\documentclass[11pt]{amsart}  % amslatex
\usepackage{amssymb}
\usepackage{amscd}
\usepackage{amsmath}

\newtheorem{thm}{Theorem}[section]
\newtheorem{cor}[thm]{Corollary}
\newtheorem{corollary}[thm]{Corollary}
\newtheorem{prop}[thm]{Proposition}
\newtheorem{proposition}[thm]{Proposition}
\newtheorem{lemma}[thm]{Lemma}

\theoremstyle{remark}
\newtheorem{remark}[thm]{Remark} 
\newtheorem{example}[thm]{Example}

\theoremstyle{definition}
\newtheorem{definition}[thm]{Definition}

\numberwithin{equation}{subsection} 

\newcommand{\bfC}{\mathtt{\mathbf C}} 
\newcommand{\bfM}{\mathtt{\mathbf M}}
\newcommand{\bfN}{\mathtt{\mathbf N}}
\newcommand{\bfx}{\mathtt{\mathbf x}}
\newcommand{\bfy}{\mathtt{\mathbf y}}
\newcommand{\T}{\mathtt{\mathbf T}}
\newcommand{\bfV}{\mathtt{\mathbf V}}
\newcommand{\bfW}{\mathtt{\mathbf W}}
\newcommand{\bfY}{\mathtt{\mathbf Y}}
\newcommand{\hor}{\operatorname{hor}}
\newcommand{\dgmod}{\operatorname{dgmod}}
\newcommand{\Modinf}{\operatorname{Mod}_\infty}
\newcommand{\Conv}{\operatorname{Conv}}
\newcommand{\oConv}{\overline{\Conv}}
\newcommand{\bConv}{\mathbf{Conv}}
\newcommand{\bB}{\mathbb B}
\newcommand{\bof}{\mathbf f}
\newcommand{\bog}{\mathbf g}
\newcommand{\boh}{\mathbf h}
\newcommand{\Jets}{\operatorname{Jets}}
\newcommand{\Coker}{\operatorname{Coker}}
\newcommand{\ML}{\operatorname{ML}}

\newcommand{\Op}{\operatorname{Op}}
\newcommand{\GL}{\operatorname{GL}}

\newcommand{\Sp}{\operatorname{Sp}}
\newcommand{\tSp}{\widetilde{\operatorname{Sp}}}

\newcommand{\tM}{\widetilde M}

\newcommand{\g}{\mathfrak{g}}
\newcommand{\A}{{\widehat{\mathbb A}}}

\newcommand{\caH}{{\mathcal H}}

\newcommand{\E}{{\mathcal E}}
\newcommand{\caP}{{\mathcal P}}
\newcommand{\cB}{{\mathcal B}}

\newcommand{\bV}{{\mathbb V}}
\newcommand{\C}{{\mathbb C}}
\newcommand{\fC}{\widehat{\mathbb C}}

\newcommand{\Z}{{\mathbb Z}}
\newcommand{\R}{{\mathbb R}}

\newcommand{\oih}{\frac{1}{i\hbar}}

\newcommand{\tn}{\widetilde{\nabla}}
\newcommand{\tnabla}{\widetilde{\nabla}}
\newcommand{\tiP}{{\widetilde{P}}}

\newcommand{\cont}{\operatorname{cont}}

\newcommand{\Ad}{\operatorname{Ad}}
\newcommand{\tIsom}{{\widetilde{\operatorname{Isom}}}}

%%%%%%%%%%%%%%%%%%%%%%%%%%%%%%%%%%%%%%%%%%%%%%%%%%%%%%%%%%%%%%%%%%%%%%%%%%%%%%%%%%%%%%%
\newcommand{\Rees}{\operatorname{Rees}}

\newcommand{\End}{\operatorname{End}}
\newcommand{\Hom}{\operatorname{Hom}}

\newcommand{\uRHOM}{{\underline{\bR\operatorname{HOM}}}}

\newcommand{\uHom}{\underline{\operatorname{Hom}}}

\newcommand{\tAd}{\widetilde{\operatorname{Ad}}}
\newcommand{\Der}{{\mathtt{Der}}}

\newcommand{\proj}{\operatorname{proj}}

\newcommand{\geB}{\mathfrak B}

\newcommand{\geX}{\mathfrak X}

\newcommand{\geU}{\mathfrak U}

\newcommand{\cucup}{\cup}
\newcommand{\ddx}{\frac{\partial}{\partial x}}
\newcommand{\ddfx}{\frac{\partial}{\partial \fx}}
\newcommand{\ddxi}{\frac{\partial}{\partial \xi}}
\newcommand{\ddfxi}{\frac{\partial}{\partial \fxi}}
\newcommand{\ddz}{\frac{\partial}{\partial z}}
\newcommand{\ddfz}{\frac{\partial}{\partial \fz}}

\newcommand{\ddzeta}{\frac{\partial}{\partial \zeta}}
\newcommand{\ddfzeta}{\frac{\partial}{\partial \fzeta}}
\newcommand{\ddfxj}{\frac{\partial}{\partial \fx_j}}
\newcommand{\ddfxij}{\frac{\partial}{\partial \fxi_j}}

\newcommand{\uC}{\underline{ C}}
\newcommand{\uG}{\underline{ G}}

\newcommand{\ucH}{\underline{ \mathcal H}}
\newcommand{\ucG}{\underline{ \mathcal G}}

\newcommand{\ucQ}{\underline{ \mathcal Q}}
\newcommand{\bz}{\overline{ z}}
\newcommand{\tg}{\widetilde{ g}}
\newcommand{\tgg}{\widetilde{ \mathfrak g}}

\newcommand{\id}{{\mathtt{Id}}}

\newcommand{\HOM}{\operatorname{HOM}}

\newcommand{\wbA}{\widehat{\widehat \bA}}
\newcommand{\bfA}{\widehat{\bA}}

\newcommand{\gX}{\mathfrak X}

\newcommand{\bC}{{\mathtt{\mathbb C}}}

\newcommand{\bP}{{\mathtt{\mathbb P}}}
\newcommand{\bQ}{{\mathtt{\mathbb Q}}}
\newcommand{\bR}{{\mathtt{\mathbb R}}}
\newcommand{\bT}{{\mathtt{\mathbb T}}}

\newcommand{\bZ}{{\mathtt{\mathbb Z}}}
\newcommand{\cA}{{\mathtt{\mathcal A}}}
\newcommand{\cL}{{\mathtt{\mathcal L}}}
\newcommand{\cC}{{\mathtt{\mathcal C}}}
\newcommand{\cD}{{\mathtt{\mathcal D}}}
\newcommand{\cE}{{\mathtt{\mathcal E}}}
\newcommand{\cF}{{\mathtt{\mathcal F}}}
\newcommand{\cG}{{\mathtt{\mathcal G}}}
\newcommand{\cH}{{\mathtt{\mathcal H}}}
\newcommand{\cM}{{\mathtt{\mathcal M}}}
\newcommand{\cN}{{\mathtt{\mathcal N}}}
\newcommand{\cO}{{\mathtt{\mathcal O}}}
\newcommand{\cP}{{\mathtt{\mathcal P}}}
\newcommand{\cQ}{{\mathtt{\mathcal Q}}}
\newcommand{\cR}{{\mathtt{\mathcal R}}}
\newcommand{\cS}{{\mathtt{\mathcal S}}}
\newcommand{\cT}{{\mathtt{\mathcal T}}}

\newcommand{\cV}{{\mathtt{\mathcal V}}}
\newcommand{\cW}{{\mathtt{\mathcal W}}}
\newcommand{\muS}{{\mu S}}

\newcommand{\SSS}{{\mathtt{\operatorname{SS}}}}

\newcommand{\fx}{\widehat{x}}
\newcommand{\fxi}{\widehat{\xi}}
\newcommand{\DR}{\mathtt{DR}}
\newcommand{\isomoto}{\stackrel{\sim}{\rightarrow}}
\newcommand{\otomosi}{\stackrel{\sim}{\leftarrow}}
\newcommand{\Ker}{\operatorname{Ker}}

\newcommand{\Ob}{\operatorname{Ob}}

\newcommand{\Ext}{\bf \operatorname{Ext}}

\newcommand{\Lie}{\operatorname{Lie}}

\newcommand{\MP}{\operatorname{MPar}}
\newcommand{\Mp}{\operatorname{Sp^4}}
\newcommand{\Mpp}{\operatorname{Mp}}

\newcommand{\ffAbM}{{\widehat{\widehat{\bA^\bullet }}}_M}
%newcommand{\RHom}{\bR\operatorname{Hom}}
\DeclareMathOperator{\ad}{\mathtt{ad}}

\newcommand{\bA}{\mathtt{\mathbb A}}

\newcommand{\bK}{{\mathbb K}}
\newcommand{\fy}{\widehat{y}}
\newcommand{\fz}{\widehat{z}}

\newcommand{\feta}{\widehat{\eta}}
\newcommand{\fzeta}{\widehat{\zeta}}
\newcommand{\fe}{\widehat{e}}
\newcommand{\RHom}{\operatorname{\R Hom}}

\newcommand{\tG}{{\widetilde{\mathbf G}}}
\newcommand{\tP}{{\widetilde{\mathbf P}}}
\newcommand{\tiG}{{\widetilde{ G}}}

\newcommand{\Omhalf}{{\Omega^{\half}}}
\newcommand{\half}{\frac{1}{2}}
\newcommand{\fbA}{{\widehat{\bA}}}
\newcommand{\fbV}{{\widehat{\bV}}}
\newcommand{\fcV}{{\widehat{\cV}}}
\newcommand{\fotimes}{{\widehat{\otimes}}}

\newcommand{\ffbA}{\widehat{\widehat{\bA}}}
\newcommand{\ffbV}{\widehat{\widehat{\bV}}}
\newcommand{\wbV}{\widehat{\bV}}
\newcommand{\bg}{\overline{g}}

\newcommand{\Br}{\operatorname{Bar}}
\newcommand{\Cbr}{\operatorname{Cobar}}

\newcommand{\bfD}{\mathbf D}
\newcommand{\bfP}{\mathbf P}
\newcommand{\bfT}{\mathbf T}
\newcommand{\bfR}{\mathbf R}

\newcommand{\cAb}{\cA ^\bullet}
\newcommand{\cBb}{\cB ^\bullet}
\newcommand{\cCb}{\cC ^\bullet}
\newcommand{\cOb}{\cO ^\bullet}

\newcommand{\cVb}{\cV ^\bullet}
\newcommand{\cWb}{\cW ^\bullet}

\makeatletter
\renewcommand{\subsubsection}{\@startsection
{subsubsection}%
{2}%
{0mm}%
{-\baselineskip}%
{-0.5\baselineskip}%
{\normalfont\normalsize\bfseries }}%
\makeatother

\begin{document}

\bigskip

\centerline{\bf A microlocal category associated to a symplectic manifold}

\bigskip

{\centerline{Boris Tsygan}}

\bigskip

{\centerline{\em In memory of Boris Vasilievich Fedosov and Mosh\'{e} Flato}

\bigskip

{\bf Abstract} For a symplectic manifod subject to certain topological conditions, a category enriched in $A_\infty$ modules over the Novikov ring is constructed. The construction is based on the category of modules over Fedosov's deformation quantization algebra that have an additional structure, namely, an action of the fundamental groupoid up to inner automorphisms. Based in large part on the ideas of Bressler-Soibelman, Feigin, and Karabegov, it is motivated by the theory of Lagrangian distributions and and is related to other microlocal constructions of a category starting from a symplectic manifold, such as those due to Nadler-Zaslow and Tamarkin. In the case when our manifold is a flat two-torus, the answer is very close to both the microlocal category of Tamarkin and the Fukaya category as computed by Polishchuk and Zaslow.
%%%%%%%%%%%%%%%%%%%%%%%%%%%%%%%%%%%%%%%%%%%%%%%%%%%%%%%%%%%%%%%%%%%%%%%%%%%%%%%%%%%%%%%%%%%%%%%%%%%%%%%%%%%%%%%%%%%%%%%%%%%%%%%%%%%%%%%%%%%%%%%%%%%%%%%%%%%%%%%%%%%%%%%%%%%%%%%%%%%%%%%%%%%%%%%%%%%%%%%%%%%%%%%%%%%%%%%%%%%%%%%%%%%%%%%%%%%%%%%%%%%%%%%%%%%%%%%%%%%%%%%%%%%%%%%%%%%%%%%%%%%%%%%%%%%%%%%%%%%%%%%%%%%%%%%%%%%%%%%%%%%%%%%%%%%%%%%%%%%%%%%%%%%%%%%
\section{Introduction}\label{s:intro}
There are several ways to construct a category which is an invariant of a symplectic manifold. One is due to Fukaya and is based on Floer cohomology \cite{FOOO}, \cite{FOOO1}.  A connection between the Fukaya theory and theory of constructible sheaves was established by Nadler and Zaslow \cite{NZ}, \cite{N}. Another construction of a category starting from a symplectic manifold was carried out by Tamarkin \cite{T} and \cite{T1}. It is based on microlocal theory of sheaves on manifolds developed by Kashiwara and Schapira in \cite{KS}. 

In this paper we describe yet another construction. It is based on microlocal objects, as \cite{T} and \cite{T1} are. But instead of microlocal theory of sheaves we use asymptotics of pseudodifferential operators and Lagrangian distributions \cite{GS}, \cite{H}, or rather their algebraic version described by deformation quantization \cite{BFFLS}, \cite{F}, \cite{NT}, \cite{NT3}. 
 %%%%%%%%%%%%%%%%%%%%%%%%%%%%%%%%%%%%%%%%%%%%%%%%%%%%%%%%%%%%%%%%%%%%%%%%%%%%%%%%%%%%%%%%%%%%%%%%%%%%%%%%%%%%%%%%%%%%%%%%%%%%%%%%%%%%%%%%%%%%%%%%%%%%%%%%%%%%%%%%%%%%%%%%%%%%%%%%%
\subsection{Motivation from Morse theory}\label{ss:MotMorse}
%%%%%%%%%%%%%%%%%%%%%%%%%%%%%%%%%%%%%%%%%%%%%%%%%%%%%%%%%%%%%%%%%%%%%%%%%%%%%%%%%%%%%%%
\subsubsection{The classical Morse filtration}\label{sss:classical Morse}
%%%%%%%%%%%%%%%%%%%%%%%%%%%%%%%%%%%%%%%%%%%%%%%%%%%%%%%%%%%%%%%%%%%%%%%%%%%%%%%%%%%%%%%
First recall that, given a function $f$ on a $C^\infty$ manifold $X$, one can study De Rham cohomology of $X$ using a  filtration of the sheaf $\bC_X$ by subsheaves $\bC_{X,t}=\bC_{\{f(x)\geq t\}}$ for any real $t$. If $f$ is a Morse function, the cohomology $H^\bullet (X, \bC_{X,t}/\bC_{X,t'})$ is described in terms of critical points of $f$.
%%%%%%%%%%%%%%%%%%%%%%%%%%%%%%%%%%%%%%%%%%%%%%%%%%%%%%%%%%%%%%%%%%%%%%%%%%%%%%%%%%%%%%%
\subsubsection{The filtered local system of $\bK$-modules}\label{sss:filteredLocSys}
%%%%%%%%%%%%%%%%%%%%%%%%%%%%%%%%%%%%%%%%%%%%%%%%%%%%%%%%%%%%%%%%%%%%%%%%%%%%%%%%%%%%%%%
The above can be interpreted as follows. Let 
$$\Lambda=\{\sum _{k=0}^\infty a_k \exp(\frac{1}{i\hbar}c_k )|a_k\in \bC;c_k\geq 0; c_k\to\infty\}$$
be the Novikov ring. Let $\bK$ be its field of quotients which is defined the same way as $\Lambda$, with the condition $c_k\geq 0$ replaced by $c_k\in \bR.$ Consider the trivial $\bK$-module of rank one and the corresponding constant sheaf $\bK_X$ on $X$. Given a function $f$, consider the action of the fundamental groupoid $\pi_1(X)$ on $\bK_X$ such that any class of a path $x_0\to x_1$ acts by multiplication by $\exp(\frac{1}{i\hbar}(f(x_0)-f(x_1))).$

For any real number $t$, denote by $C^\infty _{\Lambda^t,X}$ the sheaf associated to the presheaf of formal expressions
\begin{equation}\label{eq:C infty Lambda intro}
\{\sum _{k=0}^\infty a_k \exp(\frac{1}{i\hbar}\varphi_k )|a_k\in C_X^\infty((\hbar));\varphi_k\in C_X^\infty; \varphi_k\geq t; \varphi_k\to\infty\}
\end{equation}
Define $C^\infty_{\bK,X}$ the same way but without the condition $\varphi_k\geq t.$ When $t=0,$ we denote $C^\infty_{\Lambda^t,X}$ by $C^\infty_{\Lambda,X}.$

The fundamental groupoid $\pi_1(X)$ acts on $C^\infty_{\bK,X}$ (the simple exact meaning of this statement is explained in Definition \ref{dfn:A infty Lie groupoid}). Horizontal sections are of the form 
\begin{equation}\label{eq:hor sects}
\sum _k a_k \exp(\frac{1}{i\hbar}(c_k+f(x)))
\end{equation} 
where $a_k\in \bC((\hbar))$, $c_k\in \bR,$ and $c_k\to\infty.$ Now consider the sheaf $\cF^t(f)$ of horizontal sections that are in $C_{\Lambda^t,X}^\infty$. Note that $\exp(\frac{1}{i\hbar}(c+f))$ is in $\cF^t(f)$ on an open set if and only if $c\geq t-f$ on this open set. Therefore 
\begin{equation}\label{eq:cohomo of F t of f}
H^\bullet (X, \cF^t(f))={\widehat{\oplus}}_c H^\bullet (U_{c,t})((\hbar))
\end{equation}
where $U_{c,t}$ is the biggest open subset on which $c\geq t-f.$ We see that this cohomology essentially contains all the information about the cohomology of $X_t$ for various $t.$ The symbol ${\widehat{\oplus}}$ denotes the completed direct sum, {\em i.e.} the space of infinite sums 
\begin{equation}\label{eq:dirsum}
\sum_{k=1}^\infty A_k, \; A_k\in H^\bullet (U_{{c_k},t})((\hbar)),\; c_k\to\infty
\end{equation}
%%%%%%%%%%%%%%%%%%%%%%%%%%%%%%%%%%%%%%%%%%%%%%%%%%%%%%%%%%%%%%%%%%%%%%%%%%%%%%%%%%%%%%%
\subsubsection{The twisted De Rham complex}\label{sss:tw dr com intro}
%%%%%%%%%%%%%%%%%%%%%%%%%%%%%%%%%%%%%%%%%%%%%%%%%%%%%%%%%%%%%%%%%%%%%%%%%%%%%%%%%%%%%%%
The language of local systems and of actions of the fundamental groupoid makes it natural to look at flat connections. 
\begin{definition}\label{dfn:K forms}
Denote by $\Omega^\bullet_{\bK,X}$,  resp. $\Omega^\bullet_{\Lambda^t,X}$, resp. $\Omega^\bullet_{\Lambda,X}$, the sheaf of differential forms with coefficients in $C^\infty_{\bK,X}$, resp. $C^\infty_{\Lambda^t,X},$ resp. $C^\infty_{\Lambda,X}.$
\end{definition}
 Consider the twisted De Rham complex 
\begin{equation}\label{eq:tw DR intro}
(\Omega^\bullet_{\bK,X}, \,i\hbar d_{\DR}+df\wedge)
\end{equation}
This complex is filtered by subcomplexes $\Omega^\bullet_{\Lambda^t,X}.$ The fundamental groupoid acts on it preserving the differential (again, see Definition \ref{dfn:A infty Lie groupoid} for the exact meaning of this).

Now, for traditional local systems of finite dimensional vector spaces, locally, the cohomology of the De Rham complex is the same as the space of horizontal sections. The latter is (again, locally) the same as the derived space of horizontal sections, which is by definition the cohomology of the fundamental groupoid with coefficients in functions. In the context of $C^\infty_{\bK,M}$-valued forms, the first of these statements is false. In fact, the cohomology of the complex \eqref{eq:tw DR intro} is huge: regardless of $f$, it is the sum of cohomologies of $d_{\DR}+d\varphi\wedge$ for {\em all} $\varphi.$ But if we consider the local double complex of cochains of the fundamental groupoid with coefficients in \eqref{eq:tw DR intro}, we get the cohomology isomorphic to $\bK.$ This is easy to see. In fact, we can replace $f$ by $0$ in \eqref{eq:tw DR intro}, since the two complexes are isomorphic by means of multiplication by $\exp(\frac{1}{i\hbar}f)$. The value of the local double complex on a coordinate chart $U$ becomes 
$$\cC^{p,q}=\Omega^p_{\bK} (U^{q+1})$$
for $p,q\geq 0.$ There are two differentials: one is $d_{\DR}: \cC^{p,q}\to \cC^{p+1,q};$ the other is $\delta: \cC^{p,q}\to \cC^{p,q+1}$ where for $\omega\in \cC^{p,q}$
\begin{equation}\label{eq:delta}
\delta\omega=\sum_{j=0}^q (-1)^j p_j^* \omega
\end{equation}
Here $p_j$ is the projection $X^{q+1}\to X^q$ along the $j$th factor. But the differential $\delta$ admits a contracting homotopy
\begin{equation}\label{eq:delta 1}
h\omega=i_0^* \omega
\end{equation}
where $i_0(x_0,\ldots,x_{q-1})=(0,x_0,\ldots,x_{q-1}).$ More precisely, $[\delta,h]=\id-r_0$ where $r_0=0$ for $q>0$ or $p>0,$ and $r_0 a=a(0)$ for $p=q=0.$

The sheaf associated to the presheaf of local complexes $\cC^{\bullet,\bullet}$ inherits the action of the fundamental groupoid. The easiest way to express this is to say that, if 
\begin{equation}\label{eq:lim ind loc coh intro}
\cC^{p,q}_x=\varinjlim _{x\in U} \cC^{p,q}(U),
\end{equation}
then there are operators 
\begin{equation}\label{eq:lim ind loc coh intro 1}
\pi_1(x,y) \times \cC_y^{p,q}\to \cC^{p,q}_x
\end{equation}
that define an action. In a more general situation, when we start with a differential graded module $\cE^\bullet $ over $\Omega^\bullet_{\bK,X}$ with a compatible action of $\pi_1(X),$ they define an $A_\infty$ action. This is more or less the same for all practical purposes ({\em cf.} \ref{ss:A infty acciones}).

We summarize the above as follows. Starting from a function $f$ we constructed a filtered differential graded module $\cE^\bullet $ over $\Omega^\bullet_{\bK,X}$ with a compatible action of $\pi_1(X),$ namely the twisted De Rham complex \eqref{eq:tw DR intro}. From that we passed to a filtered $\bK_X$-module with an ({\em a priori} $A_\infty$) action of $\pi_1(X).$ It is natural to call such an object a filtered infinity local syslem of $\bK$-modules. (Note that the complex is filtered but $\pi_1$ does not preserve the filtration). The goal of this paper is to generalize large parts of the above in the way that we explain next.
%%%%%%%%%%%%%%%%%%%%%%%%%%%%%%%%%%%%%%%%%%%%%%%%%%%%%%%%%%%%%%%%%%%%%%%%%%%%%%%%%%%%%%%%%%%%%%%%%%%%%%%%%%%%%%%%%%%%%%%%%%%%%%%%%%%%%%%%%%%%%%%%%%%%%%%%%%%%%%%%%%%%%%%%%%%%%%%%%%%%%%%%%%%%%%%%%%%%%%%%%%%%%%%%%%%%%%%%%%%%%%%%%%%%%%%%%%%%%%%%%%%%%%%%%%%%%%%%%%%%%
\subsection{Lagrangian submanifolds}\label{ss:lag su} 
%%%%%%%%%%%%%%%%%%%%%%%%%%%%%%%%%%%%%%%%%%%%%%%%%%%%%%%%%%%%%%%%%%%%%%%%%%%%%%%%%%%%%%%
\subsubsection{Review of the results}\label{sss:Review of Results}
Let $M$ be a symplectic manifold and $L_0,$ $L_1$ its Lagrangian submanifolds. Under some topological assumptions that we will list below, we will construct an infinity-local system of $\bK$-modules $\cC^\bullet (L_0,L_1)$ on $M.$ In examples, this infinity local system  is often filtered. The precise topological conditions that guarantee it being filtered are yet to be determined. Complexes $\cC^\bullet(L_0,L_1)$  have a structure of an $A_\infty$ -category enriched in $A_\infty$ local systems of $\bK$-modules (we will develop this in detail in a subsequent work). When $M=T^*X$, $L_0={\rm{graph}}(0),$ and $L_1={\rm{graph}}(df),$ we recover the construction we discussed above (with some modification).

The topological conditions, most probably much too conservative for large parts of the construction, are as follows.

1) The manifold $M$ has an $\Sp^4$-structure ({\em cf.} \ref{ss:SpN}). In other words, for an almost complex structure compatible with $\omega,$ consider the first Chern class $c_1(M)$ of the tangent bundle viewed as a complex vector bundle. Then $2c_1(M)$ must be trivial in $H^2(M,\bZ/4\bZ).$ An $\Sp^4$ structure is a trivialization of $2c_1(M).$

2) The image of the pairing of the class of the symplectic form with the image of the Hurewicz morphism is zero: $\langle\pi_2(M), [\omega]\rangle=0.$

(The properties of Lagrangian submanifolds that are usually considered in Fukaya theory, such as exactness, grading, and existence of a ${\rm{Spin}}$ structure, all make their appearance in our considerations, as well as in \cite{T1}. Their exact role will be discussed in a subsequent work).

The infinity local system will be constructed in several steps indicated below. The meaning of all the terms used will be explained later in the introduction and/or in the rest of the article. All steps are possible under some additional conditions.

a) We will introduce a sheaf of algebras $\cA_M$ with a flat connection on $M.$ On this sheaf, the fundamental groupoid $\pi_1(M)$ will act up to inner automorphisms. Denote by $\cA_M^\bullet$ the differential graded algebra of $\cA_M$-valued forms, with the differential given by the connection.

b) Consider two modules $\cV$ and $\cW$ over $\cA_M$ with a compatible action of $\pi_1(M)$ and a compatible connection. Denote by $\cV^\bullet,$ $\cW^\bullet$ the differential graded modules of forms with values in $\cV$ or $\cW.$ Then the standard complex computing their ${\rm{Ext}}$ over $\cA_M^\bullet$ has a structure of a $\Omega^\bullet_{\bK,M}$-module with a (twisted) $A_\infty$ action of $\pi_1(M)$.

c) Given an $\Omega^\bullet_{\bK,M}$-module with a (twisted) $A_\infty$ action of $\pi_1(M),$ we will construct an infinity local system as in \eqref{eq:lim ind loc coh intro}.

d) To construct modules $\cV$ as in b), note that we can start with an $\cA_M$-module with a compatible connection and a compatible action of a bigger groupoid $\tG_M$ that maps onto $\pi_1(M)$ in such a way that the kernel of this map acts by inner automorphisms. 

e) Given a Lagrangian submanifold $L,$ we notice that there exists a subgroupoid of $\tG_M|L$ on $L,$ as well as an $\cA_M|L$-module with a compatible connection and a compatible action of this subgroupoid. Now we can get an object as in d) by an induction procedure.

We will now outline the steps a)-e) in more detail.
%3) The submanifolds $L_0$ and $L_1$ have metalinear structures and ${\rm{Spin}}$ structures.
%%%%%%%%%%%%%%%%%%%%%%%%%%%%%%%%%%%%%%%%%%%%%%%%%%%%%%%%%%%%%%%%%%%%%%%%%%%%%%%%%%%%%%%
\subsection{Deformation quantization}\label{ss:deq}
\subsubsection{The twisted De Rham complex, deformation quantization, and ${\rm{Ext}}$ functors}\label{sss:Tw DR, def quant, Ext} The fact that the twisted De Rham complex can be interpreted in terms of homological algebra had been known for a long time. Namely, let $\cD_{\hbar}(X)$ be the ring of $C^\infty$ $\hbar$-differential operators, {\em i.e.} the subalgebra of all differential operators which is generated, in any local coordinate system, by $F(x_1,\ldots,x_n)$ for all functions $F$ and by $i\hbar \frac{\partial}{\partial x_j}$ for all $j$. Here $\hbar$ can be any nonzero number, but it is easy to modify this construction to make $\hbar$ a formal parameter (in which case $\cD_\hbar(X)$ is the Rees ring \cite{Borel}). The algebra $\cD_\hbar(X)$ acts on the space of functions on $X$. Denote the corresponding module by $V_0.$ Now note that a function $f$ defines an automorphism of $\cD_\hbar(X)$, namely the conjugation with $\exp(\frac{1}{i\hbar}f).$ When $\hbar$ is not a number but a formal parameter, it is not clear how to define $\exp(\frac{1}{i\hbar}f)$ but conjugation by it makes perfect sense. Namely, in any coordinate system it sends $F(x_1,\ldots, x_n)$ to itself for all $F$ and $i\hbar\frac{\partial}{\partial x_j}$ to $i\hbar\frac{\partial}{\partial x_j}+\frac{\partial f}{\partial x_j}$ for all $j$. It can be easily shown that ${\rm{Ext}}^\bullet_{\cD_\hbar}(V_0,V_f)$ can be computed by the twisted De Rham complex. When $\hbar$ is a nonzero number, this complex is of course isomorphic to the standard De Rham complex. When $\hbar$ is a formal parameter, this complex is 
\begin{equation}\label{eq:tw De Rham intro}
(\Omega^\bullet(X)[\hbar], i\hbar d_{\DR}+df\wedge)
\end{equation}
When we formally invert $\hbar$ the cohomology of this differential becomes easier to compute because we can use the spectral sequence associated to the filtration by powers of $\hbar$. The first differential in this spectral sequence is $df\wedge.$ When $f$ has isolated nondegenerate critical points, the cohomology of this differential, and therefore the cohomology of the twisted De Rham complex, is concentrated in the top degree $n$ and its dimension over the field $\bC((\hbar))$ of Laurent series is equal to the number of critical points.

Now let $\bA_M$ be a deformation quantization of $C^\infty (M)$ ({\em cf.} \cite{BFFLS}; we recall the definitions in \ref{sss:defquadef}). When $M=T^*X,$ there is the canonical deformation quantization that is a certain completion of $\cD_\hbar(X)$. (Another, arguably more correct, deformation is a completion of the algebra of $\hbar$-differential operators on half-forms). The algebra $\bA_M$ is a reasonable replacement of $\cD_\hbar(X)$, although it is no longer an algebra over $\bC[\hbar]$ but only over $\bC[[\hbar]].$ In particular it does not allow any specialization at a nonzero number $\hbar.$ 

In mid-eighties, Feigin suggested an idea based on the intuition from algebraic theory of $\cD$-modules \cite{Borel}. According to this idea, and to a subsequent work \cite{BS} of Bressler and Soibelman, one should associate to a Lagrangian submanifold $L$ a sheaf of $\bA_M$-modules $\bV_L$ supported on $L.$ Then ${\rm{Ext}}^\bullet (\bV_{L_0}, \bV_{L_1})$ should somehow be a first approximation for a more interesting theory, namely the Floer cohomology. The latter also sees intersection points of transversal Lagrangian submanifolds, but in a much subtler way. Those intersection points define cochains (not necessarily cocycles) of the Floer complex that are not of the same but of different degrees (given by the Maslov index). Furthermore, the differential in the Floer complex may send one such cochain to a linear combination of other points  (in other words, there may be instanton corrections). The standard homological algebra seems to be unable to catch these effects.

Below we will outline several tools that, combined, seem to allow to construct a category some (but not all) of whose objects come from Lagrangian submanifolds  and which is much closer to the Fukaya category than the bare category of $\bA_M$-modules. 
%%%%%%%%%%%%%%%%%%%%%%%%%%%%%%%%%%%%%%%%%%%%%%%%%%%%%%%%%%%%%%%%%%%%%%%%%%%%%%%%%%%%%%%
\subsubsection{The Fedosov construction}\label{sss:Fedosov intro} The work of Fedosov \cite{F} provided a simple and very efficient tool for working with deformation quantization of symplectic manifolds. Recall that a local model for deformation quantization is the Weyl algebra $C^\infty(M)[[\hbar]]$ with the Moyal-Weyl product $*$. The key properties of this product are that it is $\Sp(2n,\bR)$-invariant and that 
$$[\xi_j,x_k]=i\hbar \delta_{jk};[x_j,x_k]=[\xi_j,\xi_k]=0.$$
The local model for the Fedosov construction is as follows. Start with the space $\fbA$ of power series in formal variables $\fx_j,$ $\fxi_j,$ and $\hbar,$ $1\leq j\leq n.$ Turn it into an algebra by introducing the Moyal-Weyl product. Now consider the algebra of $\fbA$-valued differential forms on the Darboux chart with coordinates $x_j, \xi_j.$ This algebra is equipped with the differential given by formula 
\begin{equation}\label{eq:Fedosov flat intro}
\nabla_{\mathbb A}=\sum_{j=1}^n ((\frac{\partial}{\partial x_j}-\frac{\partial}{\partial \fx_j})dx_j+   (\frac{\partial}{\partial \xi_j}-\frac{\partial}{\partial \fxi_j})d\xi_j)
\end{equation}
({\em cf.} also \eqref{eq:Fedosov flat}). The cohomology algebra of this differential is the usual deformation quantization. 

For a general symplectic manifold $M$, one replaces a deformation $\bA_M$ with the algebra $\Omega^\bullet (M, \fbA_M)$ of $\fbA_M$-valued differential forms on $M.$ Here $\fbA_M$ is the bundle of algebras with fiber $\fbA$. The differential on the algebra $\Omega^\bullet (M, \fbA_M)$ is a chosen Fedosov connection. On any local Darboux chart, this algebra is isomorphic to the one discussed in the previous paragraph.

Note that the usual intuition about flat connections does not work here. Namely, there is no action of the fundamental groupoid (monodromy) preserving this flat connection. In fact, even locally, the algebra of horizontal sections is not at all isomorphic to the fiber. This feature will change rather radically after a modification that we introduce next.
Much of what follows is based on the idea suggested to the author by Alexander Karabegov: extend the work of Fedosov so that it will describe an asymptotic version of Maslov's theory of canonical operators and of H\"{o}rmander's theory of Lagrangian distributions ({\em cf.} \cite{GS}, \cite{H}, \cite{Leray}). Actually, the constructions below require nothing but a systematic introduction into deformation quantization of quantities of the form \eqref{eq:oscill intro} below. They do however have very strong connections to \cite{GS}, \cite{H}, \cite{Leray}. We discuss these connections in Appendices (Sections \ref{s:meta}, \ref{s:metaa}, \ref{s:LagrDistr}, and \ref{s:app d}). Note that exponentials \eqref{eq:oscill intro} were considered in deformation quantization since the introduction of the subject, in particular in \cite{BFFLS}, in \cite{DS} in \cite{Fedosov}.
%%%%%%%%%%%%%%%%%%%%%%%%%%%%%%%%%%%%%%%%%%%%%%%%%%%%%%%%%%%%%%%%%%%%%%%%%%%%%%%%%%%%%%%
\subsubsection{The extended Fedosov construction}\label{sss:extended Fedosov intro} Let us start with a remark about what happens when one tries systematically to introduce into deformation quantization quantities of the form 
\begin{equation}\label{eq:oscill intro}
\exp(\frac{1}{i\hbar}\varphi).
\end{equation} 
Let us do this at the level of the algebra of formal series $\fbA.$ All such quantities where $\varphi$ are power series starting with cubic terms become elements of a new algebra automatically as soon as one replaces $\fbA$ by a completion ${\widehat{\fbA}}$ ({\em cf.} \ref{ss:The alg curvA}). We interpret quantities \eqref{eq:oscill intro} where $\varphi$ are quadratic as elements of the $4$-fold covering group $\Mp(2n,\bR)$ (see the remark below). To add elements \eqref{eq:oscill intro} where $\varphi$ is constant, we tensor our algebra by the Novikov field $\bK$ (as in \ref{sss:filteredLocSys}).
\begin{remark}\label{rmk:why Sp4} Here is an explanation of the presence of $\Sp^4$ ({\em cf.} section \ref{s:meta} for definitions). The Lie algebra of derivations of the algebra $\bfA$ has a subalgebra consisting of elements $\frac{1}{i\hbar} \ad(q(\fx,\fxi))$ where $q$ is a quadratic function. This Lie subalgebra is isomorphic to ${\mathfrak{sp}}(2n),$ and its action is the standard action by linear coordinate changes. Consider the $\bfA$-module $\bC[[\fx, \hbar]][\hbar^{-1}]$ on which $\fx$ acts by multiplication and $\fxi$ by $i\hbar\ddfx.$ On it, $\frac{1}{i\hbar}\fx_j \fxi_k$ acts by $\fx_j \frac{\partial}{\partial \fx_k}+\frac{1}{2}\delta_{jk}.$ Note that $\ad(\frac{1}{i\hbar}\fx_j \fxi_k)$ form a basis of the subalgebra ${\mathfrak{gl}}(n)$ inside ${\mathfrak{sp}}(2n).$ We see that one can integrate the action of this Lie subalgebra on the module to an action of the group, put the most natural way to do this is to pass to the two-fold cover $\ML(n,\bR)$ consisting of pairs $\{(g,\zeta)|\det(g)=\zeta^2\}.$ One cannot extend this group action to the full symplectic group. To achieve that, we will have to extend the module considerably. But the group containing $\ML(n)$ is not $\Sp(2n)$ but its universal two-fold cover ${\rm{Mp}}(2n).$ The group $\Sp^4$ contains ${\rm{Mp}}(2n)$  as a normal subgroup with quotient $\bZ/2\bZ.$ We pass to this bigger group because it behaves better with respect to Lagrangian subspaces. For example, if a symplectic manifold $M$ has a real polarization, then $M$ has an $\Sp^4(2n)$-structure but not necessarily an  ${\rm{Mp}}(2n)$-structure. On a more basic level, the pre-image of $\GL(n,\bR)$ in $\Sp^4(2n, \bR)$ splits, {\em i.e.} is isomorphic to $\GL(n,\bR)\times \bZ/4\bZ.$
\end{remark} 

Finally, we do not add elements \eqref{eq:oscill intro} where $\varphi$ are linear, for the following reason. Note that $\ad(\frac{1}{i\hbar} \fxi_j)=\ddfxj$ and $\ad(\frac{1}{i\hbar} \fx_j)=-\ddfxij$. Exponentials of these operators should be shifts in formal variables $\fx_j$ and $\fxi_j.$ But such shifts do not act on power series. Instead, they should correspond to shifts acting from one fiber of the associated bundle of algebras to another. These shifts will be discussed in \ref{ss:shifts} below. One does not need to add them, they will act automatically as long as topological conditions 1), 2) from \ref{sss:Review of Results} are satisfied.

We get an algebra $\cA$ containing ${\widehat{\fbA}},$ $\bC[\Mp(2n)],$ and $\bK$ as subalgebras. The associated bundle of algebras $\cA_M$ carries a Fedosov connection $\nabla_\cA$ that extends the one on $\fbA_M.$ For all we know, the cohomology of the De Rham complex of this connection is huge. But the bundle of algebras $\cA_M$ carries another structure that we are going to discuss next.
%%%%%%%%%%%%%%%%%%%%%%%%%%%%%%%%%%%%%%%%%%%%%%%%%%%%%%%%%%%%%%%%%%%%%%%%%%%%%%%%%%%%%%%
%%%%%%%%%%%%%%%%%%%%%%%%%%%%%%%%%%%%%%%%%%%%%%%%%%%%%%%%%%%%%%%%%%%%%%%%%%%%%%%%%%%%%%%
\subsection{The action of $\pi_1$ up to inner automorphisms}\label{ss:shifts} It turns out that, if conditions 1) and 2) from \ref{sss:Review of Results} are satisfied, the fundamental groupoid $\pi_1(M)$ {\em acts on the bundle of algebras $\cA_M$ up to inner automorphisms}. The notion of such an action is defined in section \ref{s:action up to inner}. Moreover, the Fedosov connection $\nabla_\cA$ extends to a flat connection up to inner derivations compatible with this action ({\em cf.} \ref{sss:Conns up to inn}).

All the requisite notions are well-known and go back to Grothendieck. The version that suits our purposes is developed here in section \ref{s:action up to inner}. For the readers convenience we introduce these notions gradually, starting with the case of a group acting on an algebra, though the generality we need is that of a Lie groupoid acting on a sheaf of algebras. The Lie groupoid in question will be the fundamental groupoid or its extension by a bundle of Lie groups.%%%%%%%%%%%%%%%%%%%%%%%%%%%%%%%%%%%%%%%%%%%%%%%%%%%%%%%%%%%%%%%%%%%%%%%%%%%%%%%%%%%%%%%%%%%%%%%%%%%%%%%%%%%%%%%%%%%%%%%%%%%%%%%%%%%%%%%%%%%%%%%%%%%%%%%%%%%%%%%%%%%%%%%%%%%%%%%%%%%%%%%%%%%%%%%%%%%%%%%%%%%%%%%%%%%%%%%%%%%%%%%%%%%%%%%%%%%%%%%%%%%%%%%%%%%%%%%%%%%%%%%%%
\subsection{From an action up to inner automorphisms to an $A_\infty$ local system}\label{ss:From act up to inn to loc sys intro}
%%%%%%%%%%%%%%%%%%%%%%%%%%%%%%%%%%%%%%%%%%%%%%%%%%%%%%%%%%%%%%%%%%%%%%%%%%%%%%%%%%%%%%%
In section \ref{s:From up to in to A infty} we explain that, given an action of a groupoid $\cG$ on a sheaf of algebras $\cA$ up to inner automorphisms and given two $\cA$-modules $\cV$ and $\cW$ with a compatible action of the groupoid, the standard complex $\cC^\bullet (\cV, \cA, \cW)$ that computes ${\rm{Ext}}^\bullet _{\cA}(\cV,\cW)$ carries a ({\em twisted}) $A_{\infty}$ action of $\cG.$ We make a similar argument when $\cA$ carries a flat connection up to inner derivations. (Twisted $A_\infty$ actions are discussed in \ref{appendix:infty and twisted}. They are needed because the action in \ref{ss:shifts} is continuous only locally).

Let $\cA_M^\bullet $ be the sheaf of $\cA_M$-valued forms on $M.$ The above procedure starts with two differential graded $\cA$-modules $\cV^\bullet,$ $\cW^\bullet$ with compatible actions of $\pi_1(M)$ and produces the standard complex $\cC^\bullet(\cV^\bullet, \cA^\bullet, \cW^\bullet)$ which is a sheaf of $\Omega_{\bK, M}^\bullet$-modules with a compatible {\em twisted} $A_\infty$ action of $\pi_1(M).$ Finally, for an open chart $U$ in $M,$ consider the double complex $\cC^{\bullet, \bullet}(\cV^\bullet, \cW^\bullet)(U)$ where $\cC^{p,q}(U)$ is the space of $q$-cochains of $\pi_1(U)$ with coefficients in the graded component $\cC^p(\cV^\bullet, \cA^\bullet, \cW^\bullet),$ as in the second part of \ref{sss:tw dr com intro}. Let 
$$\cC^{\bullet, \bullet}_x=\varinjlim_{x\in U} \cC^{\bullet, \bullet}(\cV^\bullet, \cW^\bullet)(U)$$
be the stalk at a point $x.$ As we indicated in \ref{sss:tw dr com intro} (after \eqref{eq:lim ind loc coh intro}), these complexes form an $A_\infty$ local system of $\bK$-modules. 
We  denote this local system by $\uRHOM(\cV^\bullet, \cW^\bullet).$

We sum up the construction up to this point in section \ref{s:Resume}.
%%%%%%%%%%%%%%%%%%%%%%%%%%%%%%%%%%%%%%%%%%%%%%%%%%%%%%%%%%%%%%%%%%%%%%%%%%%%%%%%%%%%%%%%%%%%%%%%%%%%%%%%%%%%%%%%%%%%%%%%%%%%%%%%%%
\subsection{Objects constructed from Lagrangian submanifolds}. We proceed to construct a differential graded module $\cV_L$ as in \ref{ss:From act up to inn to loc sys intro} starting from a Lagrangian submanifold $L$. This is done using an induction procedure that is explained in Section \ref{s:Objects from Lagrangians}, in particular in \ref{ss:Objects from Lagrangians}. In Section \ref{s:Rhom int points Maslov}, we prove that the general construction, when applied to $M=\bR^{2n},$ $L_0={\rm{graph}}(0),$ and $L_1={\rm{graph}}(df),$ reproduces the one in \ref{sss:filteredLocSys}, with the one important distinction. Namely, the filtered $A_\infty$ local system $\uRHOM(\cV_{L_0}^\bullet, \cV_{L_1}^\bullet)$ whose construction is outlined above is a module over a trivial local system of differential graded algebras whose fiber is the algebra
\begin{equation}\label{eq:alg of cochains of MPar}
\cS^\bullet=C^\bullet (\MP(n), \bK)
\end{equation}
of cochains of the group $\MP(n)$ with coefficients in the Novikov field $\bK.$ Here $\MP(n)$ is the parabolic subgroup of the group $\Mp(2n)$ which is the pre-image of the stabilizer of the Lagrangian submanifold $\xi_1=\ldots=\xi_n=0$ in $\Sp(2n).$ 
We prove that the general construction outlined in \ref{ss:From act up to inn to loc sys intro} is the tensor product of $\cS^\bullet$ by the filtered local system described in  \ref{sss:filteredLocSys}.
\begin{remark} There probably exists a correct way of factoring out the maximal ideal of $\cS^\bullet$ and in particular recovering the exact answer as in \ref{sss:filteredLocSys}. Note that the algebra $\cS^\bullet$ plays a vital role in the computation in section \ref{s:Rhom int points Maslov}. Namely, the vanishing of the cohomology of $\MP(n)$ with coefficients in a certain class of modules leads to a vanishing result for all components involving a factor $\exp(\frac{1}{i\hbar} \varphi(x,\fx))$ where the quadratic part of $\varphi$ with respect to $\fx$ is nonzero. {\em Cf.} Lemma \ref{lemma:statfaza}, Corollary \ref{cor:statfaza 2} (which we interpret as stationary phase statements of some sort). 
\end{remark}
%%%%%%%%%%%%%%%%%%%%%%%%%%%%%%%%%%%%%%%%%%%%%%%%%%%%%%%%%%%%%%%%%%%%%%%%%%%%%%%%%%%%%%%%%%%%%%%%%%%%%%%%%%%%%%%%%%%%%%%%%%%%%%%%%%%%%%%%%%%%%%%%%%%%%%%%%%%%%%%%%%%%%%%%%%%%%%
\subsubsection{The example of a two-dimensional torus}\label{ss:Ex torus intro} In \ref{s:Rhom theta}, we compute $\uRHOM(\cV^\bullet _{L_0}, \cV^\bullet _{L_m})$ where $M=\bR^2/\bZ^2,$ $L_0=\{\xi=0\},$ and $L_m=\{\xi=mx\}.$ The answer is the trivial bundle whose fiber is the space of matrices indexed by $k,\ell\in \bZ$ with coefficients in $\cS^\bullet.$ If $\gamma_1,\,\gamma_2$ are the two generators of the fundamental group $\pi_1(M)\isomoto \bZ^2,$ then the action of $\pi_1(M)$ on the matrix units ${\mathbf{E}}_{k\ell}$ is given by
$$\gamma_1^q \gamma_2^p: {\mathbf{E}}_{k\ell}\mapsto \exp(\frac{1}{i\hbar} ( \frac{mq^2}{2}+q(\ell-k))) {\mathbf{E}}_{k+p, \ell+p-mq}$$ 
As a consequence (Corollary \ref{cor:hor theta}), horizontal sections of this local system have the same algebraic expression as theta functions. This agrees with the computation of the Fukaya category of $M$ given by Polishchuk and Zaslow in \cite{PZ}.
%%%%%%%%%%%%%%%%%%%%%%%%%%%%%%%%%%%%%%%%%%%%%%%%%%%%%%%%%%%%%%%%%%%%%%%%%%%%%%%%%%%%%%%%%%%%%%%%%%%%%%%%%%%%%%%%%%%%%%%%%%%%%%%%%%%%%%%%%%%%%%%%%%%%%%%%%%%%%%%%%%%%%%%%%%%%%%
\subsection{Microlocal category of sheaves}\label{ss:Sheaves intro}
%%%%%%%%%%%%%%%%%%%%%%%%%%%%%%%%%%%%%%%%%%%%%%%%%%%%%%%%%%%%%%%%%%%%%%%%%%%%%%%%%%%%%%%%%%%%%%%
\subsubsection{The microlocal category of Tamarkin} \label{ss:Tamarkin}
In \cite{T}, Tamarkin defined the category $D(T^*X)$ for a manifold $X$. This is a full subcategory of the differential graded category of complexes of sheaves on $X\times { \R}$. Below are the key properties of the differential graded category $D(T^*X).$

(1) For $c\geq 0,$ there is a natural transformation $\tau_c: \id \to (T_c)_*$ where, for $(x,t)\in X\times \R,$ $T_c(x,t)=(x,t+c).$ One has $\tau_c\tau_{c'}=\tau_{c+c'}.$ Define
$$\HOM(\cF, \cG)={\prod _{c\geq 0} } '\RHom (\cF, (T_c)_*\cG)$$
where $\prod '$ is the subset of the direct product consisting of all elements $(v_c)$ such that $v_c=0$ for all but countably many $c_k, k=1,2,\ldots,$ satisfying $c_k\to \infty.$
Then $\HOM(\cF, \cG)$ is a complex of modules over the Novikov ring $\Lambda_\bZ=\{\sum _{k=0}^{\infty} a_k e^{-\frac{c_k}{i\hbar}}\}$ where $a_k \in \Z,$ $c_k\in \R,$ $c_k\geq 0,$ and $c_k\to \infty.$

\begin{remark}\label{rmk:Tauc}
For a general sheaf $\cF$ there is no relation between its behavior on an open subset $U$ and on the shift of $U$ by $c$ in the $t$ direction. But Tamarkin's subcategory has a remarkable property that the natural transformation $\tau_c$ exists. A key example is provided by sheaves $\cF_f$ defined in the paragraph below.
\end{remark}

(2) For every object $\cF$ of $D(T^*X)$, a closed subset $\muS(\cF)$ is defined, called the microsupport of ${\mathcal F}.$
Let $f$ be a smooth function on $X.$ Denote $\cF_f=\Z_{\{t+f(x)\geq 0\}}.$ Then $\muS(\cF_f)={\operatorname{graph}}(df).$
(Observe that $T_c^*\cF_f=\cF_{f-c};$ the morphism $\tau_c:\cF\to T_{c*}\cF$ is the restriction to the subset $\{t-f-c\geq 0\}$ of $\bZ_{\{t-f\geq 0\}}$).

(3) For a Morse function $f$, the complex $HOM(\cF_0, \cF_f)$ is quasi-isomorphic to the Morse complex of $f.$

(4) Let ${\mathtt{\mathbf T}}^2$ be the standard 2-torus with the flat symplectic structure. One defines the category $D({\mathtt{\mathbf T}}^2)$ of objects of $D(T^*\R^1)$ equivariant under certain projective action of $\Z^2.$ For every Lagrangian submanifold of ${\mathtt{\mathbf T}}^2$ of the form $a\xi+bx=c,$ $a, b , c$ being integers, one constructs an object $\cF_{a,b,c}$ of $D({\mathtt{\mathbf T}}^2)$. The full subcategory generated by these objects is isomorphic to the full subcategory of the Fukaya category generated by Lagrangian submanifolds $a\xi+bx=c$ as computed by Polishchuk-Zaslow in \cite{PZ}.
\begin{remark}\label{rmk:what is DT2} The category $D({\mathtt{\mathbf T}}^2)$ can be defined either as a partial case of the general construction \cite{T1} or by an explicit procedure that we recall in \ref{ss:sheaves see theta}.
\end{remark}

(5) {\em Theorem B}. Let $\Phi$ be a Hamiltonian symplectomorphism of $T^*X$ which is equal to identity outside a compact subset. There exists a functor $T_\Phi: D(T^*X)\to D(T^*X)$ such that, if $\muS(\cF)$ is compact, $\muS(T_\Phi(\cF))\subset \Phi(\muS(\cF)).$ For every $\cF$ and $\cG,$ $\HOM(\cF, \cG)$ and $\HOM(\cF, T_\Phi(\cG))$ are isomorphic modulo $\Lambda_\bZ$-torsion. Similarly for $\HOM(\cF, \cG)$ and $\HOM(T_\Phi(\cF), \cG)$.

(6) {\em Theorem A}. Let $\cF$ and $\cG$ be objects of $D(T^*X)$ such that $\muS(\cF)$ and $\muS(\cG)$ are compact and do not intersect. Then $\HOM(\cF,\cG)=0$ modulo $\Lambda_\bZ$-torsion.

For the sake of completeness, let us indicate how some of the above constructions are carried out. For a sheaf ${\mathcal F}$ on $X\times { \R},$ let $\SSS(\cF)$ be its singular support as defined in \cite{KS}. Let $D(T^*X)$ be the left orthogonal complement to the subcategory of sheaves ${\mathcal G}$ such that $\SSS({\mathcal G})$ is contained in $\{\tau\leq 0\},$ where $\tau$ is the variable dual to the coordinate $t$ on $\R.$ The microsupport of an object $\cF$ is defined by $\muS(\cF)=\{(x,\xi)\in T^*X| (x,\xi, t, 1)\in \SSS(\cF) \operatorname{for}\;\operatorname{some} t\in \R\}.$

Tamarkin's current work \cite{T1} generalizes the construction of $D(T^*X)$ to any symplectic manifold $M$.
%%%%%%%%%%%%%%%%%%%%%%%%%%%%%%%%%%%%%%%%%%%%%%%%%%%%%%%%%%%%%%%%%%%%%%%%%%%%%%%%%%%%%%%%%%%%%%%%%%%%%%%%%%%%%%%%%%%%%%%%%%%%%%%%%%%%%%%%%%%%%%%%%%%%%%%%%%%%%%%%%%%%%%%%%%%%%%
\subsubsection{Comparisons between the categories}\label{sss:comrapaisons}
As we can see, many properties of the category $D(T^*X)$ are parallel to those of categories such as $\cA_M^\bullet$-modules with an $A_\infty$ action of $\pi_1(M).$ These include (1) (the second half), (3), and (4). Property (5) is very likely to hold. Properties (2) and (6) need further study (see next remark).

The following idea probably allows to construct a functor from ($\cA_M,$ $\pi_1(M)$)-modules on $T^*X$ satisfying some conditions to sheaves on $X\times \bR.$ For such a module $\cV^\bullet,$ assume that $\uRHOM (\cV_0^\bullet, \cV^\bullet)$ is a {\em filtered} infinity local system as, for example, in Conjecture \ref{prop:filt on RHOM} if the latter is true. Denote the filtration by ${\rm{Filt}}_a,\,a\in \bR.$ Then the stalk at $(x,t)$ of the sheaf corresponding to $\cV^\bullet$ should be the ${\rm{Filt}}_t$ part of the complex that computes local cohomology of this infinity local system at $x.$ 
\begin{remark}\label{rmk:muS}
Our source of defining ($\cA_M,$ $\pi_1(M)$)-modules are {\em oscillatory modules}. (Their original version was defined in \cite{OM}). Oscillatory modules as defined here in \ref{ss:Oscillatory mods} are actually complexes of sheaves. It is possible to relax the definition somewhat and only require them to carry a differential $\nabla_\cV$ satisfying $\nabla_\cV^2=\frac{1}{i\hbar}\omega$ where $\omega$ is the symplectic form. (In other words, we can use the groupoid $\tG_M$ as defined in  \ref{sss:not quite flat con} and not in \ref{sss:quite flat con}). If we allow this, we seem to gain much more generality. For example, it will be much easier to construct an oscillatory module not only from a Lagrangian but from a coisotropic submanifold (as discussed in \cite{KW}) and maybe for more general submanifolds. On the other hand, it seems that the condition $\nabla_\cV^2=0$ may be what is needed to define the microlocal support $\mu S(\cV^\bullet)$ (the latter should be some version of the support of the differential $\nabla_\cV$). {\em Cf.}, for example, an explicit formula for $\nabla_\cV$ given by \eqref{eq:nabla V osc}.
\end{remark}
\begin{remark}\label{rmk:Witten} Much of the motivation behind our approach came from \cite{W}. We do not know any rigorous link between the two works. It would be very interesting to relate our methods to the study of asymptotics of eigenvalues of the Schr\"{o}dinger operator.
\end{remark}
\subsubsection{Acknowledgements} I am grateful to Dima Tamarkin for fruitful discussions and for many explanations of his works. As already indicated above, much of the present paper originated from earlier ideas of Boris Feigin and Sasha Karabegov.
%%%%%%%%%%%%%%%%%%%%%%%%%%%%%%%%%%%%%%%%%%%%%%%%%%%%%%%%%%%%%%%%%%%%%%%%%%%%%%%%%%%%%%%%%%%%%%%%%%%%%%%%%%%%%%%%%%%%%%%%%%%%%%%%%%%%%%%%%%%%%%%%%%%%%%%%%%%%%%%%%%%%%%%%%%%%%%%%%%%%%%%%%%%%%%%%%%%%%%%%%%%%%%%%%%%%%%%%%%%%%%%%%%%%%%%%%%%%%%%%%%%%%%%%%%%%%%%%%%%%%%%%%%%%%%%%%%%%%%%%%%%%%%%%%%%%%%%%%%%%%%%%%%%%%%%%%%%%%%%%%%%%%%%%%%%%%%%%%%%%%%%%%%%%%%%%%%%%%%%%%%%%%%%%%%%%%%%%%%%%%%%%%%%%%%%%%%%%%%%%%%%%%%%%%%%%%%%%%%%%%%%%%%%%%%%%%%%%%%%%%%%%%%%%%%%%%%%%%%%%%%%%%%%%%%%%%%%%%%%%%%%%%%%%%%%%%%%%%%%%%%%%%%%%%%%%%%%%%%%%%%%%%%%%%%%%%%%%%%%%%%%%%%%%%%%%%%%%%%%%%%%%%%%%%%%%%%%%%%%%%%%%%%%%%%%%%%%%%%%%%%%%%%%%%%%%%%%%%%%%%%%%%%%%%%%%%%%%%%%%%%%%%%%%%%%%%%%%%%%%%%%%%%%%%%%%%%%%%%%%%%%%%%%%%%%%%%%%%%%%%%%%%%%%%%%%%%%%%%%%%%%%%%%%%%%%%%%%%%%%%%%%%%%%%%%%%%%%%%%%%%%%%%%%%%%%%%%%%%%%%%%%%%%%%%%%%%%%%%%%%%%%%%%%%%%%%%%%%%%%%%%%%%%%%%%%%%%%%%%%%%%%%%%%%%%%%%%%%%%%%%%%%%%%%%%%%%%%%%%%%%%%%%%%%%%%%%%%%%%%%%%%%%%%%%%%%%%%%%%%%%%%%%%%%%%%%%%%%%%%%%%%%%%%%%%%%%%%%%%%%%%%%%%%%%%%%%%%%%%%%%%%%%%%%%%%%%%%%%%%%%%%%%%%%%%%%%%%%%%%%%%%%%%%%%%%%%%%%%%%%%%%%%%%%%%%%%%%%%%%%%%%%%%%%%%%%%%%%%%%%%%%%%%%%%%%%%%%%%%%%%%%%%%%%%%%%%%%%%%%%%%%%%%%%%%%%%%%%%%%%%%%%%%%%%%%%%%%%%%%%%%%%%%%%%%%%%%%%%%%%%%%%%%%%%%%%%%%%%%%%%%%%%%%%%%%%%%%%%
\section{$\RHom$ and the twisted De Rham complex}\label{s:Rhom int points}
\subsection{Deformation quantization algebra}\label{ss:Deformation quantization of a formal neighborhood} 
Put
$${\mathbb A}=C^\infty (\bR^{2n})[[\hbar]]$$ 
with the Moyal-Weil product
$$
(f*g)(x,\xi)=\exp(\frac{i\hbar}{2}(\frac{\partial}{\partial \xi}\frac{\partial}{\partial y}-\frac{\partial}{\partial x}\frac{\partial}{\partial \eta})) (f(x,\xi)g(y,\eta))|_{x=y,\xi=\eta}
$$
For a function $f(x)$ denote
$$
\bV_f=\bA / \sum_j \bA (\xi_j-\frac{\partial f}{\partial x_j})
$$
or, in a simplified notation, 
$$\bV_f=\bA/\bA(\xi-f'(x))$$
\begin{lemma}\label{lemma:Vf as funs}
As a $\bC[[\hbar]]$-module, $\bV_f$ is isomorphic to $C^\infty (\bR^n)[[\hbar]]$ on which $x_j$ acts by multiplication and $\xi_j$ by  $i\hbar \frac{\partial}{\partial x_j}+\frac{\partial f}{\partial x_j}.$
\end{lemma}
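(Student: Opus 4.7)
The plan is to exhibit an explicit $\bC[[\hbar]]$-linear isomorphism $\Phi: C^\infty(\bR^n)[[\hbar]] \to \bV_f$, defined as the inclusion of $\xi$-independent elements followed by the quotient map, and then read off the left action of $x_j$ and $\xi_j$ from the Moyal formulas. The argument has three parts: surjectivity of $\Phi$, injectivity of $\Phi$, and the identification of the generators' action.

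\emph{Surjectivity.} Given $a \in \bA$, apply the smooth Hadamard lemma in the $\xi$ variables around the point $\xi = f'(x)$ to obtain
\[
a(x,\xi) = a(x, f'(x)) + \sum_j (\xi_j - \partial_j f(x))\, b_j(x,\xi), \qquad b_j \in C^\infty(\bR^{2n})[[\hbar]].
\]
Converting each classical summand to the Moyal product, $(\xi_j - \partial_j f)\, b_j = b_j * (\xi_j - \partial_j f) + \tfrac{i\hbar}{2}\partial_{x_j} b_j + O(\hbar^2)$, and the first term lies in $\bA(\xi - f')$, so $[a] \equiv [a(x, f'(x))] \pmod{\hbar\, \bV_f}$. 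Iterating on the $\hbar$-order of the remainder produces $u = u_0 + \hbar u_1 + \cdots \in C^\infty(\bR^n)[[\hbar]]$ with $\Phi(u) = [a]$; the series converges in the $\hbar$-adic topology on $\bV_f$.

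\emph{Injectivity.} Suppose $u(x) = \sum_j g_j * (\xi_j - \partial_j f)$ with $u \in C^\infty(\bR^n)[[\hbar]]$ and $g_j \in \bA$. At order $\hbar^0$ the Moyal product reduces to the classical product, so $u^{(0)}(x) = \sum_j g_j^{(0)}(x,\xi)(\xi_j - \partial_j f(x))$ holds in $C^\infty(\bR^{2n})$; setting $\xi = f'(x)$ yields $u^{(0)} = 0$. Since $(\xi_j - \partial_j f)_{j=1}^n$ is a regular sequence in $C^\infty(\bR^{2n})$, the Koszul relation gives $g_j^{(0)} = \sum_k h_{jk}^{(0)} (\xi_k - \partial_k f)$ with $h_{jk}^{(0)}$ antisymmetric. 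The decisive observation is
\[
[\,\xi_j - \partial_j f,\; \xi_k - \partial_k f\,]_{*} = -i\hbar\,\partial_j\partial_k f + i\hbar\,\partial_k\partial_j f = 0,
\]
so the $(\xi_j - \partial_j f)$ pairwise Moyal-commute. Combined with the antisymmetry of $h_{jk}^{(0)}$, this forces $\sum_j g_j^{(0)} * (\xi_j - \partial_j f) \in \hbar\,\bA(\xi - f')$. Absorbing this into the $g_j^{(m)}$ for $m \geq 1$ rewrites the original relation as $u = \hbar u_1$ with $u_1 \in C^\infty(\bR^n)[[\hbar]] \cap \bA(\xi - f')$ of exactly the same form. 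Induction on $\hbar$-adic order shows $u = 0$.

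\emph{Action on generators.} Under $\Phi$, $x_j * u(x) = x_j u(x)$, so $x_j$ acts by multiplication. The Moyal identities $\xi_j * u(x) = u(x)\xi_j + \tfrac{i\hbar}{2}\partial_{x_j} u$ and $u(x)\xi_j = u * \xi_j + \tfrac{i\hbar}{2}\partial_{x_j} u$ combine to $\xi_j * u = u * \xi_j + i\hbar \partial_{x_j} u$. Since $u * \xi_j = u * (\xi_j - \partial_j f) + u \cdot \partial_j f$ and the first summand lies in $\bA(\xi - f')$, we conclude $\xi_j \cdot [u] = \partial_j f \cdot u + i\hbar \partial_{x_j} u$, as claimed. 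The main obstacle is the injectivity step, where one must reconcile the classical Koszul relation with the Moyal algebra structure; the Moyal-commutativity of the defining functions of the graph Lagrangian is exactly what makes the antisymmetric correction cancel and produces a genuinely higher-order $\hbar$ remainder, powering the induction.
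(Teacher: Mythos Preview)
Your proof is correct. The paper states this lemma without proof, treating it as elementary; your $\hbar$-adic filtration argument, using Koszul exactness together with the Moyal-commutativity $[\xi_j-\partial_j f,\,\xi_k-\partial_k f]_*=0$, is the natural way to verify it.
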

%%%%%%%%%%%%%%%%%%%%%%%%%%%%%%%%%%%%%%%%%%%%%%%%%%%%%%%%%%%%%%%%%%%%%%%%%%%%%%%%%%%%%%%%%%%%%%%%%%%%%%%%%%%%%%%%%%%%%%%%%%%%%%%%%%%%%%%%%%%%%%%%%%%%%%%%%%%%%%%%%%%%%%%%%%%%%%%%%%%%%%%%%%%%%%%%%%%%%%%%%%%%%%%%%%%%%%%%%%%%%%%%%%%%%%%%%%%%%%%%%%%%%%%%%%%%%%%%%%%%%%%%%%%%%%%%%%%%%%%%%%%%%%%%%%%%%%%%%%%%%%%%%%%%%%%%%%%%%%%%%%%%%%%%%%%%%%%%%%%%%%%%%%%%%%%
\subsection{The complex computing $\RHom (\bV_0, \bV_f)$}\label{Rhom 0 f}
\begin{lemma}\label{lemma:koszul ext comput}
The complex $(\Omega^\bullet (\bR^n)[[\hbar]], i\hbar d_{\rm{DR}}+df\wedge)$ computes ${\operatorname {Ext}}^\bullet_{\bA}(\bV_0, \bV_f)$ 
\end{lemma}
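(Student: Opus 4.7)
The plan is threefold: build a free resolution of $\bV_0$ over $\bA$, apply $\Hom_\bA(-,\bV_f)$, and recognise the result as $(\Omega^\bullet(\R^n)[[\hbar]], i\hbar d_\DR + df\wedge)$ via Lemma \ref{lemma:Vf as funs}.

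First I would introduce the Koszul-type resolution
$$K^{-p} = \bA \otimes_\C \Lambda^p(\C^n), \qquad d(a \otimes e_{i_1}\wedge\cdots\wedge e_{i_p}) = \sum_{k=1}^{p}(-1)^{k-1}\, a\xi_{i_k}\otimes e_{i_1}\wedge\cdots\widehat{e_{i_k}}\cdots\wedge e_{i_p},$$
augmented by $K^0 = \bA \twoheadrightarrow \bV_0$. Since $\xi_1,\ldots,\xi_n$ pairwise commute in the Moyal--Weyl product, $d^2=0$, and each $K^{-p}$ is a free left $\bA$-module (so suitable for computing $\operatorname{Ext}$). To verify exactness I would pass to the $\hbar$-adic associated graded: modulo $\hbar$ the Moyal--Weyl product degenerates to the ordinary commutative product on $C^\infty(\R^{2n})$, and the complex becomes the classical Koszul resolution of $C^\infty(\R^n)\cong C^\infty(\R^{2n})/(\xi_1,\ldots,\xi_n)$ associated to the regular sequence of coordinates $\xi_j$, whose exactness is standard. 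Exactness then lifts to $K^\bullet$ by the usual argument for $\hbar$-adically complete, $\hbar$-torsion-free complexes.

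Given the resolution, $\operatorname{Ext}^\bullet_\bA(\bV_0,\bV_f) = H^\bullet(\Hom_\bA(K^\bullet,\bV_f))$. One computes
$$\Hom_\bA(\bA\otimes_\C \Lambda^p(\C^n),\bV_f) \;\cong\; \bV_f \otimes_\C \Lambda^p(\C^n)^* \;\cong\; \Omega^p(\R^n)[[\hbar]],$$
using Lemma \ref{lemma:Vf as funs} for the first factor and dualising $e_j\leftrightarrow dx_j$. Encoding $\varphi$ as the form $\omega$ with $\omega_{i_1\cdots i_p} = \varphi(1\otimes e_{i_1}\wedge\cdots\wedge e_{i_p})$, precomposition with the Koszul differential and the action $\xi_j \cdot u = i\hbar\, \partial u/\partial x_j + (\partial f/\partial x_j)\, u$ from Lemma \ref{lemma:Vf as funs} gives
$$(d^*\varphi)_{j_1\cdots j_{p+1}} = \sum_{k=1}^{p+1}(-1)^{k-1}\left(i\hbar\frac{\partial}{\partial x_{j_k}} + \frac{\partial f}{\partial x_{j_k}}\right)\omega_{j_1\cdots \widehat{j_k}\cdots j_{p+1}},$$
which is exactly the coefficient of $dx_{j_1}\wedge\cdots\wedge dx_{j_{p+1}}$ in $(i\hbar d_\DR + df\wedge)\omega$, completing the identification.

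The only genuine obstacle is the exactness of $K^\bullet$: the non-commutative Moyal--Weyl setting with $C^\infty$-coefficients could look delicate, but the mod-$\hbar$ reduction puts one back in the classical Koszul complex for the independent linear coordinates $\xi_1,\ldots,\xi_n$, where exactness is immediate, and no further subtlety enters upon deforming in $\hbar$.
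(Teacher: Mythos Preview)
Your proof is correct and follows essentially the same route as the paper: construct the Koszul resolution of $\bV_0$ by the commuting elements $\xi_1,\ldots,\xi_n$, apply $\Hom_\bA(-,\bV_f)$, and invoke Lemma~\ref{lemma:Vf as funs} to identify the result with the twisted de Rham complex. If anything you supply more detail than the paper does, since you spell out the exactness of the Koszul complex via reduction mod~$\hbar$, whereas the paper simply asserts that $(\cR_\bullet,\partial)$ is a free resolution.
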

\begin{proof} Fix a basis $e_1, \ldots, e_n$ of $\bC^n.$ Let $e^1,\ldots,e^n$ be the dual basis of $(\bC^n)^*.$ Let $\cR_k = \bA \otimes \wedge ^k ({\mathbb C}^n)$. If $e_1, \ldots, e_n$ is a basis of ${\mathbb C}^n,$ define the differential
\begin{equation}\label{eq: diff koszul} 
\partial (a\otimes e_{j_1}\wedge \ldots \wedge e_{j_k})=\sum_{p=1}^k (-1)^p a\xi_{j_p} \otimes e_{j_1}\wedge \ldots\wedge {\widehat {e_{j_p}}} \wedge \ldots \wedge e_{j_k}
\end{equation}
The complex $(\cR_\bullet, \partial)$ is a free resolution of the module $\bV_0.$ The complex $\Hom_\bA (\cR_\bullet, \bV_f))$ becomes 
\begin{equation}\label{eq:kos cplx} 
C^k =\wedge ^k ( \bC^n)^*\otimes \bV_f ;
\end{equation}
\begin{equation}\label{eq:kos diff 2}
d(e^{j_1} \wedge \ldots \wedge e^{j_k}\otimes v)=\sum _{p=1}^k e^{j_1} \wedge \ldots \wedge e^{j_k}\wedge e_p \otimes \xi_pv
\end{equation}
which is isomorphic to $(\Omega^\bullet (\bR^n)[[\hbar]], i\hbar d_{\rm{DR}}+df\wedge)$ because of Lemma \ref{lemma:Vf as funs}.
\end{proof}
%%%%%%%%%%%%%%%%%%%%%%%%%%%%%%%%%%%%%%%%%%%%%%%%%%%%%%%%%%%%%%%%%%%%%%%%%%%%%%%%%%%%%%%
%\subsubsection{Deformation quantization of symplectic manifolds}\label{sss:DefQuaSympMan}
%%%%%%%%%%%%%%%%%%%%%%%%%%%%%%%%%%%%%%%%%%%%%%%%%%%%%%%%%%%%%%%%%%%%%%%%%%%%%%%%%%%%%%%
\section{The Weyl algebra and the  Fedosov connection}\label{ss:Weyl and Fedosov} 
%%%%%%%%%%%%%%%%%%%%%%%%%%%%%%%%%%%%%%%%%%%%%%%%%%%%%%%%%%%%%%%%%%%%%%%%%%%%%%%%%%%%%%%%%%%%%%%%%%%%%%%%%%%%%%%%%%%%%%%%%%%%%%%%%%%%%%%%%%%%%%%%%%%%%%%%%%%%%%%%%%%%%%%%%%%%%%
\subsection{The case of ${\mathbb R}^{2n}$}\label{ss:Fedosov R2n} Set
$${{{\A}}}={\mathbb C}[[\fx_1, \ldots, \fx_n, \fxi_1, \ldots, \fxi_n, \hbar]]$$
with the Moyal-Weyl product
$$
(f*g)(\fx,\fxi)=\exp(\frac{i\hbar}{2}(\frac{\partial}{\partial \fxi}\frac{\partial}{\partial \fy}-\frac{\partial}{\partial \fx}\frac{\partial}{\partial \feta})) (f(\fx,\fxi)g(\fy,\feta))|_{\fx=\fy,\fxi=\feta}
$$

Define the operator on $\A$-valued forms by 
\begin{equation}\label{eq:Fedosov flat}
\nabla_{\mathbb A}= (\frac{\partial}{\partial x}-\frac{\partial}{\partial \fx})dx+   (\frac{\partial}{\partial \xi}-\frac{\partial}{\partial \fxi})d\xi
\end{equation}
This is the Fedosov connection (in the partial case of a flat space). One has $\nabla_{\mathbb A}^2=0;$ the complex $(\Omega^\bullet (\bR^{2n}, \A), \nabla_{\mathbb A})$ is quasi-isomorphic to $C^\infty (\bR^{2n})[[\hbar]].$ The latter embeds quasi-isomorphically to the former by means of 
\begin{equation}\label{eq: aaassjs}
f\mapsto f(x+\fx,\xi+\fxi).
\end{equation}
\subsubsection{Infinitesimal symmetries of the deformation quantization algebra on a formal neighborhood}\label{ss:infin symms} Let
${{{\A}}}={\mathbb C}[[\fx_1, \ldots, \fx_n, \fxi_1, \ldots, \fxi_n, \hbar]]$
with the Moyal-Weyl product as in \ref{ss:Deformation quantization of a formal neighborhood}.
Put
$${\mathfrak g}=\Der_{\cont}(\A)=\frac{1}{i\hbar}\A/\frac{1}{i\hbar}{{\mathbb{C}}[[\hbar]]};\;\;{\widetilde{\mathfrak g}}=\frac{1}{i\hbar}\A$$
viewed as Lie algebras with the bracket $a*b-b*a.$ 

Introduce the grading
\begin{equation}\label{eq:grading}
|\fx_i|=|\fxi_i|=1;\; |\hbar|=2.
 \end{equation}
One has a central extension
\begin{equation}\label{eq:extension g}
0\to\oih\C[[\hbar]]\to\tgg\to\g\to 0,
\end{equation}
as well as
\begin{equation}\label{eq:grading g}
\g=\prod_{i=-1}^\infty \g_i;\;\tgg=\prod_{i=-2}^\infty \tgg_i.
\end{equation}
We will use the notation
\begin{equation}\label{eq:grading g 1}
\g_{\geq 0}=\prod_{i=0}^\infty \g_i;\;\tg_{\geq 0}=\prod_{i=0}^\infty \tgg_i.
\end{equation}
Note that
\begin{equation}\label{eq:grading g 2}
\g_{0}\isomoto {\mathfrak{sp}}(2n)
\end{equation}
and the action of this Lie algebra on $\A$ is the standard action of $ {\mathfrak{sp}}$ by infinitesimal linear coordinate changes.
%%%%%%%%%%%%%%%%%%%%%%%%%%%%%%%%%%%%%%%%%%%%%%%%%%%%%%%%%%%%%%%%%%%%%%%%%%%%%%%%%%%%%%%%%%%%%%%%%%%%%%%%%%%%%%%%%%%%%%%%%%%%%%%%%%%%%%%%%%%%%%%%%%%%%%%%%%%%%%%%%%%%%%%%%%%%%%
\subsubsection{DG model for ${\RHom}(\bV_0,\bV_f)$}\label{sss:DG model for RHom} Though this is not needed for the sequel, let us explain how modules $\bV_f$ can be replaced by their DG analogs. Define 
\begin{equation}\label{eq:V f DG model}
\Omega^\bullet (\bR^{2n} , \wbV _f)=\Omega^\bullet (\bR^n)[[\fx, \hbar]]
\end{equation}
with the differential
\begin{equation}\label{eq: nabla V}
\nabla_{\bV}= (\frac{\partial}{\partial x}-\frac{\partial}{\partial \fx})dx
\end{equation}
and the action of $\Omega ^\bullet (\bR^{2n}, \A)$ defined as follows: $x$ and $\fx$ act by multiplication; $\xi$ acts by multiplication by $f'(x);$ $\fxi$ acts by $i\hbar \frac{\partial}{\partial \fx}+f'(x+\fx)-f'(x);$ $d\xi$ acts by $df'(x)=f''(x)dx.$ 

It is easy to see that $\Omega ^\bullet (\bR^{2n}, \A)$ is the space of global sections of a sheaf of differential graded algebras, and $\Omega^\bullet (\bR^{2n} , \wbV _f)$ is the space of global sections of a sheaf of differential graded modules supported on the Lagrangian submanifold $L_f = \{\xi=f'(x)\}.$ The formula $v\mapsto v(x+\fx)$ defines a quasi-isomorphic embedding 
$$\bV_f \to \Omega (\bR^n, \wbV_f)$$
compatible with the embedding of algebras $C^\infty(\bR^{2n})[[\hbar ]]\to  \Omega ^\bullet (\bR^{2n}, \A)$ defined in \eqref{eq: aaassjs}.

\begin{lemma}\label{lemma:RHom DG case} Let $e^*$, $\fe^*$ and $a^*$ be three free graded commutative variables of degrees $1$, $1,$ and $0$ respectively.
The cohomology 
$$\RHom _{\Omega ^\bullet (\bR^{2n}, \A)}  (\Omega^\bullet (\bR^{2n} , \wbV _0), \Omega^\bullet (\bR^{2n} , \wbV _f))$$
is computed by the complex  
$$\Omega^\bullet (\bR^{n} , \wbV _f)[e^*, \fe ^*][[a^*]], \nabla_{\bV}+e^*  \xi  + \fe^* \fxi  + a^* d\xi  +  (e^*-\fe^*)\frac{\partial}{\partial a^*} $$
which is isomorphic to $\Omega^\bullet (\bR^{n})[[\fx, \hbar]][e^*, \fe ^*][[a^*]]$ with the differential
$$(\frac{\partial}{\partial x}-\frac{\partial}{\partial \fx})dx +  e^* f'(x)   + a^* f''(x)dx  + \fe^* (i\hbar \frac{\partial}{\partial \fx} +f'(x+\fx)-f'(x))+(e^*-\fe^*) \frac{\partial}{\partial a^*}  $$
The latter complex is quasi-isomorphic to the one in Lemma \ref{lemma:koszul ext comput}.
\end{lemma}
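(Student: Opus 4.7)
The plan is to prove the lemma in three steps: construct a K-projective resolution $P^\bullet\twoheadrightarrow\Omega^\bullet(\bR^{2n},\wbV_0)$ over the DG-algebra $\cA^\bullet:=\Omega^\bullet(\bR^{2n},\A)$, compute the Hom complex from this resolution, and then reduce to the twisted de Rham complex.

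For the resolution I would take
$$P^\bullet\;=\;\cA^\bullet\otimes\Lambda[e_1,\ldots,e_n,\fe_1,\ldots,\fe_n]\otimes\bC[a_1,\ldots,a_n]$$
with Koszul generators $|e_j|=|\fe_j|=-1$ (odd) and $|a_j|=0$ (even), augmented to $\Omega^\bullet(\bR^{2n},\wbV_0)$ by sending all Koszul variables to $0$, and with differential $d_P=\nabla_\A+\delta$ where $\delta$ is the graded derivation on $V:=\Lambda[e,\fe]\otimes\bC[a]$ defined on generators by
$$\delta(e_j)=\xi_j-a_j,\qquad\delta(\fe_j)=\fxi_j+a_j,\qquad\delta(a_j)=d\xi_j.$$
I would verify $d_P^2=0$ by a direct check on generators: the Koszul squarings $\delta^2(e_j)=-d\xi_j$ and $\delta^2(\fe_j)=d\xi_j$ are cancelled exactly by $[\nabla_\A,\delta](e_j)=\nabla_\A(\xi_j)=d\xi_j$ and $[\nabla_\A,\delta](\fe_j)=\nabla_\A(\fxi_j)=-d\xi_j$, coming from the $(\partial_\xi-\partial_\fxi)d\xi$ piece of the Fedosov connection; on $a_j$ both terms vanish. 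Acyclicity of the augmentation then follows by filtering by total Koszul degree: after the change of basis $u_j=e_j-\fe_j$, $v_j=e_j+\fe_j$, the $(u_j,a_j)$-part contracts via a standard Koszul/polynomial homotopy, while the $v_j$-part yields a Koszul resolution of $\cA^\bullet/(\xi_j+\fxi_j)$, whose homology matches $\Omega^\bullet(\bR^{2n},\wbV_0)$ via the PBW identification.

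Applying $\Hom_{\cA^\bullet}(-,\Omega^\bullet(\bR^{2n},\wbV_f))$ to the free $\cA^\bullet$-module $P^\bullet$ gives $\Omega^\bullet(\bR^{2n},\wbV_f)$ tensored with the graded continuous dual of $V$, namely $\Lambda[e^*,\fe^*]\otimes\bC[[a^*]]$ (with $|e^*|=|\fe^*|=1$, $|a^*|=0$; the polynomial-to-power-series switch occurs because $\bC[a]^*=\bC[[a^*]]$). The transpose of $d_P$ is exactly the first stated differential $\nabla_\bV+e^*\xi+\fe^*\fxi+a^*d\xi+(e^*-\fe^*)\partial_{a^*}$, and substituting the explicit $\cA$-action on $\Omega^\bullet(\bR^{2n},\wbV_f)$ from \ref{sss:DG model for RHom} gives the second form. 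To identify the resulting complex with the twisted de Rham complex of Lemma~\ref{lemma:koszul ext comput}, rather than carry out an explicit spectral-sequence collapse I would use the pair of quasi-isomorphisms already supplied: the DG-algebra embedding $\bA\hookrightarrow\cA^\bullet$ via $f(x,\xi)\mapsto f(x+\fx,\xi+\fxi)$ in \ref{ss:Fedosov R2n}, and the DG-module embedding $\bV_f\hookrightarrow\Omega^\bullet(\bR^{2n},\wbV_f)$ via $v(x)\mapsto v(x+\fx)$ in \ref{sss:DG model for RHom}. Invariance of $\RHom$ under these quasi-isomorphisms yields $\RHom_{\cA^\bullet}(\Omega^\bullet(\bR^{2n},\wbV_0),\Omega^\bullet(\bR^{2n},\wbV_f))\simeq\RHom_\bA(\bV_0,\bV_f)$, which is computed by Lemma~\ref{lemma:koszul ext comput}.

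The main obstacle will be identifying the correct coupling terms $-a_j$ in $\delta(e_j)$ and $+a_j$ in $\delta(\fe_j)$. The non-standard feature is that $\fxi_j$ does not annihilate $\Omega^\bullet(\bR^{2n},\wbV_0)$ but acts as the differential operator $i\hbar\partial_{\fx_j}$, so the naive Koszul complex on $(\xi,\fxi,d\xi)$ has $d^2=d\xi(\partial_e-\partial_\fe)\ne 0$ and fails to be a complex at all; the auxiliary even generators $a_j$ together with the coupling provide exactly the compensation that closes $d_P^2$ to zero by cancelling against the Fedosov contribution. Verifying both $d_P^2=0$ and acyclicity of the augmentation simultaneously requires the careful bookkeeping of these interactions outlined above.
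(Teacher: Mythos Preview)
Your resolution is precisely the paper's: a Koszul--type complex $\cA^\bullet[e,\fe,a]$ with $e$ killing $\xi$, $\fe$ killing $\fxi$, $a$ killing $d\xi$, and the extra coupling $\pm a$ in $\delta(e),\delta(\fe)$ needed to close $d_P^2=0$ against the Fedosov term $\nabla_\bA\xi=d\xi$, $\nabla_\bA\fxi=-d\xi$. Your check of $d_P^2=0$ is correct (your sign convention differs from the paper's by $a\mapsto -a$, which is harmless). Dualizing to obtain the stated Hom complex is also the same.

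The genuine divergence is in the last step. The paper reduces the explicit complex to the twisted de~Rham complex by two successive contractions: first pass to the variables $e^*$ and $e^*-\fe^*$ and observe that $(e^*-\fe^*)\partial_{a^*}$ is an acyclic piece, killing all positive powers of $a^*$ and $e^*-\fe^*$; then change even variables to $y=x+\fx$, $Dx=dx-i\hbar e^*$ and contract the acyclic $\partial_\fx\,Dx$. This produces an explicit chain map realizing the quasi-isomorphism. You instead invoke derived invariance: the quasi-isomorphisms $\bA\hookrightarrow\cA^\bullet$ and $\bV_f\hookrightarrow\Omega^\bullet(\bR^{2n},\wbV_f)$ from \S\ref{ss:Fedosov R2n} and \S\ref{sss:DG model for RHom} force $\RHom_{\cA^\bullet}\simeq\RHom_\bA$, and then Lemma~\ref{lemma:koszul ext comput} finishes. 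Your route is cleaner and avoids coordinate bookkeeping, but it relies on the base-change statement for $\RHom$ along a DG-algebra quasi-isomorphism (standard, though in this formal-power-series setting you should remark that the relevant modules are suitably semifree or K-flat so that the derived restriction/extension adjunction applies). The paper's route is more elementary and yields an explicit homotopy equivalence, which is occasionally useful downstream.

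One small correction to your acyclicity sketch: after the substitution $u=e-\fe$, $v=e+\fe$ you have $\delta(u)=\xi-\fxi-2a$ and $\delta(v)=\xi+\fxi$, so the $(u,a)$-block is not quite the standard contractible Koszul on $(a,d\xi)$ alone; it still contracts, but you should filter so that the leading term is the pair $(u\mapsto -2a,\ a\mapsto d\xi)$ (or equivalently first kill $a^*$ and $e^*-\fe^*$ as the paper does). This is a bookkeeping point, not a gap.
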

\begin{proof} The DG module $\Omega^\bullet (\bR^n, \wbV_0)$ is the quotient of the free DG module $\Omega^\bullet (\bR^{2n}, \A)$ by the differential graded submodule generated by $\xi, $ $ d\xi,$ and $\fxi.$ A Koszul complex $\cP = \Omega^\bullet (\bR^{2n}, \A)[e,\fe,a]$ is a semi-free resolution of this quotient. The differential extends $\nabla_{\mathbb A},$ sends $e v$ to $\xi v+ av,$ $\fe v $ to $-\fxi v+av,$ $av$ to $d\xi\cdot v,$ and is a coderivation with respect to the action of $\bC[e,\fe,a].$ The complex $\Hom _{\Omega^\bullet (\bR^{2n}, \A)} (\cR, \Omega^\bullet (\bR^n, \wbV_f))$ is isomorphic to both complexes above. It remains to show that the latter of those complexes is quasi-isomorphic to ($\Omega^\bullet (\bR^{2n})[[\hbar]], i\hbar d_{\rm{DR}}+df\wedge)$. To this end, consider the second complex in the statement of the lemma. Change the odd variables to $e^*$ and $e^*-\fe^*;$ note that we can factor out all positive powers of $a^*$ and $e^*-\fe^*$. This is because the differential $(e^*-\fe^*) \frac{\partial}{\partial a^*}$ is acyclic. We are left with the complex
$\Omega^\bullet (\bR^{n})[[\fx, \hbar]][e^*]$ with differential
$$(\frac{\partial}{\partial x}-\frac{\partial}{\partial \fx})dx +  e^*  (i\hbar \frac{\partial}{\partial \fx} +f'(x+\fx))  $$
Now change the even variables. Put $y=x+\fx$ and keep $\fx$ as the second variable. As for the odd variables, put $Dx=dx-i\hbar e^*$ and keep $e^*$ as the second variable. The differential becomes
$$(i\hbar \frac{\partial}{\partial y}+f'(y))e^* - \frac{\partial}{\partial \fx} Dx.$$
We can factor out all positive powers of $\fx$ and of $Dx$ because the differential $\frac{\partial}{\partial \fx} Dx$ is acyclic.
\end{proof}
%%%%%%%%%%%%%%%%%%%%%%%%%%%%%%%%%%%%%%%%%%%%%%%%%%%%%%%%%%%%%%%%%%%%%%%%%%%%%%%%%%%%%%%
\subsection{Deformation quantization of symplectic manifolds}\label{sss:defquadef}
We recall from \cite{BFFLS} that a deformation quantization of a symplectic manifold $M$ is a formal product 
$$f*g=fg+\sum_{k=1}^\infty (i\hbar)^k P_k(f,g)$$
where $P: C^\infty(M)\times C^\infty(M)\to C^\infty(M)$ are bilinear bidifferential operators, $f*(g*h)=(f*g)*h$ in $C^\infty(M)[[\hbar]],$ $1*f=f*1=f,$ and 
$$P_1(f,g)-P_1(g,f)=\{f,g\}.$$ 
An isomorphism between two deformation quantizations is a formal series
$$T(f)=f+\sum_{k=1}^\infty (i\hbar)^k T_k(f)$$
where $T(f)*T(g)=T(f*'g)$ and $T_k: C^\infty(M)\to C^\infty(M)$ are linear differential operators. Below we review how to classify deformation quantizations up to isomorphism using Fedosov connections.
%%%%%%%%%%%%%%%%%%%%%%%%%%%%%%%%%%%%%%%%%%%%%%%%%%%%%%%%%%%%%%%%%%%%%%%%%%%%%%%%%%%%%%%
\subsection{The bundle $\fbA_M$}\label{ss:the bdle AM} By $\fbA_M$ we denote the bundle of algebras associated to the action of $\Sp(2n)$ on $\fbA$.
%%%
\subsection{The Fedosov connection}\label{ss:fedcon} \begin{definition}\label{def:Fedosov connection}
A Fedosov connection $\nabla$ is a connection in the bundle of algebras $\cA_M$ satisfying the following properties.
\begin{enumerate}
\item\label{Fedcon 1}
$$\nabla(fg)=\nabla(f)g+f\nabla(g)$$
for any local sections $f$ and $g$ of $\bA_M.$
\item\label{Fedcon 2}
$\nabla^2=0$
\item\label{Fedcon 3}
In any local Darboux coordinates $x,\xi$ on $M$ and any formal Darboux coordinates $\fx,\fxi$ of $\bA,$ 
$$\nabla=d_{\rm{DR}}-(\frac{\partial}{\partial \fx}dx-\frac{\partial}{\partial \fxi}d\xi)+A_{\geq 0}$$
where $A_{\geq 0}$ is a one-form with coefficients in ${\mathfrak g}_{\geq 0}$ (we use the notation of \eqref{eq:grading g 1}).
\end{enumerate} 
\end{definition}
Note that ${\mathfrak{sp}}(2n)$ embeds into $\tgg$ as the space of $\frac{1}{i\hbar}q(\fx,\fxi)$ where $q$ is a quadratic polynomial.
\begin{definition}\label{def:lifted Fedosov connection}
A lifted Fedosov connection $\tn$ is a collection of ${\widetilde{\mathfrak g}}$-valued one-forms $A_j$ on local Darboux charts $U_j$ such that 
\begin{enumerate}
\item\label{Fedcon 0} 
$$A_j=-dg_{jk}g_{jk}^{-1}+\Ad({g_{jk}})A_k$$
for any $j$ and $k.$
\item\label{Fedcon 21}
$\nabla^2$ is central.
\item\label{Fedcon 31}
In any local Darboux coordinates $x,\xi$ on $M$ and any formal Darboux coordinates $\fx,\fxi$ of $\bA,$ 
$$\nabla=d_{\rm{DR}}-\frac{1}{i\hbar}{\fxi} dx+\frac{1}{i\hbar}\fx d\xi+A_{\geq 0}$$
where $A_{\geq 0}$ is a one-form with coefficients in ${\widetilde{\mathfrak g}}_{\geq 0}$ (we use the notation of \eqref{eq:grading g 1}).
\end{enumerate} 
\end{definition}
Any lifted Fedosov connection $\tn$ defines a Fedosov connection $\nabla$ via the projection ${\widetilde{\mathfrak g}}\to {\mathfrak g}.$ In this case we call $\tn$ a lifting of $\nabla$.

Let
\begin{equation}\label{eq:G semi dir 0}
G=\Sp(2n,\R)\ltimes \exp (\g_{\geq 1})
\end{equation}
This group acts on $\bA$ by automorphisms. Let $G_M$ be the associated bundle of groups. It acts by automorphisms on the bundle of algebras $\bA_M.$ 
\begin{definition} \label{dfn:gauge equiv Fed}
Two Fedosov connections are gauge equivalent if they are conjugated by a section of $G_M.$
\end{definition}
\begin{thm}\label{thm:Fedosov classification}
1) For every 
$$\theta=\frac{1}{i\hbar}\omega +\sum _{j=0}^\infty (i\hbar)^j \theta_j$$
where $\theta_j$ are closed two-forms on $M$, there exists a lifted Fedosov connection $\tn$ such that $\tn^2=\theta.$

2) Any Fedosov connection has a lifting. Two Fedosov connections are gauge equivalent if and only if the curvatures of their liftings are cohomologous as $\frac{1}{i\hbar}\bC[[\hbar]]$-valued two-forms. In particular, any Fedosov connection is locally gauge equivalent to the standard one.

3) For any Fedosov connection, the kernel of $\nabla:\Omega^0_M(\bA_M)\to \Omega^1_M(\bA_M)$ is isomorphic to $C^\infty_M[[\hbar]]$ as a sheaf of algebras. Therefore any Fedosov connection defines a deformation quantization of $M$.

4) Any deformation quantization comes from some Fedosov connection. Two deformation quantizations are isomorphic if and only if the corresponding Fedosov connections are gauge equivalent.
\end{thm}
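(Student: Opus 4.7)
The four parts are handled by a single mechanism — Fedosov's iterative resolution of a Maurer–Cartan type equation using a Koszul contracting homotopy. Let $\delta = \frac{1}{i\hbar}\ad(\fxi_j\, dx^j - \fx_j\, d\xi^j)$ act on $\Omega^\bullet(M, \fbA_M)$. It is a derivation of bidegree $(+1,-1)$ with respect to (form degree, fiber weight in $(\fx,\fxi)$), satisfies $\delta^2 = 0$, and admits a standard Koszul homotopy $\delta^{-1}$ with $\delta\delta^{-1} + \delta^{-1}\delta = \id - \sigma_0$, where $\sigma_0$ projects onto the component of pure fiber weight $0$ and form degree $0$. Because $\delta^{-1}$ strictly raises fiber weight in the grading \eqref{eq:grading}, any fixed-point equation $X = X_0 + \delta^{-1}Q(X)$ with $Q$ weight-preserving or weight-raising has a unique solution in the $(\fx,\fxi)$-adic topology. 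Every step of the proof will reduce to such an iteration.

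For part (1), pick a torsion-free symplectic connection on $M$ (these always exist), giving a covariant derivative $\nabla^{\mathrm{symp}}$ on $\fbA_M$ whose curvature $R^{\mathrm{symp}}$ takes values in $\g_0 \cong \mathfrak{sp}(2n) \subset \tgg$. Write the desired connection as $\tn = \nabla^{\mathrm{symp}} - \delta + \tfrac{1}{i\hbar}\ad(r)$ with $r \in \Omega^1(M, \fbA_M)$ of fiber weight $\geq 3$. A direct computation rewrites $\tn^2 = \theta$ in the schematic form $\delta r = R^{\mathrm{symp}} + \nabla^{\mathrm{symp}} r + \tfrac{1}{i\hbar}\, r \ast r - (\theta - \tfrac{1}{i\hbar}\omega)$. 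Imposing the gauge condition $\sigma_0 r = 0$ and applying $\delta^{-1}$ turns this into $r = \delta^{-1}(\cdots)$ with strictly weight-raising right-hand side, solvable by iteration. A Bianchi-type identity for the right-hand side (coming from vanishing torsion of $\nabla^{\mathrm{symp}}$, the Jacobi identity for $\ast$, and $d\theta = 0$), combined with $\delta\delta^{-1} + \delta^{-1}\delta = \id - \sigma_0$, forces the iterate to satisfy the full equation rather than merely its image under $\delta$.

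For part (2), a lifting of a Fedosov connection exists because the central extension $\tfrac{1}{i\hbar}\bC[[\hbar]] \to \tgg \to \g$ splits as a graded vector space via Moyal–Weyl symmetrization; local lifts on charts are produced, and the discrepancy on overlaps is absorbed by adjusting the transition cocycle by a central two-form. Given two liftings $\tn_0, \tn_1$ with cohomologous curvatures $\theta_1 - \theta_0 = d\alpha$, choose an interpolating family $\tn_t$ of lifted Fedosov connections with curvatures $\theta_0 + t\,d\alpha$, and solve $\tfrac{d}{dt}\tn_t = [\tn_t, c_t]$ for a gauge potential $c_t$ of fiber weight $\geq 1$ by the same scheme $c_t = \delta^{-1}(\cdots)$; exponentiating the resulting time-dependent vector field in the pro-nilpotent group $\exp(\g_{\geq 1}) \subset G_M$ produces a section of $G_M$ conjugating $\tn_0$ to $\tn_1$. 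The converse is immediate since gauge equivalence changes the central curvature by an exact form. Local gauge equivalence to the standard connection is the special case $\theta = \tfrac{1}{i\hbar}\omega$ on a Darboux ball, where $H^2$ vanishes.

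For part (3), define the Fedosov flat-section map $\sigma: C^\infty_M[[\hbar]] \to \Omega^0_M(\fbA_M)$ recursively by $\sigma(f) = f + \delta^{-1}\bigl(\nabla^{\mathrm{symp}}\sigma(f) + \tfrac{1}{i\hbar}\ad(r)\sigma(f)\bigr)$, which converges because $\delta^{-1}$ raises weight. The homotopy identity together with centrality of $\tn^2$ forces $\tn\sigma(f) = 0$, and $\sigma_0 \circ \sigma = \id$ shows $\sigma$ is a $\bC[[\hbar]]$-linear isomorphism of sheaves; transporting the Moyal–Weyl product through $\sigma$ yields a bidifferential associative star product with correct classical limit. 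For part (4), given a star product $\ast$, on each Darboux chart choose a formal $\ast$-isomorphism with Moyal–Weyl (which exists by the flatness of the trivial deformation of the Weyl algebra in the pro-nilpotent direction); the transition cocycle takes values in $G_M$ and its Maurer–Cartan form gives a Fedosov connection on $\fbA_M$. Isomorphism of star products corresponds to cohomologous trivialization cocycles, which by part (2) matches gauge equivalence of the associated Fedosov connections, completing the classification. The principal obstacle is the Bianchi-type verification used in parts (1) and (2) that the iteratively constructed $r$ and $c_t$ solve the full equation rather than its $\delta$-image; the other subtle point is the globalization step in (4), handled by the standard Gelfand–Kazhdan formal-geometry framework.
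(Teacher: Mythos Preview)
Your sketch is correct and follows the standard Fedosov iteration argument; the paper itself does not give a proof at all but simply refers the reader to \cite{F}, \cite{NT}, and \cite{BGNT}, which contain exactly the mechanism you describe (the Koszul homotopy $\delta^{-1}$, the weight-filtered fixed-point iteration, and the Bianchi-type check). So there is nothing to compare beyond noting that you have supplied what the paper outsources.
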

This is mostly contained in \cite{F}. The complete proof can be found in \cite{NT}. See also \cite{BGNT}.
%%%%%%%%%%%%%%%%%%%%%%%%%%%%%%%%%%%%%%%%%%%%%%%%%%%%%%%%%%%%%%%%%%%%%%%%%%%%%%%%%%%%%%%%%%%%%%%%%%%%%%%%%%%%%%%%%%%%%%%%%%%%%%%%%%%%%%%%%%%%%%%%%%%%%%%%%%%%%%%%%%%%%%%%%%%%%%
\section{The extended Fedosov construction}\label{s:the ext Fed}
%%%%%%%%%%%%%%%%%%%%%%%%%%%%%%%%%%%%%%%%%%%%%%%%%%%%%%%%%%%%%%%%%%%%%%%%%%%%%%%%%%%%%%%%%%%%%%%%%%%%%%%%%%%%%%%%%%%%%%%%%%%%%%%%%%%%%%%%%%%%%%%%%%%%%%%%%%%%%%%%%%%%%%%%%%%%%%
\subsection{The algebra $\cA$}\label{ss:The alg curvA}  First consider a larger completion of the Weyl algebra. Recall that the assignment
\begin{equation}\label{eq:grading Weyl alg}
|\fx_j|=|\fxi_j|=1;\;|\hbar|=2
\end{equation}
turns $\fbA$ into a complete graded algebra
\begin{equation}\label{eq:decomp of A}
\fbA=\prod_{k=0}^\infty \fbA_k 
\end{equation}
Let $\fbA [\hbar^{-1}]  _k$ be the space of elements of degree $k$ in $\fbA[\hbar^{-1}].$ 

Now define
\begin{equation}\label {eq:double compl A}
\ffbA=\{\sum _{k=-N} ^\infty a_k | a_k\in  \fbA [\hbar^{-1}]  _k \}
\end{equation}
where $N$ runs through all integers. The product is the usual Moyal-Weyl product.

Now let $\Mp(2n)$ be the group defined in \ref{ss:SpN} (in the case $N=4$). This group acts on $\ffbA$ through $\Sp(2n).$ Consider the cross product $\Mp(2n) \ltimes \ffbA$.
\begin{remark}\label{rmk:completions} Here and everywhere by cross products we will mean their completed versions. In other words, elements of the cross product are infinite sums $\sum g_k a_k$ where $g_k\in \Sp^4,$ $a_k\in \bA[\hbar^{-1}],$ and $|a_k|\to \infty.$
\end{remark}
\begin{definition}\label{dfn:curvA}
$$\cA = \{\sum _{k=0}^\infty a_k e^{\frac{1}{i\hbar}c_k} | a_k\in \Mp(2n) \ltimes \ffbA ;\, c_k\in \bR;\, c_k \to \infty\}$$
Let $\cA_\Lambda$ be defined exactly as above,  but with an extra condition $c_k\geq 0.$ We will sometimes write $\cA_{\bK}$ instead of $\cA.$
\end{definition}
Note that we view $\Mp(2n)$ as a {\em discrete} group.
\subsubsection{The Novikov ring}\label{sss:Novikov rg}
Define
\begin{equation}\label{eq:Lambda}
\Lambda = \{\sum _{k=0}^\infty a_k e^{\frac{1}{i\hbar}c_k} | a_k\in \bC;\, c_k\in \bR;c_k\geq 0;\,\, c_k \to \infty\}
\end{equation}
\begin{equation}\label{eq:Lambda 1}
\bK = \{\sum _{k=0}^\infty a_k e^{\frac{1}{i\hbar}c_k} | a_k\in \bC;\, c_k\in \bR;\, c_k \to \infty\}
\end{equation}
Clearly, $\cA$ is an algebra over $\bK.$
%%%%%%%%%%%%%%%%%%%%%%%%%%%%%%%%%%%%%%%%%%%%%%%%%%%%%%%%%%%%%%%%%%%%%%%%%%%%%%%%%%%%%%%%%%%%%%%%%%%%%%%%%%%%%%%%%%%%%%%%%%%%%%%%%%%%%%%%%%%%%%%%%%%%%%%%%%%%%%%%%%%%%%%%%%%%%%
\subsection{The bundle $\cA_M$}\label{ss:The alg curvAM} Since the action of $\Sp(2n)$ extends from $\fbA$ to $\cA$, we get the associated bundle of algebras $\cA_M$ on any symplectic manifold $M.$
%%%%%%%%%%%%%%%%%%%%%%%%%%%%%%%%%%%%%%%%%%%%%%%%%%%%%%%%%%%%%%%%%%%%%%%%%%%%%%%%%%%%%%%%%%%%%%%%%%%%%%%%%%%%%%%%%%%%%%%%%%%%%%%%%%%%%%%%%%%%%%%%%%%%%%%%%%%%%%%%%%%%%%%%%%%%%%
\subsection{The extended Fedosov connection}\label{ss:extfedcon} Note that the action of the Lie algebra $\tgg$ extends to an action of $\ffbA$ and therefore any Fedosov connection $\nabla_{\bA}$ extends canonically to a connection that we denote by $\nabla_{\cA}.$
%%%%%%%%%%%%%%%%%%%%%%%%%%%%%%%%%%%%%%%%%%%%%%%%%%%%%%%%%%%%%%%%%%%%%%%%%%%%%%%%%%%%%%%%%%%%%%%%%%%%%%%%%%%%%%%%%%%%%%%%%%%%%%%%%%%%%%%%%%%%%%%%%%%%%%%%%%%%%%%%%%%%%%%%%%%%%%%%%%%%%%%%%%%%%%%%%%%%%%%%%%%%%%%%%%%%%%%%%%%%%%%%%%%%%%%%%%%%%%%%%%%%%%%%%%%%%%%%%%%%%%%%%%%%%%%%%%%%%%%%%%%%%%%%%%%%%%%%%%%%%%%%%%%%%%%%%%%%%%%%%%%%%%%%%%%%%%%%%%%%%%%%%%%%
\section{Action up to inner automorphisms}\label{s:action up to inner} 
%%%%%%%%%%%%%%%%%%%%%%%%%%%%%%%%%%%%%%%%%%%%%%%%%%%%%%%%%%%%%%%%%%%%%%%%%%%%%%%%%%%%%%%%%%%%%%%%%%%%%%%%%%%%%%%%%%%%%%%%%%%%%%%%%%%%%%%%%%%%%%%%%%%%%%%%%%%%%%%%%%%%%%%%%%%%%%
\subsection{Groups acting up to inner automorphisms}\label{ss:groups to inn}
\begin{definition}\label{dfn:action grp up to inn}
Let $\Gamma$ be a group and $A$ an associative algebra. An action of $\Gamma$ on $A$ up to inner automorphisms is the following data.

1) Automorphisms $T_g:A\isomoto A$ for all $g\in \Gamma.$ 

2) Invertible elements $c(g_1,g_2)$ of $A$ for all $g_1,\,g_2$ in $\Gamma$ such that 
\begin{equation}\label{eq: kozykel c 0}
T_{g_1}T_{g_2}=\Ad(c(g_1,g_2))T_{g_1g_2}
\end{equation}
\begin{equation}\label{eq: kozykel c}
c(g_1,g_2)c(g_1g_2,g_3)=T_{g_1}c(g_2,g_3)c(g_1,g_2g_3)
\end{equation}
\end{definition}
An {\em equivalence} between $(T,c)$ and $(T',c')$ is a collection $\{b(g)\in A^\times |g\in G\}$ such that 
\begin{equation}\label{eq:equiv T c}
T'_g = \Ad (b(g)) T_g; \;c'(g_1,g_2)=b(g_1) T_{g_1}(b_{g_2}) c(g_1,g_2) b(g_1g_2)^{-1}
\end{equation}
It $\{b'(g)\}$ is an equivalence between $(T,c)$ and $(T',c')$ and $\{b''(g)\}$ is an equivalence between $(T',c')$ and $(T'',c''), $ then their {\em composition} is defined by $b(g)=b''(g)b'(g)$ and is an equivalence between $(T,c)$ and $(T'',c'').$
%%%%%%%%%%%%%%%%%%%%%%%%%%%%%%%%%%%%%%%%%%%%%%%%%%%%%%%%%%%%%%%%%%%%%%%%%%%%%%%%%%%%%%%%%%%%%%%%%%%%%%%%%%%%%%%%%%%%%%%%%%%%%%%%%%%%%%%%%%%%%%%%%%%%%%%%%%%%%%%%%%%%%%%%%%%%%%
\subsection{Derivations of square zero up to inner derivations}\label{ss:ders squazer up to inn}
\begin{definition}\label{dfn:dersquaze up inn} Let $A$ be a graded algebra and let $\Gamma$ be a group acting on $A$ up to inner automorphisms. {\em A derivation of $A$ of square zero up to inner derivations  compatible with the action of $\Gamma$} is the following data.

1) A derivation $D$ of $A$ of degree one;

2) an element $R$ of $A$ of degree two;

3) elements $\alpha(g)$ of $A$ of  degree one for every element $g$ of $\Gamma$, such that
$$
D^2=\ad(R);\; DR=0; \;T_g D T_g^{-1}=D+\ad(\alpha(g));\; 
$$
$$
D\alpha(g)+\alpha(g)^2=T_g R-R;
$$
$$
\alpha(g_1)+T_{g_1} \alpha(g_2)-\Ad (c(g_1,g_2))\alpha(g_1g_2)+D c(g_1,g_2)\cdot c(g_1,g_2)^{-1}=0.
$$
\end{definition}
Now assume that we are given two sets of data: $(T,c)$ with a compatible $(D,\alpha, R),$ and $(T',c')$ with a compatible $(D',\alpha', R').$ An equivalence 
$$(T,c),(D,\alpha, R)\isomoto (T',c'),(D',\alpha', R')$$
between them is an equivalence $\{b(g)\}$ between the actions and an element $\beta$ of $\cA$ of degree one such that
\begin{equation}\label{eq:eq ders up to inn}
D'=D+\ad(\beta);
\end{equation}
\begin{equation}\label{eq:eq ders up to inn 1}
 \alpha'(g) = -Db(g)\cdot b(g)^{-1}+ \Ad_{b(g)}(\alpha(g)+T_g\beta);
 \end{equation}
 \begin{equation}\label{eq:eq ders up to inn 11}
 R'=R+D\beta+\beta^2
\end{equation}
For two equivalences
$$(b'(g), \beta'): (T,c), (D,\alpha, R)\isomoto (T',c'),(D',\alpha', R')$$ 
and 
$$(b''(g), \beta''): (T',c'), (D',\alpha', R')\isomoto (T'',c''),(D'',\alpha'', R''), $$ 
their {\em composition} is an equivalence 
$$(b(g), \beta): (T,c),(D,\alpha, R)\isomoto (T'',c''), (D'',\alpha'', R'')$$
 given by 
 \begin{equation}\label{eq:comp for D,R}
 b(g)=b''(g)b'(g);\, \beta=\beta''+\beta'.
 \end{equation}
\begin{remark}\label{rmk:curva jasnaja} A graded algebra with $D$ and $R$ as in 1) and 2) subject to the first two equations in 3) is called {\em a curved differential graded algebra}. In other words, this is an $A_\infty$ algebra with the only nonzero operations being $m_0,\,m_1,\,m_2.$  Furthermore, $(T_g, \alpha(g))$ are {\em curved morphisms}, {\em i.e.} $A_\infty$ morphisms with the only nonzero operations $T_0,\,T_1.$
\end{remark}
\subsubsection{Lie algebras acting up to inner derivations}\label{sss:LAs to inn}
The above is a partial case of the following definition (that is not used in the sequel).
\begin{definition}\label{def:act lie alg up to inn}
Consider an action of a group $\Gamma$ on an algebra $A$ given by the data $T_g, c(g_1,g_2).$ Let $\cL$ be a Lie algebra. An action of $\cL$ on $A$ up to inner derivations compatible with the action of $\Gamma$ is the following data.

1) A linear map $D: \cL\to \Der (A),$ $X\mapsto D_X;$

2) linear maps $\alpha: \cL\to A$ for any $g\in \Gamma,$ $X\mapsto \alpha_X(g).$

3) a bilinear skew symmetric map $R: \cL\times \cL\to A$, satisfying
$$
[D_X, D_Y]=D_{[X,Y]}+\ad R(X,Y);
$$
$$
D_X(R(Y,Z))+D_Y(R(Z,X))+D_Z(R(X,Y))=
$$
$$
=[D_X, D_{[Y,Z]}]+[D_Y, D_{[Z,X]}]+[D_Z, D_{[X,Y]}];
$$
$$
T_g D_X T_g^{-1}=D+\ad(\alpha_X(g));
$$
$$
D_X\alpha_Y(g)-D_Y\alpha_X(g) +[\alpha(X,g), \alpha(Y,g)]-\alpha_{[X,Y]}(g)=T_g R(X,Y)-R(X,Y);
$$
$$
\alpha_X(g_1)+T_{g_1} \alpha_X(g_2)-\Ad (c(g_1,g_2))\alpha_X(g_1g_2)+D_X c(g_1,g_2)\cdot c(g_1,g_2)^{-1}=0.
$$
\end{definition}
More generally, let $A$ be a graded algebra and $\cL$ is a graded Lie algebra. The above definition makes sense with the following changes: $c(g_1,g_2)$ are of degree zero; $R$ and $\alpha$ are homogeneous of degree zero; and signs are present in the formulas. Definition \ref{dfn:dersquaze up inn} describes a partial case when $\cL$ is a one-dimensional graded Lie algebra concentrated in degree one.
%%%%%%%%%%%%%%%%%%%%%%%%%%%%%%%%%%%%%%%%%%%%%%%%%%%%%%%%%%%%%%%%%%%%%%%%%%%%%%%%%%%%%%%%%%%%%%%%%%%%%%%%%%%%%%%%%%%%%%%%%%%%%%%%%%%%%%%%%
\subsection{Modules with compatible structures}\label{ss:compamods}
For an algebra $A$ with an action $(T_g,\, c(g_1, g_2))$ of a group $G$ up to inner automorphisms and for an $A$-module $V,$ a compatible action of $G$ on $V$ is a collection $\{T_g:V\to V|g\in G\}$ of module automorphisms such that $T_{g_1} T_{g_2} =  c(g_1,g_2) T_{g_1g_2}.$ 

Given a graded algebra $A$ and a graded module $V$ as above, consider a derivation $(D_A, \alpha, R)$ of square zero of $A$ up to inner derivations compatible with the action of $G.$ A compatible derivation of $V$ is a derivation $D_V: V^\bullet \to V^{\bullet+1}$ such that 
\begin{equation}\label{eq:comp mod der sq 0}
D_V^2=R;\; D_V(av)=D_A(a)v+(-1) ^{|a|} aD_V(v); \, T_g D_V T_g^{-1}=D_V+\alpha(g)
\end{equation} 
for all homogeneous $a$ in $A$ and $v$ in $V.$ 

A morphism $V\to W$  is by definition an $A$-module morphism commuting $F$ such $D_WF=FD_V.$

Given an equivalence 
$$(\{b(g)\}, \beta): (T,c), (D, \alpha, R)\isomoto (T',c'), (D', \alpha', R')$$ 
and an action and derivation on an $A$-module $V$ compatible with $(T,c)$, then 
\begin{equation}\label{eq:equiv vs compat mo}
T'_g=b(g)T_g;\; D'_V = D_V + \beta
\end{equation}
define on $V$ an action and a derivation compatible with $(T',c').$ This operation is compatible with compositions of equivalences.
%%%%%%%%%%%%%%%%%%%%%%%%%%%%%%%%%%%%%%%%%%%%%%%%%%%%%%%%%%%%%%%%%%%%%%%%%%%%%%%%%%%%%%%%%%%%%%%%%%%%%%%%%%%%%%%%%%%%%%%%%%%%%%%%%%%%%%%%%
\subsection{Quotient groups acting up to inner automorphisms}\label{ss:quots acting up to}
Assume given a surjection of groups $G\to \Gamma$ with kernel $H$. Assume that $A$  is an associative algebra together with a $G$-equivariant morphism of groups $i:H\to A^{\times}.$ Consider an action of $G$ on $A$ by automorphisms, $g\mapsto \bfT_g .$ This is of course a partial case of \ref{ss:groups to inn} with $c(g_1,g_2)=1.$ 

Choose a section of $G\to \Gamma$ sending $g\in \Gamma$ to $\bg\in G.$ Put
\begin{equation}\label{eq:quotactsup} 
T_g=\bfT_{\bg};\; c(g_1,g_2)=i(\bg_1\bg_2({\overline{g_1g_2}})^{-1})
\end{equation}

Furthermore, let $\bfD, {\beta}(g), {\mathbf R}$ be a derivation of square zero up to inner derivations compatible with the action of $G$.
 Assume that 
 $$\beta(h)=-\bfD(ih)(ih)^{-1}$$ 
 for all $h\in H.$
Put
\begin{equation}\label{eq:quotactsup 1} 
D=\bfD;\; \alpha(g)=\beta(\bg);\; R=\bfR
\end{equation}
\begin{lemma}\label{lemma:quotactsup 1}
1) Formulas \eqref{eq:quotactsup 1} define a derivation of square zero up to inner derivations compatible with the action of $\Gamma$ given by \eqref{eq:quotactsup}. Given two different sections $s_1:g\mapsto {\overline g}$ and $s_2:g\mapsto {\widetilde g},$ formulas
$$b(g)=i({\widetilde g} {\overline g}^{-1});\; \beta=0$$
define an equivalence $B(s_2,s_1)$ between corresponding derivations. One has
$$B(s_3,s_2)B(s_2,s_1)=B(s_3,s_1)$$

2) Assume $(V, \bfT_g, \bfD_V)$ is an $A$-module with a compatible action of $G$ and with a compatible derivation. Put $D_V=\bfD_V;$ $T_g=\bfT_{\overline g}.$ Then $(V, T_g, D_V)$ is an $A$-module with a compatible action of $\Gamma$ and a compatible derivation.
\end{lemma}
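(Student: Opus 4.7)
The proof is a direct verification that the data \eqref{eq:quotactsup}, \eqref{eq:quotactsup 1} satisfy the axioms of Definitions \ref{dfn:action grp up to inn} and \ref{dfn:dersquaze up inn}. Set $h(g_1,g_2):=\bar{g_1}\bar{g_2}\,\overline{g_1g_2}^{-1}\in H$, so that $c(g_1,g_2)=i(h(g_1,g_2))$. The argument rests throughout on the (implicit but essential) hypothesis that the restriction of the $G$-action to $H\subset G$ is inner via $i$, i.e.\ $\bfT_h=\Ad(i(h))$ for $h\in H$, together with the parallel hypothesis $\bfT_h v=i(h)v$ on any $G$-module $V$ used in Part 2; without these, the putative cocycle $c$ does not even intertwine $T_{g_1}T_{g_2}$ with $T_{g_1g_2}$. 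Once they are granted, each axiom reduces to a short calculation.

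For Part 1, relation \eqref{eq: kozykel c 0} is
\begin{equation*}
T_{g_1}T_{g_2}=\bfT_{\bar{g_1}\bar{g_2}}=\bfT_{h(g_1,g_2)}\bfT_{\overline{g_1g_2}}=\Ad(c(g_1,g_2))T_{g_1g_2},
\end{equation*}
and \eqref{eq: kozykel c} follows from the identity $h(g_1,g_2)\cdot h(g_1g_2,g_3)=\bar{g_1}\,h(g_2,g_3)\,\bar{g_1}^{-1}\cdot h(g_1,g_2g_3)$ inside $H$ (both sides equal $\bar{g_1}\bar{g_2}\bar{g_3}\,\overline{g_1g_2g_3}^{-1}$) pushed through the $G$-equivariant $i$. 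The first four relations of Definition \ref{dfn:dersquaze up inn} for $(D,\alpha,R)=(\bfD,\beta(\bar g),\bfR)$ are direct transports of the $G$-level identities. The decisive step is the last cocycle: applying the $G$-cocycle (with trivial $c^G$) to the two factorizations of $\bar{g_1}\bar{g_2}$ gives
\begin{equation*}
\beta(\bar{g_1})+\bfT_{\bar{g_1}}\beta(\bar{g_2})=\beta(\bar{g_1}\bar{g_2})=\beta(h(g_1,g_2))+\bfT_{h(g_1,g_2)}\beta(\overline{g_1g_2}),
\end{equation*}
and substituting $\beta(h)=-\bfD(i(h))i(h)^{-1}$ together with $\bfT_h=\Ad(i(h))$ collapses this to precisely the identity required by Definition \ref{dfn:dersquaze up inn}.

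The equivalence $B(s_2,s_1)$ given by $b(g)=i(\tilde g\bar g^{-1})$ and $\beta=0$ is verified by the same mechanism. Equation \eqref{eq:equiv T c} is again an identity inside $H$ pushed through $i$; \eqref{eq:eq ders up to inn} and \eqref{eq:eq ders up to inn 11} are trivial since $\beta=0$; and \eqref{eq:eq ders up to inn 1} is the $G$-cocycle applied to the factorization $\tilde g=(\tilde g\bar g^{-1})\bar g$, combined with $\beta(\tilde g\bar g^{-1})=-\bfD(b(g))b(g)^{-1}$ and $\bfT_{\tilde g\bar g^{-1}}=\Ad(b(g))$. The composition law $B(s_3,s_2)\cdot B(s_2,s_1)=B(s_3,s_1)$ reduces via \eqref{eq:comp for D,R} to $i$ being a group homomorphism. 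Part 2 is entirely parallel: the defining identity $T_{g_1}T_{g_2}=c(g_1,g_2)T_{g_1g_2}$ on $V$ and the compatibility \eqref{eq:comp mod der sq 0} for $(T_g,D_V)=(\bfT_{\bar g},\bfD_V)$ are the module-level copies of the algebra-level verifications above, using $\bfT_hv=i(h)v$ in place of $\bfT_h=\Ad(i(h))$. The only step that deserves genuine care, and thus the sole conceptual obstacle, is making explicit the identification of $\bfT|_H$ with the action of $i(H)$ both on $A$ and on $V$; once stated, no further difficulty arises.
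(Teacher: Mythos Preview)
Your verification is correct and is precisely the ``straightforward'' computation the paper alludes to (the paper offers no details beyond that word). Your explicit identification of the tacit hypothesis $\bfT_h=\Ad(i(h))$ on $A$ (and $\bfT_h v=i(h)v$ on $V$) is apt: the paper does not state it in the lemma, but it is exactly what is assumed in the applications (see \S\ref{ss:action of pi one on curvA}, condition 2), and in Definition \ref{dfn:ind with a tvistec}), and without it the very first relation $T_{g_1}T_{g_2}=\Ad(c(g_1,g_2))T_{g_1g_2}$ fails.
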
 
The proof is straightforward.

There is also an analog of the above Lemma for Lie algebra actions as in \ref{sss:LAs to inn}.
%%%%%%%%%%%%%%%%%%%%%%%%%%%%%%%%%%%%%%%%%%%%%%%%%%%%%%%%%%%%%%%%%%%%%%%%%%%%%%%%%%%%%%%%%%%%%%%%%%%%%%%%%%%%%%%%%%%%%%%%%%%%%%%%%%%%%%%%% 
\subsection{The case of groupoids}\label{ss:case of grpds} Now let $G$ be a groupoid with the set of objects $X.$ Let $A=\{A_x|x\in X\}$ be a family of algebras. {\em An action of $G$ on $A$ up to inner automorphisms} is the data consisting of operators $T_g: A_x \otomosi A_y$ for all $g\in G_{x,y}$ and of invertible elements $c(g_1,g_2)\in A_x$ for all $g_1\in G{x_1,x_2}$ and $g_2\in G_{x_2,x_3}$ such that \eqref{eq: kozykel c} is true. We give the same definition for a family $A$ of graded algebras where we require $c(g_1,g_2)$ to be of degree zero. 

If $A=\{A_x\}$ is a family of graded algebras with an action of $G$ up to inner derivations, {\em a derivation of square zero up to inner derivations compatible with the action} of $A$ is a family of derivations $\{D_x: A_x\to A_x|x\in X\}$ and of elements $\{\alpha(g)\in A_{x_1}|x_1,x_2\in X,\,g\in G_{x_1,x_2}\}$ such that 
$$
D_x^2=\ad(R_x);\; D_xR_x=0; \;T_g D_{x_2} T_g^{-1}=D_{x_1}+\ad(\alpha(g));\; 
$$
$$
D\alpha(g)+\alpha(g)^2=T_g R_{x_2}-R_{x_1};
$$
$$
\alpha(g_1)+T_{g_1} \alpha(g_2)-\Ad (c(g_1,g_2))\alpha(g_1g_2)+D_{x_1} c(g_1,g_2)\cdot c(g_1,g_2)^{-1}=0.
$$
A similar definition can be given for a family of (graded) Lie algebras $\{\cL_x|x\in X\}.$

Now consider a family of subgroups $\{H_x \in G_{x,x}|x\in X\}$, a groupoid $\Gamma$ with the same set of objects $X,$ and an epimorphism of groupoids $G\to \Gamma$ such that $H_x=\Ker(G_{x,x}\to \Gamma_{x,x}).$ Let $\{i_x:H_x\to A_x^{\times}\}$ be a $G$-equivariant family of morphisms of groups. Choose a section $g\mapsto \bg$ of $G\to \Gamma.$

\begin{lemma}\label{lemma:quotactupto grpoids}
1) Given an action $\{\bfT_g\}$ of $G$ on $A$ with $c(g_1,g_2)=1,$ formulas \eqref {eq:quotactsup}  define an action of $\Gamma$ on $A$ up to inner automorphisms. 

2) Given a derivation of square zero $(\bfD, \bfR, \beta)$ up to inner derivations compatible with the action of G, assume that $\beta(h)=-\bfD i(h)\cdot i(h)^{-1}$ for all $x$ and all $h\in H_x$ Then formulas \eqref{eq:quotactsup 1} define a derivation of square zero up to inner derivations compatible with the action of $\Gamma.$

3) For two different choices of sections $s_1,s_2,$ same formulas as in Lemma \ref{lemma:quotactsup 1}, 1), define an equivalence $B(s_2,s_1)$ between to derivations corresponding to two sections $s_1,s_2)$. One has 
$$B(s_3,s_2)B(s_2,s_1)=B(s_3,s_1).$$
\end{lemma}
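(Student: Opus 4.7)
The plan is to reduce everything to a direct verification that the given formulas satisfy the axioms of Definitions \ref{dfn:action grp up to inn} and \ref{dfn:dersquaze up inn}, adapted to the groupoid setting. The groupoid variant introduces essentially no new content beyond the group case of Lemma \ref{lemma:quotactsup 1}: every computation takes place in a single algebra $A_x$ once one tracks the source and target objects carefully. I would therefore organize the proof as three separate verifications, one per item, and reuse the bookkeeping from Lemma \ref{lemma:quotactsup 1} throughout.

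For part 1, given $g_1\in \Gamma_{x_1,x_2}$ and $g_2\in \Gamma_{x_2,x_3}$, the elements $\bg_1\bg_2$ and $\overline{g_1g_2}$ both lie in $G_{x_1,x_3}$ and project to the same element of $\Gamma_{x_1,x_3}$, so their product-inverse lies in the isotropy group $H_{x_1}$ and $c(g_1,g_2):=i_{x_1}(\bg_1\bg_2\,\overline{g_1g_2}{}^{-1})$ is a well-defined element of $A_{x_1}^\times$. The identity $T_{g_1}T_{g_2}=\Ad(c(g_1,g_2))T_{g_1g_2}$ then follows from the $G$-equivariance of $i$, which converts conjugation in $G$ by $\bg_1\bg_2$ into $\Ad$ of $i_{x_1}$ applied to the correction element; the cocycle identity \eqref{eq: kozykel c} reduces, after cancelling, to the associativity of multiplication in $G$ together with $G$-equivariance of $i$, exactly as in the group case.

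For part 2, I would verify the five equations of Definition \ref{dfn:dersquaze up inn} (adapted to the groupoid base $X$) one at a time. The first three, namely $D_x^2=\ad R_x$, $D_xR_x=0$, and $T_gD_{x_2}T_g^{-1}=D_{x_1}+\ad\alpha(g)$, follow immediately by setting $g=\bg$ in the corresponding identities for $\bfD$. The equation $D\alpha(g)+\alpha(g)^2=T_gR_{x_2}-R_{x_1}$ likewise follows by specialization. The main obstacle is the last identity, the $\alpha$-cocycle condition, since on the right-hand side the naively-defined $\alpha$-values come from $\overline{g_1}$, $\overline{g_2}$, and $\overline{g_1g_2}$ rather than from the product $\bg_1\bg_2$. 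Here I would use the hypothesis $\beta(h)=-\bfD i(h)\cdot i(h)^{-1}$ for $h\in H_{x_1}$: writing $\bg_1\bg_2=h\cdot\overline{g_1g_2}$ with $h\in H_{x_1}$ and applying the cocycle identity for $\beta$ at the pair $(h,\overline{g_1g_2})$ in $G$, the discrepancy collapses to $-\bfD(ih)\cdot(ih)^{-1}+\Ad(ih)\,\text{(rest)}$, which combines with $Dc(g_1,g_2)\cdot c(g_1,g_2)^{-1}$ to give exactly the required relation. This is the step that genuinely uses the hypothesis on $\beta|_H$.

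For part 3, given two sections $s_1,s_2$, both $b(g)=i_x({\widetilde g}{\overline g}^{-1})$ and $\beta=0$ are well-defined because ${\widetilde g}\,{\overline g}^{-1}\in H_x$. I would then check \eqref{eq:equiv T c} and \eqref{eq:eq ders up to inn}--\eqref{eq:eq ders up to inn 11} directly: the first reduces to the $G$-equivariance of $i$, the second again to the hypothesis $\beta(h)=-\bfD i(h)\cdot i(h)^{-1}$, and the third is trivial since $\beta=0$. The composition formula $B(s_3,s_2)B(s_2,s_1)=B(s_3,s_1)$ is then the observation that $i_x({\widetilde{\widetilde g}}\,{\widetilde g}{}^{-1})\cdot i_x({\widetilde g}\,{\overline g}{}^{-1})=i_x({\widetilde{\widetilde g}}\,{\overline g}{}^{-1})$, together with $\beta=0$ under composition \eqref{eq:comp for D,R}. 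No step requires anything beyond multiplicativity of $i_x$ on its image, so once the cocycle identity in part 2 is settled the remaining verifications are purely mechanical.
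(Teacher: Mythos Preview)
Your proposal is correct and takes essentially the same approach as the paper: the paper gives no explicit proof for this lemma, and for the group version (Lemma \ref{lemma:quotactsup 1}) simply states ``The proof is straightforward.'' Your write-up is an accurate elaboration of that straightforward verification, with the correct identification of the $\alpha$-cocycle condition in part 2 as the one place where the hypothesis $\beta(h)=-\bfD i(h)\cdot i(h)^{-1}$ is genuinely used.
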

\subsection{Modules with a compatible structure}\label{ss:compmods groupoids} For $A$ and $G$ as in \ref{ss:case of grpds}, an $A$-module $V$ with a compatible action of $G$ is a collection $\{V_x|x\in X\}$ of $A_x$ modules together with isomorphisms $\{T_g: V_x\otomosi V_y|x,y\in X;\, g\in G_{x,y}\}$ satisfying
$$T_g(av)=T_g(a)T_g(v);\; T_{g_1}T_{g_2}=c(g_1,g_2)T_{g_1g_2}$$
If $A$ and $V$ are graded and $(D_A,\alpha(g),R)$ is a compatible derivation of square zero up to inner derivations, a compatible derivation of $V$ is a linear map $D_V: V^\bullet \to V^{\bullet+1}$ such that 
$$D_V^2=R;\; D_V(av)=D_A(a)v+(-1)^{|a|} aD_V(v); T_g D_V T_g^{-1}=D_V+\alpha(g)$$
for all homogeneous $a\in A_x,\,v\in V_x.$ 

There are analogs of Lemma \ref{lemma:quotactsup 1} that we leave to the reader. 
%%%%%%%%%%%%%%%%%%%%%%%%%%%%%%%%%%%%%%%%%%%%%%%%%%%%%%%%%%%%%%%%%%%%%%%%%%%%%%%%%%%%%%%%%%%%%%%%%%%%%%%%%%%%%%%%%%%%%%%%%%%%%%%%%%%%%%%%%%%%%%%%%%%%%%%%%%%%%%%%%%%%%%%%%%%%%%
\subsection{The case of Lie groupoids}\label{sss:Case of Lie grpds} 
%%%%%%%%%%%%%%%%%%%%%%%%%%%%%%%%%%%%%%%%%%%%%%%%%%%%%%%%%%%%%%%%%%%%%%%%%%%%%%%%%%%%%%%
\subsubsection{Lie groupoids: notation and conventions}\label{Lie groupoids: not} Recall that a groupoid with a set of morphisms $\cG$ and the set of objects $M$ is a {\em Lie groupoid} \cite{Mac} if $\cG$ and $M$ are manifolds and the source and target maps $s,t: \cG\to M$ are smooth surjective submersions, and the composition, inverse, and the map $M\to \cG,$ $x\mapsto \id_x,$ are smooth.

For two points $x_0$ and $x_1$ of $M,$ $\cG_{x_0,x_1}=\{g\in \cG| t(g)=x_0, \, s(g)=x_1\}.$ This way, the composition is a map $\cG_{x_0,x_1}\times \cG_{x_1,x_2}\to \cG_{x_0,x_2}.$ 
If 
$$\cG\times_M \cG=\{(g,g')\in \cG\times \cG|s(g)=t(g')\},$$
then the multiplication can be described as a map
$$m: \cG\times_M \cG \to \cG.$$
%There are two more maps 
%$$p_1,p_2: \cG\times_M \cG \to \cG;\;p_1(g,g')=g;\; p_2(g,g')=g'.$$

We denote by $\ucG$ the sheaf of (pro)manifolds on $M\times M$ defined by $\ucG(W)=(s,t)^{-1} (W),$ $W\subset M\times M.$ 

More generally, we have the map 
$${\proj}_n: \cG\times_M\ldots \times_M \cG\to M\times \ldots \times M$$
where the product is $n$-fold on the left and $(n+1)$-fold on the right. In particular, ${\proj}_1=(s,t).$
Put
\begin{equation}\label{eq:under G n} 
\ucG^{(n)}(W) =\proj_n^{-1}(W)
\end{equation}
This is a sheaf of pro-manifolds on $M^{n+1}.$

By $\cO_M$ we denote a sheaf of (graded) algebras on $M$ that could be $C^\infty_M,$ $\Omega^\bullet _M,$ or the sheaf of $\Lambda$-valued forms or functions that we will consider later. All that we need is that $\cO_M$ be defined for every manifold $M$ (of given type) and that for every morphism $f:M\to N$ the inverse image $f^*\cO_N$ be defined, together with the morphisms $f^{-1}\cO_M\to f^*\cO_M$ and $f^*\cO_N\to\cO_M$ subject to the usual identities.

By $p_j:M^{n+1}\to M$ we denote the projection onto the $j$th factor. Let $\cA$ be a sheaf of $\cO_M$-algebras. 
\begin{definition} \label{dfn:action up to Lie groupoid case}
An action of $\cG$ on $\cA$ up to inner derivations is a morphism of sheaves on $M\times M$
$$\ucG \times p_2^*\cA\to p_1^*\cA;\; (g,a)\mapsto T_g a$$
and a morphism of sheaves on $M\times M\times M$
$$c: \ucG^{(2)} \to p_1 ^*\cA$$
subject to 
$$T_{g_1}T_{g_2}(a)=\Ad c(g_1,g_2) T_{g_1g_2}(a)$$
in $p_1^*\cA$, 
for any local section $a$ of $p_3^*\cA$ and any two local sections $g_2$ of $p_{23}^*\ucG$ and $g_1$ of $p_{12}^*\ucG.$
\end{definition}
\begin{remark}\label{rmk:composing local sections} Given two local sections $g_1,g_2$ as above, by their composition we mean the following. If $g_1=g_1(x_1,x_2,x_3)\in \cG_{x_1,x_2}$ and $g_2=g_2(x_1,x_2,x_3)\in \cG_{x_2,x_3},$ then $(g_1g_2)(x_1,x_2,x_3)=g_1(x_1,x_2,x_3)g_2(x_1,x_2,x_3)$ in $G_{x_1,x_3}.$ Similarly for $c(g_1,g_2).$
\end{remark}
\subsubsection{Flat connections up to inner derivations}\label{sss:Conns up to inn}
Here we assume that $\cO_M,$ or $\cO_M^\bullet,$ is a differential graded algebra with a differential $d$. {\em A connection} on a sheaf of graded $\cO^\bullet_M$-modules $\cE$ is a morphism of sheaves $\nabla:\cE\to \cE$ of degree one such that $\nabla(ae)=da\cdot e+(-1)^{|a|}a\nabla e.$

We also assume that for every $f:M\to N$ and every sheaf of graded $\cO^\bullet_N$-modules $\cE$, a natural connection $f^*\nabla$ on $f^*\cE$ is defined, subject to the usual properties. For us $\cO_M^\bullet $ will be the sheaf of $\Lambda$-valued forms, and $f^*\nabla$ will be a straightforward analog of the standard inverse image of a connection that we will define in \ref{ss:Inverse Images}.

\begin{definition}\label{dfn:action up to Lie groupoid case 1} Let $\cA^\bullet$ is a sheaf of graded $\cO_M^\bullet$-algebras with an action of $\cG$ up to inner automorphisms. A flat connection up to inner derivations compatible with the action of $\cG$ is the following data. 

1) A connection $\nabla:\cA^\bullet\to \cA^{\bullet +1}$ which is a derivation.

2) A section $R$ of $\cA^2$.
 
3) A morphism of sheaves $\alpha: \ucG  \to p_1^*\cA^\bullet$ of degree one, such that:

$$
\nabla^2=\ad(R);\; \nabla R=0; \;T_g (p_2^*\nabla) T_g^{-1}=p_1^*\nabla+\ad(\alpha(g));\; 
$$
$$
(p_1^* \nabla)\alpha(g)+\alpha(g)^2=T_g (p_2^*R)-p_1^*R;
$$
$$
 \alpha(g_1)+T_{g_1} \alpha(g_2)-\Ad (c(g_1,g_2))\alpha(g_1g_2)+(p_1^* \nabla) c(g_1,g_2)\cdot c(g_1,g_2)^{-1}=0.
$$
We will often write $\alpha(g)=-\nabla g\cdot g^{-1}.$
\end{definition}
%%%%%%%%%%%%%%%%%%%%%%%%%%%%%%%%%%%%%%%%%%%%%%%%%%%%%%%%%%%%%%%%%%%%%%%%%%%%%%%%%%%%%%%
%%%%%
%%%%%%%%%%%%%%%%%%%%%%%%%%%%%%%%%%%%%%%%%%%%%%%%%%%%%%%%%%%%%%%%%%%%%%%%%%%%%%%%%%%%%%%
\subsection{Modules with a compatible structure: the Lie groupoid case}\label{ss:MCSLGrpd} 
In the situation of Definition \ref{dfn:action up to Lie groupoid case 1}, let $(\cV^\bullet, \nabla_\cV)$ be a differential graded $\cA^\bullet$-module together with a morphism of sheaves 
$M\times M$
$$\ucG \times p_2^*\cV\to p_1^*\cV;\; (g,v)\mapsto T_g v$$
subject to:
$$T_{g_1}T_{g_2}(v)=c(g_1,g_2) T_{g_1g_2}(v)$$
in $p_1^*\cV^\bullet$, 
for any local section $v$ of $p_3^*\cV$ and any two local sections $g_2$ of $p_{23}^*\ucG$ and $g_1$ of $p_{12}^*\ucG;$
$$T_g(av)=T_g(a)T_g(v)$$
in $p_1^*\cV,$ for any local sections $a$ of $p_2^*\cA^\bullet$ and $v$ of $p_2^*\cV^\bullet;$
$$\nabla_\cV^2=R; \nabla_\cV(av)=\nabla_\cA (a)v+(-1)^{|a|} a\nabla_\cV (v)$$
for any homogeneous local sections $a$ of $\cA^\bullet$ and $v$ of $\cV^\bullet;$
$$T_g (p_2^*D_V) T_g^{-1}=\pi_1^* D_V+\alpha(g)$$
\subsubsection{The action of the quotient in the Lie groupoid case}\label{sss:The action of the quotient in the Lie groupoid case} 

Now consider two Lie groupoids $\cG$ and $\Gamma$ with the same manifold of objects $M$ and an epimorphism of groupoids $\cG\to \Gamma$ (over $M.$) Define $\cH_x=\Ker(\cG_{x,x}\to \Gamma_{x,x})$ and $\cH=\cucup _{x\in M} \cH_x.$  Consider the morphism $: \cH\to M.$ Define the sheaf of groups $\ucH(U)=s^{-1}(U)$ for $U\subset M.$ Let $i:\ucH \to \cA^{\times}$ be a $\cG$-equivariant morphism of sheaves of groups. Choose a section $g\mapsto \bg$ of $\cG\to \Gamma.$

\begin{lemma}\label{lemma:quotactupto grpoids Lie}
1) Given an action $\{\bfT_g\}$ of $\cG$ on $\cA$ with $c(g_1,g_2)=1,$ formulas \eqref {eq:quotactsup}  define an action of $\Gamma$ on $\cA$ up to inner automorphisms. 

2) Given a flat connection $(\bfD, \bfR, \beta)$ up to inner derivations compatible with the action of $\cG$ , assume that $\beta(h)=-\bfD i(h)\cdot i(h)^{-1}$ for all local sections of $\cH.$ Then formulas
$$
\nabla=\bfD;\; \alpha(g)=\beta(\bg);\; R=\bfR
$$
define a flat connection up to inner derivations compatible with the action of $\Gamma.$

3) For two different choices of sections $s_1,s_2,$ same formulas as in Lemma \ref{lemma:quotactsup 1}, 1), define an equivalence $B(s_2,s_1)$ between to derivations corresponding to two sections $s_1,s_2)$. One has 
$$B(s_3,s_2)B(s_2,s_1)=B(s_3,s_1).$$

4) Let $\cV$ be a graded $\cA$-module with a compatible action $\bfT$ of $\cG$ and a compatible connection $\bfD_\cV.$  Then formulas
$$T_g = \bfT_{\overline g};\; \nabla_\cV=\bfD_\cV$$
define a compatible action of $\Gamma$ and a compatible connection on $\cV.$
\end{lemma}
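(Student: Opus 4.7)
The plan is to verify each of the four claims by direct unwinding, using only the postulated identities for $(\bfT, \bfD, \beta, \bfR)$, the hypothesis $\beta(h) = -\bfD\, i(h) \cdot i(h)^{-1}$ for local sections $h$ of $\ucH$, and the fact that the restriction of $\bfT$ to $\ucH$ is inner, implemented by $\Ad \circ i$ (implicit in the $\cG$-equivariance of $i$ together with $c \equiv 1$ on $\cG$). The argument will be the Lie-groupoid transcription of Lemmas \ref{lemma:quotactsup 1} and \ref{lemma:quotactupto grpoids}; the only new content is that all formulas must be read as morphisms of sheaves on the appropriate fibre products $\ucG^{(n)}$.

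For claim (1), I would set $h_{12} = \bar g_1 \bar g_2 (\overline{g_1 g_2})^{-1}$, a local section of $\ucH$ over $\ucG^{(2)}$, and compute $T_{g_1} T_{g_2} = \bfT_{\bar g_1} \bfT_{\bar g_2} = \bfT_{\bar g_1 \bar g_2} = \bfT_{h_{12}} \bfT_{\overline{g_1 g_2}} = \Ad(i(h_{12}))\, T_{g_1 g_2}$. The cocycle \eqref{eq: kozykel c} for $c(g_1, g_2) = i(h_{12})$ then reduces, using that $i$ is a homomorphism and the equivariance identity $\bfT_{\bar g}\, i(h) = i(\bar g h \bar g^{-1})$, to the associativity $(\bar g_1 \bar g_2)\bar g_3 = \bar g_1(\bar g_2 \bar g_3)$ in $\cG$.

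For claim (2), the identities $\nabla^2 = \ad R$, $\nabla R = 0$, $T_g \nabla T_g^{-1} = \nabla + \ad \alpha(g)$, and $\nabla\alpha(g) + \alpha(g)^2 = T_g R - R$ will follow at once from the analogous identities satisfied by $(\bfD, \bfR, \beta(\bar g))$. The main step will be the mixed cocycle. Using $\beta|_{\ucH} = -(\bfD\, i)\, i^{-1}$ one gets $\nabla c(g_1, g_2) \cdot c(g_1, g_2)^{-1} = -\beta(h_{12})$, so the remaining identity to establish is
\begin{equation*}
\beta(\bar g_1) + \bfT_{\bar g_1} \beta(\bar g_2) \;=\; \Ad(i(h_{12}))\, \beta(\overline{g_1 g_2}) + \beta(h_{12}).
\end{equation*}
This will be obtained by subtracting the cocycle for $\beta$ over $\cG$ (where the $\cG$-level $c$ is trivial) applied to $(\bar g_1, \bar g_2)$, namely $\beta(\bar g_1) + \bfT_{\bar g_1}\beta(\bar g_2) = \beta(\bar g_1 \bar g_2)$, from the instance applied to $(h_{12}, \overline{g_1 g_2})$, namely $\beta(h_{12}) + \Ad(i(h_{12}))\beta(\overline{g_1 g_2}) = \beta(\bar g_1 \bar g_2)$.

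For claim (3), I would take $b(g) = i(\tilde g \bar g^{-1})$ with $\beta = 0$ and verify \eqref{eq:equiv T c} and \eqref{eq:eq ders up to inn}--\eqref{eq:eq ders up to inn 11} by the same bookkeeping; the composition law $B(s_3, s_2)B(s_2, s_1) = B(s_3, s_1)$ falls out of the homomorphism property of $i$. Part (4) is then a direct transcription: with $T_g = \bfT_{\bar g}$ and $\nabla_\cV = \bfD_\cV$, the module compatibility conditions in \ref{ss:MCSLGrpd} pull back from $\cG$ to $\Gamma$ along the section exactly as the corresponding conditions for $\cA$ do. The only mild obstacle will be the sheaf-theoretic bookkeeping of local sections over $\ucG^{(n)}$ and of their compositions, which does not require any new idea beyond what was already handled in the discrete cases \ref{ss:quots acting up to} and \ref{ss:case of grpds}.
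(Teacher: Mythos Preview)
Your proposal is correct and is precisely the direct verification that the paper has in mind: the paper gives no separate proof of this lemma, treating it as the Lie-groupoid transcription of Lemmas \ref{lemma:quotactsup 1} and \ref{lemma:quotactupto grpoids} (themselves left as ``straightforward''). One small point worth making explicit: your claim that $\bfT_h = \Ad(i(h))$ for $h\in\ucH$ is indeed an implicit hypothesis of the setup (needed already for \eqref{eq:quotactsup} to yield \eqref{eq: kozykel c 0}), not a consequence of $\cG$-equivariance of $i$ alone; but with that understood, your unwinding of the mixed cocycle in part (2) is exactly right.
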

\begin{remark}\label{rmk: discontinuity}
Note that the morphisms of sheaves $c: {\underline{\Gamma}}^{(2)}\to p_1^*\cA$ and $\alpha: {\underline \Gamma}\to p_1^*\cA$ are discontinuous. For us $\Gamma$ will be an \'{e}tale groupoid, more precisely the fundamental groupoid of $M.$ We can only make a choice of a continuous $c$ and $\alpha$ on any small coordinate chart, but that will be enough for our purposes. More precisely, this will define to a {\em twisted} $A_\infty$ action as it is explained in \ref{appendix:infty and twisted}.
\end{remark}

%%%%%%%%%%%%%%%%%%%%%%%%%%%%%%%%%%%%%
%%%%%%%%%%%%%%%%%%%%%%%%%%%%%%%%%%%%%%%%%%%%%%%%%%%%%%%%%%%%%%%%%%%%%%%%%%%%%%%%%%%%%%%%%%%%%%%%%%%%%%%%%%%%%%%%%%%%%%%%%%%%%%%%%%%%%%%%%%%%%%%%%%%%%%%%%%%%%%%%%%%%%%%%%%%%%%%%%%%%%%%%%%%%%%%%%%%%%%%%%%%%%%%%%%%%%%%%%%%%%%%%%%%%%%%%%%%%%%%%%%%%%%%%%%%%%%%%%%%%%%%%%%%%%%%%%%%%%%%%%%%%%%%%%%%%%%%%%%%%%%%%%%%%%%%%%%%%%%%%%%%%%%%%%%%%%%%%%%%%%%%%%%%%
\section{From actions up to inner automorphisms to $A_\infty$ actions}\label{s:From up to in to A infty} It is a well-known fact that inner isomorphisms act on the $\Ext$ functors trivially. Therefore, if a group acts on an algebra up to inner automorphisms, given compatible actions on two $A$-modules $V$ and $W,$ the group acts on the cohomology ${\Ext}_A^\bullet (V,W).$ In this section we prove a more precise version of this fact, namely we construct an $A_\infty$ action of the group on the standard bar complex.
%%%%%%%%%%%%%%%%%%%%%%%%%%%%%%%%%%%%%%%%%%%%%%%%%%%%%%%%%%%%%%%%%%%%%%%%%%%%%%%%%%%%%%%
\subsection{$A_\infty$ actions}\label{ss:A infty acciones}
An $A_\infty$ action of a group $G$ on a complex $C^\bullet$ is a collection $\{T(g_1,\ldots,g_n)\in \Hom^{1-n}(C^\bullet,C^\bullet)| g_1,\ldots, g_n\in G, n>0\}$
satisfying
\begin{equation}\label{eq:A infty action}
[d,T(g_1,\ldots,g_n)]+\sum_{j=1}^{n-1} (-1)^jT(g_1,\ldots,g_j)T(g_{j+1},\ldots,g_n)-
\end{equation}
$$
\sum_{j=1}^{n-1} (-1)^j T(g_1,\ldots, g_jg_{j+1},\ldots,g_n)=0
$$
We sometimes write $T_g$ instead of $T(g).$ The operators $T(g)$ induce an action of $G$ on the cohomology of $C^\bullet$.

An $A_\infty$ morphism between two $A_\infty$ actions $T$ and $T'$ is a collection $\{\phi(g_1,\ldots,g_n)\in \Hom^{-n}(C^\bullet,C^\bullet)| g_1,\ldots, g_n\in G, n\geq 0\}$
satisfying
\begin{equation}\label{eq:A infty action mor}
[d,\phi(g_1,\ldots,g_n)]+\sum_{j=1}^{n-1} (-1)^jT'(g_1,\ldots,g_j)\phi(g_{j+1},\ldots,g_n)-
\end{equation}
$$-\sum_{j=1}^{n-1} (-1)^j\phi(g_1,\ldots,g_j)T(g_{j+1},\ldots,g_n)
-\sum_{j=1}^{n-1} (-1)^j \phi(g_1,\ldots, g_jg_{j+1},\ldots,g_n)=0
$$
%%%%%%%%%%%%%%%%%%%%%%%%%%%%%%%%%%%%%%%%%%%%%%%%%%%%%%%%%%%%%%%%%%%%%%%%%%%%%%%%%%%%%%%
\subsection{The $\Ext$ functors}\label{ss:Exty} Let $A$ be an associative algebra and $V,\,W$ two $A$-modules. By $C^\bullet (V,A,W),$ or simply $C^\bullet (V,W),$ we denote the standard complex computing ${\Ext^\bullet }_A(V,W)$. Namely,
$$C^m(V,W)=\prod_{p+n=m} \Hom(A^{\otimes n},\Hom^p (V,W));$$
the differential $\delta$ is defined by
$$(\delta\varphi )(a_1,\ldots, a_{n+1})=(-1)^{|\varphi||a_1|}a_1\varphi(a_2,\ldots,a_{n+1})+$$
$$\sum_{j=1}^n (-1)^{\sum_{i=1}^j (|a_i|+1)} \varphi(a_1,\ldots,a_ja_{j+1}, \ldots,a_{n+1})+$$
$$ (-1)^{\sum_{i=1}^{n+1}(|a_i|+1)} \varphi(a_1,\ldots,a_n)a_{n+1}$$ 
\begin{lemma}\label{dfn:higher on Ext}

1) Let $T$ be an automorphism of $A$ together with compatible automorphisms of $V$ and $W$ ({\em i.e.} invertible operators $T$ such that $T(av)=T(a)T(v)$).
Put 
$$ (T{\varphi})(a_1,\ldots,a_n)=T{\varphi}(T^{-1}a_1,\ldots,T^{-1} a_n)T^{-1}$$

Then $\varphi\mapsto T\varphi$ is an automorphism of $C^\bullet(V,W).$

2) For an invertible element $c$ of $A$ of degree zero define
$$(\phi (c)\varphi)(a_1,\ldots,a_n)=$$
$$=-\sum _{j=0}^n (-1)^{\sum_{i=1}^j (|a_i|+1)} \varphi (a_1,\ldots,a_j, c, c^{-1}a_{j+1}c, \ldots, c^{-1}a_nc)c^{-1}$$
One has
$$[\delta, \phi(c)]={\Ad(c)}-\id$$

3) More generally, for $m$ invertible elements $c_1,\ldots,c_m$ of degree zero of $A$, define
$$(\phi (c_1,\ldots,c_m)\varphi)(a_1,\ldots,a_n)=$$
$$=-\sum _{0\leq j_1\leq\ldots j_m\leq n} (-1)^{\sum_{k=1}^m\sum_{i=1}^{j_k}(|a_i|+1)} \varphi (a_1,\ldots,a_{j_1}, c_1, c_1^{-1}a_{j_1+1}c_1, \ldots, c_1^{-1}a_{j_2}c_1, $$
$$c_2, (c_1c_2)^{-1}a_{j_2+1}(c_1c_2), \ldots, (c_1c_2)^{-1}a_{j_3}(c_1c_2), \ldots,$$
$$c_m, (c_1\ldots c_m)^{-1}a_{j_m+1}(c_1\ldots c_m), \ldots, (c_1\ldots c_m)^{-1}a_{n}(c_1\ldots c_m))(c_1\ldots c_m)^{-1}$$
One has
$$[d,\phi(c_1,\ldots,c_m)]+\Ad_{c_1}\phi(c_2,\ldots,c_m)+$$
$$+\sum_{j=1}^{m-1}(-1)^j \phi(c_1,\ldots, c_jc_{j+1},\ldots, c_m)+(-1)^{m} \phi(c_1,\ldots,c_{m-1})=0$$
\end{lemma}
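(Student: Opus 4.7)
The plan is to verify parts (1)--(3) by direct computation on the bar complex, exploiting the telescoping structure of the differential $\delta$.

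For part (1), the assignment $\varphi\mapsto T\varphi$ is plainly $\bC$-linear and invertible. To see that it commutes with $\delta$, I would write out $(\delta(T\varphi))(a_1,\ldots,a_{n+1})$, move each occurrence of $T^{-1}$ past the relevant $a_i$ using the identity $T^{-1}(bc)=T^{-1}(b)T^{-1}(c)$, and use compatibility of $T$ with both module structures to conclude $\delta(T\varphi)=T(\delta\varphi)$. No sign issues arise since $T$ preserves degrees.

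For part (2), the calculation is the standard one showing that the ``bar pre-homotopy insertion'' $\phi(c)$ trivializes the inner automorphism $\Ad(c)$. Writing out $\delta(\phi(c)\varphi)(a_1,\ldots,a_{n+1})$ and $\phi(c)(\delta\varphi)(a_1,\ldots,a_{n+1})$ as signed sums indexed by the insertion position $j$ of $c$, one finds that almost every term in the first sum cancels against a term in the second: whenever the new multiplication introduced by $\delta$ combines an $a_i$ with one of its neighbors that does \emph{not} equal $c$, the corresponding term appears with opposite sign in $\phi(c)\delta\varphi$. The only uncancelled contributions come from multiplications involving $c$ itself or from the extreme positions $j=0$ and $j=n+1$. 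The extreme-left insertion gives $c\,\varphi(c^{-1}a_1c,\ldots,c^{-1}a_{n+1}c)c^{-1}=\Ad(c)(\varphi)(a_1,\ldots,a_{n+1})$, and the extreme-right insertion gives $-\varphi(a_1,\ldots,a_{n+1})$. The contributions where $\delta$ multiplies $c\cdot c^{-1}a_{j+1}c$ or $c^{-1}a_j c\cdot c$ vanish identically after expanding $cc^{-1}=1$. This proves $[\delta,\phi(c)]=\Ad(c)-\id$.

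For part (3), I would run the same telescoping argument on the longer insertion $\phi(c_1,\ldots,c_m)$, now summing over ordered positions $0\leq j_1\leq\cdots\leq j_m\leq n$. Writing $[\delta,\phi(c_1,\ldots,c_m)]\varphi$ as a signed sum indexed by where $\delta$ performs its multiplication, the terms fall into three classes. The ``generic'' terms, where $\delta$ multiplies two entries neither of which is a $c_k$, cancel in pairs between $\delta\phi\varphi$ and $\phi\delta\varphi$, just as in part (2). The terms where $\delta$ multiplies $c_k$ with $c_{k+1}$ (i.e.\ when $j_k=j_{k+1}$) collapse the two insertions into a single insertion of $c_kc_{k+1}$ after relabeling $j_i$, yielding exactly $(-1)^j\phi(c_1,\ldots,c_jc_{j+1},\ldots,c_m)$. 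Finally, the extreme-left insertion $j_1=0$ reproduces $\Ad(c_1)\phi(c_2,\ldots,c_m)$ after conjugating the inner $c_1$ through everything (using the same inductive shift of conjugations that defines $\phi$), and the extreme-right insertion $j_m=n$ produces $(-1)^m\phi(c_1,\ldots,c_{m-1})$. All remaining boundary terms involving $c_k\cdot c_k^{-1}$ collapse. Adding up with the prescribed signs gives the asserted identity.

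The main obstacle is the bookkeeping in part (3): tracking the nested conjugations $(c_1\cdots c_k)^{-1}(\cdot)(c_1\cdots c_k)$ and verifying that the signs $(-1)^{\sum_k\sum_i(|a_i|+1)}$ line up correctly across the four types of boundary contributions. I would organize the computation by fixing, for each term in $[\delta,\phi(c_1,\ldots,c_m)]\varphi$, the pair of consecutive arguments multiplied by $\delta$, and then reading off which of the four classes above it belongs to; a careful sign accounting then produces the stated relation. Parts (1) and (2) serve as the base and warm-up for this scheme.
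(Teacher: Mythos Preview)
The paper states this lemma without proof, treating all three parts as direct verifications on the bar complex; your telescoping argument is exactly the intended approach and is correct in outline.

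One small imprecision in your part (2): the terms where $\delta$ multiplies $c$ against a neighbor do not ``vanish identically''. Rather, the term from the $j$-th insertion where $\delta$ contracts $c\cdot(c^{-1}a_{j+1}c)=a_{j+1}c$ cancels against the term from the $(j{+}1)$-th insertion where $\delta$ contracts $a_{j+1}\cdot c$; both produce $\varphi(a_1,\ldots,a_j,a_{j+1}c,c^{-1}a_{j+2}c,\ldots)c^{-1}$ with opposite signs. (Also, the left neighbor of $c$ is $a_j$, not $c^{-1}a_jc$, since only the entries to the right of the inserted $c$ are conjugated.) The same pairwise cancellation mechanism handles the analogous boundary terms in part (3). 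With that correction your bookkeeping scheme goes through.
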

In other words: the group of automorphisms of $(A,V,W)$ acts on $C^\bullet (V,A,W);$ the subgroup of inner automorphisms acts homotopically trivially, in the sense that there is an $A_\infty$ morphism, starting with the identity, between this action and the trivial action. Note that, as in 1) above, we denote by $\Ad_{c}$ both the inner automorphism of $A$ and the induced automorphism of $C^\bullet (V,A,W).$
\begin{lemma}\label{lemma:tasovki}
$$
\phi(c_1,\ldots,c_m)\phi(d_1,\ldots,d_n)=\sum \pm \phi(e_1,\ldots,e_{n+m})
$$
where the summation is over all $(e_1,\ldots,e_{n+m})$ such that:

a) as a set, $\{e_1,\ldots,e_{n+m}\}=\{d_1, \ldots, d_m, x_1c_1x_1^{-1}, \ldots, x_nc_nx_n^{-1}\}$, with $x_j$ defined below in c);

b) the order of elements $d_j$ is preserved; the order of the elements $x_jc_jx_j^{-1}$ is the same as the order of the elements $c_j;$

c) $x_j$ is the product of all $d_k$ to the left of $x_jc_jx_j^{-1}.$
The sign is $(-1)^N$ where $N$ is the number of instances when $d_k$ is to the left of $x_jc_jx_j^{-1}.$
\end{lemma}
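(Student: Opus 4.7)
The plan is to prove Lemma \ref{lemma:tasovki} by a direct expansion of both sides followed by an explicit combinatorial bijection between the resulting terms. First I would evaluate both sides on a cochain $\varphi$ and on an argument list $(a_1,\ldots,a_N)$; each side then becomes an indexed sum of terms of the form $\pm \varphi(\ldots)\cdot(\text{right factor})$, so the lemma reduces to matching these term-by-term.

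Unfolding the LHS, $(\phi(c_1,\ldots,c_m)\phi(d_1,\ldots,d_n)\varphi)(a_1,\ldots,a_N)$ is a double sum indexed by $0\le j_1\le\cdots\le j_m\le N$ (the positions of the $c_i$-insertions into $(a_1,\ldots,a_N)$, producing an intermediate list of length $N+m$) and by $0\le k_1\le\cdots\le k_n\le N+m$ (positions of the $d_\ell$-insertions into that intermediate list). Each such pair determines a final list of length $N+m+n$ containing the $a_i$'s in order, interleaved with the $c_i$'s and $d_\ell$'s in order; forgetting the $a_i$'s returns a shuffle of $(c_1,\ldots,c_m)$ with $(d_1,\ldots,d_n)$. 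Conversely, each term on the RHS is parametrized by a choice of such a shuffle together with a multi-index $0\le\ell_1\le\cdots\le\ell_{n+m}\le N$ coming from the outer $\phi(e_1,\ldots,e_{n+m})$. The bijection between the two indexings is the obvious one.

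Next I would check that the arguments of $\varphi$ and the right factors match. On the LHS, after both insertions, an $a_i$ sitting to the right of insertions becomes conjugated by the product of everything inserted to its left, and each $c_j$ sitting to the right of $d$-insertions becomes conjugated by the product of the $d_\ell$'s inserted to its left; this is precisely the prescription $x_jc_jx_j^{-1}$ in the statement, with $x_j$ the product of $d_\ell$'s to the left of $c_j$ in the shuffle. The right factor on the LHS is $(d_1\cdots d_n)^{-1}(c_1\cdots c_m)^{-1}$; on the RHS the corresponding factor is $(e_1\cdots e_{n+m})^{-1}$, and the product $e_1\cdots e_{n+m}$ telescopes (consecutive conjugates $x_jc_jx_j^{-1}$ combine with the $d_\ell$'s between them) to $c_1\cdots c_m d_1\cdots d_n$, giving agreement.

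Finally, the signs. Each insertion carries a Koszul sign determined by moving a degree-zero element past the preceding arguments, using the convention $(-1)^{|a_i|+1}$ per argument. On the LHS, the inner $\phi(d_\ell)$ insertions occur in the list that already contains the $c_j$'s (of degree zero), so each $d_\ell$ inserted to the left of a $c_j$ contributes an extra factor $(-1)^{|c_j|+1}=-1$ relative to the same insertion made into the purely $a$-list on the RHS. Multiplying these contributions over all pairs $(d_\ell,c_j)$ with $d_\ell$ to the left of $c_j$ produces exactly the sign $(-1)^N$ in the statement. The main obstacle is the bookkeeping: one has to pin down the bijection of multi-indices and carry out the sign count without confusion with the conjugation conventions, but no idea beyond the Koszul sign rule is required.
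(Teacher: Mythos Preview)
The paper does not actually supply a proof of this lemma; it is stated, followed only by the illustrative example $\phi(c)\phi(d)=\phi(c,d)-\phi(d,dcd^{-1})$. Your plan---expand both sides on a test cochain, set up the evident bijection between double insertions on the left and (shuffle, single insertion) on the right, then match arguments, right factors, and signs---is exactly the natural approach and is what one must do.

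One concrete point deserves more care, and as written your argument does not close. You assert that ``the product $e_1\cdots e_{n+m}$ telescopes \ldots\ to $c_1\cdots c_m\,d_1\cdots d_n$.'' With the conjugation direction stated in the lemma, $e_j=x_jc_jx_j^{-1}$, this is false already for the two-letter shuffle $(d,\,dcd^{-1})$: the product is $d\cdot dcd^{-1}=d^2cd^{-1}\neq cd$, so the right factor $(e_1e_2)^{-1}$ does not equal the LHS right factor $(d)^{-1}(c)^{-1}=(cd)^{-1}$. If instead you carry out your own expansion honestly, the inner insertion of a $d$ replaces subsequent entries $b$ by $d^{-1}bd$ (this is the formula in part~3 of Lemma~\ref{dfn:higher on Ext}), so a $c_j$ with $x_j$ worth of $d$'s to its left becomes $x_j^{-1}c_jx_j$, not $x_jc_jx_j^{-1}$. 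With this correction the telescoping does work: e.g.\ $d\cdot d^{-1}cd=cd$, and in general the product over any shuffle collapses to $c_1\cdots c_m\,d_1\cdots d_n$, matching the LHS right factor $(d_1\cdots d_n)^{-1}(c_1\cdots c_m)^{-1}$. The paper's displayed example and the phrase ``$x_jc_jx_j^{-1}$'' are thus internally inconsistent with the definition of $\phi$; your proof should compute the conjugation from the definition rather than quote the stated form.

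Aside from this, your bijection and sign argument are correct: the double index set $\{0\le j_1\le\cdots\le j_m\le N\}\times\{0\le k_1\le\cdots\le k_n\le N+m\}$ is in bijection with $\{\text{shuffles of }(c_\cdot)\text{ and }(d_\cdot)\}\times\{0\le\ell_1\le\cdots\le\ell_{n+m}\le N\}$ via the obvious ``forget the $a$'s / record positions'' map, and each $d_\ell$ inserted to the left of a $c_j$ crosses a degree-zero entry and contributes an extra $(-1)$, producing the global $(-1)^N$.
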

For example, 
$$\phi(c)\phi(d)=\phi(c,d)-\phi(d,\,dcd^{-1})$$
\subsubsection{A lemma about $A_\infty$ actions}\label{sss:Lemma A infty actions}
\begin{lemma}\label{lemma:Lemma A infty actions}
Let $\tiG$ be a group and $H$ its normal subgroup. Let $G=\tiG/H.$ Consider a complex $C^\bullet$ with the following data:

1) An action of $\tiG$, $g\mapsto \cT_g$ for any $g\in \tiG.$

2) Operators $\Phi(c_1,\ldots, c_m): C^\bullet \to C^{\bullet-m},\,m\geq 0,$ for all $c_1,\ldots,c_m\in H,$ satisfying
$$[d,\Phi(c_1,\ldots,c_m)]+\cT_{c_1}\Phi(c_2,\ldots,c_m)+$$
$$+\sum_{j=1}^{m-1}(-1)^j \Phi(c_1,\ldots, c_jc_{j+1},\ldots, c_m)+(-1)^{m} \Phi(c_1,\ldots,c_{m-1})=0$$
$$
\Phi(c_1,\ldots,c_m)\Phi(d_1,\ldots,d_n)=\sum \pm \Phi(e_1,\ldots,e_{n+m})
$$
as in Lemma \ref{lemma:tasovki}.
For any section $g\mapsto \bg$ of the projection $\tiG\to G,$ there is an $A_\infty$ action of $G$ on $C^\bullet$ such that $T_g=\cT_{\bg}.$ 
\end{lemma}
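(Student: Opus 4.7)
The plan is to transfer the strict $\tiG$-action down to an $A_\infty$-action of $G$ using the section $\sigma\colon g\mapsto \overline{g}$, with the higher operators $\Phi$ absorbing the failure of $\sigma$ to be a group homomorphism. For a tuple $g_1,\ldots,g_n\in G$, set
$$c_k = c_k(g_1,\ldots,g_n) := \overline{g_1\cdots g_{k-1}}\cdot \overline{g_k}\cdot \overline{g_1\cdots g_k}^{\,-1}\in H\quad (2\le k\le n),$$
so that inductively $\overline{g_1}\cdots \overline{g_n} = c_2c_3\cdots c_n\cdot \overline{g_1\cdots g_n}$. My proposal is the explicit formula
$$T(g_1,\ldots,g_n) := (-1)^{n-1}\,\Phi(c_2,\ldots,c_n)\circ \cT_{\overline{g_1\cdots g_n}},$$
with the conventions $\Phi()=\id$ for $n=1$ (so $T(g)=\cT_{\overline{g}}$, as required by the lemma) and $T(g_1,\ldots,g_n)=0$ whenever $n\ge 2$ and some $g_i$ equals the identity of $G$.

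The motivation for this ansatz is precisely Lemma \ref{dfn:higher on Ext}: the operators $\Phi$ behave like the $\phi(c_1,\ldots,c_m)$ built from inner-automorphism data, and the $c_k$ are the inner corrections needed to repair the defect of $\sigma$. I would first check the case $n=2$ directly. The $m=1$ instance of the first relation in the hypothesis gives $[d,\Phi(c)]=\id-\cT_c$, and together with $\cT_{\overline{g_1}}\cT_{\overline{g_2}}=\cT_{c_2}\cT_{\overline{g_1g_2}}$ this yields $[d,T(g_1,g_2)] = T(g_1)T(g_2)-T(g_1g_2)$, which is the $n=2$ instance of \eqref{eq:A infty action}.

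For general $n$ I would expand $[d,T(g_1,\ldots,g_n)] = [d,\Phi(c_2,\ldots,c_n)]\,\cT_{\overline{g_1\cdots g_n}}$ using the cocycle relation in the hypothesis, producing three kinds of terms: a head $\cT_{c_2}\Phi(c_3,\ldots,c_n)$, interior contractions $\Phi(c_2,\ldots,c_kc_{k+1},\ldots,c_n)$, and a tail $\Phi(c_2,\ldots,c_{n-1})$. The interior contractions match, up to sign, the degeneration terms $T(g_1,\ldots,g_kg_{k+1},\ldots,g_n)$ via the identity $c_k(\ldots,g_kg_{k+1},\ldots)\,c_{k+1}(\ldots,g_kg_{k+1},\ldots)=c_kc_{k+1}$ in $H$, which is a direct consequence of the associativity of the section products in $\tiG$. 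For the composition terms $T(g_1,\ldots,g_j)T(g_{j+1},\ldots,g_n)$ I would apply the shuffle rule of Lemma \ref{lemma:tasovki}: the composition produces $\Phi$'s of shuffles of $c_2,\ldots,c_j$ with conjugates of $c(g_{j+1},g_{j+2}),\ldots$ by the partial products $\overline{g_1\cdots g_j}$, and those conjugating elements are exactly what is needed so that the shuffled-and-conjugated arguments reassemble into the full tuple $(c_2,\ldots,c_n)$, producing the head and the remaining shuffle terms $\cT_{c_2}\Phi(c_3,\ldots,c_n)$.

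The principal obstacle I anticipate is the combinatorial bookkeeping in this last step, namely matching the shuffle expansions against the expanded $[d,\Phi(c_2,\ldots,c_n)]$ with correct signs. A cleaner parallel route I would pursue is to reinterpret the data $\{\Phi(c_1,\ldots,c_m)\}$ as a twisting (Maurer--Cartan) element in the Hochschild cochain complex of $H$ with values in $\End(C^\bullet)$ deformed by $\cT|_H$; the shuffle rule is then exactly the cup-product compatibility of this cochain. In this framework the lemma becomes an instance of the standard transfer of $A_\infty$-structures along the short exact sequence $1\to H\to\tiG\to G\to 1$, and the $A_\infty$-relations for $T$ follow formally from the Hochschild-cocycle property of $\Phi$ combined with the cup-product property, with the section $\sigma$ only entering through the choice of cocycle representatives $c_k$.
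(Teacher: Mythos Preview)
Your explicit closed form $T(g_1,\ldots,g_n)=(-1)^{n-1}\Phi(c_2,\ldots,c_n)\,\cT_{\overline{g_1\cdots g_n}}$ does not satisfy the $A_\infty$ relations beyond $n=2$. The failure is already visible in the step you call ``interior contractions'': for the degeneration $T(g_1,\ldots,g_kg_{k+1},\ldots,g_n)$, the cocycle sitting in slot $k$ of the new tuple is
\[
c'_k=\overline{g_1\cdots g_{k-1}}\cdot\overline{g_kg_{k+1}}\cdot\overline{g_1\cdots g_{k+1}}^{\,-1},
\]
whereas the contraction term coming from $[d,\Phi(c_2,\ldots,c_n)]$ has $c_kc_{k+1}=\overline{g_1\cdots g_{k-1}}\cdot\overline{g_k}\,\overline{g_{k+1}}\cdot\overline{g_1\cdots g_{k+1}}^{\,-1}$ in that slot. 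These agree only when $\overline{g_k}\,\overline{g_{k+1}}=\overline{g_kg_{k+1}}$, i.e.\ when the section happens to be multiplicative there. So the identity you invoke is false in general, and the mismatch cannot be repaired by the shuffle relation, which only moves elements of $H$ past one another.

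A concrete counterexample: take $\tiG=\bZ/4=\langle t\rangle$, $H=\{1,t^2\}$, $G=\bZ/2$, section $\overline{[t]}=t$, and $g_1=g_2=g_3=[t]$. Then $c_2=t^2$, $c_3=1$, and your formula gives $T(g_1,g_2,g_3)=\Phi(t^2,1)\,\cT_t$. Plugging into \eqref{eq:A infty action} (using centrality so that $\cT_t\Phi(t^2)=\Phi(t^2)\cT_t$, and your convention $T(1,[t])=T([t],1)=0$), every term cancels except $[d,\Phi(t^2,1)]\cT_t=-\cT_{t^2}\Phi(1)\cT_t$. Nothing in the hypotheses forces $\Phi(1)=0$; in the intended application (Lemma~\ref{dfn:higher on Ext}) $\phi(1)$ is the ``insert $1$'' operator on the bar complex, which is nonzero.

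The paper's argument avoids any closed form. It packages the hypotheses into a differential graded algebra $\cB(H,\tiG)$ generated by $\tiG$ and the symbols $\Phi(c_1,\ldots,c_m)$, observes that this DGA is quasi-isomorphic to $k[G]$ (as a complex it is the bar resolution of $H$ with coefficients in $k[\tiG]$), and then lifts the identity along the cofibrant resolution $\Cbr\Br(k[G])\to k[G]$ to a DGA map into $\cB(H,\tiG)$. The image of $(g_1|\ldots|g_n)$ under this map, acting on $C^\bullet$, is $T(g_1,\ldots,g_n)$. The resulting formula is genuinely recursive (see Remark~\ref{rmk:all not bad c}): already for $n=3$ it is a signed \emph{sum} of two $\Phi$'s whose arguments involve both $c(g_1,g_2),c(g_1g_2,g_3)$ and $T_{g_1}c(g_2,g_3),c(g_1,g_2g_3)$, not a single $\Phi$ of your telescoping cocycles. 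Your fallback sketch via Hochschild/Maurer--Cartan transfer is exactly this idea in different language; pursuing it would lead you to the same recursive answer rather than to the closed form.
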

\begin{proof} Consider the differential graded algebra $\cB(H,\tiG)$ generated by the group algebra of $\tiG$ and by elements $\Phi(c_1,\ldots,c_m)$ of degree $-m$ for all $c_1,\ldots,c_m$ in $H$, such that:

a) $$g\Phi(c_1,\ldots,c_m)g^{-1}=\Phi(gc_1g^{-1}, \ldots ,gc_mg^{-1})$$

for any $g\in \tiG;$

b) the differential $\partial$ satisfies 
$$\partial \Phi(c_1,\ldots,c_m)+{c_1}\Phi(c_2,\ldots,c_m)+$$
$$+\sum_{j=1}^{m-1}(-1)^j \Phi(c_1,\ldots, c_jc_{j+1},\ldots, c_m)+(-1)^{m} \Phi(c_1,\ldots,c_{m-1})=0$$

c) 
$$
\Phi(c_1,\ldots,c_m)\Phi(d_1,\ldots,d_n)=\sum \pm \Phi(e_1,\ldots,e_{n+m})
$$
as in Lemma \ref{lemma:tasovki}. This differential graded algebra is quasi-isomorphic to $k[G].$ In fact, as a complex it is the standard bar construction of $H$ with coefficients in the right module $k[\tiG].$ The quasi-isomorphism is the morphism of algebras such that 
\begin{equation}\label{eq:quism Bc}
\Phi(c_1,\ldots,c_m)\mapsto 0; g\mapsto {\rm{proj}}_G (g), g\in \tiG.
\end{equation}
There is (unique up to homotopy) morphism from the standard resolution ${\operatorname{Cobar}}{\operatorname{Bar}}(k[G])$ to $\cB(H,\tiG)$ over $k[G].$ Now define $T(g_1,\ldots,g_n)$ to be the action of the image of the generator $(g_1|\ldots|g_n)$ on $C^\bullet.$
\end{proof}
\subsubsection{The $A_{\infty}$ action on the standard complex} \label{sss:The A inf action on standard} Now assume that a group $G$ acts on an algebra $A$ up to inner automorphisms. Assume that $V $ and $W$ are two $A$-modules with compatible actions. This means that there are linear automorphisms $T_g$ of $V$ and $W$ for any $g\in G$ such that 
\begin{equation}\label{eq:compat Tg V}
T_g(av)=T_g(a)T_g(v);\;T_{g_1}T_{g_2}=c(g_1,g_2)T_{g_1g_2}
\end{equation}
($c(g_1,g_2)$ in the right hand side denotes the module action of the element of $A$). 
\begin{thm}\label{thm:A infty action on Ext}
There is an $A_\infty$ action of $G$ on $C^\bullet (V,A,W)$ such that $T(g)$ is equal to $T_g$ as in \ref{dfn:higher on Ext}.
\end{thm}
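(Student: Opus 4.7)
The plan is to reduce the theorem to the combined application of Lemma~\ref{dfn:higher on Ext} and Lemma~\ref{lemma:Lemma A infty actions}, by constructing a genuine group extension of $G$ whose quotient realizes the given action up to inner automorphisms. After normalizing the cocycle via \eqref{eq:equiv T c} so that $T_e=\id$ and $c(e,g)=c(g,e)=1$, I would define $\tiG = G\times A^\times$ as a set, with multiplication
$$(g_1,a_1)(g_2,a_2) \;=\; \bigl(g_1g_2,\; a_1\,T_{g_1}(a_2)\,c(g_1,g_2)\bigr).$$
Associativity is a direct consequence of the cocycle identity \eqref{eq: kozykel c}, and $H := \{(e,a):a\in A^\times\}\cong A^\times$ is a normal subgroup with $\tiG/H\cong G$.

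The next step is to define genuine actions of $\tiG$ on $A$, $V$, $W$ by
$$(g,a)\cdot b \;=\; a\,T_g(b)\,a^{-1},\qquad (g,a)\cdot v \;=\; a\,T_g(v).$$
That these are actions, and are mutually compatible in the module sense $(g,a)\cdot(bv)=((g,a)\cdot b)((g,a)\cdot v)$, follows from \eqref{eq: kozykel c 0}, \eqref{eq:compat Tg V}, and the fact that $T_g$ respects the module structure. The subgroup $H$ acts on $A$ by inner automorphisms $\Ad(a)$ and on $V,W$ by left multiplication by $a$.

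Now I would apply Lemma~\ref{dfn:higher on Ext}(1) to produce an honest group homomorphism $\cT:\tiG\to\Aut(C^\bullet(V,A,W))$, and Lemma~\ref{dfn:higher on Ext}(3), applied to tuples of elements of $H$, to produce operators $\Phi(a_1,\dots,a_m):=\phi(a_1,\dots,a_m)$ of degree $-m$. The relation \eqref{eq:A infty action mor}-type coboundary identity is given by Lemma~\ref{dfn:higher on Ext}(3), while the multiplicative/shuffle identity is Lemma~\ref{lemma:tasovki}; $\tiG$-equivariance $\cT_g\Phi(a_1,\dots,a_m)\cT_g^{-1}=\Phi(g\cdot a_1,\dots,g\cdot a_m)$ follows from the explicit formula for $\phi$ and the definition of $\cT_g$. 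These are precisely the data required by Lemma~\ref{lemma:Lemma A infty actions}, so choosing the section $g\mapsto \bg:=(g,1)$ yields an $A_\infty$ action of $G=\tiG/H$ on $C^\bullet(V,A,W)$. By construction $T(g)=\cT_{(g,1)}$, which by Lemma~\ref{dfn:higher on Ext}(1) is exactly the operator $T_g$ in the statement, so the conclusion follows.

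The main obstacle is verifying that all pieces of data produced by Lemma~\ref{dfn:higher on Ext} really do fit the axiomatic setup of Lemma~\ref{lemma:Lemma A infty actions}; in particular that the shuffle identity of Lemma~\ref{lemma:tasovki} holds with the correct signs, and that $\Phi$ is $\tiG$-equivariant so that the DG algebra $\mathcal B(H,\tiG)$ used in the proof of Lemma~\ref{lemma:Lemma A infty actions} maps to $\End(C^\bullet(V,A,W))$. Both are essentially combinatorial bookkeeping on the bar complex, which is why the crux of the theorem has been packaged into the two preceding lemmas; the construction of $\tiG$ is the conceptual content.
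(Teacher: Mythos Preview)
Your proposal is correct and follows essentially the same route as the paper: the paper also forms the extension $\tiG = G\ltimes_c A^\times$ with the same multiplication law (writing elements as $ag$ rather than pairs $(g,a)$), takes $H=A^\times$, and then invokes Lemmas~\ref{dfn:higher on Ext}, \ref{lemma:tasovki}, and \ref{lemma:Lemma A infty actions} exactly as you do. Your normalization of the cocycle and the explicit check of $\tiG$-equivariance of $\Phi$ are reasonable elaborations of steps the paper leaves implicit.
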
 
\begin{proof} Let $\tiG=G\ltimes _c A^\times$ be the group whose elements are expressions $ag$, $g\in G$ and $A^\times,$ with the product
\begin{equation}\label{eq:G tilda c grp}
(a_1g_1)(a_2g_2)=a_1T_{g_1}(a_2) c(g_1,g_2) (g_1g_2)  
\end{equation}
and $H=A^\times.$ The theorem follows immediately from Lemmas \ref{dfn:higher on Ext}, \ref{lemma:tasovki}, and \ref{lemma:Lemma A infty actions}.
\end{proof}
\begin{remark}\label{rmk:all not bad c} The proof of Theorem \ref{thm:A infty action on Ext} actually leads to a rather simple recursive formula for the $A_\infty$ action. Namely, the construction of a morphism 
\begin{equation}\label{eq:Cobarbar to B}
{\operatorname{Cobar}}{\operatorname{Bar}}(k[G]) \to \cB(A^\times, G\ltimes_c A^\times)
\end{equation}
(see the proof of Lemma \ref{lemma:Lemma A infty actions}) is an inductive procedure in $n$ for finding the image of $(g_1|\ldots|g_n)$ under this morphism. 
Let us describe this procedure. Consider the subalgebra $\cB(A^\times, A^\times)$ of expressions $c_0 \Phi(c_1,\ldots, c_m).$ This subalgebra is quasi-isomorphic to $k$, the homotopy being 
\begin{equation}\label{eq:homotopy on B A A}
s(c_0 \Phi(c_1,\ldots, c_m))= \Phi(c_0, c_1,\ldots, c_m)
\end{equation}
Now define $\Psi(g_1,\ldots,g_n)$ in $\cB(A^\times, A^\times)$ recursively by
\begin{equation}\label{eq:Psi recur}
\Psi_1(g)=g;
\end{equation}
\begin{equation}\label{eq:Psi recur 1}
\Psi(g_1,\ldots, g_{n+1})=
\end{equation}
$$s\sum _{j=1}^n(-1)^j  \Psi(g_1,\ldots, g_j) T_{g_1\ldots g_j}\Psi(g_{j+1}, \ldots, g_{n+1})c(g_1\ldots g_j, g_{j+1}\ldots g_{n+1})$$
Here the product is described in Lemma \ref{lemma:tasovki}, and 
$$T_g(c_0 \Phi(c_1,\ldots, c_m))=(T_gc_0 \Phi(T_gc_1,\ldots, T_gc_m))$$
 
The elements  $\Psi(g_1,\ldots,g_n)$ is some linear combinations of $\phi(c_1,\ldots, c_k)$ and $c_j$ are algebraic expressions in $T_{h_0}c(h_1,h_2),$ $h_i$ being some products of $g_i.$ 
 
 Let $\psi(g_1,\ldots, g_n)$ be the image of $\Psi(g_1,\ldots, g_n)$ under the morphism of algebras 
 \begin{equation}\label{eq:Bar to Endom}
 \cB(A^\times, A^\times)\to \End(C^\bullet)
 \end{equation}
 sending $g\in G$ to $T_g,$ $c\in A^\times $ to ${\Ad(c)},$ and $\Phi(g_1,\ldots, g_n)$ to $\phi(g_1,\ldots, g_n)$. Then
 $$T(g_1, \ldots, g_n)=\psi(g_1,\ldots, g_n)T_{g_1\ldots g_n}$$
 For example,
 $$T(g_1,g_2)=\phi(c(g_1,g_2)) T_{g_1g_2}$$
\end{remark}
%%%%%%%%%%%%%%%%%%%%%%%%%%%%%%%%%%%%%%%%%%%%%%%%%%%%%%%%%%%%%%%%%%%%%%%%%%%%%%%%%%%%%%%
\subsubsection{The case of groupoids}\label{sss:grpds ainfty action ext}
Let $G$ be a groupoid with the set of objects $X$ that acts on a family of algebras $A=\{A_x|x\in X\}$ up to inner automorphisms. Let $V=\{V_x|x\in X\}$ and $W=\{W_x|x\in X\}$ two $A$-modules with compatible actions of $G$, {\em i.e.} with families $T_g: V_x\otomosi V_y$ and $T_g: W_x\otomosi W_y$, satisfying \eqref{eq:compat Tg V}.

Given a family of complexes $C^\bullet=\{C^\bullet _x|x\in X\},$ an $A_\infty$ action of $G$ on $C^\bullet$ is a collection of
$$T(g_1, \ldots, g_n): C^{\bullet+1-n}_{x_{n+1}}\otomosi C^{\bullet}_{x_1}$$
for any $g_j\in G_{x_j, x_{j+1}},$ $j=1,\ldots, n$, satisfying the identities in the beginning of \ref{ss:A infty acciones}. Morphisms between $A_\infty$ actions are defined similarly.

Define 
$$C^\bullet (V,A,W)_x=C^\bullet (V_x,A_x,W_x)$$
\begin{thm}\label{thm:A infty action on Ext 1}
There is an $A_\infty$ action of $G$ on $C^\bullet (V,A,W)$ such that $T(g)$ is equal to $T_g$ as in \ref{dfn:higher on Ext}.
\end{thm}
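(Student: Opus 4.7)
The plan is to reduce Theorem \ref{thm:A infty action on Ext 1} to Theorem \ref{thm:A infty action on Ext} by an essentially mechanical upgrade of all constructions from the group to the groupoid setting. The only new ingredient is careful bookkeeping of source and target objects; the algebraic identities remain unchanged.

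First, I would construct the twisted cross-product groupoid $\tiG = G \ltimes_c A^\times$. Its objects are those of $G$, namely $X$, and $\tiG_{x,y} = A_x^\times \times G_{x,y}$ with composition
\[
(a_1 g_1)(a_2 g_2) = a_1 \, T_{g_1}(a_2) \, c(g_1,g_2)\, (g_1 g_2)
\]
for $g_1 \in G_{x_1,x_2}$, $g_2 \in G_{x_2,x_3}$, $a_1 \in A_{x_1}^\times$, $a_2 \in A_{x_2}^\times$. Associativity is equivalent to the cocycle identity \eqref{eq: kozykel c}. The bundle of isotropy groups $H = \{A_x^\times \hookrightarrow \tiG_{x,x}\}$ is a normal sub-family, and $\tiG/H \cong G$ as groupoids over $X$.

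Next, I would upgrade Lemmas \ref{dfn:higher on Ext} and \ref{lemma:tasovki} to the groupoid setting. For $g \in G_{x,y}$ with compatible $T_g : V_y \to V_x$, $T_g : W_y \to W_x$, the formula
\[
(T_g \varphi)(a_1,\ldots,a_n) = T_g \, \varphi(T_g^{-1} a_1,\ldots,T_g^{-1} a_n) \, T_g^{-1}
\]
defines a map $C^\bullet(V_y,A_y,W_y) \to C^\bullet(V_x,A_x,W_x)$, and for invertible elements $c_1,\ldots,c_m \in A_x$ the operators $\phi(c_1,\ldots,c_m)$ act inside $C^\bullet(V_x,A_x,W_x)$ by the same formula as before. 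The computations of $[\delta,\phi(c_1,\ldots,c_m)]$ and of the product $\phi(c_1,\ldots,c_m)\phi(d_1,\ldots,d_n)$ are local to a single object $x$ and therefore verbatim identical to the group case.

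Then I would establish the groupoid version of Lemma \ref{lemma:Lemma A infty actions}. Given a family of complexes $\{C^\bullet_x\}$ with an action of the total groupoid $\tiG$ and operators $\Phi(c_1,\ldots,c_m) : C^\bullet_x \to C^{\bullet - m}_x$ for $c_j \in H_x = A_x^\times$ satisfying the bar identities and the shuffle relations of Lemma \ref{lemma:tasovki}, one forms the DG algebroid $\cB(H,\tiG)$ over $X \times X$: its $(x,y)$-component is generated by $k[\tiG_{x,y}]$ together with the $\Phi(c_1,\ldots,c_m)$ at $x$, with the same relations a), b), c) as in the proof of Lemma \ref{lemma:Lemma A infty actions}. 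Exactly as there, $\cB(H,\tiG)$ is the bar resolution of $k[G]$ by the bundle $k[\tiG]$ of right modules, so $\cB(H,\tiG) \to k[G]$ is a quasi-isomorphism of DG algebroids; lifting through the cobar-bar resolution of $k[G]$ yields the required $T(g_1,\ldots,g_n)$. Applying this to $\tiG = G \ltimes_c A^\times$, $H = A^\times$, and $C^\bullet(V,A,W)$ completes the proof, with $T(g) = T_g$ by construction.

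The main conceptual obstacle, and the reason to prove it in this style, is the proliferation of base points: each $\Psi(g_1,\ldots,g_n)$ constructed by the recursion \eqref{eq:Psi recur}-\eqref{eq:Psi recur 1} is an expression in $T_{h_0} c(h_1,h_2)$ for various composable strings $h_i$ of the $g_i$, and one must check that every such element belongs to the correct isotropy algebra $A_{x_1}^\times$ so that $\phi(c_1,\ldots,c_k)$ makes sense. This follows at once from the fact that for a composable string $g_1,\ldots,g_n$ with $g_i \in G_{x_i,x_{i+1}}$ the cocycle $c(g_1\cdots g_j,\, g_{j+1}\cdots g_{n+1})$ lies in $A_{x_1}$, and that $T_{g_1\cdots g_j}$ preserves this base point. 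The remaining verifications — the shuffle relations, the DG-algebra structure of $\cB(H,\tiG)$, and the existence of the lift from the cobar-bar resolution — are localized at a single source object and are word-for-word identical to the group case.
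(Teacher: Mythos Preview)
Your proposal is correct and follows essentially the same approach as the paper: the paper's proof consists of the single sentence ``The proof is identical to the proof of Theorem \ref{thm:A infty action on Ext},'' and you have spelled out exactly what that identification amounts to in the groupoid setting (the cross-product groupoid $G\ltimes_c A^\times$, the normal sub-family $H=\{A_x^\times\}$, and the observation that all algebraic identities are local at a single source object). Your careful tracking of base points is more explicit than the paper but adds nothing new.
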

The proof is identical to the proof of Theorem \ref{thm:A infty action on Ext}.
%%%%%%%%%%%%%%%%%%%%%%%%%%%%%%%%%%%%%%%%%%%%%%%%%%%%%%%%%%%%%%%%%%%%%%%%%%%%%%%%%%%%%%%
\subsubsection{$A_\infty$ action on the standard complex and derivations}\label{sss:action a inf der case} Let $A$ be a graded algebra with an action of $G$ up to inner automorphisms. Let $D$ be a compatible derivation of square zero up to inner derivations. If $V$ and $W$ are two graded $A$-modules with compatible actions of $G$, we assume that they carry a compatible derivation, {\em i.e.} an operator $D:V\to V$ or $W\to W$ of degree one satisfying
\begin{equation}\label{eq:comp der modu}
D(av)=D(a)v+(-1)^{|a|}aD(v);\; D^2=R;\; T_g D T_g^{-1}=D+\alpha(g)
\end{equation}
Here $R$ and $\alpha(g)$ stand for the action of corresponding elements of $A$.
%%%%%%%%%%%{thm:A infty action on Ext}{rmk:all not bad c}{eq:diffl D}{eq:diffl D 1}{cor:full diffl on stancomp}{dfn:T curved g}{eq:big phi}
For any homogeneous derivation $E $ of $A$ that acts on $V$ and $W$ compatibly, put
\begin{equation}\label{eq:diffl D}
(E\varphi )(a_1,\ldots, a_n)=[E, \varphi (a_1,\ldots, a_n)]-
\end{equation}
$$-\sum_{j=1}^n (-1)^{\sum _{i=1}^{j-1} |E|(|a_i|+1)} \varphi (a_1,\ldots,E a_j, \ldots, a_n)$$
Put for any homogeneous element $a$ of $A$ put
\begin{equation}\label{eq:diffl D 1}
(\iota_a\varphi )(a_1,\ldots, a_n)=\sum_{j=0}^n (-1)^{\sum_{i=1}^j (|a|+1)(|a_i|+1)} \varphi (a_1,\ldots, a_j, a, a_{j+1}, \ldots, a_n)
\end{equation}
\begin{lemma}\label{lemma:rels for jotas}
$$[\delta, E]=0;\; [\delta, \iota _a]=\ad(a); \; [E, \iota_a]=(-1)^{|E|} \iota_{Da};\;[\iota_a,\iota_b]=0.$$
\end{lemma}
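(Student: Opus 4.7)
Each of the four identities is a direct sign-tracking computation on cochains $\varphi\in C^\bullet(V,A,W)$, evaluated on a generic tuple $(a_1,\ldots,a_n)$. My plan is to verify them in the order listed, since the later identities rely on the same bookkeeping scheme developed for the earlier ones.

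First, for $[\delta,E]=0$, I would simply apply $\delta$ and then $E$ to $\varphi$, and conversely, and observe that because $E$ is a derivation of $A$ with $E(a_ja_{j+1})=E(a_j)a_{j+1}+(-1)^{|E||a_j|}a_jE(a_{j+1})$, and because $E$ commutes with the outer multiplications by $a_1$ and $a_{n+1}$ (up to the standard Koszul sign, already built into \eqref{eq:diffl D}), every term produced by $E$ acting on a multiplication in $\delta$ is matched by a term in $\delta E\varphi$ coming from the Leibniz rule. The signs are the standard ones for the bar complex; the important book-keeping is that the sign $(-1)^{|E|(|a_i|+1)}$ in \eqref{eq:diffl D} carries $|a_i|+1$ rather than $|a_i|$, which makes the degree count compatible with the shifted degrees in the bar resolution.

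Next, for $[\delta,\iota_a]=\ad(a)$, the point is a telescoping cancellation familiar from Cartan calculus. When I apply $\delta$ to $\iota_a\varphi$, for each insertion point $j$ of $a$ I produce boundary terms where $a$ is multiplied with its left neighbor $a_j$ and with its right neighbor $a_{j+1}$. The terms with $a_ja$ cancel against the terms in $\iota_a\delta\varphi$ where the pair $(a_j,a_{j+1})$ is contracted \textit{after} an insertion between them, and similarly for $aa_{j+1}$. The only terms that survive this telescoping are the extreme ones: $a\varphi(a_1,\ldots,a_n)$ from inserting $a$ in the leftmost slot, and $(-1)^{|\varphi||a|+\cdots}\varphi(a_1,\ldots,a_n)a$ from the rightmost slot, and together these give exactly $\ad(a)\varphi$ (noting $\ad(a)$ is a graded commutator). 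The sign tally is the main place where an error can creep in; I would carry it out by fixing the convention that $(|a|+1)$ is the degree of $a$ as a bar element and checking that the factor $\sum_{i=1}^j(|a|+1)(|a_i|+1)$ from $\iota_a$ combines correctly with $\sum_{i=1}^j(|a_i|+1)$ from $\delta$.

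Third, for $[E,\iota_a]=(-1)^{|E|}\iota_{Ea}$ (I read the $D$ in the statement as the generic derivation $E$), I expand both $E\iota_a\varphi$ and $\iota_a E\varphi$ via \eqref{eq:diffl D} and \eqref{eq:diffl D 1}. When $E$ acts via the outer commutator $[E,\,\cdot\,]$, these contributions match between the two sides (each with an $a$ inserted somewhere). When $E$ acts on one of the $a_i$'s, the contributions again match. The only unmatched term is when $E$ hits the inserted $a$ itself, which is present only in $E\iota_a\varphi$; this gives precisely $(-1)^{|E|}\iota_{Ea}\varphi$ after reconciling the two sign conventions. Finally, $[\iota_a,\iota_b]=0$ is a direct combinatorial verification: the two double sums $\iota_a\iota_b\varphi$ and $(-1)^{(|a|+1)(|b|+1)}\iota_b\iota_a\varphi$ are both sums over ordered pairs of insertion positions $0\le j_1\le j_2\le n$, and the signs conspire, after using the standard transposition sign $(-1)^{(|a|+1)(|b|+1)}$ implicit in the graded commutator, to produce identical expressions.

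The main obstacle is entirely signs: there is no conceptual subtlety beyond the Cartan-homotopy identity, but because $\delta$, $E$, and $\iota_a$ each carry sign factors of the form $\sum_i(|\cdot|+1)(|\cdot|+1)$, I would fix once and for all the bar-shifted degree $\|a\|:=|a|+1$ and re-derive all four identities in this notation, which should make each cancellation transparent.
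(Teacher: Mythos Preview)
Your proposal is correct and is precisely the kind of direct sign-tracking verification the paper has in mind; the paper itself gives no proof for this lemma (and later, in the proof of Lemma~\ref{lemma:T curved g 1}, simply says ``a) is straightforward'' for the same identities). Your reading of $D$ as the generic derivation $E$ in the third identity is also the intended one.
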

\begin{corollary}\label{cor:full diffl on stancomp}
$$(\delta+D-\iota_R)^2=0$$
on $C^\bullet(V,A,W).$
\end{corollary}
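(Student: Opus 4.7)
The goal is to show $(\delta + D - \iota_R)^2 = 0$ on $C^\bullet(V,A,W)$. My plan is a direct expansion: since each of $\delta$, $D$, and $\iota_R$ has odd (graded) degree $+1$, the square of their sum equals the sum of their individual squares plus the graded anticommutators (= graded brackets for odd elements) of all pairs:
$$(\delta + D - \iota_R)^2 = \delta^2 + D^2 + \iota_R^2 + [\delta,D] - [\delta,\iota_R] - [D,\iota_R].$$
So the proof reduces to identifying each piece.

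The bulk of the work is a bookkeeping exercise using Lemma \ref{lemma:rels for jotas} together with the hypotheses on the curved structure. Specifically: $\delta^2 = 0$ by standard Hochschild theory; $[\delta,D] = 0$ by the Lemma with $E=D$; $[\delta,\iota_R] = \ad(R)$ directly from the Lemma; $[D,\iota_R] = (-1)^{|D|}\iota_{DR} = -\iota_{DR}$, which vanishes because $DR = 0$ (part of the definition of a derivation of square zero up to inner derivations in \ref{dfn:dersquaze up inn}); and $\iota_R^2 = \tfrac{1}{2}[\iota_R,\iota_R] = 0$ from the last relation of the Lemma. Summing what remains, the verification collapses to the single identity $D^2 = \ad(R)$ as operators on $C^\bullet(V,A,W)$.

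This final identity is where I expect the only real computation. The defining data gives $D^2 = \ad(R)$ on $A$ (inner by $R$) and $D^2 = R\cdot$ on $V$ and $W$ (module action), together with compatibility $D(av) = D(a)v + (-1)^{|a|}aD(v)$. Squaring the formula \eqref{eq:diffl D} for $D$ on cochains, one writes $D = B - E$ where $B\varphi$ is ``commutator with $D$ on the $\Hom(V,W)$-valued output'' and $E\varphi$ is ``sum over insertions of $D$ on the arguments''. Then $B^2\varphi$ contributes $[R,\varphi(a_1,\dots,a_n)]$ (since $D^2$ on $\Hom(V,W)$ equals super-commutator with $R$, using $D_V^2 = R\cdot$ and $D_W^2 = R\cdot$); $E^2\varphi$ contributes $\sum_j \pm \varphi(a_1,\dots,\ad(R)a_j,\dots,a_n)$ (the off-diagonal $\varphi(\dots,Da_j,\dots,Da_k,\dots)$ terms cancel pairwise by symmetry and sign, since $D$ has degree $1$); and the cross-terms $BE+EB$ cancel by the graded derivation property $[D,\varphi(a_1,\dots,a_n)] = (D\varphi)(a_1,\dots,a_n) + \sum \pm \varphi(\dots,Da_j,\dots)$ applied twice. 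The surviving contributions assemble precisely to the action of $\ad(R)$ on $C^\bullet(V,A,W)$, matching $[\delta,\iota_R]$.

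Putting it all together, $(\delta+D-\iota_R)^2 = D^2 - \ad(R) = 0$. The main obstacle is purely the sign-tracking in the last paragraph; everything else is a routine application of Lemma \ref{lemma:rels for jotas} and the hypotheses $D^2 = \ad(R)$ and $DR = 0$.
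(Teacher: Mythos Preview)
Your proof is correct and follows exactly the approach the paper intends: expand the square, apply Lemma~\ref{lemma:rels for jotas} term by term, and reduce to $D^2=\ad(R)$ on cochains. The paper leaves the entire argument implicit (stating the result as a bare corollary), whereas you spell out the one step not contained in the lemma itself, namely that $D^2=\ad(R)$ as operators on $C^\bullet(V,A,W)$; your decomposition $D=B-E$ and the cancellation of cross-terms is the right way to see this.
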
 
\begin{remark}\label{rmk:full diff on stcompl}
We will always consider $C^\bullet (V,A,W) $ as the standard complex equipped with the total differential $\delta+D-\iota_R.$

We now define an $A_\infty$ action on this standard complex. We follow the proof of Theorem \ref{thm:A infty action on Ext}. The only change is a different choice of operators $T_g$ and $\phi(c_1,\ldots, c_n)$ (see Lemma \ref{dfn:higher on Ext}, 3)).
\end{remark}

\begin{equation}\label{eq:curvaT}
\cT_g=\exp(\iota _{\alpha(g)}) T_g
\end{equation}
for every $g\in G;$
\begin{equation}\label{eq:curvaT1}
{\widetilde{\Ad}}(c)=\exp(-\iota_{Dc\cdot c^{-1}}) \Ad(c)
\end{equation}
for every $c\in A^\times$ of degree zero.
\begin{lemma}\label{lemma:T curved g 1}
a) $[\delta, \iota_a]=\ad_a;\; [E, \iota_a]=(-1)^{|E|}\iota_{Da};\; [\iota_{a_1}, \iota_{a_2}]=0;$

b) $[\delta+D-\iota_R, \cT_g]=0;\; $

c) ${\widetilde{\Ad}}(c_1){\widetilde{\Ad}}(c_2)={\widetilde{\Ad}}(c_1c_2);$

d) $\cT_g \tAd_c \cT_g^{-1}=\tAd _{T_gc} ;$$
$$cT_{g_1}\cT_{g_2}={\widetilde{\Ad}}(c(g_1,g_2)) \cT_{g_1}\cT_{g_2}$
\end{lemma}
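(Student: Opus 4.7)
My plan is to prove the four parts in order, treating a) as essentially a restatement of Lemma \ref{lemma:rels for jotas}, and then using a) together with the structural identities from Definition \ref{dfn:dersquaze up inn} to verify b), c), d). Part a) is proved by direct inspection of the formulas \eqref{eq:diffl D} and \eqref{eq:diffl D 1}: for $[\delta, \iota_a] = \ad_a$, the ``interior'' contributions coming from inserting $a$ and multiplying it against adjacent arguments cancel in pairs across $\delta\iota_a$ and $\iota_a\delta$, leaving only the outer left- and right-module actions of $a$; for $[E, \iota_a] = (-1)^{|E|}\iota_{Ea}$, one applies the graded Leibniz rule at every argument slot using the compatibility $[E, a\cdot] = (Ea)\cdot + (-1)^{|E||a|} a \cdot E$; and $[\iota_{a_1},\iota_{a_2}] = 0$ is a routine sign check on double insertions.

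The conceptual framework for b), c), d) is that $(\cA, D, R)$ is a curved differential graded algebra in the sense of Remark \ref{rmk:curva jasnaja}, that $C^\bullet(\cV,\cA,\cW)$ equipped with $\delta + D - \iota_R$ is the natural curved $\Ext$-complex, and that $(T_g, \alpha(g))$ and $(\Ad(c), -Dc\cdot c^{-1})$ are curved DGA morphisms. The operators $\cT_g$ and $\tAd(c)$ are precisely the actions these curved morphisms induce on the curved standard complex, and b), c), d) are the corresponding functoriality statements: commutation with the curved differential and composition laws. Note that $\exp(\iota_{\alpha(g)})$ is well-defined operator-wise because $\iota_{\alpha(g)}$ strictly lowers bar degree, so the exponential series truncates on each $C^m$.

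For b), I would expand
\[
[\delta + D - \iota_R, \cT_g] = [\delta + D - \iota_R, \exp(\iota_{\alpha(g)})]\,T_g + \exp(\iota_{\alpha(g)})\,[\delta + D - \iota_R, T_g].
\]
Repeated application of a), together with the fact that all $\iota_a$ appearing commute as ordinary operators (the relevant degree products are all even in $[\iota_{a_1},\iota_{a_2}]=0$), gives the first commutator as $\exp(\iota_{\alpha(g)})\bigl(\ad_{\alpha(g)} - \iota_{D\alpha(g)}\bigr)$. For the second commutator, $[\delta, T_g] = 0$ because $T_g$ is a strict algebra and module automorphism; $[\iota_R, T_g] = -\iota_{T_gR - R}\,T_g$ follows from $T_g\,\iota_a\,T_g^{-1} = \iota_{T_g a}$; and $[D, T_g]$ is computed from the conjugation identities $T_g D T_g^{-1} = D + \ad(\alpha(g))$ on $\cA$ and $T_g D T_g^{-1} = D + \alpha(g)$ on $\cV, \cW$, transported to the standard complex by the functoriality of the $\hat{(\,)}$ construction of \eqref{eq:diffl D}. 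Substituting the curvature identity $D\alpha(g) + \alpha(g)^2 = T_g R - R$ from Definition \ref{dfn:dersquaze up inn} then produces complete cancellation.

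Parts c) and d) follow the same playbook. For c), one writes $\tAd(c_1)\tAd(c_2) = \exp(-\iota_{Dc_1\cdot c_1^{-1}})\,\Ad(c_1)\,\exp(-\iota_{Dc_2\cdot c_2^{-1}})\,\Ad(c_2)$, moves $\Ad(c_1)$ past the second exponential via $\Ad(c)\iota_a\Ad(c)^{-1} = \iota_{\Ad(c)a}$, merges exponentials using commutativity of $\iota$'s, and recognizes the result as $\exp(-\iota_{D(c_1c_2)(c_1c_2)^{-1}})\Ad(c_1c_2) = \tAd(c_1c_2)$ through the Leibniz identity $D(c_1c_2)(c_1c_2)^{-1} = Dc_1\cdot c_1^{-1} + \Ad(c_1)(Dc_2\cdot c_2^{-1})$. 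The first identity of d) is analogous, using $T_g\iota_a T_g^{-1} = \iota_{T_g a}$, while the second, $\cT_{g_1}\cT_{g_2} = \tAd(c(g_1,g_2))\cT_{g_1g_2}$, reduces to the cocycle identity for $\alpha$ in Definition \ref{dfn:dersquaze up inn} once one uses $T_{g_1}T_{g_2} = \Ad(c(g_1,g_2))T_{g_1g_2}$ to match the underlying automorphism parts. The main expected obstacle throughout is in b): identifying $T_g D T_g^{-1} - D$ as an operator on $C^\bullet$ and showing it combines correctly with the $\ad_{\alpha(g)}$, $\iota_{D\alpha(g)}$, and $\iota_{T_gR - R}$ terms, which requires careful sign bookkeeping and a precise separation of the two distinct actions of $\alpha(g)$ (by $\ad$ on $\cA$ vs.\ by left multiplication on the module side) under the functoriality of the $\hat{(\,)}$ construction.
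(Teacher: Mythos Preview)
Your proposal is correct and follows essentially the same route as the paper. Both arguments treat a) as a restatement of Lemma~\ref{lemma:rels for jotas}, and for b)--d) both unwind $\cT_g=\exp(\iota_{\alpha(g)})T_g$ and $\tAd(c)=\exp(-\iota_{Dc\cdot c^{-1}})\Ad(c)$, move the exponentials past the remaining operators using the commutation relations of a), and close the computation with the structural identities of Definition~\ref{dfn:dersquaze up inn}. The only cosmetic difference is that the paper proves b) by conjugating, computing $\cT_g(\delta+D-\iota_R)\cT_g^{-1}$ in two steps (first $T_g(\cdot)T_g^{-1}$, then $e^{\iota_\alpha}(\cdot)e^{-\iota_\alpha}$) and recognizing the result as $\delta+D-\iota_R$, whereas you expand the commutator $[\delta+D-\iota_R,\cT_g]$ directly; these are of course equivalent. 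Your explicit flagging of the delicate point---that $T_gDT_g^{-1}-D$ on $C^\bullet$ mixes the $\ad$-action on arguments with the module action on $\Hom(V,W)$---is exactly where the bookkeeping matters, and is handled in the paper only implicitly.
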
 
\begin{proof} a) is straightforward. Let us prove b).
$$\cT_g (\delta + D -\iota_R) \cT_g^{-1}=e^{\iota_{\alpha(g)}}T_g (\delta + D -\iota_R) T_g^{-1} e^{-\iota_{\alpha(g)}}=$$
$$=e^{\iota_{\alpha(g)}} (\delta+D +\ad_{\alpha(g)} -\iota_{R+D\alpha(g)+\alpha(g)^2}) e^{-\iota_{\alpha(g)}}$$
(we used the equations in Definition \ref{dfn:dersquaze up inn}). Now observe that 
$$e^{\iota_{\alpha(g)}} D e^{-\iota_{\alpha(g)}}=D+\iota_{\alpha(g)}; $$
$$e^{\iota_{\alpha(g)}} \delta e^{-\iota_{\alpha(g)}}=\delta-\ad_{\alpha(g)}+\iota_{\alpha(g)^2}$$
which implies b).

Now prove c).
$$\tAd_{c_1}\tAd_{c_2}=\exp(-\iota_{Dc_1\cdot c_1^{-1}})\Ad_{c_1} \exp(-\iota_{Dc_2\cdot c_2^{-1}}) \Ad_{c_2}=$$
$$=\exp(-\iota_{Dc_1\cdot c_1^{-1}+\Ad_{c_1}(Dc_2\cdot c_2^{-1})}) \Ad_{c_1c_2}=$$
$$=\exp(-\iota_{D(c_1c_2)\cdot (c_1c_2){^{-1}} })\Ad_{c_1c_2}= \tAd_{c_1c_2}$$
Next, observe that, because of the third equation in Definition \ref{dfn:dersquaze up inn},
$$T_g(Dc\cdot c^{-1})=T_g(Dc) T_g(c)^{-1}=D(T_g(c))T_g(c)^{-1}+[\alpha(g), T_g(c)]T_g(c)^{-1}=$$
$$=D(T_g(c))T_g(c)^{-1}+\alpha(g)-\Ad_{T_g(c)}(\alpha(g))$$
which implies
$$\cT_g \tAd_c \cT_{g^{-1}}=e^{\iota_{\alpha(g)}} T_g e^{-\iota_{Dc\cdot c^{-1}}} \Ad_c T_g^{-1} e^{-\iota_{\alpha(g)}} =$$ 
$$=\exp(\iota_{\alpha(g)}-\iota_{T_g (Dc\cdot c^{-1})}) \Ad_{T_g(c)} \exp(-\iota_{\alpha(g)})=$$
$$\exp(\iota_{\alpha(g)}-\iota_{T_g (Dc\cdot c^{-1})}-\iota _{T_g (\alpha(g))})\Ad_{T_g(c)}=$$
$$\exp(-\iota_{DT_g(c)\cdot T_g(c)^{-1}})\Ad_{T_g(c)}=\tAd_{T_g(c)}$$
which is d). Finally,
$$\cT_{g_1}\cT_{g_2}=\exp(\iota_{\alpha(g_1)} )T_{g_1} \exp(\iota_{\alpha(g_2)})T_{g_2}=\exp(\iota_{\alpha(g_1)+T_{g_1} \alpha(g_2)})T_{g_1g_2}=$$
$$+\exp(\iota_{\alpha(g_1)+T_{g_1} \alpha(g_2)})\Ad_{c_(g_1,g_2)} T_{g_1}T_{g_2}$$
while
$$\Ad_{c(g_1,g_2)}\cT_{g_1g_2}=\exp(-\iota_{Dc(g_1,g_2)c(g_1,g_2)^{-1}})\Ad_{c(g_1,g_2)} \exp(\iota_{\alpha(g_1g_2)})T_{g_1g_2}=$$
$$=\exp(-\iota_{Dc(g_1,g_2)c(g_1,g_2)^{-1}} -  \iota_{\Ad_{c(g_1,g_2)}(\alpha(g_1g_2)}) \Ad_{c_(g_1,g_2)} T_{g_1}T_{g_2}$$ 
which implies e) because of the last equation in Definition \ref{dfn:dersquaze up inn}.
\end{proof}

Let ${\bf a}=(a_1,\ldots, a_n).$ Define:
\begin{equation}\label{eq:Tg on tensors}
T_g{\bf a}=(T_ga_1,\ldots, T_ga_n);\; \Ad_c  {\bf a}=(\Ad_c a_1,\ldots \Ad _c a_n);
\end{equation} 
and
\begin{equation}\label{eq:diffl D 11}
\iota_a {\bf a}=\sum_{j=0}^n (-1)^{\sum_{i=1}^j (|a|+1)(|a_i|+1)} (a_1,\ldots, a_j, a, a_{j+1}, \ldots, a_n)
\end{equation}
(for a homogeneous element $a$).

By analogy with \eqref{eq:curvaT}, \eqref{eq:curvaT1}, set 
\begin{equation}\label{eq:ngrygv}
\cT_g=\exp(\iota _{\alpha(g)}) T_g
\end{equation}
for every $g\in G;$
\begin{equation}\label{eq:ngrygv 1}
{\widetilde{\Ad}}(c)=\exp(-\iota_{Dc\cdot c^{-1}}) \Ad(c)
\end{equation}
for every $c\in A^\times$ of degree zero.

Note that $n$ could be equal to zero. In this case $\iota_a({\mathbf a})=0,$ $T_g{\bf a}={\bf a},$ and $\Ad(c)({\mathbf a})={\mathbf a}.$

For ${\mathbf a}_1=(a_1,\ldots, a_{n_1}),\, {\mathbf a}_2=(a_{n_1+1},\ldots,a_{n_2}), $ {\em etc.}, put
$$\varphi({\mathbf a}_1,{\mathbf a}_2,\ldots)=\varphi(a_1,\ldots, a_{n_1},a_{n_1+1},\ldots,a_{n_2},\ldots)$$

Every choice of $n_1,\ldots, n_{m+1}\geq 0$ such that $n_1+\ldots+n_{m+1}=n$ defines a presentation $(a_1,\ldots,a_n)=({\mathbf a}_1,\ldots,{\mathbf a}_{m+1}).$ Define
$$|{\mathbf a}_k|=\sum _{i=n_k+1}^{n_{k+1}} |a_i|.$$
Put 
\begin{equation}\label{eq:big phi}
(\phi(c_1,\ldots,c_m)\varphi)(a_1,\ldots,a_n)=\sum_{n_1,\ldots, n_{m+1}}(-1)^{N(n_1,\ldots, n_{m+1})} 
\end{equation}
$$\varphi({\mathbf a}_1, c_1, \tAd_{c_1}^{-1} {\mathbf a}_2,  c_2, \tAd(c_1c_2)^{-1}{\mathbf a}_3,\ldots, c_m, \tAd(c_1c_2\ldots c_m)^{-1} {\mathbf a}_{m+1}) (c_1c_2\ldots c_m)^{-1} $$
Here 
$$N(n_1,\ldots, n_{m+1})= \sum_{j=1}^m \sum _{i=1}^j (|{\mathbf a}_i|+n_i)$$
\begin{lemma}\label{lemma: algebra acts, der case}
The operators $\cT_g$, ${\widetilde{\Ad}}(c),$ and $\phi(c_1,\ldots,c_m)$ satisfy all the relations of Lemmas \ref{dfn:higher on Ext} and \ref{lemma:tasovki}.
\end{lemma}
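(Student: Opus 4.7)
The plan is to verify each of the identities in Lemmas~\ref{dfn:higher on Ext} and \ref{lemma:tasovki} for the curved operators $\cT_g$, $\tAd(c)$, and $\phi(c_1,\ldots,c_m)$, using the commutation relations of Lemma~\ref{lemma:T curved g 1}(a) to reduce every calculation to a chain-level identity that parallels the classical (flat) case. Throughout let $\Delta:=\delta+D-\iota_R$; note $\Delta^2=0$ by Corollary~\ref{cor:full diffl on stancomp}. The identities in Lemma~\ref{lemma:T curved g 1}(b)--(e) already supply the curved analogues of item~1) of Lemma~\ref{dfn:higher on Ext}: $\cT_g$ intertwines $\Delta$, $\tAd$ is multiplicative, $\cT_g\tAd(c)\cT_g^{-1}=\tAd(T_gc)$, and the cocycle $\cT_{g_1}\cT_{g_2}=\tAd(c(g_1,g_2))\cT_{g_1g_2}$ holds.

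The substantive new content is the $\phi$-identities of Lemma~\ref{dfn:higher on Ext}.2)--3). I would first treat the base case $m=1$: prove $[\Delta,\phi(c)]=\tAd(c)-\id$ by splitting $\Delta=\delta+(D-\iota_R)$. The $\delta$-bracket reproduces $\Ad(c)-\id$ by the same telescoping as in the classical Lemma~\ref{dfn:higher on Ext}.2), since the inner $\Ad(c)^{-1}$-conjugations of the arguments of $\varphi$ are transparent to the Hochschild coboundary. For the remainder, the relations $[D,\iota_a]=(-1)^{|D|}\iota_{Da}$ and $[\iota_R,\iota_a]=0$ from Lemma~\ref{lemma:T curved g 1}(a) allow $D$ to be moved past all $\iota$-insertions; the only surviving terms arise when $D$ falls on the explicit inserted $c$ or on the closing $c^{-1}$, and these assemble into $\bigl(\exp(-\iota_{Dc\cdot c^{-1}})-\id\bigr)\Ad(c)=\tAd(c)-\Ad(c)$. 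Summing with the $\delta$-part yields $\tAd(c)-\id$. The general identity of Lemma~\ref{dfn:higher on Ext}.3) then follows by induction on $m$: applying $\Delta$ to the explicit formula~\eqref{eq:big phi}, the $\delta$-piece produces, via the standard combinatorial rearrangement, the required sum with every $\tAd$ replaced by $\Ad$, while the $(D-\iota_R)$-piece supplies the correction $\tAd(c_1\cdots c_k)-\Ad(c_1\cdots c_k)$ at each of the $m$ prefixes by exactly the same mechanism as in the base case applied to the composite element $c_1\cdots c_k$.

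Finally, the shuffle identity of Lemma~\ref{lemma:tasovki} is purely combinatorial at the cochain level: substituting \eqref{eq:big phi} twice into $\phi(c_1,\ldots,c_m)\phi(d_1,\ldots,d_n)$ produces a double sum indexed by insertion positions of the outer $c$'s and inner $d$'s; the inner $\tAd(c_1\cdots c_k)^{-1}$-conjugations convert each $c_j$ sitting inside the second $\phi$ into $x_jc_jx_j^{-1}$ with $x_j$ the accumulated prefix of $d$'s, reproducing the shuffle structure exactly as stated, and the signs match because the $\tAd$-factors and $\iota$-insertions all have degree zero and commute pairwise. The main obstacle is the inductive step above: bookkeeping of the precise accumulation of $\iota_{Dc_k}$-terms across the $m$ prefixes and recognizing their total as the exponential defining $\tAd(c_1\cdots c_k)$. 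I would handle this by packaging all $c_i$-dependent factors into a formal generating series in the $\iota$'s and checking the identity term by term there, following the template already carried out in the proof of Lemma~\ref{lemma:T curved g 1}(c)--(d).
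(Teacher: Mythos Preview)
Your overall strategy---compute directly on cochains by splitting $\Delta=\delta+(D-\iota_R)$---is different from the paper's, and it contains a real gap.

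The paper does not split. Instead it transports the whole computation to the \emph{source} side: for tuples $\mathbf a=(a_1,\dots,a_n)$ it introduces the bar differential $\partial$ (equation~\eqref{eq:partial bar diffl}), the derivation $E$ acting entrywise, and the tensor-level operators $\iota_a$, $T_g$, $\Ad_c$, $\cT_g$, $\tAd_c$ from \eqref{eq:Tg on tensors}--\eqref{eq:ngrygv 1}. It then observes that the identities of Lemma~\ref{lemma:T curved g 1} hold verbatim for these tensor-level operators with $\partial$ in place of $\delta$, and that the cochain-level relations of Lemmas~\ref{dfn:higher on Ext} and \ref{lemma:tasovki} follow formally from this, since $\phi(c_1,\dots,c_m)$ acts on a cochain $\varphi$ by precomposing with the corresponding tensor-level insertion operator and then post-multiplying by $(c_1\cdots c_m)^{-1}$. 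The point is that $(\Delta,\tAd,\cT)$ play exactly the same structural role relative to one another as $(\delta,\Ad,T)$ did in the uncurved setting, so the ``same'' proof works once the basic commutators are in place.

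Your proposed splitting breaks down already at $m=1$. In \eqref{eq:big phi} the tail arguments are acted on by $\tAd(c)^{-1}=\Ad(c)^{-1}\exp(\iota_\beta)$ with $\beta=Dc\cdot c^{-1}$, not by $\Ad(c)^{-1}$ alone. Since $[\delta,\iota_\beta]=\ad(\beta)\neq 0$ (Lemma~\ref{lemma:rels for jotas}), the curved $\phi(c)$ already contains pieces that do \emph{not} commute with $\delta$, so $[\delta,\phi(c)]$ is not simply $\Ad(c)-\id$ as in the classical case; extra $\ad(\beta)$-type terms appear and must cancel against contributions from $[D-\iota_R,\phi(c)]$. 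Your claim that ``the inner $\Ad(c)^{-1}$-conjugations are transparent to the Hochschild coboundary'' is therefore false for the curved $\phi$, and the clean division of labor you describe (classical part from $\delta$, correction from $D-\iota_R$) does not hold. The same issue propagates to the inductive step and to the shuffle identity: each $\tAd^{-1}$ carries its own exponential of insertions, and these interact with both $\delta$ and $D$ in ways your outline does not track. A direct approach can be made to work, but it requires keeping $\Delta$ intact and recognising that the curved identities are formal consequences of the curved commutation relations---which is precisely what the paper's tensor-level argument packages cleanly.
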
 
\begin{proof}
Define for ${\mathbf a}=(a_1,\ldots,a_n)$ and for a homogenous derivation $E$
\begin{equation}\label{eq: iota Delta on tensors}
E{\bf a}=\sum _{j=1}^n (-1) ^{|E|\sum_{p<j}|a_p|}(a_1,\ldots, E a_j, \ldots, a_n)
\end{equation}
Also put
\begin{equation}\label{eq:partial bar diffl}
\partial {\mathbf a}=\sum _{j=1}^{n-1} (-1) ^{\sum _{p\leq j} |a_p|} (a_1,\ldots, a_ja_{j+1}, \ldots, a_n)
\end{equation}
Note that Lemma \ref{lemma:T curved g 1} holds for $\cT_g$ and $\tAd_c$ as in \eqref{eq:ngrygv}, \eqref{eq:ngrygv 1} and for $D,$ $\iota,$ {\em etc.} as above, if one replaces $\delta$ by $\partial.$ (In fact, a) can be easily checked, and the rest follows formally from a)). It is easy to deduce Lemma \ref{lemma: algebra acts, der case} from this.
\end{proof}
We get a generalization of Theorem \ref{thm:A infty action on Ext}:
\begin{thm}\label{thm:A infty action on Ext der case}
There is an $A_\infty$ action of $G$ on $C^\bullet (V,A,W)$ such that $T(g)$ is equal to $\cT_g$ as in \eqref{eq:curvaT}.
\end{thm}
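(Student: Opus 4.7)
The plan is to mimic the proof of Theorem \ref{thm:A infty action on Ext} but replace the total differential $\delta$, the operators $T_g$, $\Ad(c)$, $\phi(c_1,\ldots,c_m)$ on $C^\bullet(V,A,W)$ by their twisted analogs $\delta+D-\iota_R$, $\cT_g$, $\tAd(c)$, $\phi(c_1,\ldots,c_m)$ defined in \eqref{eq:curvaT}, \eqref{eq:curvaT1}, \eqref{eq:big phi}. By Corollary \ref{cor:full diffl on stancomp} the total differential squares to zero, and by Lemma \ref{lemma: algebra acts, der case} the twisted operators satisfy exactly the same algebraic relations (action, multiplicativity up to $\tAd(c(g_1,g_2))$, Leibniz-type identities for $\phi$, and the shuffle identity of Lemma \ref{lemma:tasovki}) as their un-twisted counterparts satisfy with respect to $\delta$.

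Given this, I would form the group $\widetilde{G}=G\ltimes_c A^\times$ exactly as in the proof of Theorem \ref{thm:A infty action on Ext}: elements are pairs $ag$ with multiplication \eqref{eq:G tilda c grp}, and the normal subgroup is $H=A^\times$. The twisted operators determine a DG-algebra map
\[
\cB(A^\times,\widetilde{G}) \longrightarrow \End^\bullet(C^\bullet(V,A,W))
\]
with differential $[\delta+D-\iota_R,\ \cdot\ ]$, sending $g\in\widetilde{G}$ to $\cT_g$, an element $c\in A^\times$ to $\tAd(c)$, and $\Phi(c_1,\ldots,c_m)$ to $\phi(c_1,\ldots,c_m)$. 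Lemma \ref{lemma: algebra acts, der case} is precisely the statement that this assignment respects all defining relations of $\cB(A^\times,\widetilde{G})$ (relations a), b), c) from the proof of Lemma \ref{lemma:Lemma A infty actions}).

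Now I would invoke verbatim the rest of the proof of Lemma \ref{lemma:Lemma A infty actions}: the algebra $\cB(A^\times,\widetilde{G})$ is quasi-isomorphic to $k[G]$ via \eqref{eq:quism Bc}, so there exists a morphism of DG-algebras $\operatorname{Cobar}\operatorname{Bar}(k[G]) \to \cB(A^\times,\widetilde{G})$ over $k[G]$, unique up to homotopy. Composing with the action map above, the image of the generator $(g_1|\cdots|g_n)$ defines the operator $T(g_1,\ldots,g_n)\in\Hom^{1-n}(C^\bullet,C^\bullet)$, and the cobar relations translate into exactly the $A_\infty$-action identities \eqref{eq:A infty action}. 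By construction $T(g)=\cT_g$, as required. The recursive formulas of Remark \ref{rmk:all not bad c} carry over with no change.

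The one step that requires real verification is Lemma \ref{lemma: algebra acts, der case} --- establishing that the twisted operators satisfy the same relations as the untwisted ones. As the proof of Lemma \ref{lemma:T curved g 1} already indicates, this is the main technical obstacle: each identity involves cancellations between the explicit $\iota_{\alpha(g)}$, $\iota_{Dc\cdot c^{-1}}$, and $\iota_R$ terms, governed by the compatibility relations of Definition \ref{dfn:dersquaze up inn}. The a) part of Lemma \ref{lemma:T curved g 1} (the commutation relations $[\delta,\iota_a]=\ad(a)$, $[E,\iota_a]=(-1)^{|E|}\iota_{Da}$, $[\iota_{a_1},\iota_{a_2}]=0$, with $\delta$ replaced by the bar differential $\partial$) is the formal backbone: once established, the multiplicativity relations for $\cT_g$ and $\tAd(c)$ and the shuffle formula for $\phi(c_1,\ldots,c_m)\phi(d_1,\ldots,d_n)$ follow by a purely formal manipulation of the exponentials $\exp(\iota_{\alpha(g)})$, $\exp(-\iota_{Dc\cdot c^{-1}})$, exactly as in the computation displayed in the proof of Lemma \ref{lemma:T curved g 1}. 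After that, the construction of the $A_\infty$ action is an application of the abstract machinery already set up.
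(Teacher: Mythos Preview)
Your proposal is correct and follows essentially the same route as the paper: the theorem has no separate proof beyond the setup, since Remark \ref{rmk:full diff on stcompl} already announces that one repeats the argument of Theorem \ref{thm:A infty action on Ext} with the twisted operators $\cT_g$, $\tAd(c)$, and the new $\phi(c_1,\ldots,c_m)$ of \eqref{eq:big phi}, and Lemma \ref{lemma: algebra acts, der case} is precisely the verification that these satisfy the relations of Lemmas \ref{dfn:higher on Ext} and \ref{lemma:tasovki}, after which Lemma \ref{lemma:Lemma A infty actions} applies verbatim. Your identification of Lemma \ref{lemma: algebra acts, der case} (built on Lemma \ref{lemma:T curved g 1}) as the only place where real work happens matches the paper exactly.
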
 
%%%%%%%%%%%%%%%%%%%%%%%%%%%%%%%%%%%%%%%%%%%%%%%%%%%%%%%%%%%%%%%%%%%%%%%%%%%%%%%%%%%%%%%%%%%%%%%%%%%%%%%%%%%%%%%%%%%%%%%%%%%%%%%%%%%%%%%%%%%%%%%%%%%%%%%%%%%%%%%%%%%%%%%%%%%%%%
\subsubsection{Behavior with respect to equivalences}\label{eq:como and Yo}
Now consider an equivalence between two actions up to inner automorphisms and compatible derivations
\begin{equation}\label{eq:fat b exhibited}
{\bf b}=(\{b(g)\}, \beta): (T,c), (D,\alpha,R)\isomoto (T',c'), (D',\alpha',R')
\end{equation}
If $V$ is a module with a derivation $D_V$ and an action $T_g$ compatible with the action on the left, let ${\bf b}_* V$ be $V$ equipped with the derivation $D'_V$ and with the action $T'_g$ compatible with the action on the right ({\em cf.} \eqref{eq:equiv vs compat mo}).
Let 
$$\cB_c=\cB(A^\times, G\ltimes _c A^\times);\; \cB_{c'}=\cB(A^\times, G\ltimes _{c'} A^\times)$$ 
({\em cf.} definitions in Lemma \ref{lemma:Lemma A infty actions} and in Theorem \ref{thm:A infty action on Ext}).
\begin{lemma}\label{lemma:isos c c'}
The formulas
$$g\mapsto b(g)g, g\in G; c\mapsto c, c\in A^\times$$
define an isomorphism 
$$G\ltimes _c A^\times \otomosi G\ltimes _{c'} A^\times$$
of groups over $G.$ Together with 
$$\Phi(c_1,\ldots, c_m)\mapsto \Phi(c_1,\ldots, c_m),$$
they define an isomorphism of differential graded algebras 
$${\mathbf b}^\dagger: \cB_c\otomosi \cB_{c'}$$
over $k[G].$
\end{lemma}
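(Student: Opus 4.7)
The lemma decomposes into two assertions, both verified by checking that the proposed formulas preserve the defining relations on generators. First, the map $\psi\colon \tiG \to \tiG'$ determined by $\psi(c)=c$ for $c\in A^\times$ and $\psi(g)=b(g)g$ for $g\in G$ is a well-defined group isomorphism. Second, extending $\psi$ by the identity on the symbols $\Phi(c_1,\ldots,c_m)$ yields a DG-algebra isomorphism ${\mathbf b}^\dagger \colon \cB_c \to \cB_{c'}$. Both maps are forced on generators, so the entire proof reduces to a series of relation verifications; the compatibility with the projections to $G$ and to $k[G]$ is then automatic from the formulas.

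\textbf{Group isomorphism.} The group $\tiG = G\ltimes_c A^\times$ is generated by the embedded copies of $A^\times$ and $G$, subject to (i) the conjugation relation $g\,a\,g^{-1}=T_g(a)$ for $g\in G,\,a\in A^\times$, and (ii) the cocycle relation $g_1\cdot g_2 = c(g_1,g_2)\cdot(g_1 g_2)_G$, where $(g_1 g_2)_G$ denotes the product taken in $G$. Applying $\psi$ to (i) and computing in $\tiG'$ via the corresponding conjugation relation with $T'$ reduces at once to the first equivalence identity $T'_g=\Ad(b(g))\,T_g$. For (ii), I would expand $\psi(g_1)\psi(g_2)=b(g_1)\,g_1\,b(g_2)\,g_2$ in $\tiG'$, commute $b(g_2)$ past $g_1$ using the conjugation relation, rewrite $g_1 g_2=c'(g_1,g_2)(g_1 g_2)_G$ in $\tiG'$, and collect terms to reduce to the second equivalence identity $c'(g_1,g_2)=b(g_1)\,T_{g_1}(b(g_2))\,c(g_1,g_2)\,b(g_1 g_2)^{-1}$. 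Bijectivity is immediate from invertibility of each $b(g)\in A^\times$; compatibility over $G$ follows because $\psi(g)$ projects to $g$.

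\textbf{DG-algebra extension.} By Lemma \ref{lemma:Lemma A infty actions}, $\cB_c = \cB(A^\times,\tiG)$ is presented by the group ring of $\tiG$ together with symbols $\Phi(c_1,\ldots,c_m)$ of degree $-m$, modulo (a) the $\tiG$-equivariance $g\,\Phi(c_1,\ldots,c_m)\,g^{-1}=\Phi(T_g c_1,\ldots,T_g c_m)$, (b) the formula for the differential $\partial$, and (c) the shuffle product of Lemma \ref{lemma:tasovki}. Relations (b) and (c) refer only to $A^\times$ and to the $\Phi$-symbols, not to the cocycle; since ${\mathbf b}^\dagger$ is the identity on both sets of data, these relations are preserved automatically. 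For relation (a), I would compute $\psi(g)\,\Phi(c_1,\ldots,c_m)\,\psi(g)^{-1}$ in $\cB_{c'}$ by applying (a) in $\cB_{c'}$ first to the $g$-part of $\psi(g)$ (producing $\Phi(T'_g c_1,\ldots,T'_g c_m)$) and then to the remaining $b(g)\in A^\times\subset\tiG'$ (producing a further $\Ad(b(g))$-conjugation on each argument, by (a) applied to the element $b(g)$); the equivalence identity $T'_g=\Ad(b(g))\,T_g$ precisely cancels the $b$-conjugation against the $T'_g$ to leave $T_g$, matching ${\mathbf b}^\dagger$ applied to the right-hand side. Compatibility with $\partial$ is automatic since $\partial$ is defined by a formula independent of the cocycle, and compatibility over $k[G]$ follows from the group-level compatibility.

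\textbf{Main obstacle.} The only delicate step is the careful bookkeeping of the $b(g)^{\pm 1}$-factors in the verifications of (ii) for the group and (a) for the DG-algebra: their placement and the precise order in which the two equivalence identities are invoked must be tracked. The equivalence formulas are calibrated precisely so that these $b$-factors cancel; once the bookkeeping is set up correctly, the remaining verifications are mechanical.
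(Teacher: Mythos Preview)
Your overall strategy—present both groups and both DGAs by generators and relations, then check that the proposed formulas preserve the relations—is the natural one, and the paper states the lemma without proof, so there is nothing to compare against. However, you have the direction of the map reversed, and with your direction the verifications do not go through.

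The arrow $\otomosi$ in the lemma is a leftward isomorphism, so the map goes from $G\ltimes_{c'}A^\times$ to $G\ltimes_c A^\times$, not the other way. Concretely, trace your check of relation (a): in $\cB_{c'}$ you compute $\psi(g)\Phi(c_1,\ldots)\psi(g)^{-1}=b(g)\,g\,\Phi(c_1,\ldots)\,g^{-1}b(g)^{-1}$, and using the conjugation relation in $\cB_{c'}$ you obtain $\Phi(\Ad(b(g))T'_g c_1,\ldots)$. For this to equal $\Phi(T_g c_1,\ldots)$ you would need $T'_g=\Ad(b(g)^{-1})T_g$, which is the \emph{inverse} of the equivalence identity \eqref{eq:equiv T c}. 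The same mismatch occurs in your check of the group cocycle relation: expanding $\psi(g_1)\psi(g_2)$ in $G\ltimes_{c'}A^\times$ gives $b(g_1)T'_{g_1}(b(g_2))c'(g_1,g_2)(g_1g_2)_G$, and setting this equal to $c(g_1,g_2)b(g_1g_2)(g_1g_2)_G$ does not reduce to \eqref{eq:equiv T c} once you substitute $T'_{g_1}=\Ad(b(g_1))T_{g_1}$.

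If instead you take $\psi\colon G\ltimes_{c'}A^\times\to G\ltimes_c A^\times$ (same formulas on generators), then the computations land in $\cB_c$, conjugation by $g$ contributes $T_g$ rather than $T'_g$, and both checks reduce on the nose to the equivalence identities $T'_g=\Ad(b(g))T_g$ and $c'(g_1,g_2)=b(g_1)T_{g_1}(b(g_2))c(g_1,g_2)b(g_1g_2)^{-1}$. With that one correction your argument is complete.
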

\begin{definition}\label{dfn:ebeta}
$${\mathbf b}^*=\exp(\iota _\beta): C^\bullet (V,A,W)\otomosi C^\bullet ({\mathbf b}_*V,A,{\mathbf b}_*W)$$
\end{definition}
\begin{proposition}\label{prop:equivalence on bar cplx} If one views $C^\bullet (V,A,W)$ as a differential graded $\cB_{c'}$-modules {\em via} the morphism ${\mathbf b}^\dagger,$ then ${\mathbf b}^*$ is a morphism of differential graded modules over $\cB_{c'}$. For two composable equivalences ${\mathbf b}_1$ and ${\mathbf b}_2$, one has 
$$({\mathbf b}_1{\mathbf b}_2)^\dagger = {\mathbf b}_2^\dagger{\mathbf b}_1^\dagger; \;( {\mathbf b}_1{\mathbf b}_2)^*={\mathbf b}_2^* {\mathbf b}_1^*$$
\end{proposition}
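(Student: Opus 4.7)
The plan is to reduce the proposition to a small number of operator identities on the standard complex, all of which follow from the defining relations \eqref{eq:equiv T c}, \eqref{eq:eq ders up to inn}--\eqref{eq:eq ders up to inn 11} of an equivalence together with the calculus of $\iota$, $\ad$, $T$, $\Ad$ developed in Lemma~\ref{lemma:rels for jotas} and Lemma~\ref{lemma:T curved g 1}. Since Lemma~\ref{lemma:isos c c'} already provides the DG algebra isomorphism ${\bf b}^\dagger$, the bulk of the work is to verify that ${\bf b}^* = \exp(\iota_\beta)$ intertwines the $\cB_{c'}$-action and to check the composition identities.

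The three intertwining identities I would establish are: (i) $\exp(\iota_\beta)(\delta + D - \iota_R)\exp(-\iota_\beta) = \delta + D' - \iota_{R'}$, expressing that ${\bf b}^*$ is a chain map; (ii) $\exp(\iota_\beta)\tAd(c)\exp(-\iota_\beta) = \tAd'(c)$ for degree-zero $c \in A^\times$, where $\tAd'$ uses $D'$; and (iii) $\exp(\iota_\beta)\bigl(\tAd(b(g))\cT_g\bigr)\exp(-\iota_\beta) = \cT'_g$, matching the action of ${\bf b}^\dagger(g) = b(g)g \in \cB_{c'}$ on ${\bf b}_*V, {\bf b}_*W$ with its pullback action on $V, W$. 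Identity (i) is a direct consequence of $[\iota_\beta, \delta] = -\ad_\beta$, $[\iota_\beta, D] = \iota_{D\beta}$, and $[\iota_\beta, \iota_\beta] = 0$, combined with $D' = D + \ad\beta$ and $R' = R + D\beta + \beta^2$. Identity (ii) is a short computation using $\Ad(c)\iota_\beta\Ad(c)^{-1} = \iota_{\Ad_c\beta}$ and the equality $D'c\cdot c^{-1} = Dc\cdot c^{-1} + \beta - \Ad_c\beta$. Identity (iii) is the central computation and is a more elaborate instance of Lemma~\ref{lemma:T curved g 1}(d); after commuting $T_g$ past $\iota_\beta$ via $T_g \iota_\beta T_g^{-1} = \iota_{T_g\beta}$ and using the commutativity of all $\iota$'s among themselves, the various exponents collect into $\iota_{\alpha'(g)}$ exactly when $\alpha'(g) = -Db(g)\cdot b(g)^{-1} + \Ad_{b(g)}(\alpha(g) + T_g\beta)$. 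Compatibility with $\phi(c_1,\ldots,c_m)$ is then automatic from (i) and (ii), since by Lemma~\ref{lemma:tasovki} those operators are generated algebraically by $\tAd(c_i)$'s and the bar differential.

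For the composition statement, $({\bf b}_1{\bf b}_2)^\dagger = {\bf b}_2^\dagger{\bf b}_1^\dagger$ reduces to checking on generators of $\cB_c$: on $A^\times$ and on $\Phi$ both sides act as the identity, and on $g \in G$ it follows from $b = b_2 b_1$ as in \eqref{eq:comp for D,R}. Meanwhile $({\bf b}_1{\bf b}_2)^* = {\bf b}_2^*{\bf b}_1^*$ reduces to $\exp(\iota_{\beta_1 + \beta_2}) = \exp(\iota_{\beta_2})\exp(\iota_{\beta_1})$, which holds by $[\iota_{\beta_1}, \iota_{\beta_2}] = 0$.

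The hard part will be identity (iii): the bookkeeping needed to match the $\exp(\iota_\beta)$-conjugation of the composite $\exp(-\iota_{Db(g)\cdot b(g)^{-1}})\Ad_{b(g)}\exp(\iota_{\alpha(g)})T_g$ against $\exp(\iota_{\alpha'(g)})\exp(-\iota_{D'b(g)\cdot b(g)^{-1}})\Ad_{b(g)}T_g$ requires careful sign tracking and repeated application of the basic bracket relations in Lemma~\ref{lemma:rels for jotas}. Once this is in place, everything else in the proposition falls out of it.
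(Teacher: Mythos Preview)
Your approach is the paper's own: the proof there reduces to two operator identities recorded as Lemma~\ref{lemma: ebetta and its}, namely $\tAd_{b(g)}\,\cT_g\,\exp(\iota_\beta)=\exp(\iota_\beta)\,\cT'_g$ and $\tAd_c\,\exp(\iota_\beta)=\exp(\iota_\beta)\,\tAd'_c$, which are your (iii) and (ii). You additionally spell out the chain-map identity (i), which the paper leaves implicit but is of course part of ``morphism of differential graded modules''; and your treatment of the composition law is the same one-line argument.

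Two small cautions. First, the orientation you wrote for (iii) is reversed from what actually holds and from what the paper proves: when you carry out the computation you describe, the exponents collapse via \eqref{eq:eq ders up to inn 1} only in the order $\tAd_{b(g)}\,\cT_g\,\exp(\iota_\beta)=\exp(\iota_\beta)\,\cT'_g$, not under conjugation by $\exp(\iota_\beta)$ from the left. (Your parenthetical ``${\bf b}^\dagger(g)=b(g)g\in\cB_{c'}$'' also has the target wrong; ${\bf b}^\dagger$ lands in $\cB_c$.) Second, your justification for compatibility with the $\Phi$-generators invokes the wrong lemma: Lemma~\ref{lemma:tasovki} describes products of $\phi$'s, not their generation from $\tAd$ and $\delta$. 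The correct reason is that the explicit formula \eqref{eq:big phi} for $\phi(c_1,\dots,c_m)$ is built solely from insertions of the $c_j$ and from $\tAd$-operators applied to tensor substrings; since $\exp(\iota_\beta)$ commutes with insertions (all $\iota$'s commute) and conjugates $\tAd$ to $\tAd'$ by (ii), compatibility with $\phi$ follows.
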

\begin{proof} The statement follows from
\begin{lemma}\label{lemma: ebetta and its}
a) $\tAd_{b(g)} \cT_g \exp(\iota_\beta) =\exp(\iota_\beta)\cT'_g$

b) $\tAd_c \exp(\iota_\beta)=\exp(\iota_\beta) \tAd'_c$
\end{lemma} 
To prove the lemma, observe
$$\tAd _{b(g)} \cT_g \exp(\iota_{\beta}) {\cT'_g} ^{-1}=$$
$$\exp(-\iota_{Db(g)\cdot b(g)^{-1}}) \Ad_{b(g)} \exp(\iota _{\alpha(g)}) T_g \exp(\iota_\beta){ T'_{g}}^{-1} \exp(-\iota_{\alpha'(g)})=$$
$$\exp(-\iota_{Db(g)\cdot b(g)^{-1}}) \exp(\iota_{\Ad_{b(g)}\alpha(g)})\exp(\iota_{\Ad_{b(g)}T_g\beta}) \exp(-\iota_{\alpha'(g)})=\exp(\iota_\beta)$$ 
because of \eqref{eq:eq ders up to inn}. This proves a). To prove b), note that
$$\tAd_c \exp(\iota_\beta) \tAd_c^{-1}= \exp(-\iota_{Dc\cdot c^{-1}}) \Ad_c \exp(\iota_\beta) \Ad_c^{-1} \exp(\iota_{D'c\cdot c^{-1}})=$$
$$\exp(-\iota_{Dc\cdot c^{-1}}) \exp(\iota_{T_c\beta})  \exp(\iota_{Dc\cdot c^{-1}+\beta-T_c\beta})=\exp(\iota_\beta)$$
\end{proof}
\subsubsection{Behavior with respect to Yoneda product}\label{sss:under Yo}
%%%%%%%%%%%%%%%%%%%%%%%%%%%%%%%%%%%%%%%%%%%%%%%%%%%%%%%%%%%%%%%%%%%%%%%%%%%%%%%%%%%%%%%%%%%%%%%%%%%%%%%%%%%%%%%%%%%%%%%%%%%%%%%%%%%%%%%%%%%%%%%%%%%%%%%%%%%%%%%%%%%%%%%%%%%%%%
Now let us describe the relation of the $A_\infty$ action on a quotient to Yoneda product
\begin{equation}\label{eq:coprod Bc}
\smile: C^\bullet (V_1,A,V_2)\otimes C^\bullet(V_2,A,V_3)\to C^\bullet(V_1,A,V_3)
\end{equation}
given by
\begin{equation}\label{eq:Yoneda}
 (\varphi\smile \psi)(a_1,\ldots,a_{m+n})=(-1)^{(|\varphi|+m)\sigma_j(|a_j|+1)} \varphi(a_1,\ldots,a_m)\psi(a_{m+1},\ldots,a_{m+n})
\end{equation}
\begin{lemma}\label{lemma:coproduct Bc}
The coproduct
$$\Delta \phi(c_1,\ldots, c_m)=\sum_{j=1}^m\phi(c_1,\ldots, c_j)\otimes c_1\ldots c_j \phi(c_{j+1},\ldots,c_m)$$
turns the algebra $\cB_c$ into a differential graded bialgebra. The morphism \eqref{eq:quism Bc} is a bialgebra morphism.  If we write $\Delta a=\sum a^{(1)} \otimes a^{(2)},$ then 
$$a (\varphi\smile \psi)=\sum a^{(1)} \varphi \smile a^{(2)} \psi $$
for $a$ in $\cB_c.$ Morphisms $b^\dagger$ from Lemma \ref{lemma:isos c c'} are morphisms of bialgebras.
\end{lemma}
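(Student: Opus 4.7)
The plan is to define $\Delta$ on the generators of $\cB_c$ and extend multiplicatively: declare $\Delta(g)=g\otimes g$ for every $g\in\tiG=G\ltimes_c A^\times$, and set $\Delta\Phi(c_1,\ldots,c_m)$ equal to the deconcatenation sum in the statement, with the convention that the $j=0$ and $j=m$ endpoints contribute $1\otimes\Phi(c_1,\ldots,c_m)$ and $\Phi(c_1,\ldots,c_m)\otimes 1$. The first task is to verify that $\Delta$ descends to the quotient defining $\cB_c$, i.e.\ respects the defining relations (a), (b), (c) of Lemma \ref{lemma:Lemma A infty actions}. Relation (a) is immediate because every $g\in\tiG$ is group-like, so conjugating each $c_i$ by $g$ commutes with deconcatenation. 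Coassociativity is the standard property of deconcatenation on a tensor coalgebra.

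Compatibility with the differential is a direct calculation: write $\partial\Phi(c_1,\ldots,c_m)=-c_1\Phi(c_2,\ldots,c_m)-\sum_j(-1)^j\Phi(c_1,\ldots,c_jc_{j+1},\ldots,c_m)-(-1)^m\Phi(c_1,\ldots,c_{m-1})$ and apply $\Delta$ term by term. The interior-contraction terms split cleanly into whichever tensor factor contains the contracted pair. The leftmost term $c_1\Phi(c_2,\ldots,c_m)$ produces under $\Delta$ two types of summands: those where $c_1$ and the subsequent $\Phi$-piece fall on the same side of the tensor cut (matching the interior contributions of $(\partial\otimes 1+1\otimes\partial)\Delta\Phi$ where $\partial$ acts on one factor), and those where $c_1$ lands alone on the left (matching the $j=1$ summand of the deconcatenation); the truncation term $\Phi(c_1,\ldots,c_{m-1})$ behaves symmetrically on the right. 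The augmentation \eqref{eq:quism Bc} intertwines $\Delta$ with the group-like coproduct on $k[G]$: each $\Phi$ is killed, the endpoint summands of $\Delta\Phi$ vanish because they still carry a $\Phi$-factor, and on $\tiG$ both sides produce a group-like in $k[G]$.

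For the Yoneda compatibility, by multiplicativity of both $\Delta$ and $\smile$ it suffices to verify the formula for $a=g$, $a=c\in A^\times$, and $a=\Phi(c_1,\ldots,c_m)$. The first two cases are immediate because $T_g$ and $\Ad_c$ are algebra automorphisms of $C^\bullet(V,A,W)$, matching $\Delta g=g\otimes g$ and $\Delta c=c\otimes c$. For $\Phi$, expand $\phi(c_1,\ldots,c_m)(\varphi\smile\psi)$ using the insertion formula of Lemma \ref{dfn:higher on Ext}(3) and partition the sum by how many of the $c_i$'s are inserted among the first $M$ arguments (the $\varphi$-block) versus the last $N$ arguments (the $\psi$-block). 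If the first $j$ insertions land in the $\varphi$-block, the $\varphi$-block contribution is $(\phi(c_1,\ldots,c_j)\varphi)$ evaluated on its arguments; the $\psi$-block contribution, once one tracks the prefix conjugation by $c_1\ldots c_j$ that persists through the $\psi$-block together with the trailing factor $(c_1\ldots c_m)^{-1}$, equals $(c_1\ldots c_j)\cdot\phi(c_{j+1},\ldots,c_m)\psi$. Summing on $j$ reproduces exactly $\sum a^{(1)}\varphi\smile a^{(2)}\psi$. Finally, the morphism $\mathbf{b}^\dagger$ of Lemma \ref{lemma:isos c c'} fixes each $\Phi$-generator and each $c\in A^\times$, and sends $g\mapsto b(g)g$, which is a product of group-likes hence group-like; combined with the fact that $\mathbf{b}^\dagger$ is already an algebra morphism, this makes it a bialgebra morphism.

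The main obstacle is verifying that $\Delta$ is compatible with the shuffle product of Lemma \ref{lemma:tasovki}. The shuffle interleaves $(c_1,\ldots,c_m)$ with $(d_1,\ldots,d_n)$, conjugating each $c_j$ by the prefix $x_j$ equal to the product of those $d_k$ that lie to its left. Showing that cutting this shuffle at a fixed total position yields a shuffle on the left times a shuffle on the right, with the prefixes correctly renormalized so that each cut-factor matches the corresponding factor of $\Delta\phi(\vec c)\cdot\Delta\phi(\vec d)$, is the essential combinatorial bookkeeping; it is what forces the precise form of $\Delta$ on the $\Phi$-generators.
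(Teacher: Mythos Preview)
Your approach is the same as the paper's: the paper's entire proof is ``The proof is straightforward,'' and you have supplied exactly the direct verification that this sentence stands in for. The outline is sound --- define $\Delta$ on group-likes and on the $\Phi$-generators, check it against the relations (a)--(c), check compatibility with $\partial$, and verify the Yoneda formula by splitting the insertion positions of the $c_i$'s between the $\varphi$-block and the $\psi$-block.

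One small slip: you write that the $j=m$ endpoint of $\Delta\Phi(c_1,\ldots,c_m)$ is $\Phi(c_1,\ldots,c_m)\otimes 1$, but from the formula it is $\Phi(c_1,\ldots,c_m)\otimes c_1\cdots c_m$ (interpreting the empty $\Phi$ as the unit). You actually use the correct version later in your Yoneda argument, where you note that the $\psi$-block carries the prefix $c_1\cdots c_j$; at $j=m$ this prefix is $c_1\cdots c_m$, not $1$. So this is only a mis-statement in the preamble, not in the verification itself.
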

The proof is straightforward.
%%%%%%%%%%%%%%%%%%%%%%%%%%%%%%%%%%%%%%%%%%%%%%%%%%%%%%%%%%%%%%%%%%%%%%%%%%%%%%%%%%%%%%%
%%%%%%%%%%%%%%%%%%%%%%%%%%%%%%%%%%%%%%%%%%%%%%%%%%%%%%%%%%%%%%%%%%%%%%%%%%%%%%%%%%%%%%%
\subsection{$A_\infty$ action on the standard complex: the case of Lie groupoids}\label{sss:action a inf lie grpoid case}
\subsubsection{$A_\infty$ action of a Lie groupoid}\label{sss:A inf actn LG}
Consider a Lie groupoid $\cG$ with the manifold of objects $M.$ Let $\cA^\bullet$ be a sheaf of $\cO_M^\bullet$-algebras with an action of $\cG$ up to inner automorphisms and with a compatible flat connection up to inner derivations as in \ref{sss:Conns up to inn}. 
Recall the presheaves $\ucG^{(n)}$ on $M^{n+1}$ \eqref{eq:under G n}. Let also
\begin{equation}\label{eq:G ij}
\ucG^{(n)}_{jk}=p_{jk}^{-1}\ucG
\end{equation}
where $p_{jk}: M^{n+1}\to M^2$ is the projection to the $j$th and $k$th components.
\begin{definition}\label{dfn:A infty Lie groupoid}
An $A_\infty$ action of $\cG$ on a differential graded $\cO_M^\bullet$-module $\cC^\bullet$ is a collection of morphisms
$$T: \cG^{(n)}\to {\underline{\Hom}}^{1-n}(p^*_{n+1}\cC^\bullet, p^*_{1}\cC^\bullet),$$
$n\geq 1,$ such that \eqref{eq:A infty action} holds for every $g_1,\ldots, g_n$ where $g_j$ is a local section of $\ucG^{(n)}_{j,j+1}$. 
\end{definition} 
An $A_\infty$ morphism of $A_\infty$ actions is a collection of morphisms
$$\phi: \ucG^{(n)}\to {\underline{\Hom}}^{-n}(p^*_{n+1}\cC^\bullet, p^*_{1}\cC^\bullet),$$ 
$n\geq 0,$
such that \eqref{eq:A infty action mor} holds. 
%%%%%%%%%%%%%%%%%%%%%%%%%%%%%%%%%%%%%%%%%%%%%%%%%%%%%%%%%%%%%%%%%%%%%%%%%%%%%%%%%%%%%%%
%%%%%%%%%%%%%%%%%%%%%%%%%%%%%%%%%%%%%%%%%%%%%%%%%%%%%%%%%%%%%%%%%%%%%%%%%%%%%%%%%%%%%%%
\subsubsection{Action on the standard complex}\label{sss:act st LG}
Let $\cV^\bullet$ and $\cW^\bullet$ be two graded $\cA^\bullet$-modules with compatible actions of $\cG$ and with compatible connections $\nabla.$ Sometimes, to distinguish, we denote the three connections by $\nabla_{\cA},$ $\nabla_{\cV},$ and $\nabla_{\cW}$ respectively. Compatibility means, as usual, that 
$$\nabla(av)=\nabla (a)v+(-1)^{|a|}a\nabla(v)$$
for $a\in \cA^\bullet$ and $v\in \cV^\bullet.$

\begin{definition}\label{dfn:st compl Lie grpoid}{\em The standard complex } $\cCb (\cVb, \cAb, \cWb)$ is the complex of sheaves 
$$\cC^m=\prod_{p+n=m} {\underline{\operatorname{Hom}}}^p_{{\cO_M^{\bullet}}}({\otimes}_{{\cO_M^{\bullet}}}^n{\cAb}, {\underline{\operatorname{Hom}}}_{{\cO_M^{\bullet}}}(\cVb,\cWb))$$
with the differential $\delta+\nabla+\iota_R$ ({\em cf.} \eqref{eq:diffl D}, \eqref{eq:diffl D 1}, and Corollary \ref{cor:full diffl on stancomp}).

\end{definition}
\begin{remark}\label{rmk:standard cplx conn sheav} In other words, $\cCb$ is the standard complex computed over the algebra of scalars $\cO_M^\bullet$ and sheafifed. An example arises when $\cA$ is a bundle of algebras with a flat connection, $\cV$ and $\cW$ are bundles of modules with compatible flat connections, $\cO_M$ is the differential graded algebra of forms, and $\cVb$, resp. $\cWb,$ is the module of $\cV$- (resp. $\cW$)-valued forms. In this case $C^\bullet(\cV,\cA,\cW)$ is a bundle of complexes with an induced flat connection, and $\cCb(\cVb,\cWb)$ is the complex of forms with values in this bundle. Our situation is different in only one regard. Namely, our $\cOb$ will be mainly the algebra of $\Lambda$-valued forms. Accordingly, the exact nature of local cochains $\varphi(a_1,\ldots, a_n;\; v)$ that we allow needs to be specified. We will do this in \ref{ss:Inverse Images}.
\end{remark}

\begin{thm}\label{thm:A infty action on Ext der case grpoid case}
There is an $A_\infty$ action of $\cG$ on $\cCb (\cVb,\cAb,\cWb)$ such that $T(g)$ is equal to $\cT_g$ as in \eqref{eq:curvaT}.
\end{thm}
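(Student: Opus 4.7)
The plan is to reduce this Lie-groupoid version to the already-established group version, Theorem~\ref{thm:A infty action on Ext der case}, by replaying its construction locally on stalks and then checking that everything assembles into morphisms of sheaves of the required type. Concretely, I would introduce the curved operators on the standard complex exactly as before: for a local section $g$ of $\ucG$, set
\[
\cT_g \;=\; \exp(\iota_{\alpha(g)})\,T_g\,:\, p_2^*\cCb \;\longrightarrow\; p_1^*\cCb,
\]
for an invertible local section $c$ of $\cA^0$, set $\tAd(c)=\exp(-\iota_{\nabla c\cdot c^{-1}})\Ad(c)$, and for local sections $c_1,\dots,c_m$ of $\cA^{0,\times}$, set $\phi(c_1,\dots,c_m)$ by the same formula~\eqref{eq:big phi}. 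All of these make sense as morphisms of the sheaves on the appropriate fibered powers of $\ucG$ because $\iota_a$, $T_g$, $\Ad(c)$, and $\nabla$ act locally on cochains of $\cCb$.

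Next I would check that the analogs of Lemmas~\ref{lemma:T curved g 1} and~\ref{lemma: algebra acts, der case} hold verbatim in the sheaf-theoretic setting. Since every identity to be verified is a local algebraic equation among morphisms of $\cO_M^\bullet$-modules, it suffices to observe that all defining identities of \ref{dfn:action up to Lie groupoid case 1} are formally identical to the group case (Definition~\ref{dfn:dersquaze up inn}). Thus exactly the same manipulations --- commuting $e^{\iota_\bullet}$ past $T_g$, $\Ad$, $\delta$, and $\nabla$ --- go through, with the only bookkeeping change being that $T_{g_1}T_{g_2}=\Ad(c(g_1,g_2))T_{g_1g_2}$ is now an identity of morphisms between $p_1^*\cA$ and $p_3^*\cA$ over $M^3$. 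The key point is that, just as in Lemma~\ref{lemma: algebra acts, der case}, Corollary~\ref{cor:full diffl on stancomp} still produces a square-zero total differential $\delta+\nabla-\iota_R$ on $\cCb$.

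With these identities in hand, I would apply the abstract mechanism of Lemma~\ref{lemma:Lemma A infty actions} fiberwise. Namely, for local sections $g_1,\dots,g_n$ of the appropriate $\ucG^{(n)}_{j,j+1}$, form the group $\tiG_{\mathrm{loc}}=\cG_{\mathrm{loc}}\ltimes_c \cA^{0,\times}_{\mathrm{loc}}$ as in the proof of Theorem~\ref{thm:A infty action on Ext der case} and run the explicit recursion of Remark~\ref{rmk:all not bad c}:
\begin{equation*}
\Psi(g_1,\dots,g_{n+1}) \;=\; s\!\!\sum_{j=1}^n (-1)^j \Psi(g_1,\dots,g_j)\,T_{g_1\cdots g_j}\,\Psi(g_{j+1},\dots,g_{n+1})\,c(g_1\cdots g_j,\,g_{j+1}\cdots g_{n+1}),
\end{equation*}
with $\Psi_1(g)=g$, and then send $\Psi(g_1,\dots,g_n)$ through the algebra map of~\eqref{eq:Bar to Endom} using the sheafified operators $\cT_g$, $\tAd(c)$, $\phi(c_1,\dots,c_m)$. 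The resulting $T(g_1,\dots,g_n)\in \uHom^{1-n}(p_{n+1}^*\cCb,\,p_1^*\cCb)$ is a morphism of sheaves on $\ucG^{(n)}$ because each step of the recursion preserves the sheaf-of-morphisms structure.

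The relations \eqref{eq:A infty action} then follow from exactly the same Hochschild-style computation as in the group case, now applied to local sections of $\ucG^{(n)}$; likewise $T(g)=\cT_g$ by the base case of the recursion. The only nontrivial obstacle I would expect is purely bookkeeping: checking that the shuffle identity of Lemma~\ref{lemma:tasovki} really does hold for the sheafified $\phi$'s on intersections where the composition $g_1g_2$ is defined. This is routine but tedious. Everything else is a direct transcription of the group-case proof of Theorem~\ref{thm:A infty action on Ext der case}, using that the defining identities of an action up to inner derivations (Definition~\ref{dfn:action up to Lie groupoid case 1}) and of a compatible module (Section~\ref{ss:MCSLGrpd}) are local in exactly the sense needed.
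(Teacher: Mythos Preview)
Your proposal is correct and follows essentially the same route as the paper: the paper's proof consists of the single observation that one runs the recursive procedure of Remark~\ref{rmk:all not bad c} with $\phi(c_1,\dots,c_m)$ as in~\eqref{eq:big phi}, the only change being that in the map~\eqref{eq:Bar to Endom} one uses $\tAd(c)$ in place of $\Ad(c)$. Your write-up spells out the sheaf-theoretic bookkeeping more carefully than the paper does, but the mathematical content is identical.
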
 
\begin{proof} The operators $T(g_1,\ldots,g_n)$ are computed by a recursive procedure from Remark \ref{rmk:all not bad c} where $\phi(c_1,\ldots,c_n)$ are as in \eqref{eq:big phi}. The only difference is that the morphism \eqref{eq:Bar to Endom} sends $c$ not to $\Ad(c)$ but to ${\widetilde{\Ad}}(c)$ (cf. \eqref{eq:curvaT}, \eqref{eq:curvaT1}).
\end{proof} 
%%%%%%%%%%%%%%%%%%%%%%%%%%%%%%%%%%%%%%%%%%%%%%%%%%%%%%%%%%%%%%%%%%%%%%%%%%%%%%%%%%%%%%%
%%%%%%%%%%%%%%%%%%%%%%%%%%%%%%%%%%%%%%%%%%%%%%%%%%%%%%%%%%%%%%%%%%%%%%%%%%%%%%%%%%%%%%%
%%%%%%%%%%%%%%%%%%%%%%%%%%%%%%%%%%%%%%%%%%%%%%%%%%%%%%%%%%%%%%%%%%%%%%%%%%%%%%%%%%%%%%%
\subsection{The cochain complex of an $A_\infty$ action}\label{sss:cochains a inf} Given a sheaf of $\cO_M^\bullet$-modules $\cM^\bullet$ with an $A_\infty$ action of a Lie groupoid $\cG,$ define
$$C^\bullet (M,\cM^\bullet)=\prod_{n=0}^\infty \Gamma(M^{n+1}, { \underline{\Hom}}(\ucG^{(n)}, p_1^* \cM^{\bullet-n}))$$
with the differential 
$$(d\Phi)(g_1,\ldots,g_{n+1})=\nabla_\cM \Phi(g_1,\ldots,g_{n+1})+\sum_{j=1}^n T(g_1,\ldots,g_j)\Phi(g_{j+1},\ldots,g_{n+1})+$$
$$+\sum_{j=1}^n (-1)^j \Phi(g_1,\ldots, g_jg_{j+1}, \ldots, g_{n+1})+(-1)^{n+1} \Phi(g_1,\ldots,g_{n})$$ 
Here $g_j$ is a local section of $\ucG^{(n)}_{j,j+1}$, {\em cf.} \eqref{eq:G ij}.
%%%%%%%%%%%%%%%%%%%%%%%%%%%%%%%%%%%%%%%%%%%%%%%%%%%%%%%%%%%%%%%%%%%%%%%%%%%%%%%%%%%%%%%%%%%%%%%%%%%%%%%%%%%%%%%%%%%%%%%%%%%%%%%%%%%%%%%%%%%%%%%%%%%%%%%%%%%%%%%%%%%%%%%%%%%%%%
%%%%%%%%%%%%%%%%%%%%%%%%%%%%%%%%%%%%%%%%%%%%%%%%%%%%%%%%%%%%%%%%%%%%%%%%%%%%%%%%%%%%%%%%%%%%%%%%%%%%%%%%%%%%%%%%%%%%%%%%%%%%%%%%%%%%%%%%%%%%%%%%%%%%%%%%%%%%%%%%%%%%%%%%%%%%%%
%%%%%%%%%%%%%%%%%%%%%%%%%%%%%%%%%%%%%%%%%%%%%%%%%%%%%%%%%%%%%%%%%%%%%%%%%%%%%%%%%%%%%%%
%%%%%%%%%%%%%%%%%%%%%%%%%%%%%%%%%%%%%%%%%%%%%%%%%%%%%%%%%%%%%%%%%%%%%%%%%%%%%%%%%%%%%%%
\section{The $A_\infty$ action of $\pi_1(M)$ on standard complexes of $\cA^\bullet_M$-modules}\label{s:A infty action for symp}
\subsection{The action of $\pi_1(M)$ up to inner automorphisms on $\cA^\bullet_M$}\label{ss:action of pi one on curvA} Assume that $M$ is a symplectic manifold with a chosen $\Sp^4$ structure. In this section we construct:

1) a groupoid $\tG_M$ together with an epimorphism $\tG_M\to \pi_1(M)$ and a morphism of groups
\begin{equation}\label{eq:kernel and inners}
 \Ker(\tG_{x,x}\stackrel{p}{\longrightarrow} \pi_1(M)_{x,x})\stackrel{i}{\longrightarrow} \cA_{M,x}^\times;
\end{equation}

2) an action of $\tG_M$ on $\cA_M$ up to inner automorphisms such that any element $h$ of $\Ker(p)$ acts by conjugation with $i(h)$;

3) a flat connection on $\cA_M$ up to inner derivations compatible with the action of $\tG_M$, such that $\nabla$ is a Fedosov connection $\nabla_\cA$ with curvature $\frac{1}{\hbar}\omega.$

A more straightforward construction works in general under the assumption that $M$ has an $\Sp^4$ structure and yields the connection with $R=\frac{1}{i\hbar} \omega.$ A construction that is a little more involved yields a connection with $R=0$ under an additional restriction:
\begin{equation}\label{eq:hurewicz on form is ze}
\langle \pi_2(M), [\omega]\rangle=0
\end{equation}
meaning that the class of the symplectic form vanishes on the image of the Hurewicz homomorphism. 

By Lemma \ref{lemma:quotactupto grpoids} we will conclude that 
\begin{proposition}\label{prop:curvAis loc syst up to inn}
The sheaf of algebras 
\begin{equation}\label{eq: curvA grad}
\cA^\bullet _M=\Omega^\bullet _M(\cA)
\end{equation}
of $\cA_M$-valued forms on $M$ carries an action of $\pi_1(M)$ up to inner automorphisms and a compatible flat connection up to inner derivations such that $\nabla$ is a Fedosov connection $\nabla_\cA$ with curvature $\frac{1}{\hbar}\omega.$
\end{proposition}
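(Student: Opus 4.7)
The plan is to construct the three ingredients listed at the start of this section -- the extended groupoid $\tG_M$ with an epimorphism $p: \tG_M \to \pi_1(M)$ and a morphism $i$ from $\Ker(p)$ into $\cA_M^\times$; an action of $\tG_M$ on the bundle of algebras $\cA_M$ in which $\Ker(p)$ acts by $\Ad\circ i$; and a compatible flat connection up to inner derivations on $\cA_M$ with $\nabla = \nabla_\cA$ and $R = \frac{1}{i\hbar}\omega$ -- and then invoke Lemma \ref{lemma:quotactupto grpoids Lie}, which applied to any section of $p$ descends these data to the required action of $\pi_1(M)$ on $\cA_M^\bullet = \Omega^\bullet_M(\cA_M)$ together with the claimed compatible flat connection up to inner derivations.

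I would build $\tG_M$ as the extension of $\pi_1(M)$ by a bundle of Lie groups $\cH_M$ obtained by enlarging the gauge bundle $G_M$ of \ref{ss:fedcon} to contain the bundle with fiber $\Mp(2n)$; this bundle is globally defined precisely by virtue of the $\Sp^4$-structure, hypothesis (1) of \ref{sss:Review of Results}. The bundle $\cH_M$ sits naturally inside $\cA_M^\times$ as a subbundle of multiplicative units (since $\Mp(2n) \subset \cA$ by construction in \ref{ss:The alg curvA} and $\exp(\g_{\geq 1}) \subset \cA^\times$), giving the morphism $i$. A morphism of $\tG_M$ between two points $x$ and $y$ is then a homotopy class of paths together with a choice of lift of its $\Sp(2n)$-monodromy to $\Mp(2n)$ and of the residual gauge ambiguity to $\exp(\g_{\geq 1})$; this is precisely the data that removes the ambiguity in the parallel transport of a lifted Fedosov connection.

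To define the action, I would choose a lifted Fedosov connection $\tn$ with $\tn^2 = \frac{1}{i\hbar}\omega$, whose existence is Theorem \ref{thm:Fedosov classification}(1), and extend it to $\nabla_\cA$ on $\cA_M$ as in \ref{ss:extfedcon}. Parallel transport along a path equipped with its extra gauge data defines, for each $g\in\tG_M$, an honest algebra isomorphism $T_g$ between the corresponding fibers of $\cA_M$, with $T_{g_1}T_{g_2} = T_{g_1g_2}$ on the nose (so that $c(g_1,g_2)\equiv 1$) and with $T_h = \Ad(i(h))$ for $h\in\cH_M$ by construction of the extension. The connection $\nabla_\cA$ itself, together with $R = \frac{1}{i\hbar}\omega$ (which is central and $\nabla_\cA$-closed, so that the first two identities of Definition \ref{dfn:action up to Lie groupoid case 1} are satisfied) and with $\alpha(g) = -\nabla_\cA g\cdot g^{-1}$, then furnishes the compatible flat connection up to inner derivations; the remaining three identities of that definition follow from $\tn^2 = \frac{1}{i\hbar}\omega$ and from the fact that $T_g$ is parallel transport of the very same connection.

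Applying Lemma \ref{lemma:quotactupto grpoids Lie}, parts 1), 2), and 4), to $\cG = \tG_M$, $\Gamma = \pi_1(M)$, $\cA = \cA_M$, and $\cO_M^\bullet = \Omega^\bullet_M$ then produces the action of $\pi_1(M)$ on $\cA_M^\bullet$ up to inner automorphisms together with the compatible flat connection up to inner derivations with $\nabla = \nabla_\cA$ and curvature $\frac{1}{i\hbar}\omega$, as claimed. The hard part of this plan is the first step: globally constructing $\tG_M$ together with the embedding $\cH_M \hookrightarrow \cA_M^\times$, and verifying that every parallel-transport ambiguity of $\tn$ along paths and across small discs is absorbed into $\cH_M$. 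This is where the $\Sp^4$-structure is essentially used, since without it the $\Mp(2n)$-bundle that is the new ingredient in $\cH_M$ (beyond the old $G_M$ of \ref{ss:fedcon}) would not even exist. Once this geometric construction is in place, the algebraic verifications reduce to direct computation from $\tn^2 = \frac{1}{i\hbar}\omega$ and from the definition of parallel transport.
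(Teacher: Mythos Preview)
Your proposal is correct and follows essentially the same route as the paper: build the extension groupoid $\tG_M$ with kernel mapping into $\cA_M^\times$, equip $\cA_M$ with the action of $\tG_M$ and with the extended Fedosov connection (Theorem~\ref{thm:Fedosov classification}(1), \S\ref{ss:extfedcon}), then descend to $\pi_1(M)$ via the quotient Lemma~\ref{lemma:quotactupto grpoids Lie}.

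One small difference worth flagging: the paper's $\tG_M$ (see \S\ref{ss:The constr of tG} and the short exact sequence~\eqref{eq:SES of groups for tG}) is built as the groupoid of the $\Mp(2n)$-twisted bundle of \S\ref{ss:grpd of a twisted bdl}, so its kernel over each point is exactly $\Mp(2n,\bR)$, not the larger group $\Mp(2n)\ltimes\exp(\g_{\geq 1})$ you describe. Your enlarged kernel would also work (those extra factors are units in $\cA_M$), but it is more than is needed, and the paper's leaner construction avoids having to check that the $\exp(\g_{\geq 1})$-ambiguity in parallel transport is globally coherent. Relatedly, the paper does not literally define the action $T_g$ via parallel transport of $\nabla_\cA$; it uses the twisted-bundle groupoid machinery of \S\ref{sss:conns on tw bdles}--\ref{sss:conn on tG} to produce $T_g$, $\alpha(g)$, and $R$ directly from the cocycle data and the lifted Fedosov form $A_j$. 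Your parallel-transport description is morally equivalent but would require an extra argument that the holonomy ambiguities match the $\Mp(2n)$-extension exactly.
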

Now Theorem \ref{thm:A infty action on Ext der case grpoid case} implies
\begin{thm}\label{Thm:Main on A infty actions}
For any two differential graded $\cA_M^\bullet$-modules $\cVb,$ $\cWb$ with a compatible action of $\pi_1(M)$ and a compatible connection, the standard complex $\cCb(\cVb,\cAb,\cWb)$ has a natural $A_\infty$ action of $\pi_1(M).$
\end{thm}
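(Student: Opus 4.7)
The plan is to obtain the theorem by composing two results that have already been established. First, Proposition \ref{prop:curvAis loc syst up to inn} equips the sheaf of differential graded algebras $\cA^\bullet_M = \Omega^\bullet_M(\cA)$ with the structure needed to apply the machinery of Section \ref{s:From up to in to A infty} in its Lie-groupoid form: an action of the groupoid $\cG = \pi_1(M)$ up to inner automorphisms, together with a compatible flat connection up to inner derivations whose curvature is $R = \frac{1}{i\hbar}\omega$. Second, the modules $\cVb$ and $\cWb$ are, by hypothesis, graded $\cA^\bullet_M$-modules with a compatible $\pi_1(M)$-action and compatible connection in the sense of \ref{ss:MCSLGrpd}. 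This is exactly the input data required by Theorem \ref{thm:A infty action on Ext der case grpoid case}, which then produces an $A_\infty$ action of $\pi_1(M)$ on the standard complex $\cCb(\cVb, \cAb, \cWb)$ endowed with the total differential $\delta + \nabla + \iota_R$ from Definition \ref{dfn:st compl Lie grpoid}, and whose degree-one component $T(g)$ coincides with the twisted operator $\cT_g = \exp(\iota_{\alpha(g)}) T_g$ of \eqref{eq:curvaT}.

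To make the construction explicit, I would simply invoke the recursive formula from Remark \ref{rmk:all not bad c}: build the elements $\Psi(g_1, \ldots, g_n)$ inside the differential graded bialgebra $\cB(\cA^\times, \pi_1(M) \ltimes_c \cA^\times)$ using the contracting homotopy \eqref{eq:homotopy on B A A}, and then apply the algebra morphism \eqref{eq:Bar to Endom} that sends $g \mapsto \cT_g$, $c \mapsto \widetilde{\Ad}(c)$, and $\Phi(c_1, \ldots, c_m) \mapsto \phi(c_1, \ldots, c_m)$, with $\phi$ as in \eqref{eq:big phi} and $\widetilde{\Ad}$ as in \eqref{eq:curvaT1}. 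The $A_\infty$ identities \eqref{eq:A infty action} are then a formal consequence of Lemma \ref{lemma: algebra acts, der case} (which collects the algebraic relations satisfied by $\cT_g$, $\widetilde{\Ad}(c)$, and $\phi(c_1, \ldots, c_m)$) together with the quasi-isomorphism $\cB_c \simeq k[\pi_1(M)]$ from Lemma \ref{lemma:Lemma A infty actions}; the compatibility of $\cT_g$ and $\widetilde{\Ad}(c)$ with the total differential is already verified in Lemma \ref{lemma:T curved g 1}.

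The main obstacle, and the only step where the Lie-groupoid situation differs from the discrete one, is the one flagged in Remark \ref{rmk: discontinuity}: a global continuous section of $\tG_M \to \pi_1(M)$ need not exist, and consequently the cocycle $c(g_1, g_2)$ and the one-form $\alpha(g)$ are only locally continuous as functions of the groupoid element. On any sufficiently small contractible chart $U \subset M$ a continuous section is available, and the recursive construction above yields a bona fide $A_\infty$ action of $\pi_1(U)$. Over overlaps, two such local choices differ by an equivalence of the type described in Lemma \ref{lemma:quotactupto grpoids Lie}(3), and Proposition \ref{prop:equivalence on bar cplx} provides the required $\mathbf{b}^* = \exp(\iota_\beta)$ intertwining the two induced actions in a way that is strictly functorial with respect to composition of equivalences. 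The remaining work is thus purely bookkeeping: one must check that these local data satisfy the cocycle conditions defining a \emph{twisted} $A_\infty$ action in the sense of \ref{appendix:infty and twisted}, which is the precise meaning one must give to the word ``natural'' in the statement of the theorem. Once this twisted-versus-honest distinction is made explicit, the assembly of local $A_\infty$ actions into a single global twisted $A_\infty$ action of $\pi_1(M)$ on $\cCb(\cVb, \cAb, \cWb)$ follows formally.
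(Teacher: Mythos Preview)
Your proposal is correct and follows exactly the paper's approach: the paper's proof is the single sentence ``Now Theorem \ref{thm:A infty action on Ext der case grpoid case} implies'' placed immediately before the statement, relying on Proposition \ref{prop:curvAis loc syst up to inn} just established. Your second and third paragraphs are accurate elaborations of the machinery behind Theorem \ref{thm:A infty action on Ext der case grpoid case} and of the twisted-action caveat from Remark \ref{rmk: discontinuity}, but they go beyond what the paper records at this point; the paper defers the twisted bookkeeping to \ref{appendix:infty and twisted} and \ref{sss:from up to to tw} rather than spelling it out here.
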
 
\subsection{The construction of the groupoid $\tG_M$}\label{ss:The constr of tG} There are two options for constructing the groupoid $\tiG_M$ and a flat connection up to inner derivations. 

\subsubsection{The connection with $R=\frac{1}{i\hbar}\omega$}\label{sss:not quite flat con} Assume that $M$ is a symplectic manifold with an $\Sp^4$  structure. Let $g_{jk}$ be an $\Mp(2n,\bR)$-cocycle whose projection to $\Sp$ is a cocycle representing the tangent bundle. Consider the groupoid of the bundle represented by the cocycle $g$ (viewed as a twisted bundle with $c$=1). Here the role of $G$ (as in \ref{ss:grpd of a twisted bdl}) is played by the group $\Mp(2n)$ as in \ref{s:meta}.
%\begin{remark}\label{rmk:curvA obra}
%COULD IT BE SOMETHING ELSE? HOW TO RECONCILE WITH THE GENERALITY OF S.C.PRO-LIE?
%\end{remark}

Consider a lifted Fedosov connection with curvature $\frac{1}{i\hbar}\omega$ ({\em cf.} Theorem \ref{thm:Fedosov classification}). This is a partial case of a connection defined in \ref{sss:conns on tw bdles}. Now we can define a flat connection up to inner derivations as in \ref{sss:conns on tw bdles}. (Observe that $\frac{1}{i\hbar}\bA$ is a Lie subalgebra of the associative algebra $\cA$ and $\Mp(2n,\bR)$ is a subgroup of $\cA^\times$).
\subsubsection{The connection with $R=0$}\label{sss:quite flat con}
Consider the cocycle $g_{jk}$ as above in \ref{sss:not quite flat con}. Consider $\tg_{jk}\in \exp(\frac{1}{i\hbar}\bR)$ defined by
\begin{equation}\label{eq:cochain g tilde}
\tg_{jk}=\exp(\frac{1}{i\hbar} f_{jk})
\end{equation}
where
\begin{equation}\label{eq:cochain g tilde 1}
\omega|U_j=d\alpha_j;\; \alpha_j-\alpha_k=df_{jk}
\end{equation}
Observe that 
\begin{equation}\label{eq:cochain g tilde 2}
c_{jkl}=\exp(\frac{1}{i\hbar} (f_{jk}+f_{kl}-f_{jl}))
\end{equation}
takes values in $\exp(\frac{1}{i\hbar}\bR)$ and represents the class $\exp(\frac{1}{i\hbar}[\omega]).$ If our lifted Fedosov connection is represented by a collection of ${\widetilde{\mathfrak g}}$-valued one-forms $A_j,$ then 
\begin{equation}\label{eq:Fedosov twisted corrected}
{\widetilde A}_j=\frac{1}{i\hbar}\alpha_j+A_j
\end{equation}
represents a {\em flat} connection in the twisted bundle given by $\tg_{jk}, c_{jkl}.$ Now we can define a flat connection up to inner derivations exactly as we did in \ref{sss:not quite flat con}. Now we have $R=0.$

There is a short exact sequence of groups
\begin{equation}\label{eq:SES of groups for tG}
1\to \Mp(2n,\bR)\to ( \tG_M)_{x,x}\to \pi_1(M,x)\to 1
\end{equation}
for any point $x$ of $M.$
%%%%%%%%%%%%%%%%%%%%%%%%%%%%%%%%%%%%%%%%%%%%%%%%%%%%%%%%%%%%%%%%%%%%%%%%%%%%%%%%%%%%%%%%%%%%%%%%%%%%%%%%%%%%%%%%%%%%%%%%%%%%%%%%%%%%%%%%%%%%%%%%%%%%%%%%%%%%%%%%%%%%%%%%%%%%%%
%%%%%%%%%%%%%%%%%%%%%%%%%%%%%%%%%%%%%%%%%%%%%%%%%%%%%%%%%%%%%%%%%%%%%%%%%%%%%%%%%%%%%%%%%%%%%%%%%%%%%%%%%%%%%%%%%%%%%%%%%%%%%%%%%%%%%%%%%%%%%%%%%%%%%%%%%%%%%%%%%%%%%%%%%%%%%%
%%%%%%%%%%%%%%%%%%%%%%%%%%%%%%%%%%%%%%%%%%%%%%%%%%%%%%%%%%%%%%%%%%%%%%%%%%%%%%%%%%%%%%%
\section{Resum\'{e} of the general procedure}\label{s:Resume} We summarize the construction that we described up to this point. This includes the notions of objects and the construction of the infinity local system of morphisms between two objects. Next (in section \ref{ss:induced mods}) we will present a construction of a special type of objects.
%%%%%%%%%%%%%%%%%%%%%%%%%%%%%%%%%%%%%%%%%%%%%%%%%%%%%%%%%%%%%%%%%%%%%%%%%%%%%%%%%%%%%%%
\subsection{$\Omega^\bullet_{\bK, M}$-modules and their inverse images}\label{ss:Inverse Images} Recall the definiton of the sheaf $\Omega^\bullet _{\bK, M}$ of $\bK$-valued forms on a manifold $M$ (Definition \ref{dfn:K forms}). We will be considering the following class of sheaves of $\Omega^\bullet _{\bK, M}$-modules. Start with a vector bundle $E$ (finite or profinite) and a fiber bundle ${\mathfrak X}$ on $M.$ Local sections of the module $\cM^\bullet_{\E,\gX}$ are countable sums
\begin{equation} \label{eq:sections M E X}
\sum_{\varphi,\Phi} a_{\Phi,\varphi} \exp(\frac{1}{i\hbar}\varphi) e_\Phi
\end{equation} 
where $a_{\Phi,\varphi}$ are local differential forms with coefficients in $E$, $\varphi$ are local sections of $C^\infty_M,$ $e_\Phi$ are formal symbols corresponding to local sections $\Phi$ of $\gX,$ and $\varphi\to +\infty.$ For a smooth map $M\to N$ we define
\begin{equation}\label{eq:inv im K}
f^*\cM^\bullet _{\E,\gX}=\cM^\bullet _{f^*E,f^*\gX}
\end{equation}
We consider the following differentials on $\cM^\bullet_{E,\gX}.$ Let $E_0$ be a fiber of $E$ and let $X$ be a fiber of $\gX.$ Choose any local trivialization of the bundles $E$ and $\gX$ near $x_0.$ Also choose any local coordinate systems on $M$ near $x_0$ and on $\gX$ near $\Phi(x_0).$ Then we can identify local sections of $E$ with local functions $M\to E_0$ and local sections of $\gX$ with local maps $M\to \bR^{{\rm{dim}} \gX}.$ We require the diferential to be of the form
\begin{equation}\label{eq:diffl on MEX}
\nabla_\cM \sum_{\varphi,\Phi} a_{\Phi,\varphi} \exp(\frac{1}{i\hbar}\varphi) e_\Phi =\sum_{\varphi,\Phi} da_{\Phi,\varphi} \exp(\frac{1}{i\hbar}\varphi) e_\Phi+
\end{equation}
$$
+\sum_{\varphi,\Phi} \frac{1}{i\hbar} \varphi '  a_{\Phi,\varphi} \exp(\frac{1}{i\hbar}\varphi) e_\Phi dx+\sum_{\varphi,\Phi} A(x,\Phi(x)) \Phi '(x)  a_{\Phi,\varphi} \exp(\frac{1}{i\hbar}\varphi) e_\Phi dx+ 
$$
$$
+\sum_{\varphi,\Phi} B(x,\Phi(x))  a_{\Phi,\varphi} \exp(\frac{1}{i\hbar}\varphi) e_\Phi 
$$
 Here $A$ and $B$ are local $\End(E_0)$-valued functions on $M\times X.$ If $\nabla_\cM$ is of the above form for one choice of the local trivializations then it is true for any such choice. We will use the shorthand
 \begin{equation}\label{eq:shorthand}
 \nabla_\cM e_\Phi=(A\Phi'+B) e_\Phi
 \end{equation}
 
 Now $f:M\to N$ is a smooth map. A differential $\nabla_\cM$ on $\cM^\bullet _{E,\gX}$ induces a differential $f^*\nabla_\cM$ on $f^* \cM^\bullet _{E,\gX}=\cM^\bullet _{f^*E,f^*\gX}$ as follows. Let $x$ be local coordinates on $M$, $y$ local coordinates on $N$, and let the map be locally of the form $y=f(x).$ If
 $$\nabla_\cM e_\Psi=(A(y,\Psi(y))\Psi'(y) dy+B(y,\Psi(y))dy) e_\Psi$$
 for any $\Psi,$ then
 $$f^* \nabla_\cM e_\Phi=(A(f(x),\Phi(x))\Phi'(x) dx+B(f(x),\Phi(x))f'(x) dx) e_\Phi$$
 
 In other words: let $p: \gX\to M$ be the projection. Locally in $\gX$ (near $\Phi(x)$), we require that there exist linear operators $A(z): T_{z}\gX_{p(z)} \to \End E_{p(z)}$ and $B(z):T_{p(z)} M\to \End E_{p(z)}$ and a linear projection $P(z): T_z \gX \to T_z \gX_{p(z)},$ all smoothly depending on $z\in \gX,$ such that for any point $x$ of $M$ and for any $\eta\in T_xM,$ 
 \begin{equation}\label{eq:P A X etc}
 \nabla_\cM e_\Phi (x)(\eta) =( A(\Phi(x))P(d\Phi (x))\eta+B(\Phi(x))\eta)e_\Phi
 \end{equation}
 Note that if $\nabla_\cM$ satisfies this property for one choice of $P$ then it satisfies it for any other choice. This is because for any two projections $P_1$ and $P_2$, $(P_1-P_2)d\Phi(x): T_xM\to T_{\Phi(x)} \gX_{\Phi(x)}$ is a linear operator depending only on the value $\Phi(x).$ 
 
 For $f:M\to N,$ if $\nabla_\cM$ is locally determined by $A(z),$ $B(z),$ and $P(z),$ so is $f^*\nabla_\cM$.
%%%%%%%%%%%%%%%%%%%%%%%%%%%%%%%%%%%%%%%%%%%%%%%%%%%%%%%%%%%%%%%%%%%%%%%%%%%%%%%%%%%%%%%%%%%%%%%%%%%%%%%%%%%%%%%%%%%%%%%%%%%%%%%%%%%%%%%%%%%%%%%%%%%%%%%%%%%%%%%%%%%%%%%%%%%%%%
%%%%%%%%%%%%%%%%%%%%%%%%%%%%%
%%%%%%%%%%%%%%%%%%%%%%%%%%%%%%%%%%%%%%%%%%%%%%%%%%%%%%%%%%
%%%%%%%%%%%%%%%%%%%%%%%%%%%%%%%%%%%%%%%%%%%%%%%%%%%%%%%%%%%%%%%%%%%%%%%%%%%%%%%%%%%%%%%%%%%%%%%%%%%%%%%%%%%%%%%%%%%%%%%%%%%%%%%%%%%%%%%%%%%%%%%%%%%%%%%%%%%%%%%%%%%%%%%%%%%%%%
\subsection{Oscillatory modules}\label{ss:Oscillatory mods} Consider the bundle $\cA_M^\bullet$ with the action of the groupoid $\tG_M$ up to inner automorphisms and a compatible flat connection up to inner derivations as defined in \ref{sss:quite flat con}. By definition, an oscillatory module $\cV^\bullet$ is a graded module over $\cA_M^\bullet$ of the type defined in \ref{ss:Inverse Images}, with a compatible action of the groupoid $\tG_M$ and a compatible flat connection as in \ref{ss:MCSLGrpd}.  

%%%%%%%%%%%%%%%%%%%%%%%%%%%%%%%%%%%%%%%%%%%%%%%%%%%%%%%%%%%%%%%%%%%%%%%%%%%%%%%%%%%%%%%%%%%%%%%%%%%%%%%%%%%%%%%%%%%%%%%%%%%%%%%%%%%%%%%%%%%%%%%%%%%%%%%%%%%%%%%%%%%%%%%%%%%%%%
\subsection{$\Omega^\bullet_{\bK, M}$-modules with $\pi_1$-action}\label{ss:Omega with pi 1} These modules are defined in \ref{sss:A inf actn LG} (in our case here, $\cO_M^\bullet=\Omega_{\bK,M}^\bullet$  as in Definition \ref{dfn:K forms}). More generally, {\em twisted} $(\Omega^\bullet_{\bK, M}, \pi_1(M))$ modules are defined in \ref{ss:twisted A inf mos gros}.  By Theorem \ref{thm:A infty action on Ext der case grpoid case} and Lemma \ref{lemma:quotactupto grpoids}, under the assumptions $c_1(M)=0$ and \eqref{eq:hurewicz on form is ze}, the standard complex (Definition \ref{dfn:st compl Lie grpoid}) of two oscillatory modules is a twisted $\Omega^\bullet_{\bK, M}$-module with $\pi_1$-action. We denote this complex by $\cC^\bullet (\cVb, \cA^\bullet, \cWb).$
%%%%%%%%%%%%%%%%%%%%%%%%%%%%%%%%%%%%%%%%%%%%%%%%%%%%%%%%%%%%%%%%%%%%%%%%%%%%%%%%%%%%%%%%%%%%%%%%%%%%%%%%%%%%%%%%%%%%%%%%%%%%%%%%%%%%%%%%%%%%%%%%%%%%%%%%%%%%%%%%%%%%%%%%%%%%%%
\subsection{Infinity local systems of $\bK$-modules}\label{ss:Infty loc systs Kmods} An infinity local systems of $\bK$-modules on a manifold $X$ is a collection of complexes of $\bK$-modules $\cC^\bullet_x,$ $x\in X,$ together with linear maps
\begin{equation}\label{eq:loc syst formula 1}
T(g_1,\ldots,g_n):\cC^\bullet _{x_{n+1}}\to \cC^{\bullet+1-n} _{x_1}
\end{equation}
for any $g_j\in \pi_1(X)_{x_j,x_{j+1}},$ $j=1,\ldots,n,$ subject to \eqref{eq:A infty action}. In other words, this is an $A_\infty$ action of the fundamental groupoid $\pi_1(X),$ {\em cf.} \ref{sss:grpds ainfty action ext}. 
\subsubsection{From twisted $(\Omega^\bullet_{\bK, M}, \pi_1(M))$ modules to infinity local systems}\label{sss:from ommods to locsyss}
If $\cM^\bullet$ is an $\Omega^\bullet_{\bK, M}$-module with a twisted $\pi_1$-action (as in \ref{ss:Omega with pi 1}), then 
\begin{equation}\label{eq:ind lim M}
\cC^{\bullet}_x=\varinjlim_{x\in U} C^{\bullet}(U, \cM^\bullet) 
\end{equation}
is an infinity local system of $\bK$-modules. ({\em cf.} \ref{sss:cochains a inf} for the definition of the cochain complex $C^{\bullet}(U, \cM^\bullet)$). This is explained in detail in \ref{sss:From twisted mods to inf loc sys}.
\begin{definition}\label{def:HOM}
Given two oscillatory modules $\cV^\bullet$ and $\cW^\bullet$ on a symplectic manifold $M$ that has an $\Sp^4$ structure and satisfy \eqref{eq:hurewicz on form is ze}, we denote by $\uRHOM (\cVb, \cWb)$ the complex $\cC^\bullet$ ({\em cf.} \ref{ss:Infty loc systs Kmods}) constructed from the complex $\cM^\bullet=\cC^\bullet(\cVb, \cA^\bullet, \cWb)$ ({\em cf}. \ref{ss:Omega with pi 1}).
\end{definition} 
%%%%%%%%%%%%%%%%%%%%%%%%%%%%%%%%%%%%%%%%%%%%%%%%%%%%%%%%%%%%%%%%%%%%%%%%%%%%%%%%%%%%%%%%%%%%%%%%%%%%%%%%%%%%%%%%%%%%%%%%%%%%%%%%%%%%%%%%%%%%%%%%%%%%%%%%%%%%%%%%%%%%%%%%%%%%%%
%%%%%%%%%%%%%%%%%%%%%%%%%%%%%%%%%%%%%%%%%%%%%%%%%%%%%%%%%%%%%%%%%%%%%%%%%%%%%%%%%%%%%%%
\section{Objects constructed from Lagrangian submanifolds}\label{s:Objects from Lagrangians}
%%%%%%%%%%%%%%%%%%%%%%%%%%%%%%%%%%%%%%%%%%%%%%%%%%%%%%%%%%%%%%%%%%%%%%%%%%%%%%%%%%%%%%%%%%%%%%%%%%%%%%%%%%%%%%%%%%%%%%%%%%%%%%%%%%%%%%%%%%%%%%%%%%%%%%%%%%%%%%%%%%%%%%%%%%%%%%
\subsection{Induced modules}\label{ss:induced mods}
%%%%%%%%%%%%%%%%%%%%%%%%%%%%%%%%%%%%%%%%%%%%%%%%%%%%%%%%%%%%%%%%%%%%%%%%%%%%%%%%%%%%%%%
%%%%%%%%%%%%%%%%%%%%%%%%%%%%%%%%%%%%%%%%%%%%%%%%%%%%%%%%%%%%%%%%%%%%%%%%%%%%%%%%%%%%%%%
%%%%%%%%%%%%%%%%%%%%%%%%%%%%%%%%%%%%%%%%%%%%%%%%%%%%%%%%%%%%%%%%%%%%%%%%%%%%%%%%%%%%%%%
%%%%%%%%%%%%%%%%%%%%%%%%%%%%%%%%%%%%%%%%%%%%%%%%%%%%%%%%%%%%%%%%%%%%%%%%%%%%%%%%%%%%%%%
\subsubsection{The case of groups acting on algebras}\label{sss:ind grps on algs} Let $i:B\to A$ be a morphism of algebras and let $j: P\to G$ be a morphism of groups. Assume that $P$ acts on $B$ by automorphisms and $G$ acts on $A$ by automorphisms. We denote these automorphisms by $S_p,$ $p\in P,$ and $T_g,$ $g\in G.$ We assume that $i(T_p b)=T_{jp} (ib)$ for any $p$ and $g.$ For simplicity, we consider here only true actions, {\em i.e.} those for which $c(g_1,g_2)=1$ and $c(p_1,p_2)=1.$

Let $W$ be a $B$-module with a compatible action of $P$ denoted by $S_p: W\isomoto W,$ $p\in P. $ Define the induced module $V$ as follows. First consider the $A$-module $A\otimes _B W.$ Note that it carries a compatible action of $P:$ 
\begin{equation}\label{eq:der on ind mo}
S_p(a\otimes w)=T_{jp}(a) S_p(w).
\end{equation}
Now let $V$ be the quotient of the space of formal linear combinations
\begin{equation}\label{eq:formal combs Tg}
\sum _{g \in G} T_g v_g,\; v_g \in A\otimes _B W,
\end{equation}
by the linear span of $T_{gj(p)} (a\otimes w)-T_g S_p(a\otimes w),$ $g\in G,\, p\in P,\, a\in A,\, w\in W.$ Define the $A$-module structure on $V$ by
\begin{equation}\label{eq:ind alge mod stru}
a\sum T_g g v_g=\sum T_{g} (T_g^{-1}a)v_g
\end{equation}
and a compatible group action of $G$
\begin{equation}\label{eq:ind grp mod stru}
T_{g_0}  \sum T_g  v_g=\sum T_{g_0g} v_g
\end{equation}
This is just another way of defining the induced module
\begin{equation} \label{eq:cross indu}
V=(G\ltimes A)\otimes _{P\ltimes B} W
\end{equation}
Now assume that $A$ and $B$ are graded algebras. Let $\{D:A\to A;\, \alpha(g)|g\in G; R_A\}$ and $\{E:B\to B;\, \beta(g)|g\in B; R_B\} $ be derivations of square zero of $A$ and of $B$ up to inner derivations. We assume that these derivations are compatible with $i$ and $j,$ {\em i.e.}
\begin{equation}\label{eq:comp for der grps}
i(E(b))=D(i(b));\, i(\beta(p))=\alpha(jp);\, i(R_B)=R_A.
\end{equation}
Let $E_W: W\to W$ be a compatible derivation of $W.$ Then $A\otimes _B W$ carries a derivation $E_{A\otimes _B W}$ compatible with the action of $B;$ 
\begin{equation}\label{eq:der on tens prod}
E_{A\otimes _B W}(a\otimes w)=D_A(a) \otimes w+(-1)^{|a|} a\otimes E_W(w).
\end{equation}
This allows to define a derivation of the induced module $V$ compatible with the action of $G;$
\begin{equation}\label{eq:der of indu}
D_V(\sum T_g v_g)=\sum T_g (\alpha(g^{-1})v_g)+\sum T_g E_{A\otimes _B W}(v_g)
\end{equation} 
%%%%%%%%%%%%%%%%%%%%%%%%%%%%%%%%%%%%%%%%%%%%%%%%%%%%%%%%%%%%%%%%%%%%%%%%%%%%%%%%%%%%%%%
%%%%%%%%%%%%%%%%%%%%%%%%%%%%%%%%%%%%%%%%%%%%%%%%%%%%%%%%%%%%%%%%%%%%%%%%%%%%%%%%%%%%%%%
\subsubsection{The case of groupoids}\label{sss:grpoids ind} Now generalize the situation of \ref{sss:ind grps on algs} to the case when $P$ is a groupoid with the set of objects $Y$ and $G$ is a groupoid with the set of objects $X.$ Denote by $j:Y\to X$ the action of the morphism of groupoids $j$ on objects. In this case $A=\{A_x|x\in X\}, $ $B=\{B_y|y\in Y\},$ and $W=\{W_y|y\in Y\}.$ Put
\begin{equation}\label{eq:ind mod grpd}
(A\otimes_B W)_y = A_{jy} \otimes_ {B_y} W_y
\end{equation}
Formulas \eqref{eq:der on ind mo} and \eqref{eq:der on tens prod} define a compatible action of $P$ and a compatible derivation on $A\otimes_B W.$

\begin{equation}\label{eq:formal combs Tg groupoid}
V_x=\{\sum _{y\in Y, g \in G_{x,jy}} T_g v_g |v_g \in (A\otimes _B W)_y\}/ \langle T_{gj(p)} v-T_g(S_pv) \rangle
\end{equation}
Formulas \eqref{eq:ind alge mod stru}, \eqref{eq:ind grp mod stru}, \eqref{eq:der on tens prod}, and \eqref{eq:der of indu} define on $V$ an $A$-module structure, a compatible action of $G,$ and a compatible derivation.
%%%%%%%%%%%%%%%%%%%%%%%%%%%%%%%%%%%%%%%%%%%%%%%%%%%%%%%%%%%%%%%%%%%%%%%%%%%%%%%%%%%%%%%
%%%%%%%%%%%%%%%%%%%%%%%%%%%%%%%%%%%%%%%%%%%%%%%%%%%%%%%%%%%%%%%%%%%%%%%%%%%%%%%%%%%%%%%
\subsubsection{The case of Lie groupoids}\label{sss:Lie grpoids ind} Now let $\cG$ and $\cP$ be Lie groupoids with the manifolds of objects $X$ and $Y$ respectively. Let $j: \cP\to \cG$ be a morphism of Lie groupoids, {\em i.e.} a smooth map $X\to Y$ and a smooth map $\cP\to \cG$ over $X\times X$ that preserves the composition and the unit. Let $\cB^\bullet$ be a sheaf of $\cO_Y^\bullet $-algebras and let $\cA^\bullet$ be a sheaf of $\cO_X^\bullet $-algebras, together with a morphism $i: \cB^\bullet \to j^*\cA^\bullet.$ Consider an action $S$ of $\cP$ on $\cB^\bullet$ and an action $T$ of $\cG$ on $\cA^\bullet.$ We assume that the morphism $i$ preserves the action of $\cP.$ Furthermore, let $(\nabla_\cB, \beta, R_\cB)$ be a compatible flat connection up to inner derivations on $\cB^\bullet$ and let $(\nabla_\cA, \alpha, R_\cA)$ be a compatible flat connection up to inner derivations on $\cA^\bullet.$
We require the following compatibility conditions generalizing \eqref{eq:comp for der grps}:
\begin{equation} \label{eq:conds on conn Lie grpd case}
i(\nabla_\cB b)=(j^*\nabla_\cA)(ib)
\end{equation}
in $j^*\cA^\bullet$ on $Y,$ for any local section $b$ of $\cB^\bullet;$
\begin{equation}\label{eq:compat of curv}
i(R_\cB)=j^*(R_\cA)
\end{equation}
in $j^*\cA^\bullet;$ and
\begin{equation}\label{eq:comp conn Lie grpd 2}
i(\beta(p))=\alpha(jp)
\end{equation}
in $j^*\cA^\bullet$ for any local section $p$ of $\cP.$
\begin{remark}\label{rmk:alfa restricted why}
The latter equation requires some explanation. It is not {\em a priori} clear why, for a local section $g$ of $\cG,$ $\alpha(g)$ depends only on the restriction of $g$ to $Y\times Y.$ To ensure this, we will always assume that the form $\alpha(g)$ is obtained from a local section $g$ by the same procedure as the factor in front of $e_\Phi$ in the right hand side of \eqref{eq:shorthand} is obtained from a local section $\Phi.$
\end{remark}
Now assume that $\cW^\bullet$ is a $\cBb$-module with a compatible action $S$ of $\cP$ and a compatible connection $\nabla_\cW.$ The module $j^* \cA^\bullet \otimes _{\cBb} \cW$ has a compatible action $S$ of $\cP$ and a compatible connection $\nabla_{\cA\otimes _{\cB}\cW}$ given by
\begin{equation} \label{eq:S on ind}
S_p(a\otimes w)=T_{jp}(a)\otimes S_pw ;
\end{equation}
\begin{equation}\label{eq:nabla on ind grpds Li}
\nabla_{\cA\otimes _{\cB}\cW} (a\otimes w)=\nabla_\cA(a) \otimes w+(-1)^{|a|} a \otimes \nabla_\cW (w)
\end{equation}
Now define the induced module $\cV$ as follows. First, for any open subsets $U$ of $X$ and $U'$ of $Y$ and any smooth map $f:U\to U',$ let $\ucG _f$ be the inverse image of $\ucG$ under
\begin{equation}\label{eq:ucG f}
U\isomoto {\rm{graph}}(f)\hookrightarrow X\times Y \hookrightarrow X\times X
\end{equation}
The space of local sections of $\cV^\bullet$ over $U$ is the space of formal linear combinations
\begin{equation}\label{eq:combina f}
\sum_{U,U'}\sum  _{f: U\to U'} \sum_{g\in \ucG_f (U)} T_g v_g;\; v_g\in (\cAb\otimes_\cBb \cWb)(U')
\end{equation}
factorized by the linear span of
\begin{equation}\label{eq:factorizing by}
T_{gj(p)}(a\otimes w)-T_g (S_p(a\otimes w))
\end{equation}
for some $h:U'\to U''$, $f:U\to U',$ $g$ a local section of $\ucG_f(U)$, and $p$ a local section of ${\underline{\cP}}|{\rm{graph }}(h).$ We interpret $gj(p)$ as a local section of $\ucG_{hf}.$

Formulas 
\begin{equation}\label{eq:structures on ind v Lie case}
T_{g_0}\sum T_gv_g=\sum T_{g_0g} v_g;\; a\sum T_g v_g=\sum T_g (T_{g^-1}(a)v_g);
\end{equation}
\begin{equation}\label{eq:structures on ind v Lie case conn}
\nabla_\cV \sum T_g v_g=\sum T_g \alpha (g^{-1}) v_g+T_g \nabla_{\cA\otimes _\cB \cW} (v_g)
\end{equation}
define an $\cA^\bullet$-module structure, a compatible action of $\cG,$ and a compatible connection on $\cVb.$ Note that the last formula relies again on the assumption discussed in Remark \ref{rmk:alfa restricted why}. Indeed, we need to be sure that $\alpha(g^{-1})|{\rm{graph}}(f)$ depends only on $g|{\rm{graph}}(f).$
\subsubsection{General definition of an induced module}\label{sss:induced with a twistec}
Finally, let us assume, analogously to what we did in \ref{sss:The action of the quotient in the Lie groupoid case}, that there is a Lie groupoid $\Gamma$ on $X$ and a Lie groupoid $\Pi$ on $Y$ together with a morphism $\Pi\to j^*\Gamma,$ an epimorphism $\cG\to \Gamma,$ and an epimorphism $\cP\to \Pi$ such that the diagram 
$$
\begin{CD}
\cP @>>> \Pi\\
@VVV  @VVV\\
j^*\cG @>>> j^*\Gamma
\end{CD}
$$
commutes. Let $\cH_x=\Ker(\cG_{x,x}\to \Gamma_{x,x})$ and $\cQ_y=\Ker(\cP_{y,y}\to \Pi_{y,y}).$ Denote by $\ucH,$ resp. $\ucQ,$ the sheaf of sections of the bundle of groups $\cH,$ resp. $\cQ.$ We also assume that there are morphisms of sheaves $i: \ucH\to \cA^\times$ and $i: \ucQ\to \cB^\times$ such that the diagram 
$$
\begin{CD}
\ucQ @>>> \cA^\times\\
@VVV  @VVV\\
j^*\ucH @>>> j^*\cB^\times
\end{CD}
$$
commutes.
We also assume that the $\cB^\bullet$-module $\cW^\bullet$ and the flat connection up to inner derivations $(\nabla_\cB, \beta, R_\cB)$ satisfies
$$S_q w=i(q)w;\;\beta(q)=-\nabla_\cB i(q)\cdot( iq)^{-1}$$
for any local sections $q$ of $\ucQ$ and $w$ of $\cW^\bullet.$
\begin{definition}\label{dfn:ind with a tvistec} Under the assumptions above, the induced module is the quotient of the module $\cV^\bullet$ (\eqref{eq:structures on ind v Lie case}, \eqref{eq:structures on ind v Lie case conn}) by the submodule generated by elements $T_hv-i(h)v,$ $h$ being any local section of $\cH$ and $v$ any local section of $\cV^\bullet.$ 
\end{definition}
%%%%%%%%%%%%%%%%%%%%%%%%%%%%%%%%%%%%%%%%%%%%%%%%%%%%%%%%%%%%%%%%%%%%%%%%%%%%%%%%%%%%%%%
%%%%%%  {lemma:quotactupto grpoids} $\beta(h)=-\bfD i(h)\cdot i(h)^{-1}$  {sss:The action of the quotient in the Lie groupoid case} 
%%%%%%%%%%%%%%%%%%%%%%%%%%%%%%%%%%%%%%%%%%%%%%%%%%%%%%%%%%%%%%%%%%%%%%%%%%%%%%%%%%%%%%%
\subsection{The induced oscillatory module $\cV_L$}\label{ss:Objects from Lagrangians}
%%%%%%%%%%%%%%%%%%%%%%%%%%%%%%%%%%%%%%%%%%%%%%%%%%%%%%%%%%%%%%%%%%%%%%%%%%%%%%%%%%%%%%%
%%%%%%%%%%%%%%%%%%%%%%%%%%%%%%%%%%%%%%%%%%%%%%%%%%%%%%%%%%%%%%%%%%%%%%%%%%%%%%%%%%%%%%%
\subsubsection{The algebra $\cB$ and the module $\ffbV_{\bK}$}\label{sss:A0-mod V} Recall the grading
\begin{equation}\label{eq:grading Weyl alg 1}
|\fx_j|=|\fxi_j|=1;\;|\hbar|=2
\end{equation}
Now define
\begin{equation}\label{eq:formal V}
\fbV=\bC[[\hbar]];\; 
\ffbV=\{\sum _{k=-N} ^\infty v_k | v_k\in  \fbV [\hbar^{-1}]  _k \}
\end{equation} 
where $N$ runs through all integers. 
\begin{definition}\label{df:ffbV K etc} Put
\begin{equation}\label{eq:curW 0}
\ffbV_{\bK} = \{\sum _{k=0}^\infty  e^{\frac{1}{i\hbar}c_k} v_k | \in  \ffbV ;\, c_k\in \bR;\, c_k \to \infty\}  
\end{equation} 
\begin{equation}\label{eq:curW 1}
\ffbV_{\Lambda} = \{\sum _{k=0}^\infty  e^{\frac{1}{i\hbar}c_k} v_k | \in  \ffbV ;\, c_k\geq 0;\, c_k \to \infty\}
\end{equation} 
\end{definition} 

Now define the subalgebra $\cB$ of $\cA$ (Definition \ref{dfn:curvA}) by
\begin{equation}\label{eq:curvaBdef} 
\cB={\MP}(n)\ltimes  {\widehat{\widehat{\bA}}}
\end{equation}
({\em cf.} \ref{sss:Metalinear structures}).
\begin{lemma}\label{lemma:A0 acts on W0} The formulas
$$\fx\mapsto \fx;\; \fxi\mapsto i\hbar\ddfx;$$
$$ 
\left [ \begin{array}{cc}
b&0\\
0& b^{-1}\end{array}\right ] \mapsto T_b,\;(T_bf)(x)=\frac{1}{\sqrt{\det(b)}}f(b^{-1}x);
$$
$$
\left [ \begin{array}{cc}
1&a\\
0& 1\end{array}\right ] \mapsto \exp(-\frac{i\hbar}{2}a\ddfx^2)
$$
define an action of $\MP(n)$ that together with the action of ${\widehat{\widehat \bA}}$ turns $\ffbV_{\bK}$ into a $\cB$-module.
\end{lemma}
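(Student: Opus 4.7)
The proof splits into three independent tasks: (i) check that each of the four types of operators in the statement really lands in $\End(\ffbV_\bK)$; (ii) verify that taken together they define a group representation of $\MP(n)$; and (iii) verify the semidirect-product compatibility with the action of $\ffbA$, which is exactly what turns the pair of actions into a module structure over $\cB=\MP(n)\ltimes\ffbA$.

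For (i), the grading $|\fx_j|=|\fxi_j|=1,\,|\hbar|=2$ of \eqref{eq:grading Weyl alg 1} is preserved by all four operators: multiplication by $\fx$ and by $i\hbar\ddfx$ trivially, $T_b$ because it is a linear change of variables in $\fx$, and most importantly $\exp(-\tfrac{i\hbar}{2}a\ddfx^2)$ because $\hbar\ddfx^2$ has total grading zero. Thus the exponential series is term-by-term grading-preserving, and the successive powers of $\hbar$ that appear are absorbed by the completion built into $\ffbV$ and further by the $\bK$-coefficients allowed in Definition \ref{df:ffbV K etc}; this is precisely what the double completion $\ffbV_\bK$ was designed to accommodate.

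For (ii), I would use the decomposition of $\MP(n)$ as the semidirect product $\ML(n)\ltimes\mathrm{Sym}(n)$, where $\mathrm{Sym}(n)$ is the abelian group of real symmetric $n\times n$ matrices realized as the unipotent radical $\bigl(\begin{smallmatrix}1&a\\0&1\end{smallmatrix}\bigr)$ of the Siegel parabolic. On the Levi part, $T_{b_1}T_{b_2}=T_{b_1b_2}$ holds because $\sqrt{\det b}$ is by definition multiplicative on the metalinear cover $\ML(n)=\{(g,\zeta)\mid \det g=\zeta^2\}$; this is the whole reason for passing to that cover. On $\mathrm{Sym}(n)$, the operators $a\ddfx^2$ for varying symmetric $a$ pairwise commute, so the exponentials compose by simple addition of exponents. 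The crossed relation reduces to a direct computation: $T_b\ddfx_j T_b^{-1}=b^T_{jk}\ddfx_k$ gives $T_b\bigl(a\ddfx^2\bigr)T_b^{-1}=(bab^T)\ddfx^2$, which matches the adjoint action of $\mathrm{diag}(b,(b^T)^{-1})$ on $\bigl(\begin{smallmatrix}1&a\\0&1\end{smallmatrix}\bigr)$ inside $\Sp(2n)$.

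For (iii), a cross-product module over $\MP(n)\ltimes\ffbA$ is exactly a simultaneous $\MP(n)$-representation and $\ffbA$-module for which conjugation by $T_g$ on the $\ffbA$-action implements the $\Sp(2n)$-action on $\ffbA$ pulled back along $\MP(n)\hookrightarrow\Sp^4\to\Sp(2n)$. Since both sides are algebra automorphisms of $\ffbA$, it suffices to check this on the Darboux generators $\fx,\fxi$. The commutation $[\fxi_j,\fx_k]=i\hbar\delta_{jk}$ is the one-line check $[i\hbar\ddfx_j,\fx_k]=i\hbar\delta_{jk}$. The identity $T_b\fx T_b^{-1}=b^{-1}\fx$ and $T_b(i\hbar\ddfx)T_b^{-1}=b^T(i\hbar\ddfx)$ is immediate from the change-of-variables formula, and matches the $\Sp(2n)$-action of $\mathrm{diag}(b,(b^T)^{-1})$. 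Finally, for $U_a=\exp(-\tfrac{i\hbar}{2}a\ddfx^2)$, the BCH series truncates after one bracket since $[a\ddfx^2,\fxi]=0$, giving $U_a\fx_j U_a^{-1}=\fx_j-a_{jk}\fxi_k$ and $U_a\fxi_j U_a^{-1}=\fxi_j$; this is the action of a symplectic shear, matching the image of $\bigl(\begin{smallmatrix}1&a\\0&1\end{smallmatrix}\bigr)$ under $\MP(n)\to\Sp(2n)$ (up to the standard sign convention in the metaplectic representation).

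\textbf{Expected main obstacle.} None of the algebraic checks is deep; the only nontrivial analytic point is the convergence of $\exp(-\tfrac{i\hbar}{2}a\ddfx^2)$ on $\ffbV_\bK$. This is where the construction of the double completion $\ffbV$ (together with the $\bK$-coefficients allowing $\hbar$-powers of arbitrary sign modulated by growing real exponents) is essential, and explaining why no divergence arises when an element of $\ffbV_\bK$ of a fixed total degree is fed into the grading-preserving but $\hbar^n$-producing exponential is the one place where one has to be careful about completions rather than just manipulating operators formally.
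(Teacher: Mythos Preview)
The paper states this lemma without proof; it is treated as a routine verification, with the closely related formulas for the full metaplectic representation appearing later in Section~\ref{s:metaa} (see \eqref{eq:Weil rep 11}--\eqref{eq:Weil rep 12}). Your three-part plan---well-definedness on the completion, the group law on $\MP(n)$, and the cross-product compatibility with $\ffbA$---is exactly what such a verification requires, and your execution of each part is correct.

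One small structural correction: the Levi of $\MP(n)$ is not $\ML(n)$. By Definition~\ref{def:MPar} and Lemma~\ref{lemma:coho cocy sp4}(a), $\MP(n)\cong P(n)\times\{\pm1,\pm i\}$, so the Levi is the four-fold cover $\{(b,z):z^4=\det(b)^2\}$ of $\GL(n)$, containing $\ML(n)$ with index two. The paper's shorthand $\sqrt{\det b}$ should be read as the datum $z$; with that reading your multiplicativity check $T_{(b_1,z_1)}T_{(b_2,z_2)}=T_{(b_1b_2,z_1z_2)}$ is immediate, and the extra central $\bZ/2$ simply acts by the scalar $\pm i$. This does not affect the unipotent or compatibility computations, which go through exactly as you wrote them.
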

\begin{definition}\label{df:ind formal mod}
$$\fcV=\cA\fotimes _{\cB}\ffbV$$
\end{definition}  
Here by $\fotimes$ we mean the completed tensor product. Namely,
$$
\cA\fotimes _{\cB}\ffbV=\varprojlim _{N\to\infty} \cA\otimes _{\cB}\ffbV / \exp(\frac{N}{i\hbar}) \cA_\Lambda \otimes _\cB \ffbV
$$
In \ref{s:metaa} we interpret $\fcV$ as an algebraic version of the metaplectic representation (Proposition \ref{prop:meta as ind}). 
%%%%%%%%%%%%%%%%%%%%%%%%%%%%%%%%%%%%%%%%%%%%%%%%%%%%%%%%%%%%%%%%%%%%%%%%%%%%%%%%%%%%%%%%%%%%%%%%%%%%%%%%%%%%%%%%%%%%%%%%%%%%%%%%%%%%%%%%%%%%%%%%%%%%%%%%%%%%%%%%%%%%%%%%%%%%%%
\subsubsection{The sheaf of algebras $\cB_L$ and the sheaf of modules ${\widehat{\widehat{\bV}}}_L$}\label{sss:sheaf BL} Let $L$ be a Lagrangian submanifold of $M.$ Recall that we assume the existence of an $\Sp^4$ structure on $M.$ Consider the restriction to $L$ of the $\Mp(n)$-valued cocycle ${\widetilde g}_{jk}$ as in \ref {sss:not quite flat con} or in \ref{sss:quite flat con} (it does not matter which one of them because $\omega|L=0$). Consider the cohomologous $\MP(n)$-valued cocycle ${\widetilde p}_{jk}$ as in \eqref{eq:lifting p t ij}.

The group $\MP(n,\bR)$ ({\em cf.} \ref{sss:Metalinear structures}) acts on $\cB$ by automorphisms. It also acts on ${\widehat{\widehat{\bV}}}_\bK$ compatibly. Let $\cB_L$ be the bundle of algebras and ${\widehat{\widehat{\bV}}}_L$  the bundle of modules on $L$ associated to these actions and to the principal $\MP$-bundle defined by $p_{jk}.$ Note that the Lie algebra ${\widetilde{\mathfrak{g}}}$ \eqref{eq:extension g} acts by derivations on $\cB$ and on ${\widehat{\widehat{\bV}}}_\bK.$ Therefore any given Fedosov connection defines a connection on $\cB_L$ and on ${\widehat{\widehat{\bV}}}_L.$ If the curvature of this connection is $\frac{1}{i\hbar}\omega$ then the connection on ${\widehat{\widehat{\bV}}}_L$ is flat. We denote these connections by $\nabla_\cB$ and $\nabla_\bV.$
\begin{definition}\label{dfn:curvablagr}  By $\cB^\bullet _L,$ resp. by  ${\widehat{\widehat{\bV}}}_L^\bullet,$  we denote the differential graded algebra of $\cB_L$-valued $\bK$-forms with differential $\nabla_\cB,$ resp. the differential graded module of ${\widehat{\widehat{\bV}}}_L$-valued $\bK$-forms with differential $\nabla_\bV.$
\end{definition}
%%%%%%%%%%%%%%%%%%%%%%%%%%%%%%%%%%%%%%%%%%%%%%%%%%%%%%%%%%%%%%%%%%%%%%%%%%%%%%%%%%%%%%%
\subsubsection{The Lie groupoid $\bfP_L$}  \label{sss:Lie groupoid PL} \eqref{eq:lifting p t ij}   Construct the groupoid $\bfP_L$ as the groupoid of the (twisted in general, but not in this case) bundle defined by this cocycle as in \ref{ss:grpd of a twisted bdl}. We have a short exact sequence of groups 
\begin{equation}\label{eq:short e s PL}
1\to \MP(n,\bR)\to (\bfP_L)_{x,x} \to \pi_1(L,x)\to 1
\end{equation}
for every point $x$ of $L.$
\begin{definition}\label{dfn:VL}   
Let ${\widehat{\widehat{\bV}}}_L^\bullet$ be the $\cB^\bullet _L$-module with the compatible action of $\bfP_L$ and the compatible connection $\nabla_\bV$ as in Definition \ref{dfn:curvablagr}. The oscillatory module $\cV^\bullet_L$ is the $\cA_M^\bullet$-module with a compatible action of $\tG_M$ and a compatible connection induced from ${\widehat{\widehat{\bV}}}_L^\bullet$ as in Definition \ref{dfn:ind with a tvistec}.
\end{definition}
\subsection{Filtrations} \label{ss:Filts}
\begin{proposition}\label{prop:Fil}
Assume that $L$ is a Lagrangian submanifold of $M$ such that $\langle [\omega, \pi_2(M,L)\rangle =0.$ Then there is a filtration ${\rm{Filt}}^a \cV^\bullet _L,$ $a\in {\mathbb R},$ such that:
\begin{enumerate}
\item ${\rm{Filt}}^a \cV^\bullet _L \subset {\rm{Filt}}^b \cV^\bullet _L$ for $a\geq b;$
\item ${\rm{Filt}}^a {\mathbb K} \cdot {\rm{Filt}}^b \cV^\bullet _L \subset {\rm{Filt}}^{a+b} \cV^\bullet _L;$
\item ${\rm{Filt}}^a \cV^\bullet _L $ is preserved by $\nabla_\cV$ and by the action of $\cA_M^\bullet$ (but not necessarily by the action of ${\widetilde{\mathbf G}}_M$).
\end{enumerate}
Here ${\rm{Filt}}^a {\mathbb K}$ consists of sums as in \eqref{eq:Lambda 1} with the additional condition $c_k\geq a$ for all $k.$
\end{proposition}
\begin{proof} Similarly to what we did in Section \ref{ss:charts and cocycles}, for any chart $T$ choose a one-form $\alpha_T$ on $T$ such $d\alpha_T=\omega_T,$ for any two charts $T$ and $T'$ a function $f_{TT'}$ on $T\times_M T'$ such that $\alpha_T-\alpha_{T'}=df_{TT'},$ and for any three charts $T,T',T''$ put $c_{TT'T''}={\rm{exp}}(f_{TT'}-f_{TT''}+f_{T'T''})$ which is a locally constant function on $T'\times _M T'\times _M T''.$ We can choose them in such a way that they all vanish on $L.$ For any path $T$ from $x_0$ in $L$ to $x_1$ in $M,$ and for a small open $U_{x_1}$ containing $x_1,$ we get an open subset $U_T$ of ${\widetilde{M/L}}$ (homeomorphic to the image of $U_{x_1}$ in $M/L$). Consider a cover of ${\widetilde{M/L}}$ by such $U_{T}.$ We would like to define ${\rm{Fit}}^0 \cV^\bullet _L$ to be the linear span of those elements of $\cV_L^\bullet$ that are, under the trivialization with respect to a chart $T$, represented by ${\rm{exp}}(\frac{1}{i\hbar} \varphi_T) g_T v$ where $g_T\in {\rm{Sp}}^4(2n),$ $v\in {\widehat{\widehat{\mathbb V}}}_\Lambda$, and $\varphi_T$ are some functions on $U_T$. To make such a filtration well-defined, we have to have
$${\rm{exp}}(\frac{1}{i\hbar} (\varphi_T-\varphi_{T'})) ={\rm{exp}}(\frac{1}{i\hbar}f_{TT'}) c_{TST'}$$
on $U_T\cap U_{T'}$, for any $T$ and $T'$ as above and for any homotopy $S$ between them. We will find such $\varphi_T$ if we show that the right hand side of the above:  a) does not depend on $S$ and b) defines a one-cocyclewith respect to the cover of ${\widetilde{M/L}}$ by $U_T$. But under our assumption b) follows immediately from Lemma \ref{lemma:strong ekviv}. As for a), for two different homotopies $S$ and $S'$ between $T$ and $T'$, 
$$c_{TST'}c_{SS'T}=c_{TSS'}c_{TS'T'}$$
But $c_{SS'T}=c_{TSS'}=1.$ Indeed, $S\times _M S'\times _MT=T$ and the same is true for $T'$, and $c$ vanishes when restricted to $L.$
\end{proof}
\subsection{The microsupport of a filtered module}\label{ss:microsu} Assume $\cV^\bullet$ has a filtration as in Proposition \ref{prop:Fil}. Define
\begin{equation}\label{eq:microsu}
\mu{\rm{Supp}}(\cV^\bullet)={\rm{supp}} H^\bullet (s(\varinjlim _{a\to 0,\;a\geq 0} {\rm{Filt}}^0 \cV^\bullet / {\rm{Filt}}^a \cV^\bullet ), \nabla_\cV)
\end{equation}
Here $s$ denotes the sheafification of a presheaf.
%%%%%%%%%%%%%%%%%%%%%%%%%%%%%%%%%%%%%%%%%%%%%%%%%%%%%%%%%%%%%%%%%%%%%%%%%%%%%%%%%%%%%%%
%\section{Examples of oscillatory modules $\cV_f$}\label{s:oscillatory modules} 
%%%%%%%%%%%%%%%%%%%%%%%%%%%%%%%%%%%%%%%%%%%%%%%%%%%%%%%%%%%%%%%%%%%%%%%%%%%%%%%%%%%%%%%
%%%%%%%%%%%%%%%%%%%%%%%%%%%%%%%%%%%%%%%%%%%%%%%%%%%%%%%%%%%%%%%%%%%%%%%%%%%%%%%%%%%%%%%%%%%%%%%%%%%%%%%%%%%%%%%%%%%%%%%%%%%%%%%%%%%%%%%%%%%%%%%%%%%%%%%%%%%%%%%%%%%%%%%%%%%%%%
\subsection{The case of $\bR^{2n}$}\label{ss:the case of R 2n osc}%%%%%%%%%%%%%%%%%%%%%%%%%%%%%%%%%%%%%%%%%%%%%%%%%%%%%%%%%%%%%%%%%%%%%%%%%%%%%%%%%%%%%%%%%%%%%%%%%%%%%%%%%%%%%%%%%%%%%%%%%%%%%%%%%%%%%%%%%%%%%%%%%%%%%%%%%%%%%%%%%%%%%%%%%%%%%%
\subsubsection{The groupoid $\tG_M$}\label{sss:the groupoid tG} Sections of $\tG_M$ are in bijection with smooth functions $g(x_1,\xi_1; x_2,\xi_2)$ on $M\times M$ with values in ${\Mp(n)}$. We will denote a section corresponding to $g$ by a formal symbol 
\begin{equation}\label{eq:elts of tG flat case} 
\sigma (x_1,\xi_1; x_2,\xi_2) = \exp(\frac{1}{i\hbar}(\xi_2-\xi_1)\fx+(x_1-x_2)\fxi) g(x_1,\xi_1; x_2,\xi_2)
\end{equation}
The composition consists of formal multiplication of exponentials and multiplication of elements of ${\Mp(2n)}.$
%%%%%%%%%%%%%%%%%%%%%%%%%%%%%%%%%%%%%%%%%%%%%%%%%%%%%%%%%%%%%%%%%%%%%%%%%%%%%%%%%%%%%%%
\subsubsection{The flat connection up to inner derivations on $\cA_M$ compatible with the action of $\tG_M$}\label{sss:the connection on tG} For a section $\sigma$ as in \eqref{eq:elts of tG flat case},
$$-\alpha(\sigma)=\nabla_\tG \sigma\cdot \sigma^{-1}=d_{\rm{DR}}g\cdot g^{-1}+\frac{1}{i\hbar} (\xi_2 dx_2-\xi_1 dx_1)+$$
$$(-\frac{\fxi_1}{i\hbar}dx_1+\frac{\fx_1}{i\hbar}d\xi_1)-\Ad_g (-\frac{\fxi_2}{i\hbar}dx_2+\frac{\fx_2}{i\hbar}d\xi_2)$$
%%%%%%%%%%%%%%%%%%%%%%%%%%%%%%%%%%%%%%%%%%%%%%%%%%%%%%%%%%%%%%%%%%%%%%%%%%%%%%%%%%%%%%%
\subsubsection{The sheaf $\cV_f^\bullet $}\label{sss:the space Vf}
Denote by $\cV_f^\bullet $ the oscillatory module corresponding to the Lagrangian submanifold ${\rm{graph}}(df).$ One has
\begin{equation}\label{eq:ahut} 
\cV^\bullet _f={\widehat \cV}^\bullet _M=\Omega_{\bK,M}^\bullet (\fcV).
\end{equation}
In other words, local sections of $\cV^\bullet _f$ are $\cV$-valued $\bK$-forms on $M$ ({\em cf.} Definition \ref{df:ind formal mod}). 
\begin{remark}\label{rmk:expl of sigma() case R2n}
a) Sections of ${\widetilde{\widetilde{\mathbb V}}}_L$ are identified with ${\widetilde{\widetilde{\mathbb V}}}$-valued functions as follows. If $v(x,{\widehat x})$ is a ${\widetilde{\widetilde{\mathbb V}}}$-valued function, then the corresponding section of ${\widetilde{\widetilde{\mathbb V}}} _L$ is 
\begin{equation}\label{eq: exp v etc}
{\rm{exp}}(\frac{1}{i\hbar} (f(x+{\widehat x})-f'(x){\widehat x})) v(x,{\widehat x})
\end{equation}
b) A section $w(x,dx,{\widehat x})$ of \eqref{eq:ahut}  is identified with $\cV^\bullet _f$ given by 
\begin{equation}\label{eq: w x fx etc}
{\underline w}=\sigma ((x,\xi);(x,f'(x))) {\rm{exp}}(\frac{1}{i\hbar} (f(x+{\widehat x})-f'(x){\widehat x})) w(x,dx,{\widehat x})
\end{equation}
where $\sigma(x,\xi; x,f'(x))$ is as in \eqref{eq:elts of tG flat case}.
\end{remark}
%%%%%%%%%%%%%%%%%%%%%%%%%%%%%%%%%%%%%%%%%%%%%%%%%%%%%%%%%%%%%%%%%%%%%%%%%%%%%%%%%%%%%%%
\subsubsection{The connection on $\cV_f$}\label{sss:the conn Vf osc}
\begin{equation}\label{eq:nabla V osc}
\nabla_\cV=-\frac{\xi-f'(x)}{i\hbar}dx+(\frac{\partial}{\partial x}-\ddfx-\frac{1}{i\hbar}f''(x)\fx) dx + (\ddxi+\frac{1}{i\hbar}\fx)d\xi
\end{equation}
Indeed, under the identification as in b) in Remark \ref{rmk:expl of sigma() case R2n}, the connection $\Delta|L$ becomes
$${\rm{Ad}}\; {\rm{exp}} (-\frac{1}{i\hbar}(f(x+\fx)-f'(x)\fx))(-\frac{f'(x)}{i\hbar}+\frac{\partial}{\partial x}-\frac{\partial}{\partial \fx}+\frac{1}{i\hbar}f''(x)\fx)dx$$
which is equal to 
$$\nabla_{\rm{st}}=(\frac{\partial}{\partial x}-\frac{\partial}{\partial \fx})dx$$
Now, if we denote
$$\sigma_f=\sigma(x,\xi;x,f'(x)),$$
as well as 
$$A=\frac{1}{i\hbar}\xi dx - \frac{\partial}{\partial \fx} dx+\frac{\fx}{i\hbar} d\xi;\; p(x,\xi)=(x,f'(x)), $$
then
$$\nabla _\cV (\sigma_f {\underline w})= \sigma_f (A-p^*A) \sigma _f {\underline w}+ \sigma_f {\underline{\nabla_{\rm{st}} w}}$$
Since
$$A-p^*A=-\frac{1}{i\hbar} \xi dx- \frac{\partial}{\partial \fx} dx+\frac{\fx}{i\hbar} d\xi+\frac{1}{i\hbar} f'(x)dx+ \frac{\partial}{\partial \fx} dx-\frac{\fx}{i\hbar} f''(x)dx,$$
we conclude that \eqref{eq:nabla V osc} holds.
%%%%%%%%%%%%%%%%%%%%%%%%%%%%%%%%%%%%%%%%%%%%%%%%%%%%%%%%%%%%%%%%%%%%%%%%%%%%%%%%%%%%%%%
\subsubsection{The action of ${\widehat{\A}}$ on $\cV_f$}\label{sss:the action of A on Vf osc}
The formal variables act as follows: $\fx$ by multiplication, and $\fxi$ by $i\hbar\ddfx + f'(x+\fx)-f'(x)$.

Indeed, under the identification b) in Remark \ref{rmk:expl of sigma() case R2n}, $\fxi$ acts by
$${\rm{Ad}}\; {\rm{exp}} (-\frac{1}{i\hbar}(f(x+\fx)-f'(x)\fx)) (i\hbar \frac{\partial}{\partial \fx})=i\hbar \frac{\partial}{\partial \fx} +f'(x+\fx)-f'(x)$$
and $\fx$ acts by
$${\rm{Ad}}\; {\rm{exp}} (-\frac{1}{i\hbar}(f(x+\fx)-f'(x)\fx))(\fx)=\fx.$$
%%%%%%%%%%%%%%%%%%%%%%%%%%%%%%%%%%%%%%%%%%%%%%%%%%%%%%%%%%%%%%%%%%%%%%%%%%%%%%%%%%%%%%%
\subsubsection{The action of $\tG_M$ on $\cV_f$}\label{sss:the action of G on Vf osc}  A section $\sigma$ as in  \eqref{eq:elts of tG flat case} acts by
\begin{equation}\label{eq:action of tG on Vf osc}
\exp(-\frac{1}{i\hbar}(f(x_1+\fx)-f'(x_1)\fx-f(x_2+\fx)-f'(x_2)\fx) g(x_1,\xi_1;x_2,\xi_2)
\end{equation}
This is obvious because of how we make the identification in b), Remark \ref{rmk:expl of sigma() case R2n}.
\subsection{Comparison between $\cV_f^\bullet$ and $\cV_0^\bullet$}\label{ss:compa of Vs}
\begin{corollary}\label{cor:compa Vs} One has an isomorphism
$${\rm{exp}} (-\frac{1}{i\hbar}(f(x+\fx)-f'(x)\fx)): \cV_0^\bullet \isomoto \cV_f^\bullet$$
\end{corollary}
\subsection{The filtration and the microsupport}\label{ss:Fimi}
The filtration on $\cV_f^\bullet$ as constructed in \ref{ss:Filts} is defined as follows:
$${\rm{Filt}}^0 \cV^\bullet _f=\Omega ^\bullet _{{\mathbb R}^{2n}} (\cV_\Lambda)$$
where 
$$\cV_\Lambda={\rm{Sp}}^4 (2n) \cdot {\widehat{\widehat{\mathbb V}}}_\Lambda$$
The microsupport of $\cV^\bullet _\Lambda$ is ${\rm{graph}}(df)$, as seen from the formula for $\nabla_\cV$ in \ref{sss:the conn Vf osc}.
%%%%%%%%%%%%%%%%%%%%%%%%%%%%%%%%%%%%%%%%%%%%%%%%%%%%%%%%%%%%%%%%%%%%%%%%%%%%%%%%%%%%%%%%%%%%%%%%%%%%%%%%%%%%%%%%%%%%%%%%%%%%%%%%%%%%%%%%%%%%%%%%%%%%%%%%%%%%%%%%%%%%%%%%%%%%%%
%%%%%%%%%%%%%%%%%%%%%%%%%%%%%%%%%%%%%%%%%%%%%%%%%%%%%%%%%%%%%%%%%%%%%%%%%%%%%%%%%%%%%%%
%%%%%%%%%%%%%%%%%%%%%%%%%%%%%%%%%%%%%%%%%%%%%%%%%%%%%%%%%%%%%%%%%%%%%%%%%%%%%%%%%%%%%%%
%%%%%%%%%%%%%%%%%%%%%%%%%%%%%%%%%%%%%%%%%%%%%%%%%%%%%%%%%%%%%%%%%%%%%%%%%%%%%%%%%%%%%%%
%%%%%%%%%%%%%%%%%%%%%%%%%%%%%%%%%%%%%%%%%%%%%%%%%%%%%%%%%%%%%%%%%%%%%%%%%%%%%%%%%%%%%%%%%%%%%%%%%%%%%%%%%%%%%%%%%%%%%%%%%%%%%%%%%%%%%%%%%%%%%%%%%%%%%%%%%%%%%%%%%%%%%%%%%%%%%%
%%%%%%%%%%%%%%%%%%%%%%%%%%%%%%%%%%%%%%%%%%%%%%%%%%%%%%%%%%%%%%%%%%%%%%%%%%%%%%%%%%%%%%%%%%%%%%%%%%%%%%%%%%%%%%%%%%%%%%%%%%%%%%%%%%%%%%%%%%%%%%%%%%%%%%%%%%%%%%%%%%%%%%%%%%%%%%%%%%%%%%%%%%%%%%%%%%%%%%%%%%%%%%%%%%%%%%%%%%%%%%%%%%%%%%%%%%%%%%%%%%%%%%%%%%%%%%%%%%%%%%%%%%%%%%%%%%%%%%%%%%%%%%%%%%%%%%%%%%%%%%%%%%%%%%%%%%%%%%%%%%%%%%%%%%%%%%%%%%%%%%%%%%%%
%\section{The microlocal category of sheaves}\label{s:sheaves}
%%%%%%%%%%%%%%%%%%%%%%%%%%%%%%%%%%%%%%%%%%%%%%%%%%%%%%%%%%%%%%%%%%%%%%%%%%%%%%%%%%%%%%%%%%%%%%%%%%%%%%%%%%%%%%%%%%%%%%%%%%%%%%%%%%%%%%%%%%%%%%%%%%%%%%%%%%%%%%%%%%%%%%%%%%%%%%%%%%%%%%%%%%%%%%%%%%%%%%%%%%%%%%%%%%%%%%%%%%%%%%%%%%%%%%%%%%%%%%%%%%%%%%%%%%%%%%%%%%%%%%%%%%%%%%%%%%%%%%%%%%%%%%%%%%%%%%%%%%%%%%%%%%%%%%%%%%%%%%%%%%%%%%%%%%%%%%%%%%%%%%%%%%%%%%%
\section{ The complex computing $\RHom (\cV_0^\bullet, \cV^\bullet)$}\label{s:Rhom int points Maslov}
\subsection{The simplified version}\label{ss:simpliver} Set, as above, ${\mathbb A}={\mathbb C}[\fx, \fxi, \hbar]$ with the Moyal-Weyl product; $\tiG={\rm{Sp}}^4(2n);$ $\tiP={\rm{MPar}}(2n);$ $\cA_0={\mathbb C}[\tiG]\ltimes {\mathbb A}[\hbar ^{-1}];$ $\cB_0={\mathbb C}[\tiP]\ltimes {\mathbb A}[\hbar^{-1}];$ ${\mathbb V}={\mathbb C}[\fx,\hbar]$ on which $\fxi$ acts by $i\hbar \frac{\partial}{\partial \fx}$ and $\fx$ by multiplication; $\cV_0=\cA_0\otimes _{\cB_0} {\mathbb V}[\hbar^{-1}]$ (note that, since operators ${\rm{exp}}(ai\hbar (\frac{\partial}{\partial \fx})^2)$ are well defined on ${\mathbb V}$, the group $\tiP$ acts on ${\mathbb V}$ compatibly with the action of ${\mathbb A}$). In this simplified version all the tensor products and tensor products are not completed. We start with computing ${\rm{Ext}}^\bullet _{\cA_0} (\cV_0, \cV_0).$
\begin{proposition}\label{prop:simpliver}
$${\rm{Ext}}^\bullet _{\cA_0} (\cV_0, \cV_0)\isomoto H^\bullet (\tiP, {\mathbb C}[\hbar, \hbar^{-1}])$$
where $H^\bullet$ stands for the (discrete) group cohomology.
\end{proposition}
\begin{proof} First, by a version of Shapiro's lemma \cite{CE}, we have
$${\rm{Ext}}^\bullet _{\cA_0} (\cV_0, \cV_0)\isomoto H^\bullet (\tiP, C^\bullet ({\mathbb V}, {\mathbb A}, \cV_0))$$
where $C^\bullet$ in the right hand side is the standard complex computing ${\rm{Ext}}_{\mathbb A}({\mathbb V}, \cV_0).$ Secondly, we have
$$\cV_0=\bigoplus_{\lambda\in \tiG/\tiP} \cV_{0, \lambda}$$
An element $p$ of $\tiP$ sends $\cV_{0,\lambda}$ to $\cV_{0, p\lambda}$ and therefore we have a $\tiP$-module decomposition
$$\cV_0=\bigoplus_{\cO} \cV_\cO$$
where $\cO$ are $\tiP$-orbits in $\tiG/\tiP$ and
$$\cV_\cO=\bigoplus_{\lambda\in \cO} \cV_{0,\lambda}$$
\begin{lemma}\label{lemma:statfaza simpl}
For all $\cO$ except the one-point orbit,
$$H^\bullet (\tiP, C^\bullet ({\mathbb V}, {\mathbb A}, \cV_\cO))=0$$
\end{lemma}
This follows from results in \ref{sss:stationary phase statement}. Finally, ${\mathbb C}[\hbar, \hbar^{-1}]\to C^\bullet ({\mathbb V}, {\mathbb A}, {\mathbb V}[\hbar^{-1}])$ is a quasi-isomorphism and ${\mathbb V}[\hbar^{-1}]=\cV_\cO$ where $\cO$ is the one-point orbit.
\end{proof}
This proves the simplified case of Theorem \ref{thm:comp v0 vf}. The actual theorem is more complicated because our actual module consists of forms with values in completed $\cV_0$ and we take not only complexes of derived modules between them bur also the derived invariants of the fundamental groupoid with values in the De Rham complex. It is almost evident that passing to derived invariants of the fundamental groupoid will get rid of the dependence on the point $(x,\xi)$ of our space and reduce the problem to the above, after some completion and tensoring by the Novikov ring. The remainder of this section just makes this explicit (in addition to Section \ref{sss:stationary phase statement} that was mentioned and used above). The main and only point is to construct explicitly a resolution of the differential graded module $\cV_0$ that carries an action of $\pi_1({\mathbb R}^{2n}).$
%\subsection{The case of oscillatory modules: the complex computing $\RHom (\cV_0, \cV_f)$}\label{Rhom 0 f osc} 
\subsection{The statement of the result}\label{sss:statement Hom V0 V} Let $M=\bR^{2n}.$ Given an oscillatory module $\cV^\bullet $ on $M$, construct the following complex. Note first that the group $\MP(n,\bR)$ acts on the linear span of $d\fx_1,\ldots,d\fx_n$ through the projection $\MP(n)\to \ML(n).$ Introduce the vector space
\begin{equation}\label{eq:Space with dxhats}
\wedge (d\fx_1,\ldots, d\fx_n) d^{-\frac{1}{2}} \fx
\end{equation}
where
$$d^{\frac{1}{2}} \fx=(d\fx_1\ldots d\fx_n)^{-\frac{1}{2}}$$
is a formal element on which a pair $(g,u)$ in $\MP$ acts via multiplication by $u.$ 
Consider the space 
\begin{equation}\label{eq: V with dx hats}
\wedge (d\fx_1,\ldots, d\fx_n) d^{-\frac{1}{2}} \fx \otimes \cV^\bullet
\end{equation}
with the following structures.
\subsubsection{The differential}\label{sss:the differential cochains}
%%%%%%%%%%%%%%%%%%%%%%%%%%%%%%%%%%%%%%%%%%%%%%%%%%%%%%%%%%%%%%%%%%%%%%%%%%%%%%%%%%%%%%%
Define the differential on \eqref{eq: V with dx hats} as
$$\tnabla_\cV=\frac{1}{i\hbar}(\xi dx+\fxi d\fx-\fx d\xi)+\nabla_\cV$$
One checks that $\tnabla^2=0.$ In fact,
$$\frac{1}{i\hbar}\nabla_\cV (\xi dx+\fxi d\fx-\fx d\xi)+\frac{1}{(i\hbar)^2}(\xi dx+\fxi d\fx-\fx d\xi)^2=$$
$$\frac{1}{i\hbar}(-d\xi d\fx+d\xi dx +dx d\xi -d\fx d\xi)=0$$
%%%%%%%%%%%%%%%%%%%%%%%%%%%%%%%%%%%%%%%%%%%%%%%%%%%%%%%%%%%%%%%%%%%%%%%%%%%%%%%%%%%%%%%
\subsubsection{The action of $\MP$}\label{sss:the action of MP cochains} 
Denote by $\MP(n,\bR)_M$ the sheaf of smooth sections of the associated (in our case trivial) bundle of groups with fiber $\MP(n).$

There is an obvious action of $\MP(n,\bR)_M$ on \eqref{eq: V with dx hats} but we have to modify it to make it commute with the differential. Put
\begin{equation}\label{eq:Rh}
{\mathbf R}_h=h+\frac{1}{i\hbar}[\iota_{d\fx}dx, h]
\end{equation}
Here $[,]$ stands for the commutator of operators on \eqref{eq: V with dx hats}; 
$$\iota_{d\fx} dx=\sum_{j=1}^n \iota_{d\fx_j} dx_j;$$ 
and $\iota_{d\fx_j}$ is the graded derivation of $\wedge(d\fx_1,\ldots,d\fx_n)$ that sends $d\fx_j$ to one and $d\fx_k$ to zero for $k\neq j.$
One checks immediately that 
\begin{equation}\label{eq:R is action}
{\mathbf R}_{h_1} {\mathbf R}_{h_2}={\mathbf R}_{h_1h_2}
\end{equation}
\begin{lemma}\label{lemma:deformed Rh and alpha}
\begin{equation}\label{eq;comp h apfa etc 1}
\tnabla_{\cV}{\mathbf R}_h={\mathbf R}_h \tnabla_{\cV}
%\tnabla_{\cV}({\mathbf R}_hv)=-\alpha(h){\mathbf R}_h  v+{\mathbf R}_h \tnabla_\cV v
\end{equation}
\end{lemma}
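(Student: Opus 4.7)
The assertion $\tilde\nabla_\cV\,\mathbf R_h = \mathbf R_h\,\tilde\nabla_\cV$ is the vanishing of the graded commutator $[\tilde\nabla_\cV,\mathbf R_h]$, since $\tilde\nabla_\cV$ is odd and $\mathbf R_h$ is even. Expanding $\mathbf R_h = h + \tfrac{1}{i\hbar}[X,h]$ with $X := \iota_{d\fx}\,dx$, the claim becomes
\[
[\tilde\nabla_\cV,h] \;+\; \tfrac{1}{i\hbar}\bigl[\tilde\nabla_\cV,\,[X,h]\bigr] \;=\; 0,
\]
and I plan to verify this by direct computation, splitting $\tilde\nabla_\cV$ into its four summands $\tfrac{1}{i\hbar}\xi\,dx$, $\tfrac{1}{i\hbar}\fxi\,d\fx$, $-\tfrac{1}{i\hbar}\fx\,d\xi$, and $\nabla_\cV$.

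First I would reduce $[\tilde\nabla_\cV,h]$. The piece $\xi\,dx$ commutes with $h$ since it involves no formal variables; the piece $\fxi\,d\fx$ commutes with $h$ because, if $g\in\ML(n)$ denotes the image of $h$, then $h$ acts contragrediently on $\fxi$ and on $d\fx$ (namely $h\fxi_jh^{-1} = \sum_k g_{kj}\fxi_k$ and $h\,d\fx_j\,h^{-1} = \sum_k(g^{-1})_{jk}d\fx_k$), so the pairing $\sum_j\fxi_j\,d\fx_j$ is invariant. The remaining two pieces give the anomaly: the term $-\tfrac{1}{i\hbar}\fx\,d\xi$ contributes $-\tfrac{1}{i\hbar}(1-g^{-1})\fx\cdot d\xi\cdot h$ via $[\fx_j,h] = ((1-g^{-1})\fx)_j\,h$, and the compatibility of the action with the connection up to inner derivations gives $[\nabla_\cV,h] = -\alpha(h)\,h$ with $\alpha(h) = -\nabla h\cdot h^{-1}$. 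The half-density factor $d^{-\frac12}\fx$ on which $(g,u)\in\MP$ acts by $u$ contributes only a central scalar and drops out of every commutator.

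Next I would compute $[X,h] = \sum_{a,k}(1-g)_{ak}\,\iota_{d\fx_a}\,dx_k\cdot h$ using $h\iota_{d\fx_k}h^{-1} = \sum_a g_{ak}\iota_{d\fx_a}$ and the $h$-invariance of $dx_k$, and then evaluate $[\tilde\nabla_\cV,[X,h]]$. A short check using $\{dx_j,\iota_{d\fx_a}\} = 0$, the fact that $\partial_{\fx_j}$, $\fx_j$ and the base differentials each (anti)commute trivially with $Y := \sum(1-g)_{ak}\iota_{d\fx_a}dx_k$, shows that three of the four pieces of $\tilde\nabla_\cV$ graded-commute with $Y$. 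The key algebraic input is the Clifford relation $\{d\fx_b,\iota_{d\fx_a}\} = \delta_{ab}$, which yields
\[
[\fxi_b\,d\fx_b,\;\iota_{d\fx_a}\,dx_k] \;=\; \delta_{ab}\,\fxi_b\,dx_k.
\]
The resulting $\fxi$-term then pairs, via $[\fxi_j,\fx_k] = i\hbar\,\delta_{jk}$, with the anomaly from $-\tfrac{1}{i\hbar}\fx\,d\xi$, while the residual interaction between $Y$ and $\nabla_\cV$ absorbs the $\alpha(h)\,h$ contribution; the precise matching follows from the explicit form of $\alpha$ dictated by the action-up-to-inner-derivations structure on $\cA_M^\bullet$ and its restriction to $\cV_L^\bullet$.

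The main obstacle is the bookkeeping of signs from graded commutators of odd operators ($\tilde\nabla_\cV,\,d\fx_j,\,dx_j,\,d\xi_j,\,\iota_{d\fx_j}$) and even operators ($h,\,\fx,\,\fxi,\,X$), combined with the need to check that $h$'s simultaneous action on the three tensor factors $\wedge(d\fx)$, $d^{-\frac12}\fx$, and $\cV^\bullet$ is correctly taken into account in every term; this is a consistency verification rather than a genuinely hard step, but one must be careful enough that the exact cancellation between the Weyl-pairing correction and the $\alpha(h)$-correction is made rigorous.
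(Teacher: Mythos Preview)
Your approach is the same as the paper's: a direct commutator computation. The paper packages it slightly differently---it first invokes the $\cA^\bullet_M$-module compatibility to obtain $[\nabla_\cV,\mathbf R_h]=-\alpha(h)\mathbf R_h$ in one step (with $\alpha(h)=-dh\cdot h^{-1}+A_{-1}-\Ad_h A_{-1}$, $A_{-1}=\tfrac{1}{i\hbar}(-\fxi\,dx+\fx\,d\xi)$), and then computes $[\tfrac{1}{i\hbar}(\xi\,dx+\fxi\,d\fx-\fx\,d\xi),\mathbf R_h]$ via the Jacobi identity applied to $\mathbf R_h=h+\tfrac{1}{i\hbar}[\iota_{d\fx}dx,h]$, obtaining $+\alpha(h)\mathbf R_h$. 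Your finer four-piece split is equivalent but more laborious.

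There is one concrete gap. Your formula $[\fx_j,h]=((1-g^{-1})\fx)_j\,h$, with $g$ the $\ML(n)$-image of $h$, is only valid when $h$ lies over the Levi. A general $h\in\MP(n)$ projects to $\left(\begin{smallmatrix}b&a\\0&(b^{-1})^t\end{smallmatrix}\right)\in\Sp(2n)$, and conjugation by such an element sends $\fx$ to a combination of $\fx$ \emph{and} $\fxi$; it is $\fxi$, not $\fx$, that transforms purely under the Levi quotient. So $[-\tfrac{1}{i\hbar}\fx\,d\xi,h]$ acquires an extra $\fxi\,d\xi$ piece you have not written. This is not fatal---the missing piece is exactly the $\Ad_h(\fx\,d\xi)$ contribution inside $\alpha(h)$, so the cancellation still holds---but your explicit formulas need correction. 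A similar remark applies to your claim that $\nabla_\cV$ graded-commutes with $Y$: the coefficients $(1-g)_{ak}$ depend on $(x,\xi)$, so $d_{\rm DR}$ hits them, producing terms that match the $dh\cdot h^{-1}$ part of $\alpha(h)$. Once these two corrections are made, your outline goes through.
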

\begin{proof} For a local section $h$ of $\MP(n)_M,$ define $\alpha(h)\in \Omega^1_M(\cA_M)$ by
\begin{equation}\label{eq:alpha of h in MP}
\alpha(h)=-dh\cdot h^{-1}+A_{-1}-\Ad_h (A_{-1})
\end{equation}
where $A_{-1}=\frac{1}{i\hbar}(-\fxi dx + \fx d\xi).$  
Note that
\begin{equation}\label{eq;comp h apfa etc 4}
\nabla_{\cV}({\mathbf R}_hv)=-\alpha(h){\mathbf R}_h  v+{\mathbf R}_h \nabla_\cV v;
\end{equation}
\begin{equation}\label{eq;comp h apfa etc 2}
-\frac{1}{i\hbar}(\xi dx+\fxi d\fx-\fx d\xi)({\mathbf R}_hv)=-\alpha(h){\mathbf R}_h  v-{\mathbf R}_h \frac{1}{i\hbar}(\xi dx+\fxi d\fx-\fx d\xi) v
\end{equation}
The first equation is equivalent to the fact that $\cV^\bullet$ is a differential graded $\cA^\bullet _M$-module. The second is checked by a direct computation:
$$\frac{1}{i\hbar}[\fxi dx+\xi dx -\fx d\xi, h]=-\frac{1}{i\hbar}[\fx d\xi, h];$$
$$\frac{1}{i\hbar}[\fxi dx+\xi dx -\fx d\xi, [\iota_{d\fx}dx, h]]=\frac{1}{i\hbar}[[\fxi dx+\xi dx -\fx d\xi, \iota_{d\fx}]dx, h]+$$
$$[\iota_{d\fx}dx, \frac{1}{i\hbar}[\fxi dx+\xi dx -\fx d\xi,h]]=\frac{1}{i\hbar}[\fxi dx, h]$$
(the second summand vanishes). Therefore 
$$\frac{1}{i\hbar}[\fxi dx+\xi dx -\fx d\xi,{\mathbf R}_h]=\frac{1}{i\hbar}[\fxi dx-\fx d\xi, h]=-[\nabla_\cV, h].$$
Lemma \ref{eq;comp h apfa etc 1} immediately follows.
\end{proof}
%%%%%%%%%%%%%%%%%%%%%%%%%%%%%%%%%%%%%%%%%%%%%%%%%%%%%%%%%%%%%%%%%%%%%%%%%%%%%%%%%%%%%%%
%\subsubsection
\begin{proposition}\label{prop:hom v0 v} The standard complex computing group cohomology
$$C^\bullet  (\MP(n)_M, \wedge (d\fx_1,\ldots, d\fx_n) d^{-\frac{1}{2}} \fx \otimes \cV^\bullet)$$
is quasi-isomorphic to the complex $\cC^\bullet(\cV_0^\bullet, \cA^\bullet_M, \cV^\bullet).$
\end{proposition}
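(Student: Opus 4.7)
The plan is to realize both complexes as computing $\RHom_{\cA^\bullet_M}(\cV^\bullet_0, \cV^\bullet)$ via two different but compatible resolutions, then to identify the explicit complex on the left with what comes out of the Shapiro / Frobenius reciprocity decomposition. The key observation is that by Definition \ref{dfn:VL}, $\cV^\bullet_0$ is the module induced from $\ffbV^\bullet_{L_0}$ along the inclusion $\cB^\bullet_{L_0} \hookrightarrow \cA^\bullet_M$ and the morphism of groupoids $\bfP_{L_0}\to\tG_M$ (and in the flat case $L_0=\{\xi=0\}\subset \bR^{2n}$ both are trivial bundles so this decomposition is particularly transparent). The standard complex $\cC^\bullet(\cV^\bullet_0,\cA^\bullet_M,\cV^\bullet)$ computes $\Ext^\bullet_{\cA^\bullet_M}(\cV^\bullet_0,\cV^\bullet)$ by Definition \ref{dfn:st compl Lie grpoid} and Corollary \ref{cor:full diffl on stancomp}, so it suffices to show that the group cochain complex $C^\bullet(\MP(n)_M,\,\wedge(d\fx_\bullet)d^{-1/2}\fx\otimes\cV^\bullet)$ with differential $\tnabla_\cV$ and action $\mathbf{R}_h$ also computes this $\Ext$.

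First I will build a semi-free resolution $\cP^\bullet\twoheadrightarrow\cV^\bullet_0$ of $\cA^\bullet_M$-modules by combining a Koszul resolution in the formal variables $\fxi_j$ with the bar resolution of $\MP(n)$. Concretely, let $e_1,\dots,e_n$ be formal generators of homological degree $1$ and set
$$\cP^\bullet \;=\; \cA^\bullet_M \,\hat\otimes_{\MP(n)}\, \bigl(C_\bullet(\MP(n))\otimes \wedge^\bullet(e_1,\dots,e_n)\cdot d^{1/2}\fx\bigr),$$
with Koszul differential $\partial e_i=\fxi_i$, bar differential on $C_\bullet(\MP(n))$, and the ambient differential from $\cA^\bullet_M$. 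Here $d^{1/2}\fx$ is a formal generator on which $(g,u)\in\MP(n)$ acts by multiplication by $u$; its presence is what makes the $\MP$-action on the Koszul piece $\wedge^\bullet(e_\bullet)$ (which involves the determinant character via the top form $e_1\wedge\cdots\wedge e_n$) become an honest $\MP(n)$-equivariant structure. The map $\cP^\bullet\to\cV^\bullet_0$ augments $C_\bullet(\MP(n))\otimes \wedge^\bullet(e_\bullet)$ to $\bC$, sends $d^{1/2}\fx$ to the distinguished generator of $\ffbV$, and extends $\cA^\bullet_M$-linearly through the induction presentation of Definition \ref{dfn:ind with a tvistec}. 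Acyclicity reduces, by induction ($\cA\otimes_\cB-$ is exact on modules induced from the natural module category), to (a) exactness of the Koszul complex $\cB\otimes\wedge^\bullet(e_\bullet)\to \ffbV$ resolving $\ffbV=\ffbA/\ffbA\cdot\fxi$ as an $\ffbA$-module, and (b) exactness of the bar resolution of the trivial $\MP(n)$-module.

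Next I apply $\uHom_{\cA^\bullet_M}(-,\cV^\bullet)$. The Koszul part dualises: $e_i\mapsto d\fx_i$ (in cohomological degree $+1$), the factor $d^{1/2}\fx$ dualises to $d^{-1/2}\fx$, and the bar resolution of $\MP(n)$ dualises to the group cochain complex $C^\bullet(\MP(n)_M,-)$. After the standard computation of the dual differential, the contribution of $\partial e_i=\fxi_i$ becomes left multiplication by $\frac{1}{i\hbar}\fxi\,d\fx$ (using that $\fxi$ acts on $\cV^\bullet$ through the $\cA^\bullet_M$-structure), while the $\cA^\bullet_M$-differential contributes the de Rham type piece of $\nabla_\cV$ plus the terms $\frac{1}{i\hbar}(\xi\,dx-\fx\,d\xi)$ coming from the connection on the induced module (c.f.\ \ref{sss:the conn Vf osc} and \eqref{eq:elts of tG flat case}). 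Summing up produces exactly the differential
$$\tnabla_\cV=\frac{1}{i\hbar}(\xi\,dx+\fxi\,d\fx-\fx\,d\xi)+\nabla_\cV$$
and (by Lemma \ref{lemma:deformed Rh and alpha}) the modified $\MP$-action $\mathbf{R}_h$; the correction term $\frac{1}{i\hbar}[\iota_{d\fx}dx,h]$ appears because $\MP$ acts non-trivially on the Koszul variables $d\fx_j$ and on $d^{-1/2}\fx$ simultaneously, and this correction is precisely what is forced by commutativity with the Koszul part of $\tnabla_\cV$, i.e.\ by the equation \eqref{eq;comp h apfa etc 1}.

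Finally, to conclude the quasi-isomorphism I identify the resulting complex with both sides. Since $\cP^\bullet$ is semi-free over $\cA^\bullet_M$, the complex $\uHom_{\cA^\bullet_M}(\cP^\bullet,\cV^\bullet)$ computes $\RHom_{\cA^\bullet_M}(\cV^\bullet_0,\cV^\bullet)$. On the other hand $\cC^\bullet(\cV^\bullet_0,\cA^\bullet_M,\cV^\bullet)$ computes the same $\RHom$ by its own bar resolution. A comparison map between the two resolutions (obtained by standard acyclic models / bar-Koszul comparison) gives the required quasi-isomorphism between $\cC^\bullet(\cV^\bullet_0,\cA^\bullet_M,\cV^\bullet)$ and $C^\bullet(\MP(n)_M,\,\wedge(d\fx_\bullet)d^{-1/2}\fx\otimes\cV^\bullet)$. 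The main obstacle is step two: carefully matching signs, the half-density twist, and the modification $\mathbf{R}_h$, so that the dual differential on $\uHom_{\cA^\bullet_M}(\cP^\bullet,\cV^\bullet)$ is recognisable as the group-cochain differential of $\MP(n)_M$ with the prescribed action on the coefficient complex $(\wedge(d\fx_\bullet)d^{-1/2}\fx\otimes\cV^\bullet,\tnabla_\cV)$. Once this identification is verified, the quasi-isomorphism follows formally from uniqueness of $\RHom$ up to quasi-isomorphism.
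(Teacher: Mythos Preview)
Your proposal is correct and follows essentially the same route as the paper: build a Koszul resolution in the $\fxi_j$'s, then take the bar resolution over $\MP(n)_M$, induce up to $\cA^\bullet_M$, and finally apply $\uHom_{\cA^\bullet_M}(-,\cV^\bullet)$ to land on the group cochain complex with coefficients in $\wedge(d\fx_\bullet)d^{-1/2}\fx\otimes\cV^\bullet$. The paper carries this out in stages (first the $\ffbA_M$-free Koszul complex $\bP^\bullet$ with generator $v_0$ playing the role of your $d^{1/2}\fx$, then the bar construction $\cP^\bullet$ over $\cB^\bullet_M$, then the induced $\cR^\bullet=\cA_M^\bullet\fotimes_{\cB_M^\bullet}\cP^\bullet$) and introduces the modified action ${\mathbf R}_h=h+[\frac{1}{i\hbar}e\,dx,h]$ on the resolution side, exactly mirroring your observation that the correction is forced by compatibility with the Koszul differential.
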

More precisely,
$$C^\bullet =\oplus _{m=0}^\infty \Hom ((\MP(n)_M) ^m, \wedge (d\fx_1,\ldots, d\fx_n) d^{-\frac{1}{2}} \fx \otimes \cV^\bullet);$$
$$(\delta D)(h_1,\ldots,h_{m+1})=(-1)^m \tnabla _\cV D(h_1,\ldots,h_{m+1})+{\mathbf R}_{h_1} D(h_2,\ldots, h_{m+1})+$$
$$+\sum_{j=1}^m (-1)^j D(h_1,\ldots, h_jh_{j+1},\ldots, h_{m+1})+(-1)^{m+1}D(h_1,\ldots, h_m);$$
The following section \ref{s:the resol and the computat} is devoted to the proof of Proposition \ref{prop:hom v0 v}.
\subsection{The resolution of $\cV_0$ and the computation of $\bR{\Hom}(\cV_0,\cV)$}\label{s:the resol and the computat}
%%%%%%%%%%%%%%%%%%%%%%%%%%%%%%%%%%%%%%%%%%%%%%%%%%%%%%%%%%%%%%%%%%%%%%%%%%%%%%%%%%%%%%%%%%%%%%%%%%%%%%%%%%%%%%%%%%%%%%%%%%%%%%%%%%%%%%%%%%%%%%%%%%%%%%%%%%%%%%%%%%%%%%%%%%%%%%
\subsubsection{A resolution of $\cV_0$}\label{sss:resolution of Vf osc} As above, let $M=\bR^{2n}.$ First construct a resolution $\bP^\bullet$ that is only free over $\wbA_M,$ not over $\cA_M.$ This resolution is a free module over 
\begin{equation}\label{eq:A dot formal formal}
{\widehat{\widehat{\bA^\bullet }}}_M= \Omega_M^\bullet (\wbA_M)
\end{equation}
with the space of generators 
$$
\wedge(e_1,\ldots, e_n)v_0; \; |v_0|=0;\; |e_j|=-1
$$
with the differential $\nabla_\bP$ defined by the following properties:
\begin{equation}\label{eq:diff on reso}
\nabla_\bP v_0=\frac{1}{i\hbar}(-\xi dx+\fx d\xi)v_0;\; \nabla e_j=\fxi_j;
\end{equation}
$$\nabla_\bP(a v)=\nabla_\cA a\cdot  v+(-1)^{|a|} a \nabla_\bP v$$
for any $a$ in $\ffAbM$ and any $v$ in $\bP;$  and 
$$\nabla_\bP(\beta v_0)=\nabla_\bP \beta\cdot  v+(-1)^{|\beta|} \beta \nabla_\bP v_0$$
for any $\beta$ in $\wedge(e_1,\ldots,e_n).$ A simple computation shows that $\nabla^2=0.$

Next we construct a $\cB_{M}^\bullet$-free resolution of the $\cB_{_M}^\bullet$-module $\fcV^\bullet _M$. Here, as always, $\cB_{M}^\bullet$ stands for forms with coefficients in the (trivial) bundle of algebras associated to $\cB,$ and $\fcV^\bullet _M$ stands for forms with coefficients in the bundle of modules associated to ${\widehat{\fcV}},$ {\em cf.} Definition \ref{df:ind formal mod}. We first observe that $\bP^\bullet$ is in fact a $\cA^\bullet$-module, though not free. Indeed, to define an $\cB_{M}^\bullet$-action, we have to define an $\MP_M$-action compatible with the action of the smaller algebra and with the differential. We are going to do this next.
%%%%%%%%%%%%%%%%%%%%%%%%%%%%%%%%%%%%%%%%%%%%%%%%%%%%%%%%%%%%%%%%%%%%%%%%%%%%%%%%%%%%%%%%%%%%%%%%%%%%%%%%%%%%%%%%%%%%%%%%%%%%%%%%%%%%%%%%%%%%%%%%%%%%%%%%%%%%%%%%%%%%%%%%%%%%%%
\subsubsection{The action of $\MP(n)_M$} \label{sss:the action of MP} The action of $\MP(n)_M$ extends from $\fcV^\bullet _M$ to $\bP^\bullet$ because of the following. The group $\MP$ also acts on $\wedge(e_1,\ldots,e_n).$ The latter action is induced by the linear action on $\bR^n$ which in our context is the easiest to describe as follows: identify $e_j$ with $\fxi_j$ and therefore $\bR^n$ with the linear span of $\fxi_j$ in $\fbA.$ The action of $\MP$ through the composition $\MP\to\GL\to\Sp$ on $\fbA$ leaves this subspace invariant. This is the action that we mean. 

Recall again that an element of $\MP(n)$ may be represented by a pair
$$(\left [ \begin{array}{cc}
b&a\\
0&{b^t}^{-1} \end{array}\right ], u);\; \det(b)=u^2.
$$ 
This element sends $v_0$ to $u^{-1}v_0.$ Combined with the above, we get an action of $\MP(n)$ on $\wedge(e_1,\ldots, e_n)v_0.$

Unfortunately, this action does not make $\bP^\bullet$ a differential graded $\cB_{M}$-module. To achieve that, we have to change the action as follows:
\begin{equation}\label{eq:changed action of MP}
{\mathbf R}_h = h+[\frac{1}{i\hbar}edx, h]
\end{equation}
Here $edx=\sum_j e_j dx_j.$ The commutator is just the commutator of operators on $\bP^\bullet.$ This action, unlike the previous one, makes $\bP^\bullet$ a differential graded $\bP^\bullet$-module, which is equivalent to the following.

One has 
\begin{equation}\label{eq;comp h apfa etc}
\nabla_{\bP}({\mathbf R}_hv)=-\alpha(h){\mathbf R}_h  v+{\mathbf R}_h \nabla_\bP v
\end{equation}

\subsubsection{The resolution $\cP^\bullet$}\label{sss:the diff in P}
Now define 
\begin{equation}\label{eq:curvaP}
\cP^\bullet =\cB_{-\bullet} (\MP(n)_M, \bP^\bullet)=\oplus_{m=0}^\infty \bC[\MP(n)_M ]^{\otimes m} \fotimes \bP^{\bullet}
\end{equation}
The action of $\cB_{M}$ on $\cP^\bullet$ is given by
$$h((h_1,\ldots,h_m)\otimes v)=(hh_1,\ldots,h_m)\otimes {\mathbf R}_h v$$
({\em cf.} \eqref{eq:changed action of MP});
$$a((h_1,\ldots,h_m)\otimes v)= (h_1,\ldots,h_m)\otimes av$$
for $h$ in $\MP(n)_M$ and $a$ in ${\widehat{\widehat{\bA}}}_M.$

This is the standard bar resolution of the $\MP$-module $\bP^\bullet.$ More precisely, the differential is given by
$$\nabla_{\cP}=\nabla^{(0)}_{\cP}+\nabla^{(1)}_{\cP}$$
\begin{equation}\label{eq:diffl on curvaP2}
\nabla ^{(0)} ((h_1,\ldots, h_m)\otimes v)=(-1)^m (h_1,\ldots, h_m)\otimes \nabla_\bP v
\end{equation}
\begin{equation}\label{eq:diffl on curvaP}
\nabla^{(1)} ((h_1,\ldots,h_m)\otimes v)=\sum_{j=1}^{m-1} (-1)^{j} (h_1,\ldots,h_jh_{j+1},\ldots,h_m)\otimes v
\end{equation}
$$
+(-1)^m (h_1,\ldots,h_{m-1})\otimes  v
$$
Finally, put
\begin{equation}\label{eq:curvaR defini}
\cR^\bullet = \cA_M ^\bullet \fotimes_{\cB_{M}^\bullet} \cP^\bullet
\end{equation}
\subsection{The complex $\Hom(\cR^\bullet, \cV^\bullet)$}\label{ss:complex hom(R,V)} The complex 
\begin{equation}\label{eq:hom (R V) cplex}
\Hom_{\cA^\bullet} (\cR^\bullet, \cV^\bullet)
\end{equation}
is now straightforward to compute for any oscillatory module $\cV$ on $\bR^{2n}.$ It is the complex of cochains of the group $\MP(n)_M$ with coefficients in the module $\wedge(e_1^*,\ldots,e_n^*)v_0^*\otimes \cV,$
\begin{equation}\label{eq:hom R V explicitely}
\Hom_{\cA^\bullet} (\cR^\bullet, \cV^\bullet)\isomoto C^\bullet (\MP(n)_M, \wedge(e_1^*,\ldots,e_n^*)v_0^*\otimes \cV)
\end{equation}
Here $|e_j^*|=1;\; |v_0^*|=0;$ the action of $\MP$ on $\wedge(e_1^*,\ldots,e_n^*)v_0^*$ is dual to the one from \ref{sss:the action of MP}. It is straightforward that this complex is identical to the one in Proposition \ref{prop:hom v0 v}.
%\subsubsection{The differential}\label{sss:the diff in Hom R,V}
%%%%%%%%%%%%%%%%%%%%%%%%%%%%%%%%%%%%%%%%%%%%%%%%%%%%%%%%%%%%%%%%%%%%%%%%%%%%%%%%%%%%%%%%%%%%%%%%%%%%%%%%%%%%%%%%%%%%%%%%%%%%%%%%%%%%%%%%%%%%%%%%%%%%%%%%%%%%%%%%%%%%%%%%%%%%%%
\subsubsection{The case $\cV=\cV_f$}\label{sss:V is Vf} Now we are able to compute $\bR\Hom_{\cA_M^\bullet} (\cV_0^\bullet, \cV_f^\bullet).$ 
Recall (Definition \ref{eq:curW 0})
$$\ffbV_{\bK} = \{\sum _{k=0}^\infty  e^{\frac{1}{i\hbar}c_k} v_k | \in  \ffbV ;\, c_k\in \bR;\, c_k \to \infty\}$$
Here we view this space with the following action of $\MP(n,{\bR}):$
$$ 
\left [ \begin{array}{cc}
b&0\\
0& b^{-1}\end{array}\right ] \mapsto S_b,\;(S_bf)(x)=f(b^{-1}x);
$$
$$
\left [ \begin{array}{cc}
1&a\\
0& 1\end{array}\right ] \mapsto \exp(-\frac{i\hbar}{2}a\ddfx^2)
$$
Now define the $\MP(n)$-module 
\begin{equation}\label{eq:space on which df acts}
\Omega_\bK^{\bullet,\bullet}=\wedge(d\fx_1,\ldots, d\fx_n)\otimes \bC[\Mp(n)]\otimes _{\MP(n)} \ffbV_{\bK}
\end{equation}
and the $\MP(n)_M$-module of $\Omega_\bK^\bullet$ forms with coefficients in \eqref{eq:space on which df acts}.
\begin{remark}\label{rmk:lambda forms with coeffs...} Intuitively, $\Omega_\bK^{\bullet,\bullet}$ is the space of expressions 
\begin{equation}\label{eq:formal double form}
\sum_{J,K,j} \exp(\frac{1}{i\hbar}\varphi_{j,J,K} (x,\xi,\fx)) a_{j,J,K}(x,\xi,\fx) dx_J d\fx_K
\end{equation}
where linear term of $\varphi_{j,J,K} (x,\xi,\fx)$ with respect to $\fx$ is zero, and its quadratic term may be infinite; more precisely, it is allowed to be not just a quadratic form but a point of the Lagrangian Grassmannian.
\end{remark}
The differential on $\wedge(d\fx_1,\ldots,d\fx_n)\otimes {\widehat \cV}^\bullet _\bK)$ is
\begin{equation}\label{eq:diffl when V is Vf}
d_f=\ddxi d\xi+(\ddx-\ddfx)dx+\ddfx d\fx+\frac{1}{i\hbar} (f'(x+\fx)-f'(x))d\fx+\frac{1}{i\hbar} (f'(x)-f''(x)\fx)dx
\end{equation}
One has
\begin{equation}\label{eq:diffl when V is Vf 1}
d_f=(\exp(-\frac{1}{i\hbar}(f(x+\fx)-f'(x)\fx)) d_0 (\exp(\frac{1}{i\hbar}(f(x+\fx)-f'(x)\fx))
\end{equation}

\begin{proposition}\label{prop:vf} The standard complex $\cC\bullet _{\cA_M^\bullet}(\cV^\bullet _0, \cVb_f)$
is quasi-isomorphic to the complex 
\begin{equation}\label{eq:cplex for hom v0 vf}
C^\bullet (\MP(n)_M, \Omega_\bK^{\bullet,\bullet}).
\end{equation}
\end{proposition}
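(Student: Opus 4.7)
The plan is to reduce the statement to Proposition~\ref{prop:hom v0 v} applied with $\cV^\bullet=\cV^\bullet_f$, and then to identify the resulting coefficient sheaf and its differential explicitly using the induced-module description of $\cV^\bullet_f$.

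First, I would invoke Proposition~\ref{prop:hom v0 v}: it yields at once a quasi-isomorphism between $\cC^\bullet_{\cA^\bullet_M}(\cV^\bullet_0,\cV^\bullet_f)$ and
$$C^\bullet\bigl(\MP(n)_M,\ \wedge(d\fx_1,\ldots,d\fx_n)\,d^{-\frac12}\fx\otimes\cV^\bullet_f\bigr),$$
equipped with the differential induced by $\tnabla_\cV=\frac{1}{i\hbar}(\xi dx+\fxi d\fx-\fx d\xi)+\nabla_\cV$. So the entire problem reduces to recognising the coefficient sheaf (together with its differential and $\MP(n)$-action) as $\Omega_\bK^\bullet$-forms with values in $\Omega_\bK^{\bullet,\bullet}$.

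Second, I would unwind $\cV^\bullet_f=\Omega^\bullet_{\bK,M}(\fcV)$ with $\fcV=\cA\fotimes_\cB\ffbV$ (Definitions~\ref{df:ind formal mod}, \ref{dfn:VL}). Inside $\cB=\MP(n)\ltimes\ffbA$ the subgroup $\MP(n)$ acts on $\ffbV_\bK$ via the half-density representation of Lemma~\ref{lemma:A0 acts on W0}, in which the factor $u=\sqrt{\det b}$ is built in. The formal symbol $d^{-\frac12}\fx$ is acted on by $(g,u)\in\MP(n)$ precisely by the character $u$, so tensoring with $d^{-\frac12}\fx$ compensates exactly this half-density twist. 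Using that the induced module $\fcV$ is obtained from $\ffbV_\bK$ by formally adjoining shifts parametrised by $\Mp(2n)/\MP(n)$ and the Novikov scalars, the collapse of the $\cB$-tensor product yields an $\MP(n)$-equivariant identification
$$d^{-\frac12}\fx\otimes\fcV\ \simeq\ \bC[\Mp(n)]\otimes_{\MP(n)}\ffbV_\bK,$$
hence $\wedge(d\fx_1,\ldots,d\fx_n)\,d^{-\frac12}\fx\otimes\cV^\bullet_f\simeq\Omega^\bullet_\bK(\Omega_\bK^{\bullet,\bullet})$ as $\MP(n)_M$-modules.

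Third, I would verify that under this identification $\tnabla_\cV$ becomes $d_f$ of \eqref{eq:diffl when V is Vf}. This is a direct substitution using \ref{sss:the conn Vf osc} and \ref{sss:the action of A on Vf osc}: in the sum $\frac{1}{i\hbar}\xi dx+\frac{1}{i\hbar}\fxi d\fx-\frac{1}{i\hbar}\fx d\xi+\nabla_\cV$, the $\frac{1}{i\hbar}\xi dx$ cancels against $-\frac{\xi-f'(x)}{i\hbar}dx$ leaving $\frac{1}{i\hbar}f'(x)dx$; the $-\frac{1}{i\hbar}\fx d\xi$ cancels against $\frac{1}{i\hbar}\fx d\xi$ inside $\nabla_\cV$; and $\frac{1}{i\hbar}\fxi d\fx$, acting as $i\hbar\partial_{\fx}+f'(x+\fx)-f'(x)$, produces $\partial_{\fx}d\fx+\frac{1}{i\hbar}(f'(x+\fx)-f'(x))d\fx$. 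Collecting terms reproduces $d_f$ exactly, and the $\MP(n)$-action $\mathbf R_h$ of Proposition~\ref{prop:hom v0 v} is precisely the action of $\MP(n)_M$ on $\Omega_\bK^{\bullet,\bullet}$ after the identification. The main obstacle is bookkeeping in the second step: one must check carefully that the half-density character of $d^{-\frac12}\fx$ really does convert the $\cB$-module $\ffbV_\bK$ into a plain $\MP(n)$-module, and that the completions inherent in $\cA$ and in the Novikov field $\bK$ are compatible with presenting the induction as $\bC[\Mp(n)]\otimes_{\MP(n)}\ffbV_\bK$.
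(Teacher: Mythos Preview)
Your proposal is correct and follows essentially the same route as the paper: specialize Proposition~\ref{prop:hom v0 v} to $\cV^\bullet=\cV^\bullet_f$ and then identify the coefficient module $\wedge(d\fx)\,d^{-\frac12}\fx\otimes\cV^\bullet_f$ with $\Omega^\bullet_\bK$-forms in $\Omega_\bK^{\bullet,\bullet}$. The paper does not spell out the details of this identification or the matching of $\tnabla_\cV$ with $d_f$; your explicit term-by-term computation of the differential and your observation that the character carried by $d^{-\frac12}\fx$ cancels the half-density twist in Lemma~\ref{lemma:A0 acts on W0} (yielding the untwisted action $S_b$ used in the definition of $\Omega_\bK^{\bullet,\bullet}$) fill in exactly what the paper leaves implicit.
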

%%%%%%%%%%%%%%%%%%%%%%%%%%%%%%%%%%%%%%%%%%%%%%%%%%%%%%%%%%%%%%%%%%%%%%%%%%%%%%%%%%%%%%%%%%%%%%%%%%%%%%%%%%%%%%%%%%%%%%%%%%%%%%%%%%%%%%%%%%%%%%%%%%%%%%%%%%%%%%%%%%%%%%%%%%%%%%
\subsubsection{A stationary phase statement}\label{sss:stationary phase statement}
\begin{lemma}\label{lemma:statfaza} For any positive integer $p,$ consider $\bR^p$ viewed as a discrete group. One has
$$H^\bullet (\bR^p, \bC[\bR^p])=0.$$
\end{lemma}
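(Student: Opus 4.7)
The plan is to reduce to $p=1$ and then exploit the fact that $\bR$ is divisible (i.e., a $\bQ$-vector space) to obtain a self-similarity of the cohomology. As an abstract abelian group, $\bR^p$ is torsion-free and divisible, hence a $\bQ$-vector space of dimension $2^{\aleph_0}$, and so there is an abstract-group isomorphism $\bR^p\cong \bR$. Under any such identification, the regular $\bR^p$-module $\bC[\bR^p]$ is carried to $\bC[\bR]$ with its translation action, so that $H^\bullet(\bR^p,\bC[\bR^p])\cong H^\bullet(\bR,\bC[\bR])$, and it suffices to prove the $p=1$ case.

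Set $V:=H^\bullet(\bR,\bC[\bR])$. The same cardinality observation yields an isomorphism $\bR\cong \bR\oplus \bR$ of abstract abelian groups; this identifies $\bC[\bR]$ with $\bC[\bR\oplus\bR]=\bC[\bR]\otimes_\bC \bC[\bR]$ as a module over the product group. The K\"unneth formula in group cohomology over the field $\bC$ then gives $V\cong V\otimes_\bC V$ as graded $\bC$-vector spaces. In addition, $V^0=(\bC[\bR])^{\bR}=0$, because $\bR$ permutes the canonical basis $\{e_s\}_{s\in\bR}$ freely, so no nonzero finitely supported element is translation-invariant.

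From $V\cong V\otimes V$ and $V^0=0$, a short induction on cohomological degree forces $V=0$: if $V^i=0$ for $0\le i<n$, then
\[
V^n\cong (V\otimes_\bC V)^n=\bigoplus_{i+j=n}V^i\otimes_\bC V^j=0,
\]
since each summand either has $0<i<n$ (so $V^i=0$ by the inductive hypothesis) or has $i\in\{0,n\}$ (so the $V^0$-factor vanishes). Hence $V=0$.

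The main point to verify carefully is the K\"unneth isomorphism $V\cong V\otimes V$, because $\bR$ viewed as a discrete group is not of finite type and the standard finite-type K\"unneth does not apply verbatim. Over the field $\bC$, a tensor product of projective resolutions of $\bC$ over $\bC[\bR]$ is a projective resolution of $\bC$ over $\bC[\bR\oplus\bR]=\bC[\bR]\otimes_\bC\bC[\bR]$, and the resulting double-complex computation yields the required identification at the level of $\Ext$. An alternative route, potentially safer, is to pass to the associated graded of $\bC[\bR]$ with respect to powers of the augmentation ideal, which is the symmetric algebra $\Symm_\bC(\bR\otimes_\bQ\bC)$ on a $\bC$-vector space of infinite dimension; the same $V\cong V\otimes V$ argument runs cleanly there by Koszul duality, and the spectral sequence of the filtration transfers the vanishing back to $\bC[\bR]$.
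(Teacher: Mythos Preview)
Your argument is essentially the paper's own proof: the paper uses the same $\bQ$-vector-space self-similarity (writing $\bR^p\cong\bQ\oplus\bR^p$ rather than your $\bR\cong\bR\oplus\bR$ after first reducing to $p=1$), invokes K\"unneth, notes that $H^0$ vanishes, and runs the identical minimal-degree argument. Your concern about the validity of K\"unneth in this non-finite-type setting is reasonable, but the paper does not address it either; note that your first proposed fix via tensoring projective resolutions does not by itself settle it, since $\Hom$ out of an infinite-rank free module need not commute with tensor product of coefficients.
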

\begin{proof} One has $\bR^p\isomoto \bigoplus \bQ.$ Therefore $\bR^p\isomoto \bQ\oplus \bR^p.$ By K\"{u}nneth formula, 
$$H^\bullet (\bR^p,  \bC[\bR^p])\isomoto H^\bullet(\bQ, \bQ)\otimes H^\bullet (\bR^p,  \bC[\bR^p]).$$
But $H^0 (\bQ,\bQ)=0.$ If $k$ is the minimal integer such that $H^k (\bR^p,  \bC[\bR^p])\neq 0,$ K\"{u}nneth formula tells that $H^k=0,$ whence the contradiction.
\end{proof}
\begin{corollary}\label{cor:statfaza 2}
Let $\Omega$ be an orbit of $\MP(n,\bR)$ in the Lagrangian Grassmannian $\Lambda(n)$ that consists of more than one point.Then 
$$H^\bullet(\MP(n), \bC[\Omega])=0.$$
\end{corollary}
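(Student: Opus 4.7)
The plan is to exhibit a one-parameter subgroup of $\MP(n,\bR)$ whose cohomology with coefficients in $\bC[\Omega]$ vanishes by Lemma~\ref{lemma:statfaza}, and then to propagate this vanishing to the full group.

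\textbf{Step 1 (choice of one-parameter subgroup).}
Set $\g:=\operatorname{Lie}(\MP(n,\bR))$. Since $|\Omega|>1$, for every $\ell\in\Omega$ the Lie algebra $\h_\ell$ of the stabilizer $H_\ell=\operatorname{Stab}_{\MP(n,\bR)}(\ell)$ is a proper subalgebra of $\g$. The adjoint orbit $\bigcup_{g\in \MP(n,\bR)}\operatorname{Ad}(g)\h_\ell$ is a proper constructible subset of $\g$; choose $X$ in its complement and set $A:=\exp(\bR X)$. Then $A$ is a closed one-parameter subgroup isomorphic, as a Lie group and hence as a discrete abstract abelian group, to $\bR$, and no conjugate of $H_\ell$ contains $A$, i.e.\ $A\not\subset H_{\ell'}$ for every $\ell'\in\Omega$.

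\textbf{Step 2 (vanishing for $A$).}
Decompose $\Omega$ into its $A$-orbits. Each orbit $O$ has the form $A/\Gamma_O$ for some proper closed subgroup $\Gamma_O\subsetneq A\cong\bR$, so $\Gamma_O=\{0\}$ or $\Gamma_O=\alpha\bZ$ for some $\alpha>0$, i.e.\ $O\cong\bR$ or $O\cong\bR/\bZ$ as $A$-set. For $O\cong\bR$, Lemma~\ref{lemma:statfaza} gives $H^\bullet(A,\bC[A])=0$. For $O\cong\bR/\bZ$, fix an abstract $\bQ$-vector-space splitting $\bR=\bQ\oplus V$ with $\bZ\subset\bQ$; then $\bR/\bZ=(\bQ/\bZ)\oplus V$ as $\bQ\oplus V$-sets, whence
\[
\bC[\bR/\bZ]\;\cong\;\bC[\bQ/\bZ]\otimes_\bC\bC[V]
\]
as $\bQ\oplus V$-modules. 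By K\"unneth,
\[
H^\bullet(\bR,\bC[\bR/\bZ])\;\cong\;H^\bullet(\bQ,\bC[\bQ/\bZ])\otimes H^\bullet(V,\bC[V]),
\]
and the second factor vanishes by Lemma~\ref{lemma:statfaza} applied to $V\cong\bR$ abstractly. Thus $H^\bullet(A,\bC[O])=0$ in either case, and summing over orbits yields $H^\bullet(A,\bC[\Omega])=0$.

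\textbf{Step 3 (bootstrap to $\MP(n,\bR)$).}
The remaining task---promoting the vanishing $H^\bullet(A,\bC[\Omega])=0$ to $H^\bullet(\MP(n,\bR),\bC[\Omega])=0$---is the main obstacle, because $A$ is not normal in $\MP(n,\bR)$, so the Hochschild--Serre spectral sequence does not apply directly. I would attempt to build a filtration $A=G_0\subset G_1\subset\cdots\subset G_k=\MP(n,\bR)$ by iteratively enlarging $A$: first within the solvable radical (the unipotent part $\operatorname{Sym}(n,\bR)$ together with a maximal $\bR$-split torus), where each successive extension is by a copy of $\bR$ and Hochschild--Serre propagates vanishing; and then by the reductive quotient, where one must treat the compact factor with a separate argument exploiting that its orbits on $\Omega$ are compact homogeneous spaces. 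This last compact-quotient step is the technically hardest piece, since the discrete-group cohomology $H^\bullet(K^\delta,\bC)$ of a compact $K$ is in general nontrivial.
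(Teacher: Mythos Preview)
Your Step~3 is, as you concede, the real gap: promoting vanishing from a non-normal one-parameter subgroup $A$ to $\MP(n)$ has no direct spectral-sequence route, and your sketch via a filtration by iterated $\bR$-extensions followed by a compact quotient does not close (discrete-group cohomology of a compact Lie group is highly nontrivial, so that last step has no reason to succeed). The paper avoids this entirely by replacing $A$ with the unipotent radical
\[
N=\Ker\bigl(\MP(n)\to\ML(n)\bigr)\;\cong\;\operatorname{Sym}_n(\bR)\;\cong\;\bR^{\,n(n+1)/2},
\]
which is \emph{normal} in $\MP(n)$ and abelian. A single application of Hochschild--Serre for $N\triangleleft\MP(n)$ then reduces the corollary to $H^\bullet(N,\bC[\Omega])=0$; no chain of subgroups and no compact quotient appear.

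For that vanishing the paper again exploits that $N$ is a vector space. Pick $\ell_0\in\Omega$ and let $Z\subset N$ be its $N$-stabilizer. The only point of $\Lambda(n)$ fixed by all of $N$ is $L_0=\{\fxi=0\}$, and $|\Omega|>1$ forces $L_0\notin\Omega$, so $Z\subsetneq N$. A second Hochschild--Serre, now for $Z\triangleleft N$ (every subgroup of an abelian group is normal), together with Lemma~\ref{lemma:statfaza} applied to the nonzero vector space $N/Z$, yields $H^\bullet(N,\bC[\Omega])=0$.

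Two smaller points about your Steps~1--2. In Step~1 you assert $A=\exp(\bR X)\cong\bR$; this can fail if $X$ generates a circle (take $X$ in a compact torus of $\GL(n)\subset\MP(n)$). You could repair this by choosing $X$ inside $N$, which already points toward the paper's argument. In Step~2 you conclude by ``summing over $A$-orbits'', but the discrete group $\bR$ is not of type $FP_m$ for any $m$, so $H^\bullet(\bR,-)$ need not commute with the uncountable direct sum $\bC[\Omega]=\bigoplus_O\bC[O]$; orbit-by-orbit vanishing does not automatically give vanishing on the sum.
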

\begin{proof}
Let $N$ be the subgroup of $\MP(n,\bR)$ consisting of pairs 
$$(\left [ \begin{array}{cc}
1&a\\
0&1 \end{array}\right ], 1)
$$ 
(in other words, $N=\Ker (\MP(n)\to \ML(n))$).
Choose a point in $\Omega.$ Denote its stabilizer by $Z.$ Then $Z$ is a real  vector subspace of $N.$ Let $W$ be a complementary subspace to $Z$. Consider the Lyndon spectral sequence 
$$E_2^{pq}=H^p (N/Z, H^q (Z, \bC[\Omega]))\implies H^{p+q}(N, \bC[\Omega]).$$
But $\Omega\isomoto Z$ as a $Z$-set, so $H^\bullet(N, \bC[\Omega])=0$ by Lemma \ref{lemma:statfaza}. Now consider the Lyndon spectral sequence 
$$E_2^{pq}=H^p (\ML(n), H^q (N, \bC[\Omega]))\implies H^{p+q}(\MP(n), \bC[\Omega]).$$
The statement follows.
\end{proof}
%%%%%%%%%%%%%%%%%%%%%%%%%%%%%%%%%%%%%%%%%%%%%%%%%%%%%%%%%%%%%%%%%%%%%%%%%%%%%%%%%%%%%%%%%%%%%%%%%%%%%%%%%%%%%%%%%%%%%%%%%%%%%%%%%%%%%%%%%%%%%%%%%%%%%%%%%%%%%%%%%%%%%%%%%%%%%%
\subsection{The computation of $\uRHOM(\cV_0,\cV_f)$}\label{ss:comp of rhom 0 f}
Recall the notation
\begin{equation}\label{eq:alg of cochains of MPar body}
\cS^\bullet=C^\bullet (\MP(n), \bK)
\end{equation}
\begin{thm}\label{thm:comp v0 vf}
$$\uRHOM^\bullet  (\cVb_0,\cVb_f)\isomoto \cS^\bullet$$
with the action of a path from $(x_1\xi_1)$ to $(x_2,\xi_2)$ given by muttiplication by $\exp(\frac{1}{i\hbar} (f(x_1)-f(x_2)).$
\end{thm}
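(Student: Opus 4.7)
The plan is to combine Proposition \ref{prop:vf}, which identifies the standard complex $\cC^\bullet_{\cA^\bullet_M}(\cV^\bullet_0, \cV^\bullet_f)$ with the group cochain complex $C^\bullet(\MP(n)_M, \Omega^{\bullet,\bullet}_\bK)$, with the stationary-phase vanishing of Corollary \ref{cor:statfaza 2}. First I would use the gauge equivalence \eqref{eq:diffl when V is Vf 1}: multiplication by $\exp(\frac{1}{i\hbar}(f(x+\fx) - f'(x)\fx))$ intertwines $d_f$ with $d_0$ and commutes with the $\MP(n)_M$-action. This identifies the stalks of the $f$-twisted complex with those of the untwisted one; the identification at a point $x$ multiplies by the value $\exp(\frac{1}{i\hbar}f(x))$ of the gauge at $\fx=0$. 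Propagating this identification along a path from $(x_1,\xi_1)$ to $(x_2,\xi_2)$ produces precisely the multiplication by $\exp(\frac{1}{i\hbar}(f(x_1)-f(x_2)))$ claimed in the statement, so the transport structure of the local system is accounted for as soon as the underlying fiber is computed in the case $f=0$.

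Once the reduction to $f=0$ is in place, it remains to compute the cohomology of the total complex $(C^\bullet(\MP(n), \Omega^{\bullet,\bullet}_\bK), \delta_{\mathrm{grp}} + d_0)$. Following Remark \ref{rmk:lambda forms with coeffs...}, I would decompose the coefficients according to the $\MP(n)$-orbit of the quadratic-in-$\fx$ part of the phase $\varphi$, viewed as a point of the Lagrangian Grassmannian $\Lambda(n)$. By Corollary \ref{cor:statfaza 2}, the $\MP(n)$-group cohomology on every non-trivial orbit vanishes, so only the $\MP(n)$-fixed stratum (vanishing quadratic part) survives. Since linear terms in $\fx$ are already excluded from $\Omega^{\bullet,\bullet}_\bK$ by construction, on the surviving stratum the phase $\varphi$ is independent of $\fx$. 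A Poincar\'e-lemma style contracting homotopy in the $\fx$-variables then identifies the surviving piece of $\Omega^{\bullet,\bullet}_\bK$ with its value at $\fx=0$, which is the constant coefficient system $\bK$. Taking group cohomology of $\MP(n)$ with coefficients in $\bK$ yields $\cS^\bullet$ by \eqref{eq:alg of cochains of MPar body}, as required.

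The main obstacle is controlling the orbit decomposition in the presence of the Novikov-type completions defining $\ffbV_\bK$ and $\cA$: the coefficients are infinite sums $\sum_k e^{c_k/i\hbar} v_k$ with $c_k \to \infty$, and the partition of $\Lambda(n)$ by $\MP(n)$-orbits must be organized so as to respect this completion. I would handle this by introducing a filtration by the order of vanishing of $\varphi$ in $\fx$, yielding a convergent spectral sequence whose $E_1$-term decomposes orbit by orbit, so that Corollary \ref{cor:statfaza 2} can be applied termwise. Finally, I would verify that the $A_\infty$-action of $\pi_1(M)$ produced by Theorem \ref{thm:A infty action on Ext der case grpoid case} collapses, on the reduced complex $\cS^\bullet$, to the strict action by scalar multiplication as stated: this follows because the higher $A_\infty$-operations $T(g_1,\ldots,g_n)$ for $n\geq 2$ are built from $\phi(c_1,\ldots,c_m)$'s involving factors $c(g_i,g_j)\in \MP(n,\bR)\ltimes\wbA_M$, which act trivially on the $\fx$-independent $\bK$-summand that survives the stationary-phase reduction.
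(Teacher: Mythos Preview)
Your reduction to $f=0$ via the gauge $\exp(\frac{1}{i\hbar}(f(x+\fx)-f'(x)\fx))$ and the resulting identification of the transport are correct, and your instinct to invoke Corollary~\ref{cor:statfaza 2} is the right one. But there is a genuine gap: you never compute the $\pi_1(U)$-cochain complex that actually defines the stalk of $\uRHOM$. By Definition~\ref{def:HOM} and \eqref{eq:ind lim M}, the fiber at $x$ is $\varinjlim_{x\in U} C^\bullet(U,\cM^\bullet)$, the cochain complex of the $A_\infty$ action of $\pi_1(U)$ on the standard complex $\cM^\bullet=C^\bullet(\MP(n)_M,\Omega^{\bullet,\bullet}_\bK)$; it is \emph{not} $\cM^\bullet$ itself. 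The paper makes this explicit in \eqref{eq:extext}: one first takes the $\pi_1$-cohomology, and it is precisely this step---via the contracting homotopy \eqref{eq:delta 1} from the Introduction---that eliminates all dependence on $x,\xi,dx,d\xi$ and reduces the sheaf of groups $\MP(n)_M$ to the bare discrete group $\MP(n)$.

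Without that step your argument breaks in two places. First, Corollary~\ref{cor:statfaza 2} is about the discrete group $\MP(n)$; its proof rests on the isomorphism $\bR^p\cong\bigoplus\bQ$ of Lemma~\ref{lemma:statfaza}, which has no analogue for the sheaf $\MP(n)_M$ of smooth $\MP(n)$-valued functions, so you cannot apply it before localizing to a point. Second, your claim that the Poincar\'e lemma in $\fx$ collapses the surviving piece to $\bK$ is wrong: the module in Proposition~\ref{prop:vf} consists of $\Omega^\bullet_{\bK}$-forms on $M$ with values in \eqref{eq:space on which df acts}, so after eliminating $\fx$ you are still left with $\Omega^\bullet_{\bK,M}$, not $\bK$. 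This is exactly the phenomenon flagged in the Introduction after \eqref{eq:tw DR intro}: the cohomology of the bare complex is ``huge'', and only the double complex with $\pi_1$-cochains brings it down to $\bK$.
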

\begin{proof} First one checks that all the structures for $\cVb_0$ and $\cVb_f$ are conjugate by multiplication by $\exp(\frac{1}{i\hbar}(f(x+\fx)-f'(x)\fx)).$ So we can reduce the statement to the case $f=0.$ The cohomology in question is computed by the complex 
\begin{equation}\label{eq:extext}
C^\bullet (\pi_1(M), C^\bullet (\MP(n)_M, \Omega^{\bullet, \bullet})).
\end{equation}
First compute the cohomology of $\pi_1(M)$. An argument identical to the one in Introduction (starting before \eqref{eq:delta}) shows that this cohomology is isomorphic to 
\begin{equation}\label{eq:cohom after pi 1}
H^\bullet (\MP(n), \bC[\Mp]\fotimes _{\bC[\MP(n)]} {\widehat{\widehat{\bV}}}_\bK)
\end{equation}
(In other words, all dependence on $x,\xi,$ and $dx,d\xi$ is eliminated). Now, by Corollary \ref{cor:statfaza 2}, all contributions from all Lagrangian submanifolds other than $L_0=\{\xi=0\}$ are also eliminated. Our cohomology is therefore computed by the complex
\begin{equation}\label{eq:last simplifi}
C^\bullet (\MP(n), \wedge(d\fx_1,\ldots,d\fx_n)\otimes {\widehat{\widehat{\bV}}}_\bK)
\end{equation}
of group cochains of $\MP(n)$ with coefficients in the complex $\wedge(d\fx_1,\ldots,d\fx_n)\otimes {\widehat{\widehat{\bV}}}_\bK$ of formal forms in $\fx$ with the differential $\ddfx d\fx.$
\end{proof}
\subsection{The case of sheaves}\label{ss:sheaves see Maslov}  
Here we compare the computation above to the analogous computation for the microlocal category of sheaves as in \ref{ss:Sheaves intro}. 
\begin{proposition}\label{prop:V0 Vf locally} Let $f$ and $g$ be two $C^\infty $ functions on $\bR^n.$ For a bounded contractible open subset of $\bR^n,$ the module of horizontal sections of the local system $\uRHOM(\cV^\bullet _g, \cV^\bullet _f)$ on $U$ is a free $\cS^\bullet$-module with one generator $J(f,g)$ lying in ${\rm{Filt}}^{-\inf_{U}(f-g)}.$ The composition is as follows:
$$J(f,g)J(g,h)=\exp(\frac{1}{i\hbar}c(f,g,h))J(f,h)$$
where
$$c(f,g,h)=\inf_U(f-h)-\inf_U(f-g)-\inf_U(g-h))$$
\end{proposition}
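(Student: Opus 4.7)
The plan is to deduce the statement from Theorem \ref{thm:comp v0 vf} by a conjugation argument parallel to the one carried out there, followed by an explicit computation of horizontal sections and their Yoneda product on the contractible chart $U$. For the first step, I would show that multiplication by the factor $\exp(\frac{1}{i\hbar}(g(x+\fx) - g'(x)\fx))$ transforms the pair $(\cV_g^\bullet, \cV_f^\bullet)$ into $(\cV_0^\bullet, \cV_{f-g}^\bullet)$: the effect on the first factor is exactly the reduction in the proof of Theorem \ref{thm:comp v0 vf}, and applying the same factor to $\cV_f^\bullet$ replaces its generating exponential $\exp(\frac{1}{i\hbar}(f(x+\fx) - f'(x)\fx))$ by $\exp(\frac{1}{i\hbar}((f-g)(x+\fx) - (f-g)'(x)\fx))$, which is the generator of $\cV_{f-g}^\bullet$. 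Applying Theorem \ref{thm:comp v0 vf} with $f$ replaced by $f - g$ then yields $\uRHOM(\cV_g^\bullet, \cV_f^\bullet) \simeq \cS^\bullet$ as an $A_\infty$ local system, with a path from $x_1$ to $x_2$ acting by multiplication by $\exp(\frac{1}{i\hbar}((f-g)(x_1) - (f-g)(x_2)))$.

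For the second step, on the contractible $U$ the transport equation under the above path action forces any horizontal section to be an element of $\cS^\bullet$ multiplied by $\exp(\frac{1}{i\hbar}(f-g)(x))$, up to an overall $\bK$-scalar and the usual Novikov infinite sums. Hence the module of horizontal sections is free of rank one over $\cS^\bullet$, and I would define $J(f,g)$ as the generator $\exp(\frac{1}{i\hbar}((f-g)(x) - \inf_U(f-g)))$. The Novikov prefactor $\exp(-\frac{1}{i\hbar}\inf_U(f-g))$ is precisely what places this generator in the filtration piece $\mathrm{Filt}^{-\inf_U(f-g)}$ of the local system.

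The composition formula then follows from the Yoneda product on $\cC^\bullet(\cV^\bullet,\cA^\bullet,\cV^\bullet)$ (Lemma \ref{lemma:coproduct Bc}), which under this concrete identification reduces to pointwise multiplication of exponential factors together with the product on $\cS^\bullet$. A direct computation gives $J(f,g)\,J(g,h)$ the exponential factor $\exp(\frac{1}{i\hbar}((f-h)(x) - \inf_U(f-g) - \inf_U(g-h)))$, whereas $J(f,h)$ has exponential factor $\exp(\frac{1}{i\hbar}((f-h)(x) - \inf_U(f-h)))$. Taking the ratio yields $J(f,g)\,J(g,h) = \exp(\frac{1}{i\hbar} c(f,g,h))\,J(f,h)$ with $c(f,g,h) = \inf_U(f-h) - \inf_U(f-g) - \inf_U(g-h)$, and the subadditivity $\inf_U(f-h) \geq \inf_U(f-g) + \inf_U(g-h)$ guarantees $c(f,g,h) \geq 0$, so the correction factor lies in $\Lambda$ as required.

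The main obstacle I foresee is the first step: one must verify that conjugation by $\exp(\frac{1}{i\hbar}(g(x+\fx) - g'(x)\fx))$ intertwines not only the $\cA^\bullet$-module structures but also the actions of $\tG_M$ and the compatible connections on $\cV_g^\bullet$ and $\cV_f^\bullet$ simultaneously, in such a way that the resulting identification of $A_\infty$ local systems respects the Yoneda product. The explicit formulas in \ref{ss:the case of R 2n osc} indicate that all such compatibilities follow from direct computations analogous to those in the proof of Theorem \ref{thm:comp v0 vf}, after which the remaining steps reduce to the bookkeeping of exponentials and infimum estimates described above.
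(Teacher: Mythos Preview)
Your proposal is correct and follows essentially the same route as the paper: reduce $\uRHOM(\cV_g^\bullet,\cV_f^\bullet)$ to $\uRHOM(\cV_0^\bullet,\cV_{f-g}^\bullet)$ by the conjugation already used in the proof of Theorem~\ref{thm:comp v0 vf}, then read off the generator and its filtration level, and finally check the composition by multiplying exponentials. One small point of alignment: the paper writes the generator at the level of the standard complex as
\[
J(f,g)=\exp\Bigl(\tfrac{1}{i\hbar}\bigl((f-g)(x+\fx)-(f-g)'(x)\fx-\inf_U(f-g)\bigr)\Bigr),
\]
i.e.\ with the $\fx$-dependence still present, whereas your $\exp(\tfrac{1}{i\hbar}((f-g)(x)-\inf_U(f-g)))$ is its image under the quasi-isomorphism of Theorem~\ref{thm:comp v0 vf}; both describe the same horizontal section, so this is only a matter of where you choose to display the element.
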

\begin{proof} It is easy to see that
$${\underline {\bR\rm{HOM}}}(\cV^\bullet _g, \cV^\bullet _f)\isomoto {\underline {\bR\rm{HOM}}}(\cV^\bullet _0, \cV^\bullet _{f-g})$$
Put 
\begin{equation}\label{eq:J fg mods}
J(f,g)=\exp(\frac{1}{i\hbar}((f-g)(x+\fx)-(f-g)'(x)\fx-\inf_U(f-g))
\end{equation}
The statement follows from Theorem \ref{thm:comp v0 vf}.
\end{proof}
Compare this to the following result of Tamarkin. Recall the definitions from \ref{ss:Tamarkin}. Put
$$\bK_\bZ=\{\sum _{k=0}^{\infty} a_k e^{-\frac{c_k}{i\hbar}}\}$$ 
where $a_k \in \Z,$ $c_k\in \R,$ and $c_k\to \infty.$ For any two objects $\cF$ and $\cG$ of $D(T^*\bR^n),$ let $\HOM _\bK(\cF,\cG)=\bK_\bZ \otimes _{\Lambda _\bZ}\HOM(\cF,\cG).$ Let ${\rm{Filt}}^c \HOM_\bK=e^{\frac{c}{i\hbar}} \HOM.$
\begin{proposition}\label{prop:V0 Vf locally sheavs} Let $f$ and $g$ be two $C^\infty $ functions on $\bR^n.$ For a bounded contractible open subset $U$ of $\bR^n,$ consider the objects $\cF_f$ and $\cF_g$ of $D(T^*U)$ as in \ref{ss:Tamarkin}.  The complex ${ {\rm{HOM}}}_\bK(\cF_g, \cF_f)$ is quasi-isomorphic to a free $\bK_\bZ$-module with one generator $J(f,g)$ lying in ${\rm{Filt}}^{-\inf_{U}(f-g)}.$ The composition satisfies the same formulas as in Proposition \ref{prop:V0 Vf locally}. 
\end{proposition}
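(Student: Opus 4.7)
First I set $c_0 := -\inf_U(f-g) = \sup_U(g-f)$. By definition, $g(x) \leq f(x) + c_0$ for all $x \in U$, yielding the closed-set inclusion $\{t \geq f(x) + c_0\} \subset \{t \geq g(x)\}$ in $U \times \bR$. This inclusion produces the canonical quotient-of-sheaves morphism
\[
J(f,g) \colon \cF_g = \bZ_{\{t \geq g\}} \twoheadrightarrow \bZ_{\{t \geq f + c_0\}} = (T_{c_0})_*\cF_f,
\]
defining a class in $\bR\Hom(\cF_g, (T_{c_0})_*\cF_f) \subset \HOM(\cF_g, \cF_f)$; this is the candidate generator $J(f,g)$ at filtration level $c_0 = -\inf_U(f-g)$ in $\HOM_\bK$.

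Next I compute $\bR\Hom(\cF_g, (T_c)_*\cF_f) = \bR\Hom(\bZ_A, \bZ_B)$ for each $c \in \bR$, with $A = \{t \geq g\}$ and $B = \{t \geq f + c\}$, via the fiber sequence
\[
\bR\Hom(\bZ_A, \bZ_B) \longrightarrow \bR\Gamma(B, \bZ) \longrightarrow \bR\Gamma(B \setminus A, \bZ)
\]
coming from $0 \to \bZ_{(U\times\bR)\setminus A} \to \bZ_{U\times\bR} \to \bZ_A \to 0$. The set $B$ deformation-retracts onto the graph of $f + c$, so $\bR\Gamma(B, \bZ) \simeq \bZ$. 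The projection $B \setminus A = \{(x,t) : f(x)+c \leq t < g(x)\} \to U$ has contractible half-open-interval fibers over the open set $U_c := \{x \in U : g(x) - f(x) > c\}$ and empty fibers elsewhere; hence $B \setminus A$ deformation-retracts onto $U_c$, giving $\bR\Gamma(B \setminus A, \bZ) \simeq \bR\Gamma(U_c, \bZ)$. For $c \geq c_0$ we have $U_c = \emptyset$, so $\bR\Hom \simeq \bZ$ concentrated in degree zero, generated by $\tau_{c - c_0} \circ J(f,g)$. For $c < c_0$, $U_c$ is nonempty; the restriction $\bZ = H^0(B) \to H^0(U_c)$ sends $1$ to the (nonzero) constant-$1$ section, so it is injective, forcing $H^0(\bR\Hom) = 0$. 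Any nonzero class at such $c$ therefore lies in strictly positive degrees.

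To pass to $\HOM_\bK$, I would use the shifts to kill these residual positive-degree contributions. For any class $\phi \in \bR\Hom(\cF_g, (T_c)_*\cF_f)$ with $c < c_0$, the shifted class $\tau_{c_0 - c} \cdot \phi \in \bR\Hom(\cF_g, (T_{c_0})_*\cF_f)$ lies in a positive degree of a target concentrated in degree zero, hence vanishes; equivalently $e^{-(c_0-c)/i\hbar} \phi = 0$ in $\HOM$. Since $e^{-(c_0-c)/i\hbar}$ is invertible in $\bK_\bZ$, $\phi$ itself is zero in $\HOM_\bK$. Only the stable degree-zero contributions at $c \geq c_0$ therefore survive, and they are all $\bK_\bZ$-multiples of $J(f,g)$, yielding $\HOM_\bK(\cF_g, \cF_f) \simeq \bK_\bZ \cdot J(f,g)$, free of rank one with generator at filtration $-\inf_U(f-g)$.

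For the composition formula, the composite
\[
\cF_h \xrightarrow{J(g,h)} (T_{c_0(g,h)})_*\cF_g \xrightarrow{(T_{c_0(g,h)})_* J(f,g)} (T_{c_0(f,g) + c_0(g,h)})_*\cF_f
\]
is again a canonical quotient-of-closed-sets morphism and therefore agrees with $\tau_{c_0(f,g) + c_0(g,h) - c_0(f,h)} \circ J(f,h) = e^{c(f,g,h)/i\hbar} J(f,h)$, where $c(f,g,h) = \inf_U(f-h) - \inf_U(f-g) - \inf_U(g-h) \geq 0$, matching the stated formula. The main obstacle is the vanishing step for $c < c_0$; its resolution rests on the fact that the target $\bR\Hom$ at $c = c_0$ is concentrated in degree zero, which simultaneously controls all positive-degree contributions below $c_0$ once shifts are inverted in $\bK_\bZ$.
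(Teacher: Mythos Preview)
Your argument is correct and follows the same route as the paper: define $J(f,g)$ as the restriction/quotient morphism between the two constant sheaves on closed half-spaces, and show it freely generates $\HOM_\bK$. The paper's own proof is a two-line sketch (``it is clear that the right hand side is the free $\bK_\bZ$-module generated by $J(f,g)$''), whereas you actually carry out the level-by-level computation of $\bR\Hom(\cF_g,(T_c)_*\cF_f)$ via the fiber sequence and then kill the stray positive-degree classes below $c_0$ by the torsion argument.

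Two small remarks. First, your convention $\cF_f=\bZ_{\{t\ge f\}}$ disagrees with the paper's stated definition $\cF_f=\bZ_{\{t+f\ge 0\}}$, but it is the convention that makes the claimed filtration level $-\inf_U(f-g)$ come out correctly (and the paper itself uses $\bZ_{\{t-f\ge 0\}}$ a few lines later, so the sign is already inconsistent there). Second, in passing from ``each single-level class at $c<c_0$ is torsion'' to ``every element of $H^{>0}(\HOM)$ is torsion'', you implicitly use that in the restricted product $\prod'$ only finitely many support points lie below $c_0$; this is immediate from the condition $c_k\to\infty$, but worth saying.
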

\begin{proof} Recall that $\cF_f=\bZ_{t+f\geq 0}.$ It is immediate that
\begin{equation}\label{eq:tatata}
{ {\rm{HOM}}}_\bK (\cF_g, \cF_f)\isomoto { {\rm{HOM}}}_\bK (\cF _0, \cF _{f-g})
\end{equation}
Let $J(f,g)$ be the morphism $\bZ_{t\geq 0}\to \bZ_{t+f-g-\inf_U(f-g)\geq 0}$ which is the restriction to the subset $\{t+f-g-\inf_U(f-g)\geq 0\}\subset \{t\geq 0\}.$ It is clear that the right hand side of \eqref{eq:tatata} is the free $\bK_\bZ$-module generated by $J(f,g),$ that $J(f,g)$ is in ${\rm{Filt}}^{-\inf_U(f-g)},$ and that the composition is as in Proposition  \ref{prop:V0 Vf locally}.
\end{proof}
\subsubsection{Matrix units}\label{sss:matrix units}
Now put 
\begin{equation}\label{eq:matr units}
{\mathbf E}_{\bof,\bog}=\exp(\frac{1}{i\hbar} \inf_U(f-g)) J(f,g)\in \HOM_\bK (\cF,\cG)
\end{equation}
in $D(T^*U).$ Then
\begin{equation}\label{eq:matr uns relat}
{\mathbf E}_{\bof,\bog} {\mathbf E}_{\bog,\boh}={\mathbf E}_{\bof,\boh}
\end{equation}
%%%%%%%%%%%%%%%%%%%%%%%%%%%%%%%%%%%%%%%%%%%%%%%%%%%%%%%%%%%%%%%%%%%%%%%%%%%%%%%%%%%%%%%%%%%%%%%%%%%%%%%%%%%%%%%%%%%%%%%%%%%%%%%%%%%%%%%%%%%%%%%%%%%%%%%%%%%%%%%%%%%%%%%%%%%%%%%%%%%%%%%%%%%%%%%%%%%%%%%%%%%%%%%%%%%%%%%%%%%%%%%%%%%%%%%%%%%%%%%%%%%%%%%%%%%%%%%%%%%%%%%%%%%%%%%%%%%%%%%%%%%%%%%%%%%%%%%%%%%%%%%%%%%%%%%%%%%%%%%%%%%%%%%%%%%%%%%
%\section{$\RHom$ and the Morse complex}
%\subsection{The case of oscillatory modules}\label{ss:osc mods see Morse}
%\subsection{The case of sheaves}\label{ss:sheaves see Morse}
%%%%%%%%%%%%%%%%%%%%%%%%%%%%%%%%%%%%%%%%%%%%%%%%%%%%%%%%%%%%%%%%%%%%%%%%%%%%%%%%%%%%%%%%%%%%%%%%%%%%%%%%%%%%%%%%%%%%%%%%%%%%%%%%%%%%%%%%%%%%%%%%%%%%%%%%%%%%%%%%%%%%%%%%%%%%%%%%%%%%%%%%%%%%%%%%%%%%%%%%%%%%%%%%%%%%%%%%%%%%%%%%%%%%%%%%%%%%%%%%%%%%%%%%%%%%%%%%%%%%%%%%%%%%%%%%%%%%%%%%%%%%%%%%%%%%%%%%%%%%%%%%%%%%%%%%%%%%%%%%%%%%%%%%%%%%%%%%%%%%%%%%%%%%%%%
\section{$\RHom$ and theta functions}\label{s:Rhom theta} \subsection{Modules associated to the Lagrangian submanifold $\xi=mx$}\label{ss:xi-mx} In this section, $M={\bT}^2$ and ${\widetilde M}=\bR^2$ with the standard symplectic form $\omega=d\xi dx.$
%%%%%%%%%%%%%%%%%%%%%%%%%%%%%%%%%%%%%%%%%%%%%%%%%%%%%%%%%%%%%%%%%%%%%%%%%%%%%%%%%%%%%%%%%%%%%%%%%%%%%%%%%%%%%%%%%%%%%%%%%%%%%%%%%%%%%%%%%%%%%%%%%%%%%%%%%%%%%%%%%%%%%%%%%%%%%%%%%%%%%%%%%%%%%%%%%%%%%%%%%%%%%%%%%%%%%%%%%%%%%%%%%%%%%%%%%%%%%%%%%%%%%%%%%%%%%%%%%%%%%%%%%%%%%%%%%%%%%%%%%%%%%%%%%%%%%%%%%%%%%%%%%%%%%%%%%%%%%%%%%%%%%%%%%%%%%%%%%%%%%%%%%%%%%%%%%%%%%%%%%%%%%%%%%%%%%%%%%%%%%%%%%%%%%%%%%%%%%%%%%%%%%%%%%%%%%%%%%%%%%%%%%%%%%%%%%%%%%%%%%%%%%%%%%%%%%%%%%%%%%%%%%%%%%%%%%%%%%%%%%%%%%%%%%%%%%%%%%%%%%%%%%%%%%%%%%%%%%%%%%%%%%%%%%%%%%%%%%%%%%%%%%%%%%%%%%%%%%%%%%%%%%%%%%%%%%%%%%%%%%%%%%%%%%%%%%%%%%%%%%%%%%%%%%%%%%%%%%%%%%%%%%%%%%%%%%%%%%%%%%%%%%%%%%%%%%%%%%%%%%%%%%%%%%%%%%%%%%%%%%%%%%%%%%%%%%%%%%%%%%%%%%%%%%%%%%%%%%%%%%%%%%%%%%%%%%%%%%%%%%%%%%%%%%%%%%%%%%%%%%%%%%%%%%%%%%%%%%%%%%%%%%%%%%%%%%%%%%%%%%%%%%%%%%%%%%%%%%%%%%%%%%%%%%%%%%%%%%%%%%%%%%%%%%%%%%%%%%%%%%%%%%%%%%%%%%%%%%%%%%%%%%%%%%%%%%%%%%%%%%%%%%%%%%%%%%%%%%%%%%%%%%%%%%%%%%%%%%%%%%%%%%%%%%%%%%%%%%%%%%%%%%%%%%%%%%%%%%%%%%%%%%%%%%%%%%%%%%%%%%%%%%%%%%%%%%%%%%%%%%%%%%%%%%%%%%%%%%%%%%%%%%%%%%%%%%%%%%%%%%%%%%%%%%%%%%%%%%%%%%%%%%%%%%%%%%%%%%%%%%%%%%%%%%%%%%%%%%%%%%%%%%%%%%%%%%%%%%%%%%%%%%%%%%%%%%%%%%%%%%%%%%%%%%%%%%%%%%%%%%%%%%%%%%%%%%%%%%%%%%%%%%%%%%%%%%%%%%%%%%%%%%%%%%%%%%%%%%%%%%%%%%%%%%%%%%%%%%%%%%%%%%%%%%%%%%%%%%%%%%%%%%%%%%%%%%%%%%%%%%%%%%%%%%%%%%%%%%%%%%%%%%%%%%%%%%%%%%%%%%%%%%%%%%%%%%%%%%%%%%%%%%%%%%%%%%%%%%%%%%%%%%%%%%%%%
\subsubsection{The groupoid $\tG_M$}\label{sss:the groupoid tG tor} Local sections of $\tG_M$ are in bijection with smooth local functions $g(x_1,\xi_1; x_2,\xi_2)$ on $\tM\times \tM$ with values in ${\Mp(2)}.$ As in \ref{sss:the groupoid tG}, we denote a section corresponding to $g$ by a formal symbol 
\begin{equation}\label{eq:elts of tG flat case tor} 
\sigma (x_1,\xi_1; x_2,\xi_2) = \exp(\frac{1}{i\hbar}(\xi_2-\xi_1)\fx+(x_1-x_2)\fxi) g(x_1,\xi_1; x_2,\xi_2)
\end{equation}
These sections satisfy
\begin{equation}\label{eq:rel for tor grpd 1}
\sigma(x_1, \xi_1;x_2,\xi_2)=\exp(\frac{1}{i\hbar}(x_1-x_2)) \sigma(x_1, \xi_1+1;x_2,\xi_2+1); 
\end{equation}
\begin{equation}\label{eq:rel for tor grpd 2}
\sigma(x_1, \xi_1;x_2,\xi_2)= \sigma(x_1+1, \xi_1;x_2+1,\xi_2).
\end{equation}
As in \ref{sss:the groupoid tG}, the composition consists of formal multiplication of exponentials and multiplication of elements of ${\Mp(2)}.$

The flat connection up to inner derivations on $\tG_M$ is given exactly as in \ref{sss:the conn Vf osc}: for a section $\sigma$ as in \eqref{eq:elts of tG flat case},
$$-\alpha(\sigma)=\nabla_\tG \sigma\cdot \sigma^{-1}=d_{\rm{DR}}g\cdot g^{-1}+\frac{1}{i\hbar} (\xi_2 dx_2-\xi_1 dx_1)+$$
$$(-\frac{\fxi_1}{i\hbar}dx_1+\frac{\fx_1}{i\hbar}d\xi_1)-\Ad_g (-\frac{\fxi_2}{i\hbar}dx_2+\frac{\fx_2}{i\hbar}d\xi_2)$$
%%%%%%%%%%%%%%%%%%%%%%%%%%%%%%%%%%%%%%%%%%%%%%%%%%%%%%%%%%%%%%%%%%%%%%%%%%%%%%%%%%%%%%%
\subsubsection{The sheaf $\cV_{L_m}^\bullet $}\label{sss:the space VLm}
Denote by $\cV^\bullet_{L_m} $ the oscillatory module corresponding to the Lagrangian submanifold $\xi=mx.$ Local sections of $\cV^\bullet _{L_m}$ are sums 
\begin{equation}\label{eq;sums of vs Lm}
v=\sum _{k\in {\bZ}} v_k
\end{equation}
where $v_k$ is a local section of $\cV^\bullet _{m\frac{x^2}{2}+kx}$ on $\tM.$ In other words, $v_k$ is an $\Omega_\bK$-form on $\tM$ with coefficients in $\fcV$ (Definition \ref{df:ind formal mod}). The connection $\nabla_\cV$ is given by ({\em cf.} \ref{sss:the conn Vf osc})
\begin{equation}\label{eq:nabla V osc tor}
\nabla_\cV v_k=(-\frac{\xi-mx-k}{i\hbar}dx+(\frac{\partial}{\partial x}-\ddfx-\frac{1}{i\hbar}m\fx) dx + (\ddxi+\frac{1}{i\hbar}\fx)d\xi)v_k
\end{equation}
The action of ${\widehat{\widehat \bA}}_M$ is as follows ({em cf.} \ref{sss:the action of A on Vf osc}): $\fx$ by multiplication, and $\fxi$ by $i\hbar\ddfx + m\fx$.

\begin{remark}\label{rmk:notes on Vk}
The component $v_k$ is an element of the form $\sigma(x,\xi;x,\xi-mx-k) w_k$ where $w_k$ is a local section of the module $\bV_{L_m}$ ({\em cf.} \ref{ss:Objects from Lagrangians}).
Also note that sums \eqref{eq;sums of vs Lm} may be infinite but we require that $v_k\in \exp(\frac{1}{i\hbar} N_k) \fcV_{\Lambda}$ where $N_k\to \infty$ as $|k|\to\infty.$
\end{remark}
Components $v_k$ satisfy
\begin{equation}\label{eq:conds per for v}
v_k(x,\xi)=v_{k+1}(x,\xi+1)=v_{k-m}(x+1,\xi).
\end{equation}

The action of $\tG_M$ on $\cV_{L_m}$ is as follows: 
\begin{equation}\label{eq:action of tG on Vf osc tor}
\sigma(x_1,\xi_1;\,x_2,\xi_2)v_k=\exp(-\frac{1}{i\hbar}(\frac{mx_1^2}{2}+kx_1-\frac{mx_2^2}{2}-kx_2)) g(x_1,\xi_1;x_2,\xi_2)v_k
\end{equation}
({\em cf.} \ref{sss:the action of G on Vf osc}).
It is easy to see directly that all the structures are compatible with each other (of course this also follows from the fact that the above construction is obtained by applying the general procedure of \ref{s:LagrDistr}).
%%%%%%%%%%%%%%%%%%%%%%%%%%%%%%%%%%%%%%%%%%%%%%%%%%%%%%%%%%%%%%%%%%%%%%%%%%%%%%%%%%%%%%%
\subsection{The computation of $\uRHOM(\cV^\bullet _{L_0},\cV^\bullet _{L_m})$}\label{ss:comp of rhom 0 m tor}
\subsubsection{Matrices with coefficients in $\cS^\bullet$}\label{sss:Matr(S)} 
 Let $e_\Lambda,$ resp. $E$, be the free module over $\Lambda,$ resp. $\bK,$ with generators $e_k,\,k\in \bZ.$ Recall the differential graded algebra $\cS$ from \eqref{eq:alg of cochains of MPar body}. Put also
 \begin{equation}\label{eq:alg of cochains of MPar Lambda}
\cS^\bullet_\Lambda=C^\bullet (\MP(n), \Lambda)
\end{equation}
Let
\begin{equation}\label{eq:Matr Lambd 2 a}
{\rm{Matr}} (\cS)=\varprojlim_{N\to\infty} \Hom(E, \cS^\bullet \otimes E)/\exp(\frac{1}{i\hbar}N)\Hom(E,\cS^\bullet _\Lambda \otimes E)
\end{equation}
Let ${\mathbf{E}}_{k\ell}$ be the matrix unit, {\em i.e.} the homomorphism sending $e_k$ to $e_\ell$ and $e_j$ to zero if $j\neq k.$
%%%%%%%%%%%%%%%%%%%%%%%%%%%%%%%%%%%%%%%%%%%%%%%%%%%%%%%%%%%%%%%%%%%%%%%%%%%%%%%%%%%%%%%
\subsubsection{}\label{sss:comptori}
\begin{thm}\label{thm:comp v0 vf tor}
The sheaf of complexes $\uRHOM^\bullet  (\cVb_{L_0},\cVb_{L_m})$ is quasi-isomorphic to the sheaf of sections of the trivial bundle with fiber ${\rm{Matr}}(\cS^\bullet),$
with the action of $\pi_1 (M)$ as follows. Let $\gamma_1$ and $\gamma_2$ be the two generators of $\pi_1(M),$ namely $\gamma_1$ the loop $\xi=\xi_0, x=x_0+t$ and $\gamma_2$ the loop $x=x_0, \xi=\xi_0+t.$ Then for a matrix unit ${\mathbf{E}}_{k\ell}$
$$\gamma_1^q \gamma_2^p: {\mathbf{E}}_{k\ell}\mapsto \exp(\frac{1}{i\hbar} ( \frac{mq^2}{2}+q(\ell-k))) {\mathbf{E}}_{k+p, \ell+p-mq}$$ 
\end{thm}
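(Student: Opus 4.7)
\emph{Proof proposal.} The strategy is to reduce the torus computation to the planar statement Theorem \ref{thm:comp v0 vf} by passing to the universal cover $\tM=\bR^2$ and then packaging the $\bZ^2$-equivariance as the matrix structure together with the stated monodromy. Concretely, fix a contractible open chart $U\subset M$ that lifts isomorphically to $\tM$. Using the periodicity conditions \eqref{eq:conds per for v}, a local section of $\cV^\bullet_{L_m}|_U$ is uniquely a sum $\sum_{k\in\bZ} v_k$ where $v_k$ is a local section of the planar oscillatory module $\cV^\bullet_{g_k}$ with $g_k=m\tfrac{x^2}{2}+kx$, subject to the growth condition of Remark \ref{rmk:notes on Vk}. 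Specialising the same construction to $m=0$, the module $\cV^\bullet_{L_0}|_U$ likewise decomposes as $\prod'_{\ell\in\bZ}\cV^\bullet_{f_\ell}$ with $f_\ell=\ell x$; the $\ell$-th summand corresponds to the lift $\{\xi=\ell\}\subset\tM$ of $L_0$.

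For each pair $(k,\ell)$ the planar result Theorem \ref{thm:comp v0 vf}, in the sharper form of Proposition \ref{prop:V0 Vf locally}, identifies
\[
\uRHOM(\cV^\bullet_{f_\ell},\cV^\bullet_{g_k})|_U \;\simeq\; \cS^\bullet
\]
as a free $\cS^\bullet$-module generated by $J(g_k,f_\ell)$. Renormalising as in \eqref{eq:matr units} turns the generator into ${\mathbf E}_{k\ell}$. Taking the direct product over $(k,\ell)\in\bZ^2$ and keeping track of the growth conditions inherited from Remark \ref{rmk:notes on Vk} (bounded below by a linear function of $|k|+|\ell|$) produces exactly the fibre $\mathrm{Matr}(\cS^\bullet)$ of \eqref{eq:Matr Lambd 2 a}. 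This establishes the quasi-isomorphism of underlying sheaves of $\cS^\bullet$-modules and shows triviality of the bundle (the description does not depend on the chart $U$).

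It remains to compute the $\pi_1(M)=\bZ^2$ monodromy on the matrix units. By the construction in Theorem \ref{Thm:Main on A infty actions}, this monodromy is read off from the $\tG_M$-action on $\cV_{L_m}$ (and on $\cV_{L_0}$), which is given in closed form by \eqref{eq:action of tG on Vf osc tor} combined with the relabelings of \eqref{eq:conds per for v}. Going around $\gamma_2$ (i.e.\ $\xi\mapsto\xi+1$) acts on the indices by $v_k\mapsto v_{k+1}$ for both $L_0$ and $L_m$ components, so ${\mathbf E}_{k\ell}\mapsto {\mathbf E}_{k+1,\ell+1}$ with no oscillatory factor (the relevant phases in \eqref{eq:action of tG on Vf osc tor} cancel). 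Going around $\gamma_1^q$ (i.e.\ $x\mapsto x+q$) acts by $v_k\mapsto v_{k-mq}$ on $L_m$-components and trivially on $L_0$-components, yielding a shift $\ell\mapsto \ell-mq$ in the matrix index; the exponential prefactor appearing in \eqref{eq:action of tG on Vf osc tor}, namely $\exp(-\tfrac{1}{i\hbar}(g_k(x+q)-g_k'(x)\cdot q - (f_\ell(x+q)-f_\ell'(x)\cdot q)))$ evaluated on the normalised generator ${\mathbf E}_{k\ell}$, contributes precisely $\exp(\tfrac{1}{i\hbar}(\tfrac{mq^2}{2}+q(\ell-k)))$ after renormalisation by the infimum terms used in \eqref{eq:matr units}. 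Composing $\gamma_1^q$ with $\gamma_2^p$ and using that the $\gamma_2$-action is purely combinatorial then produces the full formula of the theorem.

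The main bookkeeping obstacle is the sign/index tracking in this last step: one must align the conventions for the direction of the loops, the action of deck transformations on sheets (as in \eqref{eq:conds per for v}), and the normalisation \eqref{eq:matr units} of ${\mathbf E}_{k\ell}$ relative to $J(g_k,f_\ell)$, so that the quadratic phase $\exp(\tfrac{mq^2}{2i\hbar})$ survives while all linear-in-$x$ and all constant phases either cancel or get absorbed into the index shift. Once these identifications are fixed, the remaining identities are direct computations with the explicit formulas from Section \ref{ss:the case of R 2n osc}.
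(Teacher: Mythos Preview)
Your approach is the same as the paper's: decompose both modules over a contractible chart into $\bZ$-indexed families of planar oscillatory modules, apply Theorem~\ref{thm:comp v0 vf} componentwise, and read off the monodromy from the explicit groupoid action \eqref{eq:action of tG on Vf osc tor} together with the periodicity relations \eqref{eq:conds per for v}. The paper carries this out by first building the resolution $\cR^\bullet_{L_0}$ (as in \eqref{eq:curvaR defini}, with the modification \eqref{eq:diff on reso tor}) and identifying $\Hom_{\cA^\bullet_M}(\cR^\bullet_{L_0},\cV^\bullet_{L_m})$ with a matrix of complexes $\cC^\bullet_{k\ell}$ attached to the functions $f_{k\ell}=\tfrac{m}{2}x^2+(\ell-k)x$; you short-circuit this by invoking Proposition~\ref{prop:V0 Vf locally} on each pair, which is fine in spirit.

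There is, however, an indexing inconsistency in your write-up. In the theorem (and in the paper's proof) the first index $k$ labels the sheets of $L_0$ and the second index $\ell$ the sheets of $L_m$. You assign $\ell$ to $L_0$ and $k$ to $L_m$; with that convention, $\gamma_1^q$ (which by \eqref{eq:conds per for v} shifts the $L_m$-index by $-mq$ and leaves the $L_0$-index fixed) would give $k\mapsto k-mq$, not the $\ell\mapsto\ell-mq$ you assert. Separately, the matrix units relevant here are the plain ones of \S\ref{sss:Matr(S)}, not the infimum-normalized ${\mathbf E}_{\bof,\bog}$ of \eqref{eq:matr units}; no such renormalization enters, and the phase $\exp\!\bigl(\tfrac{1}{i\hbar}(\tfrac{mq^2}{2}+q(\ell-k))\bigr)$ comes directly from the difference $f_{k\ell}(x_1)-f_{k\ell}(x_2)$ combined with the index relabeling from \eqref{eq:conds per for v}. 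Once you swap $k\leftrightarrow\ell$ in your setup and drop the reference to \eqref{eq:matr units}, the argument goes through.
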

\begin{proof} First construct the $\cA^\bullet _M$-free resolution $\cR_{L_0}^\bullet$ of $\cV^\bullet _{L_0}$ as in \eqref{eq:curvaR defini}. Local sections of $\cR^\bullet _{L_0}$ are sums \eqref{eq;sums of vs Lm} with the same relations \eqref{eq:conds per for v} $m=0;$ $v_k$ are elements of $\cR_k^\bullet$ on $\tM$  which is constructed exactly as $\cR^\bullet$ in  \eqref{eq:curvaR defini} with the only modification: equation \eqref{eq:diff on reso tor} becomes
\begin{equation}\label{eq:diff on reso tor}
\nabla_\bP v_{0, k}=\frac{1}{i\hbar}(-(\xi +k) dx+\fx d\xi)v_{0,k}
\end{equation}
Now, local sections of $\Hom_{\cA^\bullet _M}(\cR^\bullet_{L_0}, \cV^\bullet_{L_m})$ are sums $\sum_{k,\ell} b_{k\ell}$ where 
$$b_{k\ell}\in \cC^\bullet _{k\ell};$$
here $\cC^\bullet _{k\ell}$ is the complex \eqref{eq:cplex for hom v0 vf} computed for the function
\begin{equation}\label{eq:C kl}
f_{k\ell}(x,\xi)=mx^2+(\ell-k)x
\end{equation}
Local sections $b_{k\ell}$ satisfy the following:
\begin{equation}\label{eq:bkl rela}
b_{k\ell}(x,\xi)=b_{k,\ell-m} (x+1,\xi)=b_{k+1,\ell+1} (x,\xi+1)
\end{equation}
(Note that all $\cC_{k\ell}^\bullet$ are identical as graded spaces, with the differential $d_{k\ell}$ on $\cC^\bullet_{k\ell}$ given by 
$$d_{k\ell}=\Ad(\exp(-\frac{1}{i\hbar}(\frac{mx^2}{2}+(\ell-k)x+\frac{m\fx^2}{2}))) d_{00}).$$
The action of the fundamental groupoid is as follows. A path $\gamma: (x_1,\xi_1)\to (x_2,\xi_2)$ in $\tM$ preserves each $\cC^\bullet_{k\ell}$ and acts on it by 
\begin{equation}\label{eq:b kl under gamma}
(\gamma b)_{k\ell}(x_1,\xi_1) = \exp(\frac{1}{i\hbar}(\frac{mx_1^2}{2}+(\ell-k)x_1-\frac{mx_2^2}{2}+(\ell-k)x_2 ))b_{k\ell}(x_2,\xi_2)
\end{equation}
because of \eqref{eq:elts of tG flat case} and because
$$(f_{k\ell}(x_1+\fx)-f'_{k\ell}(x_2)\fx)-f_{k\ell}(x_2+\fx)-f_{k\ell}'(x_2)\fx)=$$
$$=\frac{mx_1^2}{2}+(\ell-k)x_1-\frac{mx_2^2}{2}-(\ell-k)x_2.$$
When $x_2-x_1=q$ and $\xi_2-\xi_1=p,$ the right hand side of \eqref{eq:b kl under gamma} becomes 
$$(\gamma b)_{k+p,\ell+p-mq} (x,\xi)=\exp(\frac{1}{i\hbar} ( \frac{mq^2}{2}+q(\ell-k))) b_{k\ell} (x,\xi).$$
The statement now follows from Theorem \ref{thm:comp v0 vf}.
\end{proof} 
\begin{corollary}\label{cor:hor theta}
For $m>0,$ the space of horizontal sections of $\uRHOM^\bullet  (\cVb_{L_0},\cVb_{L_m})$ is $m$-dimensional over $\bK$ with the basis
$$\theta _a = \sum _{q\in \bZ} \sum_ {k\in \bZ} \exp(\frac{1}{i\hbar} (mq^2+aq)) {\mathbf{E}}_{k,k+a-qm}$$
where $a=0,1,\ldots,m-1.$
\end{corollary}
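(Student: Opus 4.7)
The plan is to read off horizontal sections as $\pi_1(M)=\Z^2$-invariants in the fiber ${\rm{Matr}}(\cS^\bullet)$ of the trivial local-system bundle produced by Theorem \ref{thm:comp v0 vf tor}, and then solve the invariance equations for the group action given explicitly in that theorem. Write an arbitrary formal element of ${\rm{Matr}}(\cS^\bullet)$ as
\[
\theta \;=\; \sum_{k,\ell\in\bZ} c_{k\ell}\,{\mathbf E}_{k\ell},
\]
subject to the growth condition in \eqref{eq:Matr Lambd 2 a}. The condition of being horizontal is $\gamma_1^q\gamma_2^p(\theta)=\theta$ for every $(p,q)\in\bZ^2$.

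First I would impose invariance under $\gamma_2$, i.e. the case $q=0$ in the action formula of Theorem \ref{thm:comp v0 vf tor}. Then ${\mathbf E}_{k\ell}\mapsto {\mathbf E}_{k+p,\ell+p}$ with no exponential factor, so the relation $c_{k,\ell}=c_{k+p,\ell+p}$ for all $p$ forces the coefficients to depend only on the difference $n:=\ell-k$; write $c_{k,k+n}=d_n$. This reduces the problem to a single sequence $\{d_n\}_{n\in\bZ}$ in $\cS^\bullet$ (or in $\bK$ at the level of cohomology).

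Next I would impose invariance under $\gamma_1$ (the case $p=0$, $q=1$). Unwinding the formula gives a recursion expressing $d_{n}$ in terms of $d_{n+m}$ multiplied by an explicit exponential in $\frac{1}{i\hbar}$. Iterating this recursion $q$ times from a chosen residue class $a\in\{0,1,\ldots,m-1\}$ produces, by a telescoping computation, a cumulative exponent that is quadratic in $q$ of leading coefficient $m$, together with a linear term in $a$; the outcome matches exactly the expression $mq^2+aq$ appearing in the statement. Thus starting with $d_a=1$ on one residue class and $d_{a'}=0$ on the other residue classes yields the horizontal section
\[
\theta_a \;=\; \sum_{q\in\bZ}\sum_{k\in\bZ} \exp\!\left(\tfrac{1}{i\hbar}(mq^2+aq)\right) {\mathbf E}_{k,\,k+a-qm}.
\]
Since $m>0$, the exponents $mq^2+aq$ tend to $+\infty$ as $|q|\to\infty$, so these sums converge in the sense of \eqref{eq:Matr Lambd 2 a}. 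Invariance under the full $\gamma_1^q$ for arbitrary $q$, and compatibility with the simultaneous $\gamma_2^p$ translation, follows from composing the action and checking consistency.

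Finally, the $m$ sections $\theta_0,\ldots,\theta_{m-1}$ are linearly independent over $\bK$ because each $\theta_a$ is supported on matrix units ${\mathbf E}_{k\ell}$ with $\ell-k\equiv a\pmod m$, and these residue classes are disjoint. Conversely the recursion shows that any solution is determined by its values on one representative of each residue class, giving exactly $m$ degrees of freedom. The main obstacle is the bookkeeping in the second step: one has to track the $\frac{1}{i\hbar}$-exponents carefully so that iterating the single-step recursion $q$ times telescopes into the clean quadratic form $mq^2+aq$, and one must also verify compatibility with the mixed generators $\gamma_1^q\gamma_2^p$ rather than just the two pure generators, which amounts to checking that the explicit closed form $\theta_a$ is genuinely $\bZ^2$-invariant under the action given in Theorem \ref{thm:comp v0 vf tor}.
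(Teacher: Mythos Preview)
Your proposal is correct and is exactly the approach the paper intends: the corollary is stated with no separate proof, as a direct consequence of Theorem~\ref{thm:comp v0 vf tor}, and your computation of the $\bZ^2$-invariants in the fiber ${\rm Matr}(\cS^\bullet)$ via the two generators is the natural (and only) thing to do. One small remark: once you have invariance under $\gamma_1$ and $\gamma_2$ separately, invariance under all $\gamma_1^q\gamma_2^p$ is automatic since these two elements generate $\bZ^2$, so the ``compatibility with mixed generators'' you flag as an obstacle is not an extra condition.
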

%%%%%%%%%%%%%%%%%%%%%%%%%%%%%%%%%%%%%%%%%%%%%%%%%%%%%%%%%%%%%%%%%%%%%%%%%%%%%%%%%%%%%%%
\subsection{The case of sheaves}\label{ss:sheaves see theta} Following Tamarkin, we define the category $D({\mathtt{\mathbf T}}^2).$ First define the folowing diffeomorphisms of $\bR\times \bR:$ 
\begin{equation}\label{eq:S12}
S_1(x,t)=(x+1,t);\; S_2(x,t)=(x,t+x);
\end{equation} 
One has
\begin{equation}\label{eq:S12rel}
S_2S_1=T_1S_1S_2;\;T_1S_1=S_1T_1;T_1S_2=S_2T_1
\end{equation} 
where $T_1(x,t)=(x,t+1).$ (In other words, we have an action of the Heisenberg group ${\rm{Heis}}(3,\bZ)$ on $\bR\times\bR$.)

Define objects of $D(\T^2)$ as equivariant objects of $D(\bR^2)$, {\em i.e.} objects $\cF$ of $D(\bR)^2$ together with isomorphisms 
\begin{equation}\label{eq:sigma12}
\sigma_1: \cF\isomoto S_{1*} \cF;\; \sigma_2: \cF\isomoto S_{2*}\cF
\end{equation}
in $\HOM_\bK$ such that 
\begin{equation}\label{eq:sigmas rel}
\sigma_2\sigma_1\tau_1=\sigma_1\sigma_2
\end{equation}
or more precisely
\begin{equation}\label{eq:sigmas relll}
(T_1S_1)_*\sigma_2\cdot T_{1*} \sigma_1\cdot \tau_{1}=S_{2*}\sigma_1\cdot\sigma_2
\end{equation}
as morphisms $\cF\to (S_2S_1)_* \cF=(T_1S_1S_2)_*\cF.$
\begin{example}\label{ex:Fm torus} For an integer $n,$ put
\begin{equation}\label{eq:Fm torus}
\cF_m=\prod_{k\in \bZ} \cF_{m\frac{x^2}{2}+kx}
\end{equation}
In fact, 
$$(S_{1}^qS_2^p)(x,t)=(x+q,t+px);$$
$$(S_{1}^qS_2^p)^* \cF_{m\frac{x^2}{2}+kx}=\bZ_{\{t+px+m\frac{(x+q)^2}{2}+k(x+q)\geq 0\}}=T_{m\frac{q^2}{2}+kq}^* \cF_{m\frac{x^2}{2}+(k+mq+p)x};$$
%$$S_{2*} \cF_{m\frac{x^2}{2}+kx}=S_{1*} \bZ_{t+m\frac{x^2}{2}+kx\geq 0}=\bZ_{t-x+m\frac{x^2}{2}+kx\geq 0}=\cF_{m\frac{x^2}{2}+(k-1)x}$$
\end{example}
In other words, if 
\begin{equation}\label{eq:Lk vs Fs}
\cL_k=\cF_{m\frac{x^2}{2}+kx},
\end{equation}
then
\begin{equation}\label{eq:Fm on torus}
\cF_m=\prod _{k\in \bZ} \cL_k;\; (S_{1}^qS_2^p)^*\cL_k=(T_{\frac{mq^2}{2}+kq})^*\cL_{k+mq+p}
\end{equation}
\subsection{Comparison between the categories}\label{ss:comparison tori} Consider the following automorphisms of the pair $(\tG_{\bR^2},\, \cA_{\bR^2}).$ 
Let $\sigma (x_1,\xi_1; x_2,\xi_2)$ be as in \eqref{eq:elts of tG flat case tor}. Define
\begin{equation}\label{eq:rel for tor grpd 2 aut}
(S_1)\sigma(x_1, \xi_1;x_2,\xi_2)= \sigma(x_1+1, \xi_1;x_2+1,\xi_2).
\end{equation}
\begin{equation}\label{eq:rel for tor grpd 1 aut}
(S_2\sigma)(x_1, \xi_1;x_2,\xi_2)=\exp(\frac{1}{i\hbar}(x_1-x_2)) \sigma(x_1, \xi_1+1;x_2,\xi_2+1); 
\end{equation}
For a section $a$ of $\cA_{\bR^2},$ define
\begin{equation}\label{eq:S12 for Acurva}
(S_1a)(x,\xi,\fx,\fxi)=a(x+1,\xi,\fx,\fxi);\; (S_2a)(x,\xi,\fx,\fxi)=a(x,\xi+1,\fx,\fxi)
\end{equation}
It is easy to see that these maps preserve all the structures, {\em i.e.} the product on $\cA,$ the composition on $\tG,$ the action of $\tG$ on $\cA,$ and the flat connection up to inner derivations. Therefore for an oscillatory module $\cV^\bullet $ on ${\bR^2},$ one can define new oscillatory modules $S_1^*\cV^\bullet$ and $S_2^*\cV^\bullet$ as follows. As differential graded $\Omega^\bullet_\bK$-modules,  they are the inverse images of $\cV^\bullet$ under the shifts $(x,\xi)\mapsto (x+1,\xi)$ and $(x,\xi)\mapsto (x,\xi+1);$ the algebra $\cA_{\bR^2}$ and the groupoid $\tG_{\bR^2}$ act via automorphisms $S_1,\,S_2.$
One has
\begin{equation}\label{eq:s1 s2 Vm etc}
(S_2^p)^*(S_1^q)^* \cV^\bullet _{m\frac{x^2}{2}+kx}=\cV^\bullet _{m\frac{x^2}{2}+(k+mq-p)x}
\end{equation}
Note that the central subgroup $\{T_c|c\in \bZ\}$  of ${\rm {Heis}}(\bZ)$ acts on $\HOM(\cF_0,\cF_m).$ Therefore the automorphisms $\sigma_1$ and $\sigma_2$ generate an action of $\bZ^2.$
\subsubsection{Matrix units for tori}\label{sss:matuni for tori}
Put 
\begin{equation}\label{eq:fmk} 
f_{m,k}(x)=m\frac{x^2}{2}+kx
\end{equation}
Let $m>0.$ Define the matrix unit ${\mathbf{E}}_{k\ell}$ as follows. Let 
\begin{equation}\label{eq:math un klll}
{\mathbf E}_{f_{m_1,k}, f_{m+m_1, \ell}}\in \HOM_\bK (\cF_{f{m_1,k}}, \cF_{ f_{m+m_1, \ell}})
\end{equation}
be as in \eqref {eq:matr units}. Let $i_k,$ resp. ${\rm{pr}}_k,$ be the emmbedding of, resp. the projection onto, the $k$th component in the decomposition in \eqref {eq:Fm torus}. Define ${\mathbf E}_{k\ell}$ as the composition
$$ i_\ell \circ {\mathbf E}_{f_{m_1,k}, f_{m+m_1, \ell}}  \circ {\rm{pr}}_k:  \cF_{m_1} \to \cF_{m_1\frac{x^2}{2}+kx}\to \cF_{(m+m_1)\frac{x^2}{2}+\ell x}\to \cF_{m+m_1}$$
One has 
$$\HOM_\bK(\cF_{f_{m_1,k}}, \cF_{f_{m+m_1, \ell}})=\bK {\mathbf E}_{kl}$$
$${\mathbf E}_{j\ell}={\mathbf E}_{jk}{\mathbf E}_{k\ell}$$
\begin{proposition}\label{prop:action of sigma on Ekl} The action of the group $\bZ^2$ on $\HOM(\cF_0,\cF_m)$ is as follows. 
$$\sigma_1^q \sigma_2 ^p {\mathbf{E}}_{k\ell} = \exp(\frac{1}{i\hbar} (m\frac{q^2}{2}+(\ell-k)q) ){\mathbf{E}}_{\ell+p, k+p-mq}$$
\end{proposition}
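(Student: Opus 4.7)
The plan is to make the equivariance data $\sigma_i^{\cF_m}$ explicit on each component $\cL_k^{(m)}$ of $\cF_m$, and then to track through the resulting conjugation action on the matrix units directly, relying on the pullback formula \eqref{eq:Fm on torus} as the only computational input.

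First I would fix the equivariance isomorphisms $\sigma_i^{\cF_m}\colon \cF_m \isomoto S_{i*}\cF_m$ as the \emph{canonical} ones dictated by \eqref{eq:Fm on torus}. Concretely, on the $k$-th component the adjoint of $\sigma_1^{\cF_m}$ is the tautological identification $S_1^* \cL_{k+m}^{(m)} \isomoto T_{m/2+k}^*\cL_k^{(m)}$ (viewed in $\HOM_\bK$, the $T_c$-factor contributes filtration degree $c$ and corresponds to multiplication by $\exp(\tfrac{c}{i\hbar})$ in the Novikov ring), while $\sigma_2^{\cF_m}$ on the $k$-th component is the tautological identification $S_2^*\cL_{k+1}^{(m)}\isomoto \cL_k^{(m)}$ with \emph{no} $T$-shift. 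I would then verify the Heisenberg relation \eqref{eq:sigmas relll} directly; it is forced because the discrepancy $(S_2S_1)^* = T_1 (S_1S_2)^*$ on each $\cL_k^{(m)}$ is precisely $\tau_1$.

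Next I would iterate these formulas to compute the composite $\sigma_1^q \sigma_2^p$ on $\cF_m$. By induction, and by using the Heisenberg relation to normal-order $\sigma_2^p \sigma_1^q$ into $\sigma_1^q \sigma_2^p$ up to central $\tau_c$ factors, I obtain that $\sigma_1^q\sigma_2^p$ identifies the $k$-th component of $\cF_m$ with the $(k{+}mq{+}p)$-th component of $S_{1*}^qS_{2*}^p\cF_m$, up to a translation by $T_{mq^2/2 + kq}$ — matching exactly the shifts in \eqref{eq:Fm on torus} and Example \ref{ex:Fm torus}.

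Finally I would apply this to a matrix unit ${\mathbf{E}}_{k\ell}\colon \cL_k^{(0)} \to \cL_\ell^{(m)}$. The action is the conjugation
\[
\sigma_1^q\sigma_2^p\cdot {\mathbf{E}}_{k\ell}
\;=\; (\sigma_1^q\sigma_2^{p,\cF_m})^{-1} \circ S_{1*}^q S_{2*}^p({\mathbf{E}}_{k\ell}) \circ \sigma_1^q\sigma_2^{p,\cF_0}.
\]
On the source side, since $\cF_0$ has $m=0$, the $\cL_k^{(0)}$ component is shifted by $T_{kq}$ into the $(k+p)$-th slot. On the target side, the $\cL_\ell^{(m)}$ component is shifted by $T_{mq^2/2+\ell q}$ into the slot reindexed by $p$ and $mq$. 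The two $T$-shifts combine, using the $\tau_c$ naturality from \ref{ss:Tamarkin}(1), into a single overall filtration shift of
\[
(mq^2/2 + \ell q) - kq \;=\; mq^2/2 + (\ell-k)q,
\]
producing the asserted exponential factor, while the surviving endpoints are the shifted matrix unit stated in the proposition.

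The main obstacle will be bookkeeping: keeping track of inverses, source-versus-target shifts, and in particular the direction in which the Heisenberg cocycle $\tau_1$ is inserted when passing from $\sigma_2^p\sigma_1^q$ to $\sigma_1^q\sigma_2^p$ — it is here that the relative sign of $mq$ in the target index arises. Once this is consistently fixed, the comparison with Theorem \ref{thm:comp v0 vf tor} is automatic: both computations reduce to the same Heisenberg-cocycle combinatorics on pairs $(k,\ell)$ of integer labels, and both produce the same phase $\exp(\tfrac{1}{i\hbar}(mq^2/2 + (\ell-k)q))$.
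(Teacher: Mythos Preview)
Your approach is exactly the right one, and it is essentially what the paper has in mind: the proposition is stated immediately after Example~\ref{ex:Fm torus} with no separate proof, the intended argument being precisely the direct unwinding of the equivariance maps from \eqref{eq:Fm on torus} followed by conjugation on matrix units. Your phase computation $(mq^2/2+\ell q)-kq=mq^2/2+(\ell-k)q$ is correct.

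There is, however, one point where you are too hasty. You assert that ``the surviving endpoints are the shifted matrix unit stated in the proposition'' and then separately claim agreement with Theorem~\ref{thm:comp v0 vf tor}. But look at the indices: the proposition as printed has ${\mathbf E}_{\ell+p,\,k+p-mq}$, whereas Theorem~\ref{thm:comp v0 vf tor} has ${\mathbf E}_{k+p,\,\ell+p-mq}$ --- the roles of $k$ and $\ell$ are swapped. Your own source-side computation (for $\cF_0$, where $m=0$, the $k$-th component lands in the $(k+p)$-th slot) already agrees with Theorem~\ref{thm:comp v0 vf tor} and \emph{not} with the subscript order printed in the proposition. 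Moreover, if you actually push through the target-side index using \eqref{eq:Fm on torus} you get $\ell\mapsto \ell+mq+p$, which has the opposite sign of $mq$ from both stated formulas; the resolution is a convention choice about whether the $\bZ^2$-action on $\HOM$ is via $g$ or $g^{-1}$, and about which adjoint of $\sigma_i$ one uses. You flag ``bookkeeping'' as the main obstacle but then do not actually do the bookkeeping. You should carry the index computation to completion with a fixed convention, and note explicitly which of the two printed formulas (here versus Theorem~\ref{thm:comp v0 vf tor}) your computation reproduces --- the paper almost certainly contains a typo in one of them.
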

Now let $m<0.$ There is a generator
\begin{equation}\label{eq:Zkl1}
{\mathbf Z} (f_{m_1,k}, f_{m+m_1, \ell})\in \bR^1\Hom (\cF_{f_{m_1,k}}, (T_{-\sup f_{m,\ell-k} })_* \cF_{f_{m+m_1, \ell}})
\end{equation}
obtained as follows. First, to simplify notation, assume $m_1=k=0,$ as well as $\sup(f_{m,\ell})=0$ (the general case follows immediately). Replace $\cF_0=\bZ_{t\geq 0}$ by the complex 
\begin{equation}\label{eq:new Z0}
\bZ_{t<0} \to \bZ
\end{equation}
The complex 
$\Hom (\bZ_{t<0} \to \bZ, \bZ_{t\geq f_{m,\ell}})$ is isomorphic to $\bZ\oplus\bZ\stackrel{(1,-1)}{\longrightarrow} \bZ$ and 
computes $\R\Hom (\cF_{f_{0,0}}, \cF_{f_{m,\ell}}).$ 

Put
\begin{equation}\label{Zkl2}
{\mathbf Z}_ {f_{m_1,k}, f_{m+m_1, \ell}}=\exp(\frac{1}{i\hbar} \sup( f_{m,\ell-k})) {\mathbf Z} (f_{m_1,k}, f_{m+m_1, \ell})
\end{equation}
Define ${\mathbf Z}_{k\ell}$ as the composition
$$ i_\ell \circ {\mathbf Z}_{f_{m_1,k}, f_{m+m_1, \ell}}  \circ {\rm{pr}}_k:  \cF_{m_1} \to \cF_{m_1\frac{x^2}{2}+kx}\to \cF_{(m+m_1)\frac{x^2}{2}+\ell x}\to \cF_{m+m_1}$$
We have thus defined 
\begin{equation}\label{eq:E and Z}
{\mathbf E}_{k\ell}(m_2,m_1) \in \HOM_\bK ^0 (\cF_{m_1}, \cF_{m_2}), \,m_1\leq m_2;
\end{equation}
\begin{equation}\label{eq:E and Z 1}
{\mathbf Z}_{k\ell}(m_2,m_1) \in \HOM_\bK ^1 (\cF_{m_1}, \cF_{m_2}), \,m_1> m_2; 
\end{equation}
They satisfy
\begin{equation}\label {eq:E and Z 2}
{\mathbf E}_{jk}(m_3,m_2) {\mathbf E}_{k'\ell}(m_2,m_1)=\delta_{kk'} {\mathbf E}_{k\ell}(m_3,m_1);
\end{equation}
\begin{equation}\label{eq:E and Z 3}
{\mathbf E}_{jk}(m_3,m_2) {\mathbf Z}_{k'\ell}(m_2,m_1)=\delta_{kk'} {\mathbf Z}_{k\ell}(m_3,m_1)
\end{equation}
if $m_1>m_3$ and zero otherwise;
\begin{equation}\label{eq:E and Z 4}
{\mathbf Z}_{jk}(m_3,m_2) {\mathbf E}_{k'\ell}(m_2,m_1)=\delta_{kk'} {\mathbf Z}_{k\ell}(m_3,m_1)
\end{equation}
if $m_1>m_3$ and zero otherwise;
\begin{equation}\label{eq:E and Z 5}
{\mathbf Z}_{jk}(m_3,m_2) {\mathbf Z}_{k'\ell}(m_2,m_1)=0
\end{equation}
\begin{proposition}\label{prop:action of sigma on Zkl} The action of the group $\bZ^2$ on $\HOM(\cF_0,\cF_m)$ is as follows. 
$$\sigma_1^q \sigma_2 ^p {\mathbf{Z}}_{k\ell} = \exp(\frac{1}{i\hbar} (m\frac{q^2}{2}+(\ell-k)q) ){\mathbf{Z}}_{\ell+p, k+p-mq}$$
\end{proposition}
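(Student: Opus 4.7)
The plan is to mirror the argument behind Proposition \ref{prop:action of sigma on Ekl}, with the sole structural change being that the elementary morphism $\mathbf{Z}(f_{m_1,k},f_{m+m_1,\ell})$ replaces $\mathbf{E}(f_{m_1,k},f_{m+m_1,\ell})$ and lives in cohomological degree one, coming from the two-term resolution \eqref{eq:new Z0}.

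I would begin by unwinding the action: $\sigma_1^q\sigma_2^p$ acts on $\HOM^\bullet_\bK(\cF_{m_1},\cF_{m+m_1})$ via the equivariance isomorphisms \eqref{eq:sigma12}. Using the product decomposition \eqref{eq:Fm torus} together with the identity \eqref{eq:Fm on torus}, namely $\Psi^*\cL_k = T_{mq^2/2+kq}^*\cL_{k+mq+p}$ (applied with the appropriate slope $m_1$ or $m+m_1$), the computation reduces to tracking $\Psi^*\mathbf{Z}(f_{m_1,k},f_{m+m_1,\ell})$ against the canonical generator sitting on the new source and target components. The natural transformation $\tau_c$ converts each $T_c$-shift into multiplication by $\exp(c/(i\hbar))$ in $\HOM_\bK$, so both the exponential factor and the reindexing can be read off this dictionary.

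Next I would compute the pullback directly on the resolution. Representing $\cF_{f_{m_1,k}}$ by \eqref{eq:new Z0}, the class $\mathbf{Z}(f_{m_1,k},f_{m+m_1,\ell})$ is the degree-one cocycle arising from the restriction $\bZ_{\{t+f_{m,\ell-k}-\sup f_{m,\ell-k}\geq 0\}} \hookrightarrow \bZ$. Applying $\Psi^*$ shifts the defining function in accordance with \eqref{eq:Fm on torus}; after reabsorbing the resulting $T$-shifts into the normalization \eqref{Zkl2} of the transformed generator, the residual scalar is exactly $\exp(\frac{1}{i\hbar}(mq^2/2+(\ell-k)q))$, with source and target reindexed as $(\ell+p,\,k+p-mq)$ as dictated by the formula. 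I would verify the identity on the two generating cases $(q,p)=(1,0)$ and $(0,1)$ and extend to arbitrary $(q,p)$ via the cocycle relation \eqref{eq:sigmas relll}.

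The main obstacle I expect is sign bookkeeping: since $\mathbf{Z}$ lives in degree one on a two-term complex, naturality of the pullback may introduce a sign that must be shown to equal $+1$, and one must check that the $\tau_1$-twist in \eqref{eq:sigmas relll} is consistent with the computed scalar so that the formal cocycle on the right-hand side actually descends to a genuine $\bZ^2$-action. A secondary subtlety is confirming that the difference $\sup f_{m,\ell-k}-\sup f_{m,(\ell+p)-(k+p-mq)}=\sup f_{m,\ell-k}-\sup f_{m,(\ell-k)+mq}$ combines cleanly with the quadratic shift extracted from the $T$-factors; this reduces to a short calculation using $f_{m,j}(x)=mx^2/2+jx$ and, for $m<0$, the explicit value $\sup f_{m,j}=-j^2/(2m)$, which should yield exactly the claimed residue without extra terms.
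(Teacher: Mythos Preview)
The paper states this proposition without proof, exactly as it does for the companion Proposition \ref{prop:action of sigma on Ekl} on the $\mathbf{E}_{k\ell}$; both are presented as direct computations left to the reader. Your approach---pulling back along $S_1^qS_2^p$ using \eqref{eq:Fm on torus}, tracking the $T$-shifts through the normalization \eqref{Zkl2}, and reading off the scalar---is precisely the intended one, and your outline is correct.

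Your ``secondary subtlety'' is in fact the whole computation, and it goes through cleanly: with $\sup f_{m,j}=-j^2/(2m)$ for $m<0$, one has
\[
\sup f_{m,(\ell-k)+mq}-\sup f_{m,\ell-k}=\frac{((\ell-k)+mq)^2-(\ell-k)^2}{-2m}=-\Bigl(\frac{mq^2}{2}+(\ell-k)q\Bigr),
\]
which is exactly the exponent needed once the $T$-shifts from the equivariance data are absorbed. The degree-one placement of $\mathbf{Z}$ introduces no sign because the pullback by a diffeomorphism acts termwise on the two-step resolution \eqref{eq:new Z0} and commutes with the restriction map defining the class.
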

%%%%%%%%%%%%%%%%%%%%%%%%%%%%%%%%%%%%%%%%%%%%%%%%%%%%%%%%%%%%%%%%%%%%%%%%%%%%%%%%%%%%%%%%%%%%%%%%%%%%%%%%%%%%%%%%%%%%%%%%%%%%%%%%%%%%%%%%%%%%%%%%%%%%%%%%%%%%%%%%%%%%%%%%%%%%%%
%%%%%%%%%%%%%%%%%%%%%%%%%%%%%%%%%%%%%%%%%%%%%%%%%%%%%%%%%%%%%%%%%%%%%%%%%%%%%%%%%%%%%%%%%%%%%%%%%%%%%%%%%%%%%%%%%%%%%%%%%%%%%%%%%%%%%%%%%%%%%%%%%%%%%%%%%%%%%%%%%%%%%%%%%%%%%%%%%%%%%%%%%%%%%%%%%%%%%%%%%%%%%%%%%%%%%%%%%%%%%%%%%%%%%%%%%%%%%%%%%%%%%%%%%%%%%%%%%%%%%%%%%%%%%%%%%%%%%%%%%%%%%%%%%%%%%%%%%%%%%%%%%%%%%%%%%%%%%%%%%%%%%%%%%%%%%%%%%%%%%%%%%%%%%%%%%%%%%%%%%%%%%%%%%%%%%%%%%%%%%%%%%%%%%%%%%%%%%%%%%%%%%%%%%%%%%%%%%%%%%%%%%%%%%%%%%%%%%%%%%%%%%%%%%%%%%%%%%%%%%%%%%%%%%%%%%%%%%%%%%%%%%%%%%%%%%%%%%%%%%%%%%%%%%%%%%%%%%%%%%%
\section{Appendix. Metaplectic and metalinear groups}\label{s:meta} We recall the classical material that is contained, for example, in \cite{GS} and in \cite{S}. 
%\subsection{ss:gps and strres} 
%%%%%%%%%%%%%%%%%%%%%%%%%%%%%%%%%%%%%%%%%%%%%%%%%%%%%%%%%%%%%%%%%%%%%%%%%%%%%%%%%%%%%%%%%%%%%%%%%%%%%%%%%%%%%%%%%%%%%%%%%%%%%%%%%%%%%%%%%%%%%%%%%%%%%%%%%%%%%%%%%%%%%%%%%%%%%%%%%%%%%%%%%%%%%%%%%%%%%%%%%%%%%%%%%%%%%%%%%%%%%%%%%%%%%%%%%%%%%%%%%%%%%%%%%%%%%%%%%%%%%%%%%%%%%%%%%%%%%%%%%%%%%%%%%%%%%%%%%%%%%%%%%%%%%%%%%%%%%%%%%%%%%%%%%%%%%%%%%%%%%%%%%%%%%%%%%%%%%%%%%%%%%%%%%%%%%%%%%%%%%%%%%%%%%%%%%%%%%%%%%%%%%%%%%%%%%%%%%%%%%%%%%%%%%%%%%%%%%%%%%%%%%%%%%%%%%%%%%%%%%%%%%%%%%%%%%%%%%%%%%%%%%%%%%%%%%%%%%%%%%%%%%%%%%%%%%%%%%%%%%%
\subsection{Metalinear groups and metalinear structures}\label{sss:Metalinear structures} Recall \cite{GS} that the metalinear group is by definition
\begin{equation}\label{eq:ML}
\ML (n,\R)=\{(g,\, z)| g\in \GL(n, \R),\, z^2=\det (g)\}
\end{equation}
This is a twofold cover of $\GL(n, \R)$. There is a morphism
\begin{equation}\label{eq:ML to C}
{\det} ^{\frac{1}{2}}:
\ML (n,\R)\to \C^{\times}; \; (g, z)\mapsto z.
\end{equation}
Denote by ${\rm MO}(n)$ the preimage of ${\rm{O}}(n)$ in $\ML(n).$ Let also
\begin{equation}
{\rm{MU}}(n)=\{(u,\, \zeta)| u\in {\rm{U}}(n, \C),\, \zeta^2=\det (u)\}
\end{equation}
\begin{definition}\label{dfn:metaplegro}
Let $\Mpp(2n, \R)$ be the universal twofold cover of $\Sp(2n, \R).$ We  call this group {\em the metaplectic group}.
\end{definition}
There is a commutative diagram
$$
\begin{CD}
{\rm MO}(n) @>>> \ML(n,\R)\\
@VVV  @VVV\\
{\rm{MU}}(n) @>>> \Mpp(2n, \bR)
\end{CD}
$$
where the horizontal embeddings are homotopy equivalences.

A metalinear structure on a real vector bundle $E$ is a lifting of the transition automorphisms $g_{jk}^E$ to an $\ML (n,\R)$-valued cocycle ${\widetilde g}_{jk}^E$. For a real bundle $E$ with a metalinear structure, the complex line bundle $\wedge^{\frac{1}{2}}E$ is by definition given by the transition automorphisms ${\det}^{\frac{1}{2}}({\widetilde g}_{jk}^E)$, {\em cf.} \eqref{eq:ML to C}.

A metaplectic structure on a symplectic vector bundle $E$ is a lifting of the transition automorphisms $g_{jk}^E$ to an $\Mpp (n,\R)$-valued cocycle ${\widetilde g}_{jk}^E$.
A metalinear structure on a manifold (resp. a metaplectic structure on a symplectic manifold) is by definition the corresponding structure on its tangent bundle.
\begin{lemma}\label{lemma:c1 and meta} A manifold $X$ has a metalinear structure if and only if $T^*X$ has a metaplectic structure.
If a symplectic manifold has a metaplectic structure then any Lagrangian submanifold of $M$ has a metalinear structure. 
\end{lemma}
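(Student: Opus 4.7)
The plan is to reduce both parts of the lemma to a single group-theoretic identification: under the diagonal embedding $\GL(n,\R)\hookrightarrow\Sp(2n,\R)$ given by $g\mapsto\operatorname{diag}(g,(g^t)^{-1})$, the preimage in the metaplectic double cover $\Mpp(2n,\R)$ is canonically isomorphic to $\ML(n,\R)$. Concretely this identification is realized via the metaplectic representation: the formula $f(x)\mapsto z^{-1}f(g^{-1}x)$ with $z^2=\det g$ defines a well-defined action of $\ML(n,\R)$ on half-densities on $\R^n$ that extends to the metaplectic representation of $\Mpp(2n,\R)$. It is also compatible with Remark~\ref{rmk:why Sp4}, which describes the preimage of $\GL(n,\R)$ in $\Sp^4(2n,\R)$ as the split extension $\GL(n,\R)\times\bZ/4\bZ$; passing to the intermediate double cover $\Mpp\subset\Sp^4$ produces the non-split $\Z/2$ extension $\ML(n,\R)$.

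For the first assertion, the tangent bundle $T(T^*X)$ restricted to the zero section $X\subset T^*X$ splits canonically as $TX\oplus T^*X$, and its structure group reduces from $\Sp(2n,\R)$ to $\GL(n,\R)$ via the diagonal embedding above. Since $T^*X$ deformation retracts onto $X$, isomorphism classes of $\Sp(2n,\R)$-structures and of their $\Mpp(2n,\R)$-lifts on $T(T^*X)$ are in bijection with those on the restriction to $X$ via pullback along the retraction. A $\Mpp$-lift of an $\Sp$-cocycle coming from $\GL(n,\R)$ automatically takes values in the preimage of $\GL(n,\R)$ in $\Mpp(2n,\R)$, which by the identification is $\ML(n,\R)$. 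Hence such a lift is exactly an $\ML(n,\R)$-lift of the $\GL(n,\R)$-cocycle of $TX$, which is a metalinear structure on $X$. Both directions of the ``if and only if'' follow immediately.

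For the second assertion, let $L\subset M$ be Lagrangian. The Lagrangian subbundle $TL\subset TM|_L$ reduces the structure group of the symplectic bundle $TM|_L$ from $\Sp(2n,\R)$ to the parabolic subgroup $P\subset\Sp(2n,\R)$ stabilizing a Lagrangian subspace. This $P$ has Levi decomposition $P=\GL(n,\R)\ltimes N$, with $N$ the contractible unipotent radical of block matrices $\left[\begin{smallmatrix}I&B\\0&I\end{smallmatrix}\right]$, $B$ symmetric, and the Levi acting on the chosen Lagrangian subspace in the standard way. Since $N$ is simply connected the preimage of $P$ in $\Mpp(2n,\R)$ is $\ML(n,\R)\ltimes N$. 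A metaplectic structure on $M$ restricts to a $\Mpp$-lift of the $\Sp$-cocycle on $TM|_L$; combined with the reduction to $P$ it produces a lift to $\ML(n,\R)\ltimes N$, and projection onto the Levi factor $\ML(n,\R)$ yields an $\ML(n,\R)$-lift of the $\GL(n,\R)$-cocycle defining $TL$, i.e., a metalinear structure on $L$.

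The main obstacle is the group-theoretic identification of the preimage of $\GL(n,\R)$ in $\Mpp(2n,\R)$ as $\ML(n,\R)$ in a functorial way. This is delicate because the induced map $\pi_1(\GL(n,\R))=\bZ/2\to\pi_1(\Sp(2n,\R))=\bZ$ is necessarily zero, so the restricted cover is topologically trivial; the nontriviality resides entirely in the group structure (the lift of $-I\in\GL$ has order $4$, matching the $\bZ/4$ appearing in $\pi_0(\ML(1,\R))$). Once this identification is granted, the rest of the proof is routine structure-group bookkeeping along a deformation retract and along a Lagrangian reduction.
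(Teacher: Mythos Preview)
Your proof is correct and follows essentially the same route as the paper's: both hinge on the identification of the preimage of $\GL(n,\R)$ (and of the parabolic $P$) in $\Mpp(2n,\R)$ with $\ML(n,\R)$ (resp.\ $\ML(n,\R)\ltimes N$), and both use the retraction $T^*X\to X$ for part one and the reduction of $TM|_L$ to $P$ for part two. The only difference is packaging: the paper works directly with \v{C}ech cochains and shows that the $\bZ/2\bZ$-valued obstruction cocycle $a_{jk\ell}=\widetilde g_{jk}\widetilde g_{k\ell}\widetilde g_{j\ell}^{-1}$ is the same in both cases, and for part two writes down the explicit formula $\widetilde p_{jk}=\widetilde h_j^{-1}\widetilde g_{jk}\widetilde h_k$ (which is referenced later in the paper as \eqref{eq:lifting p t ij}), whereas you phrase everything in terms of reductions and lifts of structure groups. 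Your last paragraph on $\pi_1$ is commentary rather than argument, and the claim $\pi_1(\GL(n,\R))=\bZ/2$ is only literally true for $n\ge 3$; this does not affect the proof.
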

\begin{proof} The obstruction to existence of a metalinear, resp. metaplectic, structure is as follows. Pick any transition isomorphisms $g_{jk}$ for the tangent bundle. Lift them to a cochain ${\widetilde g}_{jk}$ with values in $\ML,$ resp. in $\Mpp.$ Then compute the two-cocycle $a_{jk\ell}={\widetilde g}_{jk}{\widetilde g}_{k\ell}{\widetilde g}_{j\ell}^{-1}$ with values in $\bZ/2\bZ.$ The cohomology class of this cocycle is the obstruction. If $M=T^*X,$ this cohomology class is determined by its restriction to $X$. But on $X$ the symplectic transition functions $g_{jk}$ for $TM$ can be chosen as the image of $\GL(n)$-valued transition functions for $TX$ under the embedding $\GL\to \Sp.$ This proves the first statement of the Lemma. Now, for a Lagrangian submanifold $L$ of $M,$ the transition isomorphisms for $TM|L$ are cohomologous to an $\Mpp$-valued cocycle $p_{jk}:$ $g_{jk}=h_j p_{jk} h_k^{-1}.$ Lift $h_j$ to $\Mpp(2n)$ somehow. Put 
\begin{equation}\label{eq:lifting p t ij} 
{\widetilde p}_{jk}={\widetilde h}_j^{-1} {\widetilde g}_{jk} {\widetilde h}_k.
\end{equation}
This is a cocycle cohomologous to ${\widetilde g}_{jk}|L.$ It takes values in the preimage of the subgroup of $\Sp(2n)$ consisting of matrices preserving the Lagrangian submanifold $L_0=\{\fxi=0\}.$ The image of this cocycle under the projection to $\GL$ via $\ML$ is a cocycle defining the bundle $TX.$
\end{proof}
%%%%%%%%%%%%%%%%%%%%%%%%%%%%%%%%%%%%%%%%%%%%%%%%%%%%%%%%%%%%%%%%%%%%%%%%%%%%%%%%%%%%%%%%%%%%%%%%%%%%%%%%%%%%%%%%%%%%%%%%%%%%%%%%%%%%%%%%%%%%%%%%%%%%%%%%%%%%%%%%%%%%%%%%%%%%%%%%%%%%%%%%%%%%%%%%%%%%%%%%%%%%%%%%%%%%%%%%%%%%%%%%%%%%%%%%%%%%%%%%%%%%%%%%%%%%%%%%%%%%%
\subsection{The Maslov class of a Lagrangian submanifold}\label{ss:Maslov of L} 
%%%%%%%%%%%%%%%%%%%%%%%%%%%%%%%%%%%%%%%%%%%%%%%%%%%%%%%%%%%%%%%%%%%%%%%%%%%%%%%%%%%%%%%
\subsubsection{The case $c_1(M)=0$}\label{sss:Maslov when c1=0} 
The cohomology class of the two-cocycle $a_{jk\ell}$ constructed as in the proof of Lemma \ref{lemma:c1 and meta} above but when we use the universal cover $\tSp(2n,\bR)$ instead of $\Mpp(n).$ This is now a class in $H^2(M,\bZ)$ that represents $c_1(M),$ the first Chern class of $TM$ viewed as a complex bundle after we reduce the structure group $\Sp$ to the maximal compact subgroup $U(n).$ Indeed, $\tSp$ is homotopy equivalent to
$${\widetilde U}(n)=\{(u,x)|u\in U(n),\, x\in \bR, \det(u)=e^{2\pi i x}\}.$$ The proof of Lemma \ref{lemma:c1 and meta} applied to this case establishes the following fact.

Consider the group 
\begin{equation}\label{eq:GL inf cov}
{\widetilde{\GL}}(n,\bR)=\{(g,x)|x\in \GL(n,\bR);x\in \bR; \det (g)=e^{2\pi i x}\}
\end{equation}
(Of course ${\widetilde {\GL}},$ unlike ${\widetilde U}$ or $\tSp,$ has nothing to do with universal covers).
\begin{lemma}\label{lemma:Maslov CY} A trivialization of $c_1(M)$ defines a ${\widetilde{GL}}(n)$-structure on any Lagrangian submanifold $L$ of $M,$ {\em i.e.} a lifting of the transition automorphisms of $TL$ to a ${\widetilde{\GL}}(n)$-valued cocycle.
%\end{equation}
\end{lemma}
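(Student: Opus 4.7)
The plan is to adapt the strategy of Lemma \ref{lemma:c1 and meta} verbatim, replacing the twofold cover $\Mpp(2n,\bR)$ by the full universal cover $\tSp(2n,\bR)$. First I would unpack the hypothesis using the discussion in \ref{sss:Maslov when c1=0}: the obstruction to lifting a symplectic cocycle $g_{jk}$ for $TM$ to an $\tSp(2n,\bR)$-valued cocycle $\widetilde{g}_{jk}$ is precisely the two-cocycle $a_{jk\ell}=\widetilde{g}_{jk}\widetilde{g}_{k\ell}\widetilde{g}_{j\ell}^{-1}$, whose class in $H^2(M,\bZ)$ is $c_1(M)$. Hence a trivialization of $c_1(M)$ supplies (after modifying by an integer cochain) an honest $\tSp(2n,\bR)$-valued lift $\widetilde{g}_{jk}$ of the transition data of $TM$.

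Next I would reproduce the splitting step from Lemma \ref{lemma:c1 and meta} along $L$: the cocycle $g_{jk}|_L$ is cohomologous to one taking values in $\GL(n,\bR)\subset \Sp(2n,\bR)$ (via the diagonal embedding $g\mapsto \mathrm{diag}(g,(g^t)^{-1})$, corresponding to the splitting of $TM|_L$ into $TL$ and its $\omega$-dual conormal complement). Write $g_{jk}|_L=h_j p_{jk} h_k^{-1}$ with $p_{jk}\in\GL(n,\bR)$. Lift each $h_j$ arbitrarily to $\widetilde{h}_j\in \tSp(2n,\bR)$ and set
\begin{equation}
\widetilde{p}_{jk}=\widetilde{h}_j^{-1}\,\widetilde{g}_{jk}|_L\,\widetilde{h}_k.
\end{equation}
The cocycle identity $\widetilde{p}_{jk}\widetilde{p}_{k\ell}=\widetilde{p}_{j\ell}$ is formal, and by construction $\widetilde{p}_{jk}$ projects to $p_{jk}\in\GL(n,\bR)$, so it takes values in the preimage of $\GL(n,\bR)$ inside $\tSp(2n,\bR)$.

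Finally one has to identify this preimage with the group ${\widetilde{\GL}}(n,\bR)$ of \eqref{eq:GL inf cov}; this is the step I expect to be the main obstacle and the only nontrivial point, since the first two steps are cocycle manipulations transplanted from the proof of Lemma \ref{lemma:c1 and meta}. The natural way to verify it is via the maximal compact retraction $\Sp(2n,\bR)\simeq U(n)$, under which the $\bZ$-cover $\tSp(2n,\bR)\to\Sp(2n,\bR)$ is classified (up to homotopy) by the homomorphism $\det\colon U(n)\to U(1)$; one then checks that the composition $\GL(n,\bR)\hookrightarrow\Sp(2n,\bR)\to U(n)\xrightarrow{\det}U(1)$ recovers (the phase of) the real determinant of $\GL(n,\bR)$, matching the defining relation $\det(g)=e^{2\pi i x}$ in \eqref{eq:GL inf cov}. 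Once this identification is in hand, $\widetilde{p}_{jk}$ is the sought ${\widetilde{\GL}}(n)$-valued cocycle lifting the transition functions of $TL$, and independence from the auxiliary choices of $\widetilde{h}_j$ (up to the evident coboundary) follows as in \ref{sss:not quite flat con}.
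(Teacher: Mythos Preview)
Your proposal is correct and is precisely the argument the paper has in mind: the paper's own ``proof'' is the single sentence preceding the lemma, namely that the proof of Lemma~\ref{lemma:c1 and meta} carries over verbatim with the twofold cover $\Mpp(2n,\bR)$ replaced by the universal cover $\tSp(2n,\bR)$, the obstruction class now being $c_1(M)\in H^2(M,\bZ)$. You have spelled out the steps more explicitly than the paper does, including the identification of the preimage of $\GL(n,\bR)$ in $\tSp(2n,\bR)$ with the group \eqref{eq:GL inf cov}, which the paper simply takes for granted.
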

Assume that $L$ is oriented. Then there is another ${\widetilde{GL}}(n)$-structure on $L,$ due to the fact that ${\rm{SL}}(n)$ is a subgroup of ${\widetilde{GL}}(n).$ The two liftings differ by a class in $\lambda(L)\in H^1(L,\bZ).$ We will call this class {\em the Maslov class of an oriented Lagrangian submanifold} of a symplectic manifold $M$ with a trivialization of $c_1(M).$
%%%%%%%%%%%%%%%%%%%%%%%%%%%%%%%%%%%%%%%%%%%%%%%%%%%%%%%%%%%%%%%%%%%%%%%%%%%%%%%%%%%%%%%%%%%%%%%%%%%%%%%%%%%%%%%%%%%%%%%%%%%%%%%%%%%%%%%%%%%%%%%%%%%%%%%%%%%%%%%%%%%%%%%%%%%%%%
\subsubsection{The case $2c_1(M)=0$}\label{sss:Maslov when 2c1=0}
Now consider the group 
\begin{equation}\label{eq:GL inf cov 1}
{\widetilde{U}}^{(2)} (n)=\{(g,x)|g\in U(n);x\in \bR; \det (g)^2=e^{2\pi i x}\}
\end{equation}
Note that 
\begin{equation}\label{eq:GL inf cov 2}
\{(g,x)|x\in \GL(n,\bR);x\in \bR; \det (g)^2=e^{2\pi i x}\}\isomoto \GL(n,\bR)\times \bZ
\end{equation}
Arguing exactly as before, we get
\begin{lemma}\label{lemma:Maslov 2CY} A trivialization of $2c_1(M)$ defines a ${{\GL}}(n)\times \bZ$-structure on any Lagrangian submanifold $L$ of $M.$ 
\end{lemma}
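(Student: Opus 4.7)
The plan is to mirror the proof of Lemma \ref{lemma:c1 and meta} and the sketch preceding Lemma \ref{lemma:Maslov CY}, replacing the relevant two-fold cover by the cover of $U(n)$ corresponding to $2c_1$, namely $\widetilde{U}^{(2)}(n)$ from \eqref{eq:GL inf cov 1}. First I would reduce the structure group of $TM$ to the maximal compact subgroup $U(n)\subset \Sp(2n,\bR)$ and fix transition cocycles $g_{jk}$. By exactly the same obstruction argument as in the proof of Lemma \ref{lemma:c1 and meta}, a lift of $g_{jk}$ to a $\widetilde{U}^{(2)}(n)$-valued cocycle $\widetilde{g}_{jk}$ exists iff the obstruction class in $H^2(M,\bZ)$, which here represents $2c_1(M)$, vanishes; a trivialization of $2c_1(M)$ is precisely such a lift.

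Next, I would restrict to the Lagrangian submanifold $L$. Following verbatim the argument at the end of the proof of Lemma \ref{lemma:c1 and meta}, the transition isomorphisms $g_{jk}|L$ of $TM|L$ are cohomologous to a cocycle $p_{jk}$ valued in the preimage of the stabilizer of a Lagrangian subspace, whose projection to $\GL$-type matrices gives transition functions for $TL$. Concretely, write $g_{jk}|L = h_j\, p_{jk}\, h_k^{-1}$, lift $h_j$ to $\widetilde{h}_j\in \widetilde{U}^{(2)}(n)$ arbitrarily, and set, as in \eqref{eq:lifting p t ij},
$$
\widetilde{p}_{jk} = \widetilde{h}_j^{-1}\, \widetilde{g}_{jk}\, \widetilde{h}_k.
$$
Then $\widetilde{p}_{jk}$ is cohomologous to $\widetilde{g}_{jk}|L$ and lies in the preimage of $\GL(n,\bR)$ inside $\widetilde{U}^{(2)}(n)$.

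At this point, the key algebraic ingredient is \eqref{eq:GL inf cov 2}: since $\det(g)^2>0$ for $g\in \GL(n,\bR)$, the real equation $\det(g)^2 = e^{2\pi i x}$ forces $x\in\bZ$, so the preimage of $\GL(n,\bR)$ in $\widetilde{U}^{(2)}(n)$ splits canonically as $\GL(n,\bR)\times \bZ$. Consequently $\widetilde{p}_{jk}$ decomposes uniquely as a pair $(p^{\GL}_{jk}, n_{jk})$ where $p^{\GL}_{jk}\in \GL(n,\bR)$ are transition functions for $TL$ and $n_{jk}\in\bZ$ is a 1-cocycle. This pair is by definition a $\GL(n,\bR)\times\bZ$-structure on $L$.

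The main obstacle is purely bookkeeping: verifying that the canonicity of the splitting in \eqref{eq:GL inf cov 2} makes the integers $n_{jk}$ into a genuine $\bZ$-valued \v{C}ech cocycle across triple overlaps, and that a different choice of the auxiliary lift $\widetilde{h}_j$ (or a different $h_j$) modifies $n_{jk}$ only by a coboundary so that the resulting $\GL(n,\bR)\times\bZ$-structure is well defined up to equivalence. Once this is done, independence from the auxiliary choices is formally the same calculation as in the earlier lemmas and in \cite{GS}, so no new technical difficulty arises beyond keeping track of which cover is being used.
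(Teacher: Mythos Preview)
Your approach is essentially the same as the paper's: the paper simply says ``Arguing exactly as before'' and points back to the proof of Lemma~\ref{lemma:c1 and meta} (and the sketch preceding Lemma~\ref{lemma:Maslov CY}), using the covering group attached to $2c_1$ in place of the one attached to $c_1$, together with the splitting \eqref{eq:GL inf cov 2}. So strategy and content match.

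One small imprecision to fix: once you reduce the structure group of $TM$ to $U(n)$, the phrase ``the preimage of $\GL(n,\bR)$ inside $\widetilde{U}^{(2)}(n)$'' is not literally meaningful, since $\GL(n,\bR)\not\subset U(n)$. The clean fix, already implicit in the paper, is to pass from $\widetilde{U}^{(2)}(n)$ to the homotopy-equivalent larger group $\tSp^{(2)}(2n,\bR)$ of Remark~\ref{rmk:t2Sp and Lambda un} before restricting to $L$; then $p_{jk}$ lands in the parabolic whose Levi is $\GL(n,\bR)$, and the preimage in $\tSp^{(2)}$ is the group \eqref{eq:GL inf cov 2}. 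Alternatively, reduce $TL$ to $O(n)$ via a metric and stay inside $\widetilde{U}^{(2)}(n)$. Either way your argument goes through unchanged.
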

Projecting to $\bZ$, we get a class $\mu(L)\in H^1(L,\bZ).$ We call $\mu(L)$ {\em the Maslov class of a Lagrangian submanifold} of a symplectic manifold $M$ with a trivialization of $2c_1(M).$

Note that
\begin{equation}\label{eq:lammudva}
\mu(L)=2\lambda(L)
\end{equation}
for a trivialization of $c_1,$ the induced trivialization of $2c_1,$ and an oriented $L.$
\begin{remark}\label{rmk:t2Sp and Lambda un}
Let ${\widetilde{\Lambda}}(n)$ be the universal cover of the Lagrangian Grassmannian $\Lambda(n).$ Define the group $\tSp ^{(2)}(2n,\bR)$ by the condition that the following square be Cartesian.
$$
\begin{CD}
\tSp ^{(2)}(2n,\bR)  @>>> {\widetilde{\Lambda}}(n)\\
@VVV  @VVV\\
\Sp(2n,\bR) @>>> {\Lambda(n)}
\end{CD}
$$
Then ${\widetilde U}^{(2)}$ is a homotopy equivalent subgroup of $\tSp ^{(2)}(2n,\bR) .$
\end{remark}
\begin{example} \label{ex:Lambda 1} For $n=1,$ $U(1) \isomoto S^1;$ also $\Lambda(1)\isomoto S^1.$ Under these identifications, the projection $U(1)\to \Lambda(1)$ becomes the map $\zeta\mapsto \zeta^2.$
\end{example}
\subsection{The groups $\Sp^N$}\label{ss:SpN} Here we use definitions and notation from \cite{S}. For $N\geq 1,$ let $\Lambda^N (n)$ be the universal $N$-fold cover of $\Lambda(n).$ Define the group $\Sp^N(2n,\bR)$ by requiring the following diagram to be Cartesian:
$$
\begin{CD}
\Sp ^N(2n,\bR)  @>>> {\Lambda}^N (n)\\
@VVV  @VVV\\
\Sp(2n,\bR) @>>> {\Lambda(n)}
\end{CD}
$$
In other words, $\Sp^N(2n)=\tSp^{(2)}(2n)/(\bZ/N)$. Define also
$$U^N (n)=\{(u, \zeta)| u\in U(n), \zeta\in \bC, \det(u)^2=\zeta^N\}={\widetilde U}^{(2)}/(\bZ/N)$$
This is a subgroup of $\Sp^N(n)$ and the embedding is a homotopy equivalence. 

A $\Sp^N(2n)$-structure on $M$ is the same as a trivialization of $2c_1(M)$ in $H^2(M,\bZ/N).$

The universal $N$-fold cover of $\Sp(2n)$ is a subgroup of $Sp^{2N}(2n).$ In particular, the metaplectic group $\Mpp(2n)$ is a subgroup of $\Sp^4(2n).$ The latter is generated by $\Mpp(2n)$ and the central subgroup $\{\pm1,\pm i\}.$ The intersection of the two is $\{\pm 1\}, $ the kernel of $\Mpp\to \Sp.$

The following makes sense for any $N.$ We fix $N=4$ just to fix the notation for the rest of the paper.

\begin{definition}\label{def:MPar} a) Define $P(n,\bR)$ as the subgroup of $\Sp(2n,\bR)$ consisting of pairs $(A,z)$ where 
$A=\left [ \begin{array}{cc}
b&a\\
0& (b^{-1})^t\end{array}\right ]$
is a symplectic matrix. In other words, $P(n)$ is the subgroup of $\Sp(2n)$ consisting of matrices preserving the Lagrangian submanifold $L_0=\{\fxi=0\}.$

b) Define $\MP(n,\bR)$ as the subgroup of $\Mp(2n,\bR)$ consisting of pairs $(A,z)$ where 
$A=\left [ \begin{array}{cc}
b&a\\
0& (b^{-1})^t\end{array}\right ]$
is a symplectic matrix, $z$ is a complex number, and $\det(b)^2=z^4.$ In other words, this is the lifting to $\Mp(2n)$ of $P(n).$
\end{definition}
\begin{lemma}\label{lemma:coho cocy sp4}
a) $\MP(n,\bR)\isomoto P(n,\bR)\times \{\pm 1,\pm i\}$

b) If a symplectic manifold $M$ has an $\Sp^4$ structure and $L$ is a Lagrangian submanifold then formulas \eqref{eq:lifting p t ij} define an $\MP(n)$-valued cocycle cohomologous to the transition isomorphisms of $TM|L.$  

c) If $M$ has a real polarization then it has an $\Mp(2n)$-structure.
\end{lemma}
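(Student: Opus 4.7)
The heart of the lemma is part (a); parts (b) and (c) then follow from the explicit splitting produced there. My plan is to first produce an explicit section $s:P(n,\bR)\to\MP(n,\bR)$ of the forgetful projection, together with an explicit character $\phi:\MP(n,\bR)\to\{\pm1,\pm i\}$ restricting to the identity on the central kernel of that projection; then use these to build the $\MP$-valued cocycles appearing in (b) and (c).

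For (a), given $(A,z)\in\MP(n,\bR)$ with $b$-block $b\in\GL(n,\bR)$, the defining relation $z^4=\det(b)^2$ forces $|z|^2=|\det(b)|$, so $z/|\det(b)|^{1/2}$ is automatically a fourth root of unity. I will check that $\phi(A,z)=z/|\det(b)|^{1/2}$ and $s(A)=(A,|\det(b)|^{1/2})$ are group homomorphisms; both checks reduce to multiplicativity of $\det$ on $\GL(n,\bR)$ and of the positive real square root on $\bR_{>0}$. The combined map $P(n,\bR)\times\{\pm1,\pm i\}\to\MP(n,\bR)$, $(A,\zeta)\mapsto s(A)\cdot(I,\zeta)$, is then an isomorphism of groups, with inverse $(A,z)\mapsto(A,\phi(A,z))$.

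For (b), a Lagrangian submanifold $L\subset M$ gives a reduction of the symplectic bundle $TM|L$ to the stabilizer of a Lagrangian subspace: one can choose local Darboux frames of $TM$ over $L$ in which $TL$ corresponds to $L_0=\{\fxi=0\}$, and the resulting transition isomorphisms $p_{jk}$ of $TM|L$ take values in $P(n,\bR)$. Picking any lifts $\tilde h_j\in\Mp(2n)$ of the change-of-frame elements $h_j\in\Sp(2n)$, formula \eqref{eq:lifting p t ij} produces a cocycle $\tilde p_{jk}$ in $\Mp(2n)$ whose projection to $\Sp(2n)$ is $p_{jk}\in P(n,\bR)$; hence $\tilde p_{jk}$ takes values in the pre-image $\MP(n,\bR)$ of $P(n,\bR)$, and is cohomologous to $\tilde g_{jk}|L$ directly from the coboundary formula.

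For (c), a real polarization on $M$ is by definition a Lagrangian foliation, so the structure group of $TM$ reduces globally from $\Sp(2n,\bR)$ to $P(n,\bR)$; fix transition cocycles $p_{jk}\in P(n,\bR)$. Applying the section $s$ of (a) termwise yields an $\MP(n,\bR)$-valued, hence $\Mp(2n)$-valued, cocycle lifting $p_{jk}$, which is exactly an $\Mp(2n)$-structure on $M$. The only place a genuine obstacle could hide is in (a); once the formula $s(A)=(A,|\det(b)|^{1/2})$ is in hand, (b) and (c) reduce to cocycle bookkeeping.
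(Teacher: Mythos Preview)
Your argument is correct and makes explicit precisely what the paper leaves unproved: the lemma is stated without proof in the text, so your verification is the natural one. The section $s(A)=(A,|\det b|^{1/2})$ and character $\phi(A,z)=z/|\det b|^{1/2}$ give exactly the splitting in (a), since the group law on $\MP(n)\subset\Sp^4(2n)$ is $(A_1,z_1)(A_2,z_2)=(A_1A_2,z_1z_2)$ and $|\det(\cdot)|^{1/2}$ is multiplicative on $\GL(n,\bR)$; your arguments for (b) and (c) are then the expected cocycle bookkeeping, with (b) repeating verbatim the mechanism already spelled out in the proof of Lemma~\ref{lemma:c1 and meta}.
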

\begin{definition}\label{dfn:Maslov sp 4} The projection of the cohomology class from Lemma \ref{lemma:coho cocy sp4}, b) to $H^1(L,\bZ/4\bZ)$ is called {\em the Maslov class} of $L$.
\end{definition}
When the trivialization of $2c_1(M)$ modulo $4$ comes from a trivialization of $2c_1(M)$ then the Maslov class defined above is equal to $\exp(\frac{i\pi}{2}\mu(L))$ that was defined in \ref{sss:Maslov when 2c1=0}.
%%%%%%%%%%%%%%%%%%%%%%%%%%%%%%%%%%%%%%%%%%%%%%%%%%%%%%%%%%%%%%%%%%%%%%%%%%%%%%%%%%%%%%%%%%%%%%%%%%%%%%%%%%%%%%%%%%%%%%%%%%%%%%%%%%%%%%%%%%%%%%%%%%%%%%%%%%%%%%%%%%%%%%%%%%%%%%%%%%%%%%%%%%%%%%%%%%%%%%%%%%%%%%%%%%%%%%%%%%%%%%%%%%%%%%%%%%%%%%%%%%%%%%%%%%%%%%%%%%%%%%%%%%%%%%%%%%%%%%%%%%%%%%%%%%%%%%%%%%%%%%%%%%%%%%%%%%%%%%%%%%%%%%%%%%%%%%%%%%%%%%%%%%%%%%%%%%%%%%%%%%%%%%%%%%%%%%%%%%%%%%%%%%%%%%%%%%%%%%%%%%%%%%%%%%%%%%%%%%%%%%%%%%%%%%%%%%%%%%%%%%%%%%%%%%%%%%%%%%%%%%%%%%%%%%%%%%%%%%%%%%%%%%%%%%%%%%%%%%%%%%%%%%%%%%%%%%%%%%%%%%
\section{Appendix. The algebraic metaplectic representation}\label{s:metaa} Most of the material of this section is contained in \cite{OM}. 
%%%%%%%%%%%%%%%%%%%%%%%%%%%%%%%%%%%%%%%%%%%%%%%%%%%%%%%%%%%%%%%%%%%%%%%%%%%%%%%%%%%%%%%%%%%%%%%%%%%%%%%%%%%%%%%%%%%%%%%%%%%%%%%%%%%%%%%%%%%%%%%%%%%%%%%%%%%%%%%%%%%%%%%%%%%%%%%%%%%%%%%%%%%%%%%%%%%%%%%%%%%%%%%%%%%%%%%%%%%%%%%%%%%%%%%%%%%%%%%%%%%%%%%%%%%%%%%%%%%%%%%%%%%%%%%%%%%%%%%%%%%%%%%%%%%%%%%%%%%%%%%%%%%%%%%%%%%%%%%%%%%%%%%%%%%%%%%%%%%%%%%%%%%%%%%%%%%%%%%%%%%%%%%%%%%%%%%%%%%%%%%%%%%%%%%%%%%%%%%%%%%%%%%%%%%%%%%%%%%%%%%%%%%%%%%%%%%%%%%%%%%%%%%%%%%%%%%%%%%%%%%%%%%%%%%%%%%%%%%%%%%%%%%%%%%%%%%%%%%%%%%%%%%%%%%%%%%%%%%%%%
 Recall the algebra $\cA$ from \ref{ss:The alg curvA} and the $\cA$-module from Definition \ref{df:ind formal mod}. In this section we give an interpretation of this module in terms of the metaplectic representation. 
 \subsection{Symmetries of the deformation quantization algebra of a formal neighborhood}\label{ss:Deformation quantization of a formal neighborhood and symms}  Any continuous automorphism $g$ of $\A$ induces a symplectic linear transformation $g_0$ of $\C^{2n}.$ Denote by $G$ the group of those $g$ whose linear part $g_0$ preserves the real structure. We have
\begin{equation}\label{eq:G semi dir}
G=\Sp(2n,\R)\ltimes \exp (\g_{\geq 1})
\end{equation}
Define the central extension
\begin{equation}\label{eq:tG semi dir}
\tG=\exp(\oih \C\oplus \C)\times \tSp(2n,\R)\ltimes \exp (\tgg_{\geq 1})
\end{equation}
where $\tSp(2n,\bR)$ is the universal cover of $\Sp(2n,\bR).$
One has an exact sequence
\begin{equation}\label{eq:extension G}
1\to \Z\times \exp(\oih\C[[\hbar]])\to\tG\to G\to 1
\end{equation}

%%%%%%%%%%%%%%%%%%%%%%%%%%%%%%%%%%%%%%%%%%%%%%%%%%%%%%%%%%%%%%%%%%%%%%%%%%%%%%%%%%%%%%%%%%%%%%%%%%%%%%%%%%%%%%%%%%%%%%%%%%%%%%%%%%%%%%%%%%%%%%%%%%%%%%%%%%%%%%%%%%%%%%%%%%%%%%%%
Define also $P$ to be the subgroup of $G$ consisting of elements $g$ whose linear part preserves the Lagrangian subspace
\begin{equation}\label{eq:L0}
L_0=\{\fxi_1=\ldots=\fxi_n=0\}
\end{equation}
Let $\tP$ be the preimage of $P$ in $\tG$.
\subsection{The algebraic Fourier transform}\label{ss:The algebraic Weil representation} Let $\fy=(\fy_1,\ldots,\fy_n)$ be $n$ formal variables. For a symmetric real $n\times n$ matrix $a$, put
\begin{equation}\label{eq:Ha}
\caH_a^{\fy}=\exp(\frac{a\fy^2}{2i\hbar})\fC[[\fy, \hbar]]((e^{\frac{c}{i\hbar}}|c\in \C))
\end{equation}
Here
\begin{equation}
\fC[[\fy, \hbar]]=\{\sum_{k=-N}^\infty v_k|v_k\in \C[[\fy]]((\hbar))_k\}
\end{equation}
with respect to the grading \eqref{eq:grading}; for any vector space $V,$ we define 
\begin{equation} \label{eq:completion}
V((e^{\frac{c}{i\hbar}}|c\in \C))=\{\sum_{k\in {\mathbb N};{\rm{Re}}(c_k)\to+\infty}e^{\frac{c_k}{i\hbar}}v_k\},
\end{equation}
$v_k\in V.$
In particular, the operator of multiplication by $h$ is automatically invertible.

Put also
%\begin{equation}
%\caH^\fy=\oplus_a \caH^\fy_a
%\end{equation}
For a nondegenerate $a$, define the Fourier transform (cf. \cite{K})
\begin{equation}\label{eq:Fourier}
F: \caH_a^{{\fy}}\isomoto\caH_{-a^{-1}}^{\feta}
\end{equation}
as follows. Heuristically,
\begin{equation}
(Ff)(\feta)=\frac {e^{-\frac{\pi i n}{4}}}{(2\pi i \hbar)^{n/2}}
\int e^\frac{\fy\feta}{i\hbar}f(\fy)d\fy;
\end{equation}
To give the above formula a rigorous meaning, put
$$
F(f(\fy)\exp(\frac{a\fy^2}{2i\hbar}))(\feta)
=f(i\hbar\frac{\partial}{\partial\feta})F(\exp(\frac{a\feta^2}{2i\hbar}))=
$$
$$
f(i\hbar\frac{\partial}{\partial\feta})\frac{e^{-\frac{\pi i n}{4}}}{\det(\sqrt{ia})}
\exp(\frac{-a^{-1}\feta^2}{2i\hbar})
=\frac{e^{-\frac{\pi i p(a)}{2}}}{\det\sqrt{|a|}}f(i\hbar\frac{\partial}{\partial\feta})\exp(\frac{-a^{-1}\feta^2}{2i\hbar})
$$
Here $p(a)$ is the number of positive eigenvalues of $a$. We used the branch of the square root for which ${\sqrt {ix}}>0$ if $ix>0$; it is defined on the complex plane with the line $\{ix<0, \; x\in \R\}$ removed. The final term in the above chain of equalities can be viewed as the definition of the first term.

\begin{remark}\label{rmk:who cares?} The definition of the Fourier transform $F$ extends to elements of the form \begin{equation}
{\bf f}(\fy)=\exp(\frac{a\fy^2}{2 i\hbar}+i\fy\fz)f(\fy)
\end{equation}
where $a$ is nondegenerate and $\fz$ is another formal parameter:
\begin{equation}\label{eq:Fourier with a shift}
F({\bf f})(\feta)=F(\exp(\frac{a\fy^2}{2i \hbar})f(\fy))(\feta+\fz)
\end{equation}
\end{remark}

One has
\begin{equation}\label{eq:Fourier props}
F^2 ({\bf f})(\fy)=i^{-n} {\bf f}(-\fy);\;F\fy F^{-1}=i\hbar\frac{\partial}{\partial\feta};\;Fi\hbar\frac{\partial}{\partial\fy}F^{-1}=-\feta
\end{equation}
%%%%%%%%%%%%%%%%%%%%%%%%%%%%%%%%%%%%%%%%%%%%%%%%%%%%%%%%%%%%%%%%%%%%%%%%%%%%%%%%%%%%%%%%%%%%%%%%%%%%%%%%%%%%%%%%%%%%%%%%%%%%%%%%%%%%%%%%%%%%%%%%%%%%%%%%%%%%%%%%%%%%%%%%%%%%%%%%%%%%%%%%%%%%%%%%%%%%%%%%%%%%%%%%%%%%%%%%%%%%%%%%%%%%%%%%%%%%%%%%%%%%%%%%%%%%%%%%%%%%%%%%%%%%%%%%%%%%%%%%%%%%%%%%%%%%%%%%%%%%%%%%%%%%%%%%%%%%%%%%%%%%%%%%%%%%%%%%%%%%%%%%%%%%%%%%%%%%%%%%%%%%%%%%%%%%%%%%%%%%%%%%%%%%%%%%%%%%%%%%%%%%%%%%%%%%%%%%%%%%%%%%%%%%%%%%%%%%%%%%%%%%%%%%%%%%%%%%%%%%%%%%%%%%%%%%%%%%%%%%%%%%%%%%%%%%%%%%%%%%%%%%%%%%%%%%%%%%%%%%%%%%%%%%%%%%%%%%%%%%%%%%%%%%%%%%%%%%%%%%%%%%%%%%%%%%%%%%%%%%%%%%%%%%%%%%%%%%%%%%%%%%%%%%%%%%%%%%%%%%%%%%%%%%%%%%%%%%%%%%%%%%%%%%%%%%%%%%%%%%%%%%%%%%%%%%%%%%%%%%%%%%%%%%%%%%%%%%%%%%%%%%%%%%%%%%%%%%%%%%%%%%%%%%%%%%%%%%%%%%%%%%%%%%%%%%%%%%%%%%%%%%%%%%%%%%%%%%%%%%%%%%%%%%%%%%%%%%%%%%%%%%%%%%%%%%%%%%%%%%%%%%%%%%%%%%%%%%%%%%%%%%%%%%%%%%%%%%%%%%%%%%%%%%%%%%%%%%%%%%%%%%%%%%%%%%%%%%%%%%%%%%%%%%%%%%%%%%%%%%%%%%%%%%%%%%%%%%
\subsection{The two-dimensional case}\label{two dimen case} For the readers convenience, we first present the case $n=1.$
\begin{equation} \label{eq:H 2-dim cas}
\cH=\bigoplus_{a\in \bR}  \cH^{\fx}_a \bigoplus \bigoplus_{a\in \bR}  F\cH^{\fx}_a /\sim
\end{equation}
where
\begin{equation}\label{eq:equiv 2 dim}
F f(\fx)\exp ({\frac{a\fx^2}{2i\hbar}})
\sim \frac{e^{-\frac{\pi i}{2}p(a)}}{\sqrt{|a|}} 
f(i\hbar\frac{\partial}{\partial \fx}) \exp({-\frac{a^{-1}\fx^2}{2i\hbar}})
\end{equation}
for $a\neq 0.$ Here $p(a)=1$ if $a>0$ and $p(a)=0$ otherwise.
%%%%%%%%%%%%%%%%%%%%%%%%%%%%%%%%%%%%%%%%%%%%%%%%%%%%%%%%%%%%%%%%%%%%%%%%%%%%%%%%%%%%%%%%%%%%%%%%%%%%%%%%%%%%%%%%%%%%%%%%%%%%%%%%%%%%%%%%%%%%%%%%%%%%%%%%%%%%%%%%%%%%%%%%%%%%%%
\subsubsection{The action of ${\widehat{\A}}$ on $\caH$}\label{sss:The action of A on H} The algebra ${\widehat{\A}}$ acts on the space $\caH$ as follows. 
If ${\mathbf f}$ is in the first summand in \eqref{eq:H 2-dim cas}, then $\fx$ acts on it by multiplication and $\fxi$ by $i\hbar \ddfx,$ the latter defined by
$$\ddfx (\exp(\frac{a\fx^2}{2i\hbar}f(\fx)))=\exp(\frac{a\fx^2}{2i\hbar}) (\ddfx+a\fx)f(\fx).$$
As for $F{\mathbf f},$ $\fx$  sends it to $-i\hbar F\ddfx {\mathbf f}$ and $\fxi$ sends it to $F\fx{\mathbf f}.$
%%%%%%%%%%%%%%%%%%%%%%%%%%%%%%%%%%%%%%%%%%%%%%%%%%%%%%%%%%%%%%%%%%%%%%%%%%%%%%%%%%%%%%%%%%%%%%%%%%%%%%%%%%%%%%%%%%%%%%%%%%%%%%%%%%%%%%%%%%%%%%%%%%%%%%%%%%%%%%%%%%%%%%%%%%%%%%%%
%\subsubsection{The action of $\tfg$ on $\caH$}\label{sss:The action of g on H} The Lie algebra $\tfg$ acts on $\caH$ exactly as above, namely, $\tfg=\oih \A,$ and the action of $%\hbar$ on $\caH$ is invertible.
%%%%%%%%%%%%%%%%%%%%%%%%%%%%%%%%%%%%%%%%%%%%%%%%%%%%%%%%%%%%%%%%%%%%%%%%%%%%%%%%%%%%%%%%%%%%%%%%%%%%%%%%%%%%%%%%%%%%%%%%%%%%%%%%%%%%%%%%%%%%%%%%%%%%%%%%%%%%%%%%%%%%%%%%%%%%%%%%
\subsubsection{Some operators on $\cH$}\label{sss:ome operators on H 2 dim cas} 
{\em The operator $F:\cH\to\cH.$} Define for ${\bf f}(\fx)=\exp(\frac{a\fx^2}{2i\hbar})f(\fx)$
$$F: {\bf f}\mapsto F{\bf f}\mapsto i^{-1} {\bf f}(-\fx)$$ 

{\em The operator $\exp(\frac{a\fx^2}{2i\hbar}):\cH\to\cH.$}
1) $$\exp(\frac{a\fx^2}{2i\hbar}): \exp(\frac{c\fx^2}{2i\hbar}) f(\fx)\mapsto \exp(\frac{(a+c)\fx^2}{2i\hbar}) f(\fx)$$
for $c\in \bR;$

2) $$F\exp(\frac{c\fx^2}{2i\hbar}) f(\fx)\mapsto 
\frac{e^{\frac{-\pi i}{2}p(c)}}{{\sqrt{|c|}}} f(-i\hbar\ddfx+a\fx) \exp(\frac{(a-c^{-1})\fx^2}{2i\hbar})$$
for $c\neq 0;$

3) $$F\exp(\frac{c\fx^2}{2i\hbar}) f(\fx)\mapsto iFf(\fx-ai\hbar\ddfx)\frac{e^{{-\frac{\pi i}{2}}(p(c)+p(\frac{-c}{1-ac}))}}{\sqrt{|1-ac|}} \exp(\frac{c}{1-ac}\frac{\fx^2}{2i\hbar})
 $$
 for $c\neq a^{-1}.$
%p(c)}$$
These maps preserve the equivalence relation and therefor define operators on $\cH$.
%%%%%%%%%%%%%%%%%%%%%%%%%%%%%%%%%%%%%%%%%%%%%%%%%%%%%%%%%%%%%%%%%%%%%%%%%%%%%%%%%%%%%%%%%%%%%%%%%%%%%%%%%%%%%%%%%%%%%%%%%%%%%%%%%%%%%%%
%%%%%%%%%%%%%%%%%%%%%%%%%%%%%%%%%%%%%%%%%%%%%%%%%%%%%%%%%%%%%%%%%%%%%%%%%%%%%%%%%%%%%%%%
\subsubsection{The action of $\Mp(2,\R)$ on $\caH$}\label{sss:The action of G on H} The group $\Mp(2,\R)$ acts by the algebraic version of the metaplectic representation that we are going to describe next. 
%%%%%%%%%%%%%%%%%%%%%%%%%%%%%%%%%%%%%%%%%%%%%%%%%%%%%%%%%%%%%%%%%%%%%%%%%%%%%%%%%%%%%%%%%%%%%%%%%%%%%%%%%%%%%%%%%%%%%%%%%%%%%%%%%%%%%%%%%%%%%%%%%%%%%%%%%%%%%%%%%%%%%%%%%%%%%%%%
\subsection{The metaplectic projective representations of ${\rm{SL}}_2(\bR)$}\label{sss:meta rep SL2}

Define the action of generators of ${\rm{SL}}_2(\bR)$ by exactly the same formula as the usual metaplectic representation
\begin{equation}\label{eq:Weil rep} T\colon \left [ \begin{array}{cc}
1&0\\
a&1\end{array}\right ] \mapsto \exp(\frac{a\fx^2}{2i\hbar});
\;
\left [ \begin{array}{cc}
0&1\\
-1&0\end{array}\right ] \mapsto F;
\end{equation}
\begin{equation}\label{eq:Weil rep 1} 
\left [ \begin{array}{cc}
b&0\\
0& b^{-1}\end{array}\right ] \mapsto T_b;\;(T_bf)(x)=\frac{1}{\sqrt{\det(b)}}f(b^{-1}x)
\end{equation}
The corresponding representation of ${\mathfrak{sl}}(2)$ is given by 
\begin{equation}\label{eq:sl 2 triple}
X_-=\frac{\fx^2}{2i\hbar};\, H=\frac{\fx\fxi}{i\hbar}=-\frac{\fx*\fxi}{i\hbar}-\frac{1}{2}; \, X_+=-\frac{\fxi^2}{2i\hbar}
\end{equation}
%%%%%%%%%%%%%%%%%%%%%%%%%%%%%%%%%%%%%%%%%%%%%%%%%%%%%%%%%%%%%%%%%%%%%%%%%%%%%%%%%%%%%%%%%%%%%%%%%%%%%%%%%%%%%%%%%%%%%%%%%%%%%%%%%%%%%%%%%%%%%%%%%%%%%%%%%%%%%%%%%%%%%%%%%%%%%%%%%%%%%%%%%%%%%%%%%%%%%%%%%%%%%%%%%%%%%%%%%%%%%%%%%%%%%%%%%%%%%%%%%%%%%%%%%%%%%%%%%%%%%%%%
\subsubsection{The Bruhat relations}\label{sss:Bruhat}
The following are well known to be the defining relations of ${\rm{SL}}_2$ (together with the requirement that $a\mapsto \left [ \begin{array}{cc}
1&0\\
a&1 \end{array}\right ]$ is a morphism from the additive group and $b\mapsto \left [ \begin{array}{cc}
b&0\\
0&b^{-1} \end{array}\right ]$ is a morphism from the multiplicative group).
\begin{equation}\label{eq:Bruhat 0}
\left [ \begin{array}{cc}
0&1\\
-1&0 \end{array}\right ] \left [ \begin{array}{cc}
1 &0\\
a&1\end{array}\right ] \left [ \begin{array}{cc}
0&1\\
-1 &0 \end{array} \right ]^{-1}=\left [ \begin{array}{cc}
1&-a\\
0 &1 \end{array}\right ]
\end{equation}
\begin{equation}\label{eq:Bruhat 00}
\left [ \begin{array}{cc}
0&1\\
-1&0 \end{array}\right ] \left [ \begin{array}{cc}
b &0\\
0&b^{-1}\end{array}\right ] \left [ \begin{array}{cc}
0&1\\
-1 &0 \end{array} \right ]^{-1}=\left [ \begin{array}{cc}
b^{-1}&0\\
0&b \end{array}\right ]
\end{equation}

\begin{equation}\label{eq:Bruhat 11}
\left [ \begin{array}{cc}
b&0\\
0&b^{-1} \end{array}\right ] \left [ \begin{array}{cc}
1&0\\
a&1\end{array}\right ] \left [ \begin{array}{cc}
b&0\\
0 &b^{-1} \end{array}\right ]^{-1}=\left [ \begin{array}{cc}
1&0\\
b^{-2} a&1\end{array}\right ] 
\end{equation}
\begin{equation}\label{eq:Bruhat 1}
\left [ \begin{array}{cc}
1&0\\
a&1\end{array}\right ] \left [ \begin{array}{cc}
0&1\\
-1&0\end{array}\right ] \left [ \begin{array}{cc}
1&0\\
a^{-1}&1\end{array}\right ] = \left [ \begin{array}{cc}
a^{-1}&0\\
0&a\end{array}\right ]  \left [ \begin{array}{cc}
1&a\\
0&1\end{array}\right ] 
\end{equation}
for $a\neq 0.$
\begin{prop}\label{prop:weil meta exact} Formulas \eqref{eq:Weil rep} define a representation of ${\widetilde{\rm{SL}}}_2$ in which an element of 
$\pi_1({\rm{SL}}_2)$ 
acts by $e^{\frac{\pi i}{2}c}$ where $c$ is its image in $\pi_1(\Lambda)\isomoto \bZ.$
\end{prop}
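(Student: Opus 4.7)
The plan is to check directly that the operators defined by \eqref{eq:Weil rep}, \eqref{eq:Weil rep 1} satisfy the Bruhat presentation of ${\rm{SL}}_2(\bR)$ up to explicit scalar factors, to identify those scalars with the cocycle pulling back to $\pi_1({\rm{SL}}_2)$, and finally to evaluate the scalar attached to the generator of $\pi_1$.

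First I would observe that the group ${\rm{SL}}_2(\bR)$ is generated by the three one-parameter subgroups appearing in \eqref{eq:Weil rep}, \eqref{eq:Weil rep 1}, subject to: (i)~the additive group law on the unipotent subgroup $a\mapsto \bigl[\begin{smallmatrix}1&0\\a&1\end{smallmatrix}\bigr]$, which is trivial from $\exp(a_1\fx^2/2i\hbar)\exp(a_2\fx^2/2i\hbar)=\exp((a_1+a_2)\fx^2/2i\hbar)$; (ii)~the multiplicative group law $T_{b_1}T_{b_2}$ versus $T_{b_1b_2}$, which holds up to a sign controlled by the branch of $\sqrt{\det b_1b_2}/(\sqrt{\det b_1}\sqrt{\det b_2})$ and therefore forces us to work on the metalinear cover of the diagonal torus; and (iii)~the four Bruhat relations \eqref{eq:Bruhat 0}--\eqref{eq:Bruhat 1}. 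So the content of the statement reduces to checking those relations, computing the scalar discrepancies, and summing their contributions along a generator of $\pi_1({\rm{SL}}_2)$.

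Next I would verify each Bruhat relation by direct computation on the generating spaces $\caH_a^{\fy}$, using \eqref{eq:Fourier props}. Relations \eqref{eq:Bruhat 0} and \eqref{eq:Bruhat 00} are straightforward conjugations by $F$: the first follows from $F\fx F^{-1}=i\hbar\partial_{\fy}$, so that $F\exp(a\fx^2/2i\hbar)F^{-1}=\exp(-a(i\hbar\partial_{\fy})^2/2i\hbar)$ which is the correct image of $\bigl[\begin{smallmatrix}1&-a\\0&1\end{smallmatrix}\bigr]$; the second is an easy change of variable in $T_b$. Relation \eqref{eq:Bruhat 11} reduces to the identity $T_b\exp(a\fx^2/2i\hbar)T_b^{-1}=\exp(b^{-2}a\fx^2/2i\hbar)$, which is immediate. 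The only nontrivial relation is \eqref{eq:Bruhat 1}, and this is exactly what is computed in item~3) of \ref{sss:ome operators on H 2 dim cas}: applying $F$ to $\exp(c\fx^2/2i\hbar)$ after multiplication by $\exp(a\fx^2/2i\hbar)$ yields the operator $\bigl[\begin{smallmatrix}a^{-1}&0\\0&a\end{smallmatrix}\bigr]\cdot\bigl[\begin{smallmatrix}1&0\\a&1\end{smallmatrix}\bigr]$ up to the scalar $e^{-\frac{\pi i}{2}p(a)}/\sqrt{|a|}$, where $p(a)\in\{0,1\}$ records the sign of $a$.

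Finally I would assemble these scalars into a $2$-cocycle on ${\rm{SL}}_2(\bR)$ with values in the group generated by $\{i^k\}$ (the $\sqrt{|a|}$ factors are absorbed by the $1/\sqrt{\det b}$ convention of $T_b$, as they correspond to changes of branch in the metalinear cover). To identify the resulting central extension, I would trace the generator of $\pi_1({\rm{SL}}_2)$ — realized as the loop $\theta\mapsto \exp\theta\bigl[\begin{smallmatrix}0&1\\-1&0\end{smallmatrix}\bigr]$ in ${\rm{SU}}(1)\subset {\rm{SL}}_2(\bR)$ — and decompose it into a product of the generators above. A convenient decomposition uses the fact that $F^4$ acts on $\caH_a^{\fy}$ by $i^{-2}=-1$ (by \eqref{eq:Fourier props}), so that $F^4$ contributes the phase corresponding to $c=2$, consistent with the formula $e^{\pi ic/2}$. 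The same formula on the image of \eqref{eq:Bruhat 1} shows that loops in the Lagrangian Grassmannian $\Lambda(1)$ with Maslov number $c$ lift to the phase $e^{\pi ic/2}$, using the identification $\pi_1(\Lambda(1))\cong\bZ$ from Example \ref{ex:Lambda 1}.

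The main obstacle is the bookkeeping in the last step: making the branch cuts for $\sqrt{ia}$ implicit in $F$ compatible with those of $T_b$, and verifying that the resulting cocycle is precisely the one pulled back from the $4$-fold Maslov cocycle on $\Lambda(1)$ under the composition ${\rm{SL}}_2(\bR)\to \Sp(2,\bR)\to\Lambda(1)$. Once this is done, the formula $e^{\pi ic/2}$ follows because the scalars $e^{-\pi ip(a)/2}$ from \eqref{eq:Bruhat 1} are precisely the Kashiwara signature cocycle on the Lagrangian Grassmannian, whose lift to $\pi_1$ is the Maslov index.
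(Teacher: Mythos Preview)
Your proposal is correct and follows essentially the same route as the paper: verify that the Bruhat relations \eqref{eq:Bruhat 0}--\eqref{eq:Bruhat 11} hold on the nose for the operators $T(g)$, isolate the single scalar discrepancy in \eqref{eq:Bruhat 1} (this is exactly the paper's Lemma~\ref{lemma:bruhat rep rel}), and read off the action of $\pi_1$ from it. Your write-up is in fact more explicit than the paper's about the final step---tracing the generator via $F^4=-1$ and matching with the degree-$2$ map $U(1)\to\Lambda(1)$---though the precise scalar you quote for the \eqref{eq:Bruhat 1} discrepancy differs in form from the paper's $\frac{\sqrt{|a|}}{\sqrt{a}}e^{\frac{\pi i}{2}p(a)}$ (a matter of which side the normalization of $T_{a^{-1}}$ lands on).
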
 
\begin{proof} All the Bruhat relations except \eqref{eq:Bruhat 1} are true for operators $T(g)$ defined in \eqref{eq:Weil rep}, whereas
\begin{lemma}\label{lemma:bruhat rep rel}
$$
T({\left [ \begin{array}{cc}
1&0\\
a&1\end{array}\right ] }) T({\left [ \begin{array}{cc}
0&1\\
-1&0\end{array}\right ]}) T({\left [ \begin{array}{cc}
1&0\\
a^{-1}&1\end{array}\right ] })= 
$$
$$
=\frac{\sqrt {|a|}}{\sqrt{a}} e^{{\frac{\pi i}{2}} p(a)} T({\left [ \begin{array}{cc}
a^{-1}&0\\
0&a\end{array}\right ]} ) T({\left [ \begin{array}{cc}
1&a\\
0&1\end{array}\right ] })
$$
\end{lemma}
\end{proof}
%%%%%%%%%%%%%%%%%%%%%%%%%%%%%%%%%%%%%%%%%%%%%%%%%%%%%%%%%%%%%%%%%%%%%%%%%%%%%%%%%%%%%%%%%%%%%%%%%%%%%%%%%%%%%%%%%%%%%%%%%%%%%%%%%%%%%%%%%%%%%%%%%%%%%%%%%%%%%%%%%%%%%%%%%%%%%%%%%%%%%%%%%%%%%%%%%%%%%%%%%%%%%%%%%%%%%%%%%%%%%%%%%%%%%%%%%%%%%%%%%%%%%%%%%%%%%%%%%%%%%%%%%%%%%%%%%%%%%%%%%%%%%%%%%%%%%%%%%%%%%%%%%%%%%%%%%%%%%%%%%%%%%%%%%%%%%%%%%%%%%%%%%%%%%%%%%%%%%%%%%%%%%%%%%%%%%%%%%%%%%%%%%%%%%%%%%%%%%%%%%%%%%%%%%%%%%%%%%%%%%%%%%%%%%%%%%%%%%%%%%%%%%%%%%%%%%%%%%%%%%%%%%%%%%%%%%%%%%%%%%%%%%%%%%%%%%%%%%%%%%%%%%%%%%%%%%%%%%%%%%%%%%%%%%%%%%%%%%%%%%%%%%%%%%%%%%%%%%%%%%%%%%%%%%%%%%%%%%%%%%%%%%%%%%%%%%%%%%%%%%%%%%%%%%%%%%%%%%%%%%%%%%%%%%%%%%%%%%%%%%%%%%%%%
\subsection{The case of a general $n$}\label{general n}

Now define
\begin{equation}\label{eq:alg Weil}
\caH=\bigoplus_{I\subset \{1,\ldots, n\}}\bigoplus _a F_{I,J} \caH^{\fx}/\sim
\end{equation}
where $a$ runs through all symmetric $n\times n$ real matrices and the equivalence relation is defined as follows. For every subset $K$ of $\{1,2,\ldots,n\},$ define 
\begin{equation}\label{eq:FK on H}
F_K:\bigoplus_a F_I\cH^{\fx}\to \bigoplus_a F_{I\triangle K}\cH^{\fx}
\end{equation}
(where $\triangle$ stand for the symmetric difference) as follows: if $L$ is the complement of $I\cap K,$ then
\begin{equation}
(F_{K}F_I{\mathbf f})(\fx_{I\cap K}, \fx_L)=i^{-|I\cap K|}F_{I\triangle K}{\bf f}(-\fx_{I\cap K}, \fx_L)
\end{equation}
Let $J$ be the complement of $I$.
\begin{equation}\label{eq:element of H}
{\bf f}(\fx_I ,\fx_J)=\exp({\frac{a\fx_I^2+b\fx_I\fx_J+c\fx_J^2}{2i\hbar}})f(\fx_{I}, \fx_J)
\end{equation}
such that $a_I$ is a nondegenerate symmetric matrix. Then
\begin{equation}\label{eq:equiv fourier general n}
F_KF_{I}{\mathbf f}\sim F_K\frac{\exp(-\frac{\pi i}{2}p(a))}{\sqrt{\det(|a|)}} f(i\hbar \frac{\partial}{\partial \fx_I})\exp(\frac{-a^{-1}(\fx_I+b\fx_J)^2}{2i\hbar})
\end{equation}
for all $K.$
\subsubsection{Operators on $\cH$}\label{sss:ops on H gen} Clearly, the operators $F_K$ \eqref{eq:FK on H} preserve the equivalence relation and therefore descend to $\cH.$
%%%%%%%%%%%%%%%%%%%%%%%%%%%%%%%%%%%%%%%%%%%%%%%%%%%%%%%%%%%%%%%%%%%%%%%%%%%%%%%%%%%%%%%%%%%%%%%%%%%%%%%%%%%%%%%%%%%%%%%%%%%%%%%%%%%%%%%%%%%%%%%%%%%%%%%%%%%%%%%%%%%%%%%%%%%%%%%%%%%%%%%%%%%%%%%%%%%%%%
 \subsubsection{The action of ${\widehat{\A}}$ on $\caH$}\label{sss:The action of A on H gen case} The algebra ${\widehat{\A}}$ acts on the space $\caH$ as follows. 
On the summand $F_I \caH^{\fx}_a$, 
\begin{equation}\label{eq:acts alg n gen} 
\fx_j F_I {\mathbf f}=-F_I i\hbar \frac{\partial}{\partial \fx_j}{\mathbf f}, j\in I;\; \fx_j F_I {\mathbf f}=F_I \fx_j {\mathbf f},j\notin I;
\end{equation}
\begin{equation}\label{eq:acts alg n gen 1} 
\fxi_j F_I {\mathbf f}=F_I i\hbar \frac{\partial}{\partial \fx_j}{\mathbf f}, j\notin I;\; \fxi_j F_I {\mathbf f}=F_I \fx_j {\mathbf f},j\in I.
\end{equation}

%%%%%%%%%%%%%%%%%%%%%%%%%%%%%%%%%%%%%%%%%%%%%%%%%%%%%%%%%%%%%%%%%%%%%%%%%%%%%%%%%%%%%%%%%%%%%%%%%%%%%%%%%%%%%%%%%%%%%%%%%%%%%%%%%%%%%%%%%%%%%%%%%%%%%%%%%%%%%%%%%%%%%%%%%%%%%%%%%%%%
%\subsubsection{The action of $\tfg$ on $\caH$}\label{sss:The action of g on H gen case} The Lie algebra $\tfg$ acts on $\caH$ exactly as above, namely, $\tfg=\oih \A,$ and the %action of $\hbar$ on $\caH$ is invertible.
\subsubsection{The action of $\Mp(2n)$ on $\caH$}\label{sss:The action of G on H 1} This action is exactly as described in \ref{sss:The action of G on H}. In particular, $\Mp(2n,\R)$ acts by the metaplectic representation as in\ref{sss:meta rep SL2}:
\begin{equation}\label{eq:Weil rep 11} T\colon \left [ \begin{array}{cc}
1&0\\
a&1\end{array}\right ] \mapsto \exp(\frac{a\fx^2}{2i\hbar});
\;
\left [ \begin{array}{cc}
0&1\\
-1&0\end{array}\right ] \mapsto F;
\end{equation}
more generally, let ${\mathbf F}_I$ be the matrix that is the direct sum of $\left [ \begin{array}{cc}
0&1\\
-1&0\end{array}\right ]$ in coordinates $\fx_I,\fxi_I$ and the identity matrix in the rest of the Darboux coordinates maps to $F_I;$
\begin{equation}\label{eq:Weil rep 12}
\left [ \begin{array}{cc}
b&0\\
0&^tb^{-1}\end{array}\right ] \mapsto T_b;\;(T_bf)(x)=\frac{1}{\sqrt{\det(b)}}f(b^{-1}x)
\end{equation}
\begin{remark}\label{rmk:orbit weil}
The construction of $\caH$ mimics very closely the construction of the orbit of $1$ under the action of $\Mp(2n)\ltimes \C[\fx, \fxi]$ on the space of distributions via differential operators and the standard metaplectic representation. 
\end{remark}
\begin{lemma}\label{lemma:weil and lagrangian}
Assign to $F_I \exp({\frac{ a\fx^2}{2i\hbar}})f(\fx)\in \cH$
the Lagrangian subspace
${\mathbf F}_I (\{\fxi=a\fx\})$ where ${\mathbf F}_I$ was defined after \eqref{eq:Weil rep 11}.
This is a well-defined map $\cH \to \Lambda (n)$ where $\Lambda(n)$ is the Grassmannian of Lagrangian subspaces in $\R^{2n}.$ The space $\caH$ is identified with the space of finitely supported sections of a $\tG$-equivariant vector bundle on $\Lambda(n).$
\end{lemma}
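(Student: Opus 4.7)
The plan is to treat the three assertions of the lemma separately: that the assignment on generators descends to the quotient, that the fibers assemble into a vector bundle over $\Lambda(n)$, and that this bundle is $\tG$-equivariant. First I would unpack the statement. The direct sum \eqref{eq:alg Weil} has generators of the form $F_I \exp(\tfrac{a\fx^2}{2i\hbar}) f(\fx)$, and the prescribed assignment sends each such generator to the single Lagrangian subspace $L_{I,a} := {\mathbf F}_I(\{\fxi = a\fx\}) \in \Lambda(n)$. A general element of $\cH$ is a finite sum of generators, each living over one point of $\Lambda(n)$, so that once the fiber structure is established the identification with finitely supported sections is automatic.

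For well-definedness, the key point is that the two sides of \eqref{eq:equiv fourier general n} correspond to the same Lagrangian. I would exploit the basic fact that ${\mathbf F}_K$ acts on $\bR^{2n}$ by swapping $\fx_j \leftrightarrow \fxi_j$ for $j\in K$ (with signs) and that the composition ${\mathbf F}_K{\mathbf F}_I$ equals ${\mathbf F}_{I\triangle K}$ up to the action of $\{\pm 1\}$ appearing in \eqref{eq:FK on H}. A direct symplectic-linear computation will then show that applying ${\mathbf F}_K$ to the linear graph $\{\fxi_{\rm full} = a_{\rm full}\fx_{\rm full}\}$, where $a_{\rm full}$ is the symmetric $n\times n$ matrix built from the blocks $a,b,c$ appearing in $\mathbf{f}$, produces the graph of the symmetric matrix that parametrizes the rewritten quadratic form on the right hand side of \eqref{eq:equiv fourier general n}. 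In other words, inversion of the $a$-block on the algebraic side is precisely the Bruhat coordinate change on $\Lambda(n)$.

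Once well-definedness is in hand, the vector bundle structure is essentially tautological. The open cells $U_I := \{L_{I,a} : a \text{ symmetric}\}$ are the standard Schubert cells of $\Lambda(n)$ and cover it; over each $U_I$ the fiber is a fixed copy of $\fC[[\fx, \hbar]]((e^{c/i\hbar}|c\in\bC))$, parametrized by $f(\fx)$; and the gluing on overlaps $U_I \cap U_{I\triangle K}$ is given precisely by the Fourier transform $F_K$ that Step~1 has just been shown to respect the fiber assignment. For the $\tG$-equivariance, I would check three cases on generators using \eqref{eq:Weil rep 11}--\eqref{eq:Weil rep 12}: the shear $\exp(\tfrac{a\fx^2}{2i\hbar})$ shifts the symmetric matrix by $a$, the dilation $T_b$ conjugates it, and the global Fourier transforms $F_K$ swap coordinate blocks; in each case the output is again a generator whose Lagrangian is the translate of the input's Lagrangian under the projection $\tG \to \Sp(2n)$. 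The shifts in $\exp(\tgg_{\geq 1})$ act by higher-order polynomial changes inside the fiber and preserve the support on $\Lambda(n)$, while the central subgroup $\exp(\tfrac{1}{i\hbar}\bC\oplus\bC)$ acts by fiberwise scalars.

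The main obstacle will be the symplectic bookkeeping in Step~1. The relation \eqref{eq:equiv fourier general n} involves a partial Fourier transform in the $\fx_I$-variables only, with mixed cross-terms $b\fx_I\fx_J$ and a pure $\fx_J$-quadratic part, so one must track the correct block structure of $a_{\rm full}$ and verify that the image Lagrangian is again a linear graph over a coordinate system indexed by $I\triangle K$---that is, one really recovers a generator of the form $F_{I\triangle K}\exp(\tfrac{a'\fx^2}{2i\hbar})f'(\fx)$ and not something more complicated. Once that block computation is performed, the remaining steps are formal.
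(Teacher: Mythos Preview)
The paper states this lemma without proof; it is treated as a direct verification, with only the remark afterward that $\Lambda(n)\simeq\tG/\tP$. Your plan supplies exactly the verification the paper leaves implicit, and the outline is sound: checking that the equivalence \eqref{eq:equiv fourier general n} sends both sides to the same Lagrangian is indeed the heart of the matter, and it reduces to the classical fact that the partial Legendre transform in the $\fx_I$-variables corresponds to the action of ${\mathbf F}_I$ on graphs of symmetric matrices. One small correction: in the right-hand side of \eqref{eq:equiv fourier general n} the $c\fx_J^2$ block is still present (it commutes with the $\fx_I$-Fourier transform and is merely suppressed in the paper's notation), so your $a'_{\rm full}$ should retain it; this does not affect the argument. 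Your treatment of the bundle structure via the Schubert charts $U_I$ and of $\tG$-equivariance on generators is the natural one and matches what the paper evidently has in mind.
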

The Lagrangian Grassmannian is a homogeneous space of $\tG$ via the projection $\tG\to \Mp\to \Sp.$ In fact,
$$\Lambda(n)\isomoto \tG/\tP.$$
\begin{lemma}
The lines $\C F_I \exp (\frac{a\fx^2}{2i\hbar})$ where $a$ runs through real symmetric $n\times n$ matrices and $I$ through subsets of $\{1,\ldots,n\}$ form a line subbundle of $\caH$ which is isomorphic to the bundle on $\Lambda(n)$ determined by the \v{C}ech one-cohomology class $(-1)^{\mu _L}$ where $\mu_L$ is the generator of $H^1(\Lambda(n), \Z)$ (the Maslov class).
\end{lemma}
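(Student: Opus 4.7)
The approach is to exhibit local trivializations of the alleged subbundle on an open cover of $\Lambda(n)$, compute the transition functions directly from the equivalence relation \eqref{eq:equiv fourier general n}, and then match the resulting cocycle, modulo positive reals, with a standard $\Z/2$-representative of the Maslov class.

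First I would set up local charts. For each subset $I\subset \{1,\ldots,n\}$, let $U_I\subset \Lambda(n)$ be the open set of Lagrangian subspaces transverse to $\mathbf F_I(\{\fxi=0\})$; the $U_I$ cover $\Lambda(n)$, and every $L\in U_I$ is uniquely of the form $\mathbf F_I(\{\fxi = a\fx\})$ for a real symmetric matrix $a=a_I(L)$. Via the preceding lemma identifying $\caH$ with sections of a $\tG$-equivariant bundle on $\Lambda(n)$, the assignment $L\mapsto \C\cdot F_I\exp(\frac{a_I(L)\fx^2}{2i\hbar})$ is a smoothly varying one-dimensional subspace of the fiber; as $I$ varies these assemble into a well-defined line subbundle $\caL\subset \caH$.

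Next I would compute the transition cocycle. On $U_I\cap U_J$, writing $L=\mathbf F_I(\{\fxi=a\fx\})=\mathbf F_J(\{\fxi=b\fx\})$ and taking $K=I\triangle J$, the equivalence \eqref{eq:equiv fourier general n} applied to the vacuum (i.e.\ with $f=1$ and no cross-term $b\fx_I\fx_J$) yields
\[
F_I\exp\!\Bigl(\tfrac{a\fx^2}{2i\hbar}\Bigr) \;\sim\; c_{JI}(L)\cdot F_J\exp\!\Bigl(\tfrac{b\fx^2}{2i\hbar}\Bigr),
\]
where $c_{JI}(L)$ is a scalar of the form
$(\det|\widetilde a|)^{-1/2}\exp\!\bigl(-\tfrac{\pi i}{2}p(\widetilde a)\bigr)\cdot i^{-|I\cap K|}$,
with $\widetilde a$ the principal block of $a$ (or $a^{-1}$) determined by $K$. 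Transitivity of the equivalence relation forces the \v{C}ech cocycle identity $c_{KJ}c_{JI}=c_{KI}$ on triple intersections.

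After that I would identify the class. Since positive real-valued cochains contribute trivially to $H^1(\Lambda(n),\C^\times)$, the class of $\caL$ equals the class of the phase part of $c_{JI}$, which takes values in $\{\pm 1,\pm i\}$. A careful accounting shows the $i^{-|I\cap K|}$ factors, which come from the composition rule $F_KF_I=i^{-|I\cap K|}F_{I\triangle K}$, combine with the signature phases to produce a $\{\pm 1\}$-valued cocycle modulo coboundaries. To match this with $(-1)^{\mu_L}$ I would reduce to the case $n=1$ by naturality under the standard embedding $\Lambda(1)\hookrightarrow \Lambda(n)$ (which induces an isomorphism on $\pi_1$). For $n=1$, $\Lambda(1)\simeq\R P^1$ is covered by $U_\emptyset$ and $U_{\{1\}}$, and evaluating $c_{\{1\},\emptyset}$ along a loop generating $\pi_1(\Lambda(1))=\Z$, for instance $t\mapsto \tan(\pi t)$ for $t\in[0,1]$, produces monodromy $\exp(-\pi i)=-1$, which is exactly the reduction of the generator $\mu_L$.

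The main obstacle will be the sign bookkeeping in Step 2 and Step 3: verifying that the various $i^{-|K|}$ prefactors in the $F_KF_I$ composition rule and the signature phases $\exp(-\tfrac{\pi i}{2}p)$ conspire coherently into a genuinely $\{\pm 1\}$-valued cocycle, rather than a $\mu_4$-valued one. Once the $n=1$ computation is pinned down explicitly, the reduction to arbitrary $n$ via naturality closes the argument.
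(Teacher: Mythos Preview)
The paper states this lemma without proof, so there is no argument to compare against; your outline---trivialize over the charts $U_I$, read off the transition scalar from the equivalence relation, and reduce to $n=1$ via the embedding $\Lambda(1)\hookrightarrow\Lambda(n)$---is exactly the natural route and is what one would expect an omitted proof to look like.

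There is, however, a concrete problem with your final step. Carry out the $n=1$ computation explicitly with the paper's relation \eqref{eq:equiv 2 dim}. Parametrize $\Lambda(1)$ by $\theta\in[0,\pi)$, so that on $U_\emptyset$ the section is $s_\emptyset(\theta)=\exp(\tfrac{\tan\theta\,\fx^2}{2i\hbar})$ and on $U_{\{1\}}$ it is $s_{\{1\}}(\theta)=F\exp(\tfrac{-\cot\theta\,\fx^2}{2i\hbar})$. The equivalence gives
\[
s_{\{1\}}(\theta)\;\sim\;\frac{e^{-\frac{\pi i}{2}p(-\cot\theta)}}{\sqrt{|\cot\theta|}}\,s_\emptyset(\theta),
\]
so modulo positive reals the transition function equals $1$ on the component $\theta\in(0,\pi/2)$ and $-i$ on $\theta\in(\pi/2,\pi)$. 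The monodromy around the generator of $\pi_1(\Lambda(1))$ is therefore $\pm i$, a primitive fourth root of unity, \emph{not} $-1$. Your assertion that the $i^{-|I\cap K|}$ factors conspire with the signature phases to yield a $\{\pm1\}$-valued cocycle modulo coboundaries fails already here: the class genuinely lives in $\mu_4$, and a cocycle with holonomy $\pm i$ is not cohomologous to any $\{\pm1\}$-valued one.

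This outcome is in fact consistent with the rest of the paper: Proposition~\ref{prop:weil meta exact} says the generator of $\pi_1(\SL_2)$ acts by $e^{\frac{\pi i}{2}c}$ with $c$ the image in $\pi_1(\Lambda)$, and the Maslov phase appearing in the appendix on Lagrangian distributions is $\exp(\tfrac{\pi i}{2}\mu)$, not $(-1)^\mu$. So either the lemma's statement carries a misprint (the class should be $e^{\pm\frac{\pi i}{2}\mu_L}$ rather than $(-1)^{\mu_L}$), or there is an additional normalization in the definition of the line that neither you nor the stated formulas make visible. Your write-up should flag this discrepancy rather than claim the cocycle collapses to $\{\pm1\}$.
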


\begin{lemma} \label{lemma:H as a(G,g,A)-module}
The actions described in \ref{sss:The action of A on H gen case} and \ref{sss:The action of G on H 1} turn $\caH$ into an $\cA$-module.
\end{lemma}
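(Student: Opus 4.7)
\emph{Plan.} The plan is to verify three things in turn: (a) that the formulas from Sections \ref{sss:The action of A on H gen case} and \ref{sss:The action of G on H 1}, initially defined on each summand $F_I \caH^{\fx}_a$ of the unnormalised direct sum in \eqref{eq:alg Weil}, descend to the quotient $\caH$ by the equivalence relation \eqref{eq:equiv fourier general n}; (b) that the resulting operators satisfy, respectively, the defining relations of $\ffbA$ and of $\Mp(2n,\bR)$; and (c) that the two actions are compatible with the semidirect-product structure of $\cA$, i.e.\ $T_g\cdot a = \Ad(T_g)(a)\cdot T_g$ for every $g \in \Mp(2n,\bR)$ and $a \in \ffbA$, where $\Ad(T_g)(a)$ is the image of $a$ under the standard $\Sp(2n,\bR)$-action on $\ffbA$ induced by the projection $\Mp \to \Sp$. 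Together these give an action of $\Mp(2n,\bR) \ltimes \ffbA$, which upon tensoring with the Novikov field yields the desired $\cA$-module structure.

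\emph{Step (a).} The equivalence relation \eqref{eq:equiv fourier general n} is by construction the symbolic form of the algebraic Fourier transform \eqref{eq:Fourier with a shift} applied to Gaussians multiplied by polynomials, with $F_K$ playing the role of the transform in the variables indexed by $K$. Thus the descent of the $\fx_j$, $\fxi_j$ and of the metaplectic generators reduces to their intertwining with each $F_K$, which is precisely content of \eqref{eq:Fourier props}, extended componentwise using the block-tensor-product structure of $F_K$. The computation in \ref{sss:ome operators on H 2 dim cas} is the one-variable model; the general case follows because $F_K$ is a tensor product of one-variable Fourier transforms along the $K$-coordinates.

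\emph{Step (b).} The Heisenberg relations $[\fxi_j,\fx_k] = i\hbar\delta_{jk}$, $[\fx_j,\fx_k] = [\fxi_j,\fxi_k] = 0$ can be verified on each summand $F_I\caH^{\fx}_a$ independently: for $j\notin I$ they hold by the usual Weyl action on $\caH^{\fx}$, while for $j\in I$ they follow from swapping $\fx_j$ and $\fxi_j$ under Fourier conjugation via \eqref{eq:Fourier props}. For the $\Mp$-relations one uses the Bruhat decomposition of $\Sp(2n,\bR)$: the relations between upper-triangular, lower-triangular, diagonal, and Weyl elements all reduce by the block structure of \eqref{eq:Weil rep 11}--\eqref{eq:Weil rep 12} to the one-variable identities treated in Proposition \ref{prop:weil meta exact}, while the compositions $F_K F_L$ reduce, modulo \eqref{eq:equiv fourier general n}, to $F_{K\triangle L}$ with explicit scalar factors, as is built into the construction.

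\emph{Step (c).} The intertwining $T_g\fx_jT_g^{-1} = g(\fx_j)$ and $T_g\fxi_jT_g^{-1} = g(\fxi_j)$ is checked on generators of $\Mp(2n)$: for $T_b$ this is the change of variables $\fx\mapsto b^{-1}\fx$; for $\exp(a\fx^2/2i\hbar)$ it is a direct commutator computation yielding $\fxi_j\mapsto \fxi_j + (a\fx)_j$; for $F_K$ it is the exchange of $\fx_K$ with $\fxi_K$ encoded in \eqref{eq:Fourier props}. Each case matches the corresponding generator of $\Sp(2n)$ acting on $\ffbA$ by Moyal-preserving linear coordinate changes, so the compatibility extends to all of $\Mp(2n)$.

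\emph{Main obstacle.} The main technical difficulty is careful bookkeeping of the scalar prefactors, in particular the branches of $\sqrt{\det(ia)}$ and the signs $e^{-\pi i p(a)/2}$ appearing in \eqref{eq:equiv fourier general n}, so that Step (b) yields an honest representation of $\Mp(2n,\bR)$ rather than only a projective representation of $\Sp(2n,\bR)$. The critical identity is the Bruhat relation \eqref{eq:Bruhat 1}, whose image under $T$ carries the scalar $\sqrt{|a|}/\sqrt{a}\cdot e^{\pi i p(a)/2}$ of Lemma \ref{lemma:bruhat rep rel}; consistent branch choices across the full Bruhat decomposition of $\Sp(2n)$ are precisely what lift the action from $\Sp$ to $\Mp$, parallelling the classical metaplectic representation but now carried out within the purely algebraic framework of $\caH$.
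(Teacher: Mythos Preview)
The paper states this lemma without proof; it is recorded as part of the review of material from \cite{OM} (see the opening sentence of Section~\ref{s:metaa}). Your outline is a correct and standard route to the result: verifying descent through the equivalence relation, checking the Heisenberg and Bruhat relations on generators, and confirming the semidirect-product compatibility are exactly the ingredients needed. Your identification of the scalar bookkeeping in Lemma~\ref{lemma:bruhat rep rel} as the crux of lifting from $\Sp$ to $\Mp$ is also on target.

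One small caution: in Step~(b) you assert that the general-$n$ Bruhat relations reduce ``by the block structure'' to the one-variable case. This is true for block-diagonal elements, but the full symplectic group is not a product of $\mathrm{SL}_2$'s, so the reduction requires an argument that a generating set of relations for $\Sp(2n)$ (e.g.\ those coming from a Chevalley presentation or from the standard generators $T_b$, $\exp(a\fx^2/2i\hbar)$, $F_K$) can each be checked in a rank-one or rank-two subgroup. This is routine but should be made explicit if you want a complete proof rather than a sketch.
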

\subsection{The algebraic metaplectic representation as an induced module}\label{ss:meta as ind} 
\begin{prop}\label{prop:meta as ind}
$$\cH\isomoto {\widehat{\cV}}=\cA{\widehat{\otimes}} _{\cB} \ffbV_{\bK}$$
\end{prop}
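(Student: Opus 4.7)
The aim is to exhibit an explicit isomorphism $\phi \colon \cA \widehat{\otimes}_\cB \ffbV_\bK \to \cH$ defined on elementary tensors by $\phi(a \otimes v) = a \cdot \iota(v)$, where $\iota \colon \ffbV_\bK \hookrightarrow \cH$ is the inclusion as the ``standard'' summand $F_\emptyset \cH^\fx_0$ in the presentation \eqref{eq:alg Weil} --- that is, as the component with empty index set $I$ and vanishing quadratic form. Well-definedness over $\otimes_\cB$ reduces to checking that $\iota$ is $\cB$-equivariant, which amounts to comparing the formulas in Lemma \ref{lemma:A0 acts on W0} (for the $\cB$-action on $\ffbV_\bK$) with the formulas in \ref{sss:The action of A on H gen case} and \ref{sss:The action of G on H 1} (for the $\cA$-action on $\cH$) restricted to $F_\emptyset \cH^\fx_0$: both describe $\fx$ as multiplication, $\fxi$ as $i\hbar \ddfx$, and $\MP(n)$ via the standard action on functions of $\fx$. (This last point uses that $P(n) \subset \Sp(2n)$ is exactly the stabilizer of $L_0 = \{\fxi = 0\}$.)

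Surjectivity will be immediate from the presentation \eqref{eq:alg Weil}. A generator of $\cH$ has the form $F_I \exp(\tfrac{a \fx^2}{2 i \hbar}) f(\fx) e^{c/i\hbar}$. Each of $F_I$ and $\exp(\tfrac{a \fx^2}{2 i \hbar})$ is the image of a specific element of $\Mp(2n) \subset \cA$ under the metaplectic representation \eqref{eq:Weil rep 11}--\eqref{eq:Weil rep 12}, and $e^{c/i\hbar} \in \bK \subset \cA$; the preimage
\[
F_I \exp\bigl(\tfrac{a \fx^2}{2 i \hbar}\bigr) e^{c/i\hbar} \otimes f(\fx)
\]
then maps to the given element, viewing $f(\fx)$ as an element of $\ffbV_\bK$.

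Injectivity is the main point, and I would prove it via a Bruhat-type normal form. By the Schubert decomposition of $\Lambda(n) = \Sp(2n)/P(n)$, every coset in $\Mp(2n)/\MP(n)$ admits a unique representative of the form $F_I \exp(\tfrac{a \fx^2}{2 i \hbar})$ for some subset $I$ and a symmetric matrix $a$ supported in the block indexed by $I$. Extending this to $\cA$ by absorbing the higher-order part $\exp(\tgg_{\geq 1})$ into formal deformations of $f$ and allowing $\bK$-coefficients, one obtains a decomposition
\[
\cA \;\cong\; \Bigl[\,\bigoplus_I \bK \cdot F_I \exp\bigl(\tfrac{a \fx^2}{2 i \hbar}\bigr)\,\Bigr] \,\cdot\, \cB
\]
(with appropriate Novikov completion). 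Tensoring with $\ffbV_\bK$ over $\cB$ kills the right $\cB$-factor and produces precisely the formal sum \eqref{eq:alg Weil} defining $\cH$, with the equivalence relation \eqref{eq:equiv fourier general n} between different $F_I$-components corresponding exactly to the Bruhat relation \eqref{eq:Bruhat 1} applied inside $\Mp(2n)$ and pushed across the tensor via Lemma \ref{lemma:bruhat rep rel}.

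The main obstacle will be establishing this Bruhat normal form uniformly in the extra data carried by $\cA$: in particular, handling both the nilpotent part $\exp(\tgg_{\geq 1})$ (which should contribute further $\fx$-dependent factors that can be moved across the tensor into $\ffbV_\bK$) and the Novikov-completion convention (which must align the $\widehat{\otimes}$ of Definition \ref{df:ind formal mod} with the ``$c_k \to \infty$'' countable-sum convention built into \eqref{eq:alg Weil} via \eqref{eq:completion}). Once these normal-form statements are in place, matching the equivalence relations on the two sides is a direct consequence of the Bruhat relation together with Lemma \ref{lemma:bruhat rep rel}, and the proposition follows.
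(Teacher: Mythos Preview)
The paper does not supply a proof of this proposition: it is stated at the end of \S\ref{ss:meta as ind} with only a parenthetical ``{\em cf.} \ref{sss:A0-mod V}'' and is meant to be read off from the preceding lemmas, chiefly Lemma~\ref{lemma:weil and lagrangian}, which identifies $\cH$ with finitely supported sections of a $\tG$-equivariant bundle on $\Lambda(n)\cong \tG/\tP$.  That geometric description \emph{is} the induced-module realization: the fibre at the base point $L_0$ is $\ffbV_\bK$, the stabilizer is $\MP(n)$, and ``finitely supported sections of the associated bundle'' is the standard model for $\bK[\Mp(2n)]\otimes_{\bK[\MP(n)]}\ffbV_\bK$; absorbing $\ffbA$ (which sits in $\cB$) and the Novikov factor then upgrades this to $\cA\,\widehat\otimes_\cB\,\ffbV_\bK$.

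Your plan is essentially a hands-on unpacking of the same statement, and it will work, but two points need sharpening.  First, your ``unique representative'' claim is not correct as written: the elements $F_I\exp\bigl(\tfrac{a\fx^2}{2i\hbar}\bigr)$ do \emph{not} give a system of unique coset representatives for $\Mp(2n)/\MP(n)$---different pairs $(I,a)$ can land on the same Lagrangian, and this overlap is precisely the source of the equivalence relation \eqref{eq:equiv fourier general n}.  What you actually need (and what you correctly say in the next paragraph) is that these elements generate $\cA$ as a right $\cB$-module and that the only relations among the corresponding tensors in $\cA\otimes_\cB\ffbV_\bK$ come from the Bruhat identity \eqref{eq:Bruhat 1}, which matches \eqref{eq:equiv fourier general n} via Lemma~\ref{lemma:bruhat rep rel}.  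So drop the uniqueness language and argue surjectivity/relations instead.  Second, your worry about $\exp(\tgg_{\geq 1})$ is misplaced: that factor is not part of $\cA$ as defined in \ref{ss:The alg curvA}; the higher-order exponentials already live in $\ffbA\subset\cB$ and pass across the tensor automatically.  Once those two points are cleaned up, your argument and the paper's implicit one coincide.
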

({\em cf.} \ref{sss:A0-mod V}).
%%%%%%%%%%%%%%%%%%%%%%%%%%%%%%%%%%%%%%%%%%%%%%%%%%%%%%%%%%%%%%%%%%%%%%%%%%%%%%%%%%%%%%%%%%%%%%%%%%%%%%%%%%%%%%%%%%%%%%%%%%%%%%%%%%%%%%%%%%%%%%%%%%%%%%%%%%%%%%%%%%%%%%%%%%%%%%%%%
\section{Appendix. Twisted bundles and groupoids}\label{s:appendix 1}
%%%%%%%%%%%%%%%%%%%%%%%%%%%%%%%%%%%%%%%%%%%%%%%%%%%%%%%%%%%%%%%%%%%%%%%%%%%%%%%%%%%%%%%%%%%%%%%%%%%%%%%%%%%%%%%%%%%%%%%%%%%%%%%%%%%%%%%%%%%%%%%%%%%%%%%%%%%%%%%%%%%%%%%%%%%%%%%%%%%%%%%%%%%%%%%%%%%%%%%%%%%%%%%%%%%%%%%%%%%%%%%%%%%%%%%%%%%%%%%%%%%%%%%%%%%%%%%%%%%%%%%%%%%%%%%%%%%%%%%%%%%%%%%%%%%%%%%%%%%%%%%%%%%%%%%%%%%%%%%%%%%%%%%%%%%%%%%%%%%%%%%%%%%%%%%
%%%%%%%%%%%%%%%%%%%%%%%%%%%%%%%%%%%%%%%%%%%%%%%%%%%%%%%%%%%%%%%%%%%%%%%%%%%%%%%%%%%%%%%%%%%%%%%%%%%%%%%%%%%%%%%%%%%%%%%%%%%%%%%%%%%%%%%%%%%%%%%%%%%%%%%%%%%%%%%%%%%%%%%%%%%%%%%%%%%%%%%%%%%%%%%%%%%%%%%%%%%%%%%%%%%%%%%%%%%%%%%%%%%%%%%%%%%%%%%%%%%%%%%%%%%%%%%%%%%%%%%%%%%%%%%%%%%%%%%%%%%%%%%%%%%%%%%%%%%%%%%%%%%%%%%%%%%%%%%%%%%%%%%%%%%%%%%%%%%%%%%%%%%%%%%
\subsection{Charts and cocycles}\label{ss:charts and cocycles}
Suppose we have a manifold  $X$ with two sheaves of groups $\uC\subset \uG$ where $\uC$ is constant and central in $\uG.$ Consider a class $c\in H^2(X,\uC).$ A $\uG$-bundle on $X$ twisted by $c$ is given by an equivalence class of $g_{ij} \in \uG(U_i\cap U_j)$ for an open cover $X=\cucup U_i$ such that 
\begin{equation}\label{eq:tw coc}
g_{ij}g_{jk}=c_{ijk} g_{ik}
\end{equation}
where $c_{ijk}$ is a \v{C}ech cocycle representing $c.$ Two data $g_{ij}$ and $g'_{ij}$ are equivalent if 
\begin{equation}\label{eq:tw coc eq}
g_{ij}=h_i g'_{ij} h_j^{-1} b_{ij}
\end{equation}
for some common refinement of the two covers, where $h_i \in \uG(U_i)$ and $b_{ij}\in \uC(U_i \cap U_j).$ Note that this definition makes all $\uC$-bundles equivalent. 

By a {\em chart} we mean a map $T\to X$ where $T$ is any topological space. {\em A good collection of charts} on $X$ is a collection of charts $T\to X,$ $T\in \cT,$ such that 
for every $T_0, \ldots, T_p$ in $\cT,$ 
every one-cocycle on $T_0 \times _X \ldots \times _X T_p$ with values in the pullback of $\uG,$ and every one- or two-cocycle with values in the pullback of $\uC,$ can be trivialized.
\begin{lemma}
For any good collection of charts and any twisted bundle, one can define 
\begin{equation}\label{eq:charts cocs}
c_{TT'T''} \in \uC(T\times _X T' \times_X T'');  \; g_{TT'} \in \uG (T\times _X T')
\end{equation}
satisfying
\begin{equation}\label{eq:tw cocs 1}
c_{ TT'T''}c_{TT''T'''}=c_{TT'T'''}c_{T'T''T'''}
\end{equation}
and
\begin{equation}\label{eq:tw coc 4}
g_{TT'}g_{T'T''}=c_{TT'T''}g_{TT''}
\end{equation}
in such a way that, if $T_i$ are a good open cover, then $c_{T_iT_jT_k}$ is cohomologous to $c_{ijk}$ and $g_{T_iT_j}$ is equivalent to $g_{ij}.$ The choice is unique up to equivaklence in the following sense:
\begin{equation}\label {tw coc 2 }
c_{TT'T''}=c'_{TT'T''}b_{TT'}b_{T'T''}b_{TT''}^{-1}; \; g_{TT'}=h_Tg'_{TT'} h_{T'}^{-1} b_{TT'}
\end{equation}
for some $b_{TT'}\in \uC(T\times T')$ and $h_T\in \uG(T).$
\end{lemma}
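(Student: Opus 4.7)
The plan is to use goodness of $\cT$ to trivialize the twisted cocycle locally on each chart, and then read off $g_{TT'}$ and $c_{TT'T''}$ as the obstruction to comparing two such trivializations on a fiber product.  Because $\uC$ is central, one- and two-cocycles with values in $\uC$ can be manipulated additively, and every failure of a straight gluing lives in $\uC$, where by hypothesis it is a coboundary; this is what allows each local comparison to be glued to a single global element on the fiber product.

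Concretely, fix a cover $\{U_i\}$ of $X$ supporting the original data $(g_{ij},c_{ijk})$.  For each chart $T\to X$ pull back to the open cover $T_i=T\times_X U_i$ of $T$, giving $g^T_{ij}\in\uG(T_{ij})$ with twist $c^T_{ijk}\in\uC(T_{ijk})$.  The hypothesis applied with $p=0$ says the pullback $c^T_{ijk}$ is a $\uC$-coboundary, so we can write $c^T_{ijk}=b^T_{ij}b^T_{jk}(b^T_{ik})^{-1}$ with $b^T_{ij}\in\uC(T_{ij})$; then $\tilde g^T_{ij}=g^T_{ij}(b^T_{ij})^{-1}$ is an honest $\uG$-cocycle, and by goodness again it is a $\uG$-coboundary: $\tilde g^T_{ij}=h^T_i(h^T_j)^{-1}$ with $h^T_i\in\uG(T_i)$.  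Thus for every $T$ we obtain a trivialization data $(h^T_i,b^T_{ij})$.

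Now on the fiber product $W=T\times_X T'$, with the pulled-back cover $W_i=W\times_X U_i$, compare the two trivializations by setting $k_i=(p_2^*h^{T'}_i)^{-1}p_1^*h^T_i\in\uG(W_i)$.  The identity $p_1^*g^T_{ij}=p_2^*g^{T'}_{ij}$, together with centrality of $\uC$, gives $k_ik_j^{-1}=p_2^*b^{T'}_{ij}(p_1^*b^T_{ij})^{-1}\in\uC(W_{ij})$, a $\uC$-valued $1$-cocycle on $W$.  Applying goodness on $W$ we write this as $f_if_j^{-1}$ with $f_i\in\uC(W_i)$; then $k_if_i^{-1}$ is independent of $i$ and defines a single element $g_{TT'}\in\uG(W)$.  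Running the same computation on $T\times_X T'\times_X T''$, the three comparisons $g_{TT'},g_{T'T''},g_{TT''}$ satisfy $g_{TT'}g_{T'T''}g_{TT''}^{-1}\in\uC$, and this element is taken as $c_{TT'T''}$; its coboundary on a fourth chart is the twisted $2$-cocycle identity \eqref{eq:tw cocs 1}.

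Consistency and uniqueness fall out of the same mechanism.  If the charts $T_i$ form a good open cover of $X$, the pulled-back trivializations restrict (up to central coboundaries absorbed into the $b$'s) to the original $(g_{ij},c_{ijk})$, so $c_{T_iT_jT_k}$ is cohomologous to $c_{ijk}$ and $g_{T_iT_j}$ is equivalent to $g_{ij}$.  Two different choices of trivializations $(h^T_i,b^T_{ij})$ and $(h^{T\prime}_i,b^{T\prime}_{ij})$ on each $T$ differ by a pair $(h_T,b_{TT'})$ produced, again via goodness, as the global gluing of the chart-by-chart ratios; tracking these through the definitions gives exactly the equivalence \eqref{tw coc 2 }.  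The main technical obstacle is bookkeeping: at each stage one must verify that the element produced is really independent of the local trivialization and that the central/noncentral factors recombine correctly, which is where centrality of $\uC$ in $\uG$ is used repeatedly.
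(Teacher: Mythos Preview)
Your argument is correct and follows essentially the same route as the paper: trivialize the pulled-back $2$-cocycle on each chart (your $b^T_{ij}$ is the paper's $\alpha_{ij}(T)$), trivialize the resulting honest $\uG$-cocycle (your $h^T_i$ is the paper's $h_i(T)$), and on $T\times_X T'$ compare the two trivializations and absorb the $\uC$-valued discrepancy (your $f_i$ is the paper's $\beta_i(T,T')$). The only cosmetic difference is that the paper writes $c_{TT'T''}$ directly as $\beta_i(T,T')\beta_i(T',T'')\beta_i(T,T'')^{-1}$, whereas you define it as $g_{TT'}g_{T'T''}g_{TT''}^{-1}$; a one-line computation shows these agree.
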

\begin{proof} Consider inverse images on charts $T$ of an open cover $\{U_i\}$ of $X.$ Let
$$c_{ijk}=\alpha_{ij}(T)\alpha_{jk}(T)\alpha_{ik}(T)^{-1}$$
be a trivialization of $c$ on $T$. Choose trivializations
$$g_{ij}\alpha_{ij}(T)^{-1}=h_i(T)h_j(T)^{-1}$$
on $T$ and 
$$\alpha_{ij}(T)\alpha_{ij}(T')=\beta_i(T,T')\beta_j(T,T')^{-1}$$
where $\alpha_{ij}, \,\beta_{ij}$ are sections of $\uC$ and $h_i$ are sections of $\uG.$
Put
\begin{equation} \label{eq:tw coc 6}
c_{TT'T''}=\beta_i(T,T') \beta_i (T',T'') \beta_i (T,T'')^{-1}
\end{equation}
and 
\begin{equation} \label{eq:tw coc 7}
g_{TT'}=h_i(T)^{-1} h_i(T') \beta_i(T,T')
\end{equation}
The relations above show that these do not depend on $i.$
\end{proof}
%%%%%%%%%%%%%%%%%%%%%%%%%%%%%%%%%%%%%%%%%%%%%%%%%%%%%%%%%%%%%%%%%%%%%%%%%%%%%%%%%%%%%%%%%%%%%%%%%%%%%%%%%%%%%%%%%%%%%%%%%%%%%%%%%%%%%%%%%%%%%%%%%%%%%%%%%%%%%%%%%%%%%%%%%%%%%%%%%%%%%%%%%%%%%%%%%%%%%%%%%%%%%%%%%%%%%%%%%%%%%%%%%%%%%%%%%%%%%%%%%%%%%%%%%%%%%%%%%%%%%%%%%%%%%%%%%%%%%%%%%%%%%%%%%%%%%%%%%%%%%%%%%%%%%%%%%%%%%%%%%%%%%%%%%%%%%%%%%%%%%%%%%%%%%%%%%%%%%%%%%%%%%%%%%%%%%%%%%%%%%%%%%%%%%%%%%%%%%%%%%%%%%%%%%%%%%%%%%%%%%%%%%%%%%%%%%%%%%%%%%%%%%%%%%%%%%%%%%%%%%%%%%%%%%%%%%%%%%%%%%%%%%%%%%%%%%%%%%%%%%%%%%%%%%%%%%%%%%%%%%%
\subsection{The groupoid of a twisted $G$-bundle}\label{ss:grpd of a twisted bdl} 
%%%%%%%%%%%%%%%%%%%%%%%%%%%%%%%%%%%%%%%%%%%%%%%%%%%%%%%%%%%%%%%%%%%%%%%%%%%%%%%%%%%%%%%%%%%%%%%%%%%%%%%%%%%%%%%%%%%%%%%%%%%%%%%%%%%%%%%%%%%%%%%%%%%%%%%%%%%%%%%%%%%%%%%%%%%%%%%%%%%%%%%%%%%%%%%%%%%%%%%%%%%%%%%%%%%%%%%%%%%%%%%%%%%%%%%%%%%%%%%%%%%%%%%%%%%%%%%%%%%%%

Let $G$ be a Lie group and $\uG$ the sheaf of smooth $G$-valued functions. Let $C$ be a central subgroup of $G$ and $\uC$ the sheaf of locally constant $C$-valued functions. Consider a $\uC$-valued two-cohomology class represented by a cocycle $c_{ijk}$ and twisted $G$-bundle represented by a $\uG$-valued cochain $g_{jk}.$

Define a groupoid on $X$ as follows.

For $x_0$ and $x_1$ in $X,$ define the set $\tG_{x_0,x_1}.$ Let $\gamma: [0,1]\to X$ be a smooth map. View it as a chart that we denote by $T.$ An element of $\tG_{x_0,x_1}$ is a class of an element $g_T\in G$ with respect to the following equivalence relation. Consider two charts $T$ and $T'$ representing two smooth maps $\gamma,\,\gamma': [0,1]\to X$ and a homotopy $\sigma: [0,1]^2\to X$ such that $\sigma(0, s)=x_0,\, \sigma(s,t)=x_1,\, \sigma(t,0)=\gamma(t),$ and $\sigma(t,1)=\gamma'(t).$ We will view $\sigma$ as a chart $S.$ We call $S$ {\em a homotopy between $S$ and $S'.$} Now generate the equivalence relation by the following.
\begin{equation}\label{eq:equrela naiva}
g_T \sim (g_{TT'}c_{TT'S}^{-1} )(x_0) g_{T'} (g_{TT'}c_{TT'S}^{-1} )(x_1)^{-1} 
\end{equation}
%\begin{lemma}\label{lemma:naive groupoid and pi one}

%%%%%%%%%%%%%%%%%%%%%%%%%%%%%%%%%%%%%%%%%%%%%%%%%%%%%%%%%%%%%%%%%%%%%%%%%%%%%%%%%%%%%%%%%%%%%%%%%%%%%%%%%%%%%%%%%%%%%%%%%%%%%%%%%%%%%%%%%%%%%%%%%%%%%%%%%%%%%%%%%%%%%%%%%%%%%%%%%%%%%%%%%%%%%%%%%%%%%%%%%%%%%%%%%%%%%%%%%%%%%%%%%%%%%%%%%%%%%%%%%%%%%%%%%%%%%%%%%%%%%%%

%The following proves that this is indeed an equivalence relation in a stronger sense (actually, to obtain an equivalence relation we do not need the factor $q^{\rm{Area}}$).
\begin{lemma}\label{lemma:strong ekviv} Let $S$ be a homotopy between $T$ and $T'$, $S'$ a homotopy between $T'$ and $T'',$ and $S''$ a homotopy between $T$ and $T''.$ If we denote the right hand side of \eqref{eq:equrela naiva} by $a(S)g_T,$ then $a(S)a(S')=\langle c, [\Sigma] \rangle a(S'')$ where $\Sigma$ is the sphere formed by $S,$ $S',$ and $S''.$
\end{lemma}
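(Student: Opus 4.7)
The plan is to expand both sides of the claim $a(S)a(S') = \langle c, [\Sigma]\rangle a(S'')$ using the definitions, reduce everything to the twisted cocycle relations \eqref{eq:tw cocs 1} and \eqref{eq:tw coc 4}, and then recognize the resulting scalar in $C$ as the \v{C}ech pairing with the sphere $\Sigma$ built from $S, S', S''$.

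First I would fix the notation $h_S^i := (g_{TT'}\, c_{TT'S}^{-1})(x_i)$ for $i = 0,1$, and analogously $h_{S'}^i$, $h_{S''}^i$. Reading \eqref{eq:equrela naiva} as a conjugation rule, the operator $a(S)$ acts on $G$ by $g \mapsto h_S^0 \, g \, (h_S^1)^{-1}$. Direct substitution then gives
\begin{equation*}
a(S)\bigl(a(S')(g)\bigr) = h_S^0 h_{S'}^0 \cdot g \cdot (h_S^1 h_{S'}^1)^{-1}, \qquad a(S'')(g) = h_{S''}^0 \cdot g \cdot (h_{S''}^1)^{-1}.
\end{equation*}

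Second, for each $i \in \{0,1\}$, I would apply \eqref{eq:tw coc 4} in the form $g_{TT'}(x_i) g_{T'T''}(x_i) = c_{TT'T''}(x_i) g_{TT''}(x_i)$ and use the centrality of the $C$-valued factors to get
\begin{equation*}
h_S^i h_{S'}^i = \alpha^i \cdot h_{S''}^i, \qquad \alpha^i := c_{TT'T''}(x_i)\, c_{TT'S}(x_i)^{-1}\, c_{T'T''S'}(x_i)^{-1}\, c_{TT''S''}(x_i) \in C.
\end{equation*}
Plugging back and using once more that $C$ is central in $G$, the composition reads
\begin{equation*}
a(S)\bigl(a(S')(g)\bigr) = \alpha^0 h_{S''}^0 \cdot g \cdot (h_{S''}^1)^{-1} (\alpha^1)^{-1} = \alpha^0 (\alpha^1)^{-1} \cdot a(S'')(g),
\end{equation*}
so $a(S) a(S') = \alpha^0 (\alpha^1)^{-1} \cdot a(S'')$ with $\alpha^0 (\alpha^1)^{-1} \in C$.

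The remaining and main step is to identify the scalar $\alpha^0 (\alpha^1)^{-1}$ with $\langle c, [\Sigma]\rangle$. I realize $\Sigma$ as a CW complex with $0$-cells $x_0, x_1$, $1$-cells $T, T', T''$ and $2$-cells $S, S', S''$, oriented so that $[\Sigma] = [S] + [S'] - [S'']$ as a cellular $2$-cycle. The collection $\{T,T',T'',S,S',S''\}$, restricted to $\Sigma$, is a Leray cover for the locally constant sheaf $\uC|_\Sigma$, and $\alpha^i$ is precisely the value at the vertex $x_i$ of the \v{C}ech $0$-cochain obtained by pairing $c$ with the three $2$-cells meeting at $x_i$ together with the ``corner'' triple $c_{TT'T''}$: the four factors are in bijection with the four triples of charts from $\{T,T',T'',S,S',S''\}$ that contain $x_i$ and that appear in the cellular boundary of $\Sigma$ there. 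The cocycle identity \eqref{eq:tw cocs 1}, applied to $4$-tuples such as $(T,T',T'',S)$ at $x_i$, is exactly what guarantees that $\alpha$ is a well-defined \v{C}ech $0$-cochain on $\Sigma \setminus \{x_0, x_1\}$, and the standard \v{C}ech--cellular comparison identifies the difference $\alpha^0 (\alpha^1)^{-1}$ of its values at the two vertices with $\int_\Sigma c = \langle c,[\Sigma]\rangle$. The principal obstacle is therefore the bookkeeping in this last identification: carefully matching the combinatorial product of $c$-values with the topologically defined pairing, with correct signs coming from the orientations in $[\Sigma] = [S]+[S']-[S'']$. No new algebraic input beyond \eqref{eq:tw cocs 1}, \eqref{eq:tw coc 4}, and centrality of $C$ enters.
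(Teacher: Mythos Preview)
Your reduction to the scalar $\alpha^0(\alpha^1)^{-1}$ is correct and is exactly what the paper obtains. The place where your argument is incomplete is the identification of this scalar with $\langle c,[\Sigma]\rangle$: you defer it to ``bookkeeping'' and a ``standard \v{C}ech--cellular comparison,'' but neither is actually carried out, and the cover-theoretic framing is shaky --- $T,T',T'',S,S',S''$ are charts mapping to $X$, not open subsets of $\Sigma$, so the Leray-cover claim does not literally make sense as stated.

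The paper does this step by explicit algebraic manipulation. It applies the $2$-cocycle identity \eqref{eq:tw cocs 1} several times (to quadruples such as $TT'T''S$, $TT''SS''$, $SS'S''T''$) and reduces $\alpha^i$ to $c_{SS'S''}(x_i)$ times factors of the form $c_{TSS''}(x_i)$, $c_{T'SS'}(x_i)$, $c_{T''S'S''}(x_i)$. The key observation you are missing is then geometric: since $T$ is a common boundary face of $S$ and $S''$, one has $T\times_X S\times_X S''=T$; hence $c_{TSS''}$ is a locally constant $\uC$-valued function on the connected arc $T$ and therefore $c_{TSS''}(x_0)=c_{TSS''}(x_1)$. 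The same holds with $T',T''$ in place of $T$. This is precisely what makes every factor except $c_{SS'S''}$ cancel in $\alpha^0(\alpha^1)^{-1}$. The remaining quantity $c_{SS'S''}(x_0)\,c_{SS'S''}(x_1)^{-1}$ is then identified with $\langle c,[\Sigma]\rangle$ by a short direct argument (it vanishes on coboundaries and is checked on one explicit cocycle). Your \v{C}ech--cellular picture is morally this computation, but the actual content lies in the fibre-product collapse, which you did not isolate.
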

\begin{proof} We have
$$a(S)a(S')g_T=$$
$$g_{TT'}g_{T'T''}c_{TT'S}^{-1}c_{T'T''S'}^{-1}(x_0)  g_{T''} (g_{TT'}g_{T'T''}c_{TT'S}^{-1}c_{T'T''S'}^{-1}(x_1))^{-1} $$
The right hand side is equal to
$$(g_{TT''}c_{TT'T''}c_{TT'S}^{-1}c_{T'T''S'}^{-1})(x_0) g_{T''} (g_{TT''}c_{TT'T''}c_{TT'S}^{-1}c_{T'T''S'}^{-1})(x_1)^{-1} ;$$
therefore
$$a(S)a(S')= \frac{c_{TT'T''}c_{TT''S''}}{c_{TT'S}c_{T'T''S'}}(x_0) (\frac{c_{TT'T''}c_{TT''S''}}{c_{TT'S}c_{T'T''S'}}(x_1))^{-1}a(S'')$$

Applying the acyclicity condition to the quadruple of charts $TT'T''S,$ we get 
$$\frac{c_{TT'T''}c_{TT''S''}}{c_{TT'S}c_{T'T''S'}}=\frac{c_{TT''S''}c_{T'T''S}}{c_{TT''S}c_{T'T''S'}}$$
Applying the same condition to $TT''SS''$ and then to $SS'S''T'',$ we replace the right hand side with 
$$\frac{c_{TSS''}c_{T''SS'}}{c_{T''S'S''}c_{T'SS'}}=\frac{c_{SS'S''}c_{TSS''}}{c_{T'SS'}c_{T''S'S''}}.$$
But $T\times_X S\times _X S''=T, \, T'\times_X S\times _X S'=T',$ and $T''\times_X S'\times _X S''=T''.$ Therefore the values of $c_{TSS''},$ {\em etc.} at $x_0$ and $x_1$ are the same. Therefore
$$a(S)a(S')= c_{SS'S''}(x_0)  c_{SS'S''}(x_1) ^{-1} a(S'')$$
But 
$$ c_{SS'S''}(x_0)  c_{SS'S''}(x_1) ^{-1} = \langle c, [\Sigma] \rangle $$
for any two-cocycle $c$. 
To see this, note that the left hand side is $1$ for any coboundary $c.$ On the other hand, if we enlarge $S,\,S',\, S''$ a little bit to make them an open cover of $\Sigma,$ take an element $a$ of $C,$ and define $c_{SS'S''}(x_0)=a, \,c_{SS'S''}(x_1)=1,$ the result will be $a=\langle c,[\Sigma] \rangle$.
\end{proof}
\begin{cor}\label{cor:sekv grupoids}
There is an epimorphism 
\begin{equation}\label{eq:short sekv of grupoids 1} 
\tG_{x_0,x_1}\to \pi_1(x_0,x_1)
\end{equation}
When $x_0=x_1=x,$ the kernel of this epimorphism is isomorphic to $G/\langle c, \pi_2(X)\rangle.$
\end{cor}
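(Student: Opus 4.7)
The plan is to first write down the map, then identify the kernel via Lemma \ref{lemma:strong ekviv}. I would define $p\colon \tG_{x_0,x_1}\to\pi_1(x_0,x_1)$ by sending a representative $(T,g_T)$ to the homotopy class $[T]$ of the smooth path $\gamma\colon[0,1]\to X$ underlying $T$. This is well-defined because the generating equivalence \eqref{eq:equrela naiva} only identifies $(T,g_T)$ with $(T',g_{T'})$ when a homotopy $S$ between $T$ and $T'$ is chosen, so the class in $\pi_1$ is preserved. Surjectivity is immediate: given any loop class $[\gamma]\in\pi_1(x_0,x_1)$, pick a smooth representative $T_\gamma$ and map $[T_\gamma,1_G]\mapsto[\gamma]$.

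Specialize now to $x_0=x_1=x$ and let $T_0$ denote the constant path at $x$. Any element of $\ker(p)$ is represented by some $(T,g_T)$ with $T$ null-homotopic, and choosing a null-homotopy $S$ of $T$ to $T_0$ gives, via \eqref{eq:equrela naiva}, an equivalent representative of the form $(T_0,g)$ for some $g\in G$. Thus the assignment $\iota\colon G\to\ker(p)$, $g\mapsto[T_0,g]$, is surjective, and the remaining task is to compute its kernel. This is where Lemma \ref{lemma:strong ekviv} does the work.

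To show $\langle c,\pi_2(X)\rangle\subseteq\ker(\iota)$: given $[\Sigma]\in\pi_2(X,x)$, realize $\Sigma$ as a self-homotopy $S''$ of $T_0$ (so $S''\colon I^2\to X$ maps $\partial I^2$ to $x$), and take $S=S'$ to be the trivial constant homotopy $I^2\to\{x\}$. For the triple $T=T'=T''=T_0$, the chart intersections collapse so that $g_{T_0T_0}$ and $c_{T_0T_0T_0}$ may be chosen trivial, whence $a(S)=a(S')=\operatorname{id}$. Lemma \ref{lemma:strong ekviv} then yields $a(S'')=\langle c,[\Sigma]\rangle^{-1}$, and the equivalence $(T_0,g)\sim(T_0,a(S'')g)$ produced by $S''$ shows $\iota(g)=\iota(\langle c,[\Sigma]\rangle g)$; centrality of $C$ in $G$ ensures the quotient $G/\langle c,\pi_2(X)\rangle$ makes sense.

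The converse inclusion $\ker(\iota)\subseteq\langle c,\pi_2(X)\rangle$ is the main technical point. Suppose $[T_0,g]=[T_0,g']$. Unwinding the equivalence as a finite chain of moves and collapsing adjacent moves using the multiplicativity of $a$ and the relation from Lemma \ref{lemma:strong ekviv}, every chain can be reduced to a single self-equivalence of $(T_0,g)$ coming from one self-homotopy $S''$ of $T_0$; the accumulated defect is precisely $\langle c,[\Sigma]\rangle$ for a 2-sphere $\Sigma$ formed by concatenating all the intermediate homotopies. Hence $g'g^{-1}\in\langle c,\pi_2(X)\rangle$. Combining the two inclusions gives $\ker(p)=\ker(\iota)^{-1}(e)\cdot\text{(quotient)}\cong G/\langle c,\pi_2(X)\rangle$, as claimed. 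I expect the bookkeeping in this last reduction step, keeping track of the base-point shifts introduced by the conjugation factors $\alpha,\beta$ in $a(S)(\,\cdot\,)=\alpha(\,\cdot\,)\beta$, to be the most delicate part of the argument; however, since $C$ is central these factors only contribute to the $C$-valued defect and not to the conjugation class, so the reduction goes through.
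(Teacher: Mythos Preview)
Your approach is correct and is precisely the argument the paper leaves implicit: the corollary is stated without proof immediately after Lemma~\ref{lemma:strong ekviv}, and your deduction---define the obvious map to $\pi_1$, identify the kernel with $G$ modulo the ambiguity measured by the lemma---is the intended one. Two minor remarks: in your inclusion $\langle c,\pi_2\rangle\subseteq\ker(\iota)$ the application of the lemma is clean, and for the converse your inductive collapse of a chain of homotopies works because concatenating two homotopies produces a sphere that is null-homotopic (so no defect appears at that step), leaving only the final comparison with the constant self-homotopy to contribute the $\langle c,[\Sigma]\rangle$ factor; this makes the ``bookkeeping'' you flag less delicate than you fear.
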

\subsubsection{Example: the holonomy groupoid of a vector bundle}\label{sss:hologruvekt} Let $E$ be a real oriented vector bundle of rank $N.$ Let $G={\rm{SO}}_N(\bR)$ and ${\widetilde G}={\rm{Spin}}_N (\bR)$ its universal cover. Reduce the structure group of $E$ to $G$ using a Riemannian metric. Let
$\tIsom (E)_{x_0, x_1}$ be the set of equivalence classes of data $(\gamma, u_t)$ where $\gamma: [0,1]\to X$ is a smooth map, $\gamma(0)=x_0,$ $\gamma(1)=x_1,$ and $u_t : E_{\gamma(t)}\isomoto E_{\gamma(0)}$ a metric-preserving linear isomorphism smoothly depending on $t$ and satisfying $u_0=\id.$ An equivalence between $(\gamma, u_t)$ and $(\gamma', u_t')$ is a smooth map $\sigma: [0,1]\times [0,1]\to X$ such that $\sigma(0,s)=x_0,\,\sigma(1,s)=x_1,$ $\sigma(t,0)=\gamma(t),\,\sigma(t,1)=\gamma'(t),$ and a linear metric-preserving isomorphism $v_{t,s}: E_{\sigma(t,s)}\isomoto E_{x_0}$ smooth in $(t,s),$ such that $v_{0,s}=\id,$ $v_{t,0}=u_t,$ $v_{t,1}=u'_t,$ and $v_{1,s}=u_1=u'_1.$

Lift the transition isomorphisms $g^E _{ij}$ of $E$ to some ${\widetilde{g}}_{ij}.$ Put $c_{ijk}={\widetilde{g}}_{ij} {\widetilde{g}}_{jk} {\widetilde{g}}_{ik}^{-1}.$ This cocycle represents the second Stiefel-Whitney class $w_2(E).$ Note that the groupoid $\tIsom(E)$ is isomorphic to the groupoid $\tG'$ constructed from the twisted bundle defined by ${\widetilde{g}}_{ij}, c_{ijk}.$ In fact, note that for the charts $T$ and $S$ defined by maps $\gamma$ and $\sigma,$ there is a natural lifting $\tg _{TS}$ of $g_{TS}.$ Namely, $\tg_{TS} (\gamma(t))$ is the class of the path $g_{TS} (\gamma(\tau)),\,0\leq \tau\leq t.$ Similarly with $\tg_{ST'}.$ Identify with $\widetilde G$ the set of equivanence classes of $(\gamma, u_t)$ with fixed $\gamma$ (and with $\sigma(t,s)=\gamma(t)$ in the definition of the equivalence). Now, given an equivalence $\sigma,v$ between $\gamma, u$ and $\gamma',u',$ $g_T\in {\widetilde G}$ gets identified with $\tg_{TS} \tg_{ST'}=\tg_{TT'} c_{TST'}.$ 
\begin{cor}\label{cor:sekv grupoids igen}
There is an epimorphism 
\begin{equation}\label{eq:short sekv of grupoids 1 igen} 
\tIsom(E)_{x_0,x_1}\to \pi_1(x_0,x_1)
\end{equation}
and every preimage is a homogeneous space ${\rm{Spin}}(N,\bR)/\langle w_2 (E), \pi_2(X)\rangle.$
(We identify ${\mathbb Z}/2$ with the center of ${\rm{Spin}}(N,\bR)$).
\end{cor}
%%%%%%%%%%%%%%%%%%%%%%%%%%%%%%%%%%%%%%%%%%%%%%%%%%%%%%%%%%%%%%%%%%%%%%%%%%%%%%%%%%%%%%%%%%%%%%%%%%%%%%%%%%%%%%%%%%%%%%%%%%%%%%%%%%%%%%%%%%%%%%%%%%%%%%%%%%%%%%%%%%%%%%%%%%%%%%%%%%%%%%%%%%%%%%%%%%%%%%%%%%%%%%%%%%%%%%%%%%%%%%%%%%%%%%%%%%%%%%%%%%%%%%%%%%%%%%%%%%%%%%%

%%%%%%%%%%%%%%%%%%%%%%%%%%%%%%%%%%%%%%%%%%%%%%%%%%%%%%%%%%%%%%%%%%%%%%%%%%%%%%%%%%%%%%%%%%%%%%%%%%%%%%%%%%%%%%%%%%%%%%%%%%%%%%%%%%%%%%%%%%%%%%%%%%%%%%%%%%%%%%%%%%%%%%%%%%%%%%%
\subsubsection{Connections on twisted bundles}\label{sss:conns on tw bdles} As in \ref{ss:grpd of a twisted bdl}, let $G$ be a simply-connected (pro) Lie group and $\uG$ the sheaf of smooth $G$-valued functions. Let $C$ be a central subgroup of $G$ and $\uC$ the sheaf of smooth $C$-valued functions. In addition, fix some algebra $\cA$ on which $G$ acts by automorphisms. Consider a twisted bundle defined by the data $(g_{ij}, c_{ijk}).$ A connection in this twisted bundle is a collection of $\cA$-valued forms on $U_i$ such that 
$$\Ad_{g_{ij}}(d+A_j)=d+A_i$$ 
on every $U_{ij}.$ Here $\Ad_g(d)=-dg\cdot g^{-1}.$ Note that, because $c_{ijk}$ are locally constant and central, $\Ad_{g_{ij}}\Ad_{g_{jk}}(d+A_k)=\Ad_{g_{ik}}(d+A_k),$ so the conditions above are consistent on $U_{ijk}.$ The corvature $R=dA_i+A_i^2$ is a well-defined $\cA$-valued two-form.
%%%%%%%%%%%%%%%%%%%%%%%%%%%%%%%%%%%%%%%%%%%%%%%%%%%%%%%%%%%%%%%%%%%%%%%%%%%%%%%%%%%%%%%%%%%%%%%%%%%%%%%%%%%%%%%%%%%%%%%%%%%%%%%%%%%%%%%%%%%%%%%%%%%%%%%%%%%%%%%%%%%%%%%%%%%%%%%
\subsubsection{The flat connection up to inner derivations}\label{sss:conn on tG}  Here we will construct a flat connection up to inner derivations on the associated bundle of algebras $\cA$ compatible with the action of the groupoid $\tG$ of a twisted bundle ({\em cf.} \ref{ss:grpd of a twisted bdl}) . We will start from a flat connection on the twisted bundle itself. 

First define special coordinate charts on $\tG$ as follows. Fix: 
\begin{itemize}
\item two open charts $U_0$ and $U_1$ of $X;$ 
\item two points $x_0^*\in U_0$ and $x_1^*\in U_1;$ 
\item a path $\gamma$ from $x_0^*$ to $x_1^*$ in $X$; 
\item smooth maps $\tau _0: [0,1]\times U_0\to U_0$  and $\tau_1: [0,1] \times U_1\to U_1,$ $\tau_0(0,x_0)=x_0,$ $\tau_0(1,x_0)=x^*_0,$ $\tau_1(0,x_1)=x_1,$ $\tau_1(1,x_1)=x^*_1.$
\end{itemize}
For every $x_0\in U_0$ and $x_1\in U_1,$ we will denote the path $t\mapsto \tau_0(t,x_0)$ by $\tau_{x_0}$ and the path $t\mapsto \tau_1(t,x_1)$ by $\tau_{x_1}.$ For the data as above, we construct a chart $T$ in $\tG$ as a map
$$U_0\times U_1\to \tG;\; (x_0,x_1)\mapsto \tau_{x_0} \circ \gamma \circ \tau_{x_1}: x_0\to x_1$$
(the composition of paths). 

Now consider a flat connection in our twisted bundle. In a local trivialization, on any open chart $W,$ we write $\nabla_\cV=d+A_W.$ We can identify a local section of $\tG$ on $T$ with a ${\widetilde{G}}$-valued function $g_T(x_0,x_1)$ on $U_0\times U_1.$
\begin{definition}\label{dfn:conn grpd tG}
$$\alpha(g_T)=-d g_T\cdot g_T^{-1}-A_0+\Ad_{g_T}(A_1)$$

where $A_0=\pi_0 ^*(A_{U_0})$ and $A_1=\pi_1 ^*(A_{U_1});$ 
$$R=dA_0+A_0^2.$$
\end{definition}
\begin{lemma}\label{lemma:conn on tG transition}
The above formulas define a flat connection up to inner derivations on the associated bundle of algebras $\cA$ compatible with the action of $\tG.$
\end{lemma}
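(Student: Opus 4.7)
The plan is to verify one by one the five compatibilities of Definition~\ref{dfn:action up to Lie groupoid case 1}, working in the special coordinate charts $T$ on $\tG$ introduced before the Lemma, so that a local section of $\ucG$ over $U_0\times U_1$ is represented by a smooth $G$-valued function $g_T(x_0,x_1)$.

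First I would check that $\alpha(g_T)$ depends only on the section $g$ of $\ucG$ and not on the auxiliary data $(x_0^*,x_1^*,\gamma,\tau_0,\tau_1)$ defining the chart. This is automatic: the three ingredients of $\alpha(g_T)$ are the Maurer--Cartan form $-dg_T\cdot g_T^{-1}$ and the pullbacks $A_0=\pi_0^*A_{U_0}$ and $A_1=\pi_1^*A_{U_1}$, all of which depend only on the $G$-valued function $g_T$ itself and on the fixed connection one-forms over $U_0$ and $U_1$. Next, in a local trivialization $\nabla=d+\ad(A_U)$ a standard calculation using $[d,\ad(A_U)]=\ad(dA_U)$ yields $\nabla^2=\ad(dA_U+A_U\wedge A_U)=\ad(R)$, and the Bianchi identity $dR+[A_U,R]=0$ gives $\nabla R=0$.

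For the conjugation identity $T_g(p_2^*\nabla)T_g^{-1}=p_1^*\nabla+\ad(\alpha(g))$ one uses $T_g=\Ad(g)$ together with $d\circ T_g=T_g\circ d+\ad(dg\cdot g^{-1})\circ T_g$ to get
\[
T_g(d+\ad A_1)T_g^{-1}=d-\ad(dg\cdot g^{-1})+\ad(\Ad_g A_1)=(d+\ad A_0)+\ad(\alpha(g)),
\]
which is exactly the defining formula for $\alpha(g)$. The curvature compatibility $(p_1^*\nabla)\alpha(g)+\alpha(g)^2=T_g(p_2^*R)-p_1^*R$ is then obtained by direct substitution, using the Maurer--Cartan equation $d(dg\cdot g^{-1})=(dg\cdot g^{-1})^2$ and $d(\Ad_g A_1)=\Ad_g(dA_1)+[dg\cdot g^{-1},\Ad_g A_1]$, after which all cross-terms collect into $\Ad_g(dA_1+A_1^2)-(dA_0+A_0^2)$.

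The step I expect to be the main obstacle is the cocycle identity for $\alpha$, because $c(g_1,g_2)$ enters nontrivially through the chart combinatorics of the three charts $T_{01},T_{12},T_{02}$. Here the plan is to write the composition as $g_{T_{01}}g_{T_{12}}=c\cdot g_{T_{02}}$ where $c=c(g_1,g_2)$ is the $\uC$-valued cocycle from~\eqref{eq:tw coc 4}. Since $C$ is central in $G$ and $\uC$ consists of locally constant sections, the induced element of $\cA^\times$ is central and covariantly constant, so $\Ad(c)=\id$ on $\cA$ and $(p_1^*\nabla)c\cdot c^{-1}=0$. A Leibniz computation then gives
\[
d(g_1g_2)\cdot(g_1g_2)^{-1}=dg_1\cdot g_1^{-1}+\Ad_{g_1}(dg_2\cdot g_2^{-1}),
\]
which combined with $\Ad_{g_1g_2}=\Ad_{g_1}\Ad_{g_2}$ yields
\[
\alpha(g_1)+T_{g_1}\alpha(g_2)=\alpha(g_1g_2),
\]
and the full cocycle identity of Definition~\ref{dfn:action up to Lie groupoid case 1} follows. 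The remaining bookkeeping is to check that the chart combinatorics are consistent with~\eqref{eq:tw cocs 1}, but since all corrections are scalar and covariantly constant, the calculation reduces to the strict Maurer--Cartan identity above.
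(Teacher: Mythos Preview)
Your verification is correct and is exactly the expected approach; the paper states this lemma without proof, treating it as a routine check of the axioms in Definition~\ref{dfn:action up to Lie groupoid case 1}. Your computations for $\nabla^2=\ad(R)$, the Bianchi identity, the conjugation identity, the curvature compatibility, and the cocycle condition (using that $c$ is central and locally constant, hence $\Ad(c)=\id$ and $dc=0$) are all standard Maurer--Cartan manipulations and go through as you indicate.

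One small point you gloss over: when you say $\alpha(g_T)$ is ``automatic'' and depends only on $g_T$, this is true within a fixed special chart, but different charts $T,T'$ covering the same piece of $\tG$ identify a given section with different $G$-valued functions $g_T$, $g_{T'}$ related by the equivalence \eqref{eq:equrela naiva}. Strictly speaking one should also check that $\alpha$ is invariant under this change, i.e.\ that $\alpha(g_T)=\alpha(g_{T'})$ on overlaps. This follows because the correction factors $(g_{TT'}c_{TT'S}^{-1})(x_j)$ are constants in $G$ (values at the fixed endpoints $x_0,x_1$), so their Maurer--Cartan contribution vanishes and conjugation by them is absorbed into the transition behaviour of $A_0,A_1$. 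This is a minor bookkeeping step, consistent with your overall argument.
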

%%%%%%%%%%%%%%%%%%%%%%%%%%%%%%%%%%%%%%%%%%%%%%%%%%%%%%%%%%%%%%%%%%%%%%%%%%%%%%%%%%%%%%%%%%%%%%%%%%%%%%%%%%%%%%%%%%%%%%%%%%%%%%%%%%%%%%%%%%%%%%%%%%%%%%%%%%%%%%%%%%%%%%%%%%%%%%%
%\subsection{Induced modules}\label{ss:induched}
%%%%%%%%%%%%%%%%%%%%%%%%%%%%%%%%%%%%%%%%%%%%%%%%%%%%%%%%%%%%%%%%%%%%%%%%%%%%%%%%%%%%%%%%%%%%%%%%%%%%%%%%%%%%%%%%%%%%%%%%%%%%%%%%%%%%%%%%%%%%%%%%%%%%%%%%%%%%%%%%%%%%%%%%%%%%%%%%%%%%%%%%%%%%%%%%%%%%%%%%%%%%%%%%%%%%%%%%%%%%%%%%%%%%%%%%%%%%%%%%%%%%%%%%%%%%%%%%%%%%%%%
%\section{The microlocal category of sheaves}\label{s:sheaves}
%%%%%%%%%%%%%%%%%%%%%%%%%%%%%%%%%%%%%%%%%%%%%%%%%%%%%%%%%%%%%%%%%%%%%%%%%%%%%%%%%%%%%%%%%%%%%%%%%%%%%%%%%%%%%%%%%%%%%%%%%%%%%%%%%%%%%%%%%%%%%%%%%%%%%%%%%%%%%%%%%%%%%%%%%%%%%%%%%%%%%%%%%%%%%%%%%%%%%%%%%%%%%%%%%%%%%%%%%%%%%%%%%%%%%%%%%%%%%%%%%%%%%%%%%%%%%%%%%%%%%%%%%%%%%%%%%%%%%%%%%%%%%%%%%%%%%%%%%%%%%%%%%%%%%%%%%%%%%%%%%%%%%%%%%%%%%%%%%%%%%%%%%%%%%%%
%\section{The functor from oscillatory modules to sheaves} \label{s:from oscmods to shvs}
%%%%%%%%%%%%%%%%%%%%%%%%%%%%%%%%%%%%%%%%%%%%%%%%%%%%%%%%%%%%%%%%%%%%%%%%%%%%%%%%%%%%%%%%%%%%%%%%%%%%%%%%%%%%%%%%%%%%%%%%%%%%%%%%%%%%%%%%%%%%%%%%%%%%%%%%%%%%%%%%%%%%%%%%%%%%%%%%%%%%%%%%%%%%%%%%%%%%%%%%%%%%%%%%%%%%%%%%%%%%%%%%%%%%%%%%%%%%%%%%%%%%%%%%%%%%%%%%%%%%%%%%%%%%%%%%%%%%%%%%%%%%%%%%%%%%%%%%%%%%%%%%%%%%%%%%%%%%%%%%%%%%%%%%%%%%%%%%%%%%%%%%%%%%%%%
\section{Appendix. Modules associated to Lagrangian submanifolds and Lagrangian distributions}\label{s:LagrDistr} For any Lagrangian submanifold $L$ of a symplectic manifold $M$ with a given $\Sp^4$ structure we constructed a bundle of modules ${\widehat{\widehat{\bV}}}_L$ with a flat connection $\nabla_\bV$ ({\em cf.} \ref{sss:sheaf BL}). This is a bundle of $\fbA_M$-modules, and the connections $\nabla_\bV$ and $\nabla_\bA$ are compatible. In particular, denote by $\bA_M$ the sheaf of algebras of horizontal sections of $\nabla_\bA$ and by $\bV_L$ the sheaf of horizontal sections of $\nabla_\bV.$ Then $\bV_L$ is a sheaf of $\bA_M$-modules. 

Now apply the same construction to $L$ but instead of $M$ take a tubular neighborhood of $L$ and identify it with the tubular neighborhood of $L$ in $T^*L$ by Darboux-Weinstein theorem. Use the $\Sp^4$ structure provided by the Lagrangian polarization by fibers of $T^*L$ ({\em cf.} Lemma \ref{lemma:coho cocy sp4}). We get another $\bA_M$-module that we denote by $\bV^{(0)}_L.$
\begin{lemma} $\bV^{(0)}_L\isomoto |\Omega|_L^{\frac{1}{2}}$ where $|\Omega|^{\frac{1}{2}}$ stands for the bundle of half-densities. $\bV_L$ is isomorphic to $\bV^{(0)}_L$ twisted by the $\{\pm 1,\pm i\}$-valued Maslov class of $L.$ 
\end{lemma}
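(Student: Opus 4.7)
The plan is to prove the two claims of the lemma separately: first identify $\bV^{(0)}_L$ with the half-density bundle in the canonical cotangent picture, then compare with the generic construction to extract the Maslov twist.

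For the first claim, I work on the tubular neighborhood of $L$ identified with a neighborhood of the zero section in $T^*L$, in Darboux charts $(x_j, \xi_j)$ such that $L = \{\xi = 0\}$ and the polarization is given by the cotangent fibers. In these charts the symplectic transition cocycle for $T(T^*L)$ takes the block-diagonal form $g_{jk} = \mathrm{diag}(b_{jk}, b_{jk}^{-T})$, where $b_{jk}$ is a $\GL(n,\bR)$-valued cocycle representing $TL$. By Lemma \ref{lemma:c1 and meta} and the splitting $\MP(n,\bR) \isomoto P(n,\bR) \times \{\pm 1,\pm i\}$ of Lemma \ref{lemma:coho cocy sp4}(a), these block-diagonal matrices admit a canonical lifting to $\ML(n) \subset \MP(n)$ with trivial $\{\pm 1, \pm i\}$-component — this is the content of the $\ML$-structure on $L$ provided by Lemma \ref{lemma:c1 and meta}. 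Call this cocycle $\widetilde p^{(0)}_{jk}$.

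Next I identify the associated bundle. By Lemma \ref{lemma:A0 acts on W0}, an element $(b, u) \in \ML(n)$ with $u^2 = \det(b)$ acts on $\ffbV_\bK$ by $f(\fx) \mapsto u^{-1} f(b^{-1}\fx)$, i.e., the standard half-density representation combined with the substitution on the formal variable. In the adapted coordinates the leading part of the Fedosov connection $\nabla_\bV$ on $\ffbV_L$ reads
\begin{equation}
\nabla_\bV = d - \ddfx\,dx + \tfrac{1}{i\hbar}\fx\,d\xi + A_{\geq 0},
\end{equation}
so that the symbol map that sends a horizontal section to its value at $\fx = 0,\,\xi = 0$ is an isomorphism (as for the deformation quantization $\bA_M$ itself). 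The symbol of a horizontal section of $\ffbV_L^{(0)}$ on $L$ therefore transforms under $\widetilde p^{(0)}_{jk}$ by the character $(b,u) \mapsto u^{-1}$, which is by definition the transition cocycle of $|\Omega|_L^{1/2}$. A Fedosov-type induction in powers of $(\hbar,\fx)$ shows that every symbol extends uniquely to a horizontal section, yielding the desired isomorphism $\bV^{(0)}_L \isomoto |\Omega|_L^{1/2}$.

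For the second claim, I compare the two $\Sp^4$-structures on the tubular neighborhood: the restriction of the given $\Sp^4$-structure on $M$ (defining $\bV_L$), and the $\Sp^4$-structure coming from the Lagrangian polarization of $T^*L$ (defining $\bV^{(0)}_L$). Both lift the same underlying symplectic transition cocycle of $TM|_L$. By Lemma \ref{lemma:coho cocy sp4}(a), $\MP(n,\bR) = P(n,\bR) \times \{\pm 1, \pm i\}$, so the ratio $\widetilde p_{jk}\,(\widetilde p^{(0)}_{jk})^{-1}$ is a cocycle with values in the central subgroup $\{\pm 1, \pm i\}$, which by Definition \ref{dfn:Maslov sp 4} represents the Maslov class of $L$. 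Since $\{\pm 1, \pm i\}$ acts on $\ffbV_\bK$ by scalars (through the same character $(b,u)\mapsto u^{-1}$ extended from $\ML$), the bundle $\bV_L$ is precisely $\bV^{(0)}_L$ tensored with the complex line bundle whose transition cocycle is this $\{\pm 1, \pm i\}$-valued Maslov cocycle.

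The main obstacle is the first part: rigorously producing the isomorphism $\bV^{(0)}_L \isomoto |\Omega|_L^{1/2}$ requires showing that the symbol map on horizontal sections is bijective, which is a Fedosov-iteration argument adapted from the algebra case to the module case. Once this is set up, the comparison of the two liftings in the second part reduces to a direct cocycle-level computation using the product decomposition of $\MP$.
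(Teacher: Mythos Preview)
The paper states this lemma without proof; it moves directly to the next subsection after the statement, so there is no paper-side argument to compare against directly.

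Your approach is the natural one given the surrounding framework, and it is sound. The identification of $\bV^{(0)}_L$ via the symbol map together with the $\ML(n)$-character $(b,u)\mapsto u^{-1}$ from Lemma~\ref{lemma:A0 acts on W0}, followed by the comparison of the two $\MP(n)$-liftings through the splitting $\MP(n)\cong P(n)\times\{\pm1,\pm i\}$ of Lemma~\ref{lemma:coho cocy sp4}(a), is exactly what the paper's setup is designed to produce. The Fedosov-type iteration you flag as the main obstacle is indeed the only substantive step; it is the module-side analogue of the algebra argument behind Theorem~\ref{thm:Fedosov classification}(3). The jet-bundle computations for the real-polarization case in \S\ref{ss:real pol} of the later appendix (equations \eqref{eq:Algra jets}, \eqref{eq:Modu jets}) carry out essentially this symbol identification when $M=T^*X$, which is precisely the model for your $\bV^{(0)}_L$ computation.

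One small point worth making explicit: the character $(b,u)\mapsto u^{-1}$ a priori describes half-\emph{forms}; the reason you land on half-\emph{densities} $|\Omega|_L^{1/2}$ is that under the isomorphism $\MP(n)\cong P(n)\times\{\pm1,\pm i\}$ the lift with trivial $\{\pm1,\pm i\}$-component corresponds to $z=|\det b|^{1/2}$, so $z^{-1}=|\det b|^{-1/2}$ is the half-density cocycle. This is implicit in your appeal to Lemma~\ref{lemma:coho cocy sp4}(a) but deserves a sentence.
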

We denote this class by $\exp(\frac{\pi i}{2} \mu(L)).$ Note that $\mu(L)$ can be chosen as a $\bZ$-valued cocycle only if $2c_1(M)=0.$ 
\subsection{The asymptotic construction of H\"{o}rmander and Maslov}\label{ss:asym Hor Maslov} As we have seen in \ref{sss:sheaf BL}, the oscillatory module $\cV^\bullet_L$ is induced from the module of forms with coefficients in ${\widehat{\widehat{\bV}}}.$ But it is the twisted version of the latter module that serves as an asymptotic version of the classical construction of Lagrangian distributions with wave front $L.$

Put
\begin{equation}\label{eq:curW 00}
\bV_{L, \bK} =\bK\fotimes \bV_L= \{\sum _{k=0}^\infty  e^{\frac{1}{i\hbar}c_k} v_k |v_k \in  \bV_L ;\, c_k\in \bR;\, c_k \to \infty\}  
\end{equation} 
%\begin{equation}\label{eq:curW 10}
%\bV_{L,\Lambda} =\Lambda\fotimes \bV_L= \{\sum _{k=0}^\infty  e^{\frac{1}{i\hbar}c_k} v_k | \in  \bV_L ;\, c_k\geq 0;\, c_k \to \infty\}
%\end{equation} 
\begin{definition}\label{dfn:VLKalfa}
Assume $M=T^*X.$ Let $\bV_{L,\bK}^{\eta}$ be the twist of the sheaf $ \bV_{L, \bK}$ by the \v{C}ech cohomology class $\exp(-\frac{1}{i\hbar}\eta) \in H^1(L,\exp(\frac{1}{i\hbar}\bR))$ where $\eta$ is the class of the form $\xi dx|L.$
\end{definition}
Let $X=\cucup U_\alpha$ is a small open cover. Let $L=\cucup W_\gamma$ be a refinement of the cover $L=\cucup (T^*U_\alpha\cap L).$ In particular, a choice is made of $\gamma\mapsto\alpha= \alpha(\gamma)$ such that $W_\gamma\subset T^*U_\alpha\cap L.$
\subsubsection{Quantization procedure}\label{sss:quantization proced} First let us review our deformation quantization picture in the case $M=T^*X.$ First, we have the sheaf of algebras $\bA_{T^*X}.$ It can be described by products $*_\alpha$ on $C^\infty (T^*U_\alpha)[[\hbar]]$  
\begin{equation}\label{eq:star fun A}
a*_\alpha b=\sum_{k=0}^\infty (i\hbar)^k P_{\alpha,k} (a,b)
\end{equation}
and by transition functions
\begin{equation}\label{eq:trans fun A}
G_{\alpha\beta}(a)=\sum_{k=0}^\infty (i\hbar)^k T_{\alpha \beta, k}(a)
\end{equation}
where $P_{\alpha,k}$ are bilinear bidifferential expressions, $T_{\alpha\beta,k}$ are differential operators, $P_{\alpha,0}(f,g)=fg,$ $ P_{\alpha,1}(f,g)=\frac{1}{2}\{f,g\},$ and $T_{\alpha\beta,0}(f)=f.$ One has $G_{\alpha\beta}(a*_\beta b)=G_{\alpha\beta}(a)*_\alpha G_{\alpha\beta}(b).$ Actually $G_{\alpha\beta}$ can be made the identity automorphisms, but this is not necessarily the most natural choice.

The sheaf of modules $\bV^\eta_L$ is described by the action 
\begin{equation}\label{eq:star fun V}
a*_\gamma f=\sum_{k=0}^\infty (i\hbar)^k Q_{\gamma,k} (a,f)
\end{equation}
where $f\in |\Omega|^{\frac{1}{2}}(W_\gamma)$ and $a\in C^\infty (U_{\alpha(\gamma)}),$ and by the transition functions
\begin{equation}\label{eq:trans fun V}
H_{\gamma\delta}(f)=\exp (-\frac{1}{i\hbar}\eta_{\gamma\delta}) \sum_{k=0}^\infty (i\hbar)^k S_{\gamma\delta, k}(f)
\end{equation}
where $Q_{\gamma,k}$ are bidifferential and $S_{\gamma\delta,k}$ are differential. Moreover, $Q_{\gamma\delta,0}(a,f)=af$ and 
\begin{equation}\label{eq:TWIST}
S_{\gamma\delta,0}(f)=\exp(\frac{\pi i}{2}\mu_{\gamma\delta}(L)) f.
\end{equation} 
One has
$$a*_\gamma(b*_\gamma f)=(a*_{\alpha(\gamma)} b)*_\gamma f$$
and
$$S_{\gamma\delta}(a*_\delta f)=T_{\alpha(\gamma)\alpha(\delta)}(a)*_\gamma S_{\gamma\delta}(f)$$
Again, all higher $S_{\gamma\delta, k}$ can be made zero, but this is not the most natural choice.

Let $C^\infty_{\rm{poly}}$ denote functions on $T^*X$ that are polynomial on fibers. {\em A quantization procedure} is the following.

1) For any $\alpha,$ a map 
\begin{equation}\label{eq:quantiza 0}
{\rm{Op}}^\alpha_\hbar: C^\infty_{\rm{poly}} (T^*(U_\alpha) )\to \cD(U_\alpha, |\Omega|_X^{\frac{1}{2}}) 
\end{equation}
such that
$${\rm{Op}}^\alpha_\hbar (a) {\rm{Op}}^\alpha_\hbar (b)= {\rm{Op}}^\alpha_\hbar (a*_\alpha b)$$
and
$${\rm{Op}}^\alpha_\hbar (G_{\alpha\beta }(a))={\rm{Op}}^\beta (a)$$
on $U_\alpha\cap U_\beta.$ 
(We can ask for exact equalities, not for asymptotic equalities like we use below, when $a$ and $b$ are polynomial).

2) A map 
\begin{equation}\label{eq:quantiza 1}
u^\gamma _\hbar: |\Omega|^{\frac{1}{2}}_{c} (W_\gamma) \to |\Omega|^{\frac{1}{2}}_{c} ( U_{\alpha(\gamma)})
\end{equation}
for all $\hbar>0,$ such that
$$\Op^{\alpha(\gamma)}_\hbar (a) u^\gamma_\hbar (f) - \sum_{k=0}^N  (i\hbar)^k u^\gamma_\hbar (Q_{\gamma,k} (a,f))=O(h^{N+1})$$
and
$$u^\gamma (f)-\sum _{k=0}^N (i\hbar)^k u^\delta_\hbar (S_{\gamma,\delta,k}(f)) = O(h^{N+1})$$
for all $N.$

Let us recall how a quantization procedure is carried out. For every $\gamma$ choose a {\em phase function} for $L\cap W_\gamma$ as follows. Let $\theta=(\theta_1,\ldots,\theta_k$ be a coordinate system on $\bR^k.$ Choose a coordinate system $x=(x_1,\ldots,x_n)$ on $U_{\alpha(\gamma)}$. Choose {\em a phase function} for $L\cap W_\gamma$, {\em i.e.} a function $\varphi(x,\theta)$ such that 
\begin{equation}\label{eq:Hormander}
L\cap W_\gamma =\{(\xi,x)|\exists \theta\; {\rm{ such}}\; {\rm{that}} \;\xi=\varphi_x (x,\theta)\; {\rm{and}}\; \varphi_\theta(x,\theta)=0\}
\end{equation}
Here $\varphi_x$ and $\varphi_\theta$ stand for partial derivatives. We assume that the $n\times (n+k)$ matrix $(\varphi_{xx}, \varphi_{x\theta})$ is nondegenerate. 
\begin{example}\label{ex:phase fn dim 1}
Let $n=1.$ Assume that $L=\{\xi=\varphi '(x)\}.$ Then we can choose $k=0$ and $\varphi=\varphi(x).$ Now let $L=\{x=-\psi'(\xi)\}.$ Then we can take $k=1$ and $\varphi(x,\theta)=x\theta+\psi(\theta).$ 
\end{example}
\begin{example}\label{ex:phase fn dim n}
More generally, one can always subdivide the coordinates into two groups and write $x=(x_1,x_2);$ $\xi=(\xi_1,\xi_2)$ so that $L\cap W_\gamma$ will be of the form
\begin{equation}\label{eq: Lagr gen form loc}
\xi_1=F_{x_1}(x_1,\xi_2);\; x_2=-F_{\xi_1}(x_1,\xi_2)
\end{equation}
In this case one can take $\varphi(x_1,x_2,\theta)=x_2\theta+F(x_1,\theta).$
\end{example} 
Note that the condition that the matrix of second derivatives is nondegenerate means that $\theta$ in \eqref{eq:Hormander} is uniqueand therefore $L\cap W_\gamma$ can be identified with $\{(x,\theta)|\varphi_\theta (x,\theta)=0\}.$  (To do that, one may need to pass to a finer open cover). Moreover, we can choose $n$ out of $n+k$ coordinates $x,\theta$ so that they will be coordinates on $\{\varphi_\theta =0.$ Namely, we can take any $n$ coordinates such that the corresponding square submatrix of $(\varphi_{xx}, \varphi_{x\theta})$ is nondegenerate. Denote these coordinates by $z$ and the other $k$ coordinates by $\zeta.$ Choose a procedure for extending functions $f(z)$ to functions on $\{(x,\theta)\}.$ Namely, extend $f(z)$ to $f(z)\rho(z')$ where $\rho$ is a function with small support near zero and $\rho(z')=0.$ 

Given a phase function and a compactly supported half-form $f=f(z)|dz|^{\frac{1}{2}},$ define $u^\gamma_\hbar(f)$ as follows. Denote by $f(x,\theta)$ the extension of $f(z)$ as above. Then define
\begin{equation}\label{eq:Hormander formula}
u_\hbar (f) = \frac{e^{\frac{-\pi i k}{4}}}{(2\pi\hbar)^{\frac{k}{2}}} \int e^{-\frac{\varphi(x,\theta)}{i\hbar}} f(x,\theta) d\theta |dx|^{\frac{1}{2}}
\end{equation} 
For the sake of completeness let us outline the proof of the fact that this is indeed a quantization procedure as described above (it is contained essentially in \cite{GS} and \cite{H}, as well as in \cite{NT3}).

First, as proven in \cite{H}, any two local phase functions differ by a coordinate change
$$
\varphi(x,\theta)\mapsto \varphi(g(x), h(x,\theta))
$$
followed by 
$$
\varphi(x,\theta)\mapsto \varphi(x,\theta)+\theta_1^2
$$
Here $\theta_1$ is an extra variable, possibly mutlidimensional, and $\theta_1^2$ stands for the sum of squares of variables. So we can assume that our local phase functions are as in Example \ref{ex:phase fn dim n}, possibly with $\theta_1^2$ added. We have two choices of subdivision $x=(x_1,x_2).$ Namely, for $W_\gamma$ we will have 
$$x^\gamma _1=(x_1,x_2);\;x^\gamma_2=(x_3,x_4);$$ 
for 
$W_\delta,$
$$\;x^\delta_1=(x_1,x_3);\;x^\delta_2=(x_2,x_4).$$
Let $F_\gamma(x_1,x_2,\xi_3,\xi_4)$ and $F_\delta(x_1,x_3,\xi_2,\xi_4)$ be functions as in Example \ref{ex:phase fn dim n}. Let us look for functions $f_\gamma$ and $f_\delta$ such that \eqref{eq:Hormander formula} will give the same answer for the charts $W_\gamma$ and $W_\delta.$ 
\begin{equation}\label{eq:Hormander formula compar}
\frac{e^{-\frac{\pi i}{4} (k_3+k_4)}}{(2\pi\hbar)^{\frac{k_3+k_4}{2}}} \int e^{-\frac {1}{i\hbar}(x_3\xi_3+x_4\xi_4 +F_\gamma (x_1,x_2,\xi_3,\xi_4)} f_\gamma (x_1,x_2,\xi_3,\xi_4)d\xi_3d\xi_4=
\end{equation} 
$$
=\frac{e^{-\frac{\pi i}{4} (k_2+k_4)}}{(2\pi\hbar)^{\frac{k_2+k_4}{2}}} \int e^{-\frac {1}{i\hbar}(x_2\xi_2+x_4\xi_4 +F_\delta (x_1,x_3,\xi_2,\xi_4)} f_\delta (x_1,x_3,\xi_2,\xi_4)d\xi_2d\xi_4
$$
Applying the inverse Fourier thansform we get 
\begin{equation}\label{eq:compare gamma delta}
e^{-F_\gamma}f_\gamma = \frac{e^{-\frac{\pi i}{4} (k_2-k_3)}}{(2\pi\hbar)^{\frac{k_2+k_3}{2}}} \int e^{\frac {1}{i\hbar}(-x_2\xi_2+x_3\xi_3-F_\delta})f_\delta d\xi_2 dx_3
\end{equation}
Compute the right hand side by the stationary phase method. The critical points satisfy
\begin{equation}\label{eq:crit pts}
x_2=-\frac{\partial F_\delta}{\partial \xi_2};\; \xi_3=\frac{\partial F_{\delta}}{\partial x_3}
\end{equation}
In other words, the critical point $(\xi_2,x_3)$ is such that $(x_1,x_2,\xi_1,\xi_2)$ is in $L.$ 
\begin{equation}\label{eq:compare again}
f_\gamma =  \epsilon_{\gamma\delta} \exp(\frac{1}{i\hbar} ((x_3\xi_3-F_\delta)-(x_2\xi_2-F_\gamma))) {\rm{mod}}\hbar
\end{equation}
or
\begin{equation}\label{eq:compare again 2}
f_\gamma =  \epsilon_{\gamma\delta} \exp(\frac{1}{i\hbar} (\varphi_\delta-\varphi_\gamma)) {\rm{mod}}\hbar
\end{equation}
Here
\begin{equation}\label{eq:epsilon g d}
\epsilon_{\gamma\delta} = e^{-\frac{\pi i}{4} (k_2-k_3)} e ^{-\frac{\pi i}{4} (n_-(\gamma,\delta)-n_+(\gamma,\delta))}
\end{equation}
where $n_-(\gamma,\delta),$ resp. $n_+(\gamma,\delta),$ is the number of negative, resp. positive, eigenvalues of the matrix of second derivatives of $F_\delta$ with respect to variables $\xi_2$ and $x_3.$ We can re-write \eqref{eq:epsilon g d} as
\begin{equation}\label{eq:eps correct}
\epsilon_{\gamma\delta}=\exp{\frac{\pi i}{2}(n_+-k_2)}
\end{equation}
where, as above, $n_+$ is the number of positive eigenvalues of the matrix of second derivatives of $F_\delta$ in variables $x_2,\xi_3.$
\begin{example}\label{ex:eps gade } 
Let $F_\gamma(x)=\varphi(x)$ and $F_\delta (x,\theta)=x\theta-\psi(\theta)$ as in Example \ref{ex:phase fn dim 1}. Let us compute $\epsilon_{\gamma_\delta}.$ One has $k_2=1.$ If $\varphi_{xx}>0$ then $n_2=0$. If $\varphi_{xx}<0$ then $n_2=1.$ Therefore
$$\epsilon_{\gamma_\delta}=-1\;{\rm for}\; \varphi_{xx}>0;\; \epsilon_{\gamma_\delta}=0 \;{\rm{for}}\; \varphi_{xx}<0.$$

Now compute $\epsilon_{\delta\gamma}.$ One has $k_2=0.$ If $\varphi_{xx}>0$ then $n_2=1$. If $\varphi_{xx}<0$ then $n_2=0.$ Therefore
$$\epsilon_{\delta\gamma}=1\;{\rm{for}}\;\varphi_{xx}>0;\;\epsilon_{\delta\gamma}=0\;{\rm{for}}\;\varphi_{xx}<0.$$
\end{example}
Now note that $d\varphi_\gamma = \xi dx|L$ on $L\cap W_\gamma$ and $d\varphi_\delta = \xi dx|L$ on $L\cap W_\delta.$ Therefore, if $\eta_{\gamma\delta}=\varphi_\gamma-\varphi_\delta$ on  $L\cap W_\gamma\cap W_\delta,$ then $(\eta_{\gamma\delta})$ represents the cohomology class $\eta$ corresponding to the De Rham class of $\xi dx|L.$

On the other hand, a choice of a local presentation \eqref{eq: Lagr gen form loc} of $L$ determines a choice of lifting of transition isomorphisms as in \eqref{eq:lifting p t ij}. Indeed, in a tangent space $T_{(x,\xi)} L$ to a point of $L\cap W_\gamma,$ let $\fx,$ $\fxi$ be formal Darboux coordinates coming from some local coordinate system. Choose a presentation 
\begin{equation}\label{eq:TL in coords}
\fxi_1=A\fx_1+B\fxi_2;\; \fx_2=-C\fx_1-D\fxi_2
\end{equation}
Construct a symplectic matrix sending $L_0=\{\fxi_1=\fxi_2=0\}$ to $T_{(x,\xi)} L$ as follows. Let
\begin{equation}\label{eq:pick a pj}
p(A,B,C,D): (\fx_1, \fx_2, \fxi_1, \fxi_2)\mapsto  (\fx_1,\fx_2,A\fx_1+B\fx_2,C\fx_1+D\fx_2)
\end{equation}
and
\begin{equation}\label{eq:pick a pj 1}
F_{\fx_2} : (\fx_1, \fx_2, \fxi_1, \fxi_2)\mapsto  (\fx_1, -\fxi_2, \fxi_1, \fx_2)
\end{equation}
One has
\begin{equation}\label{eq:pick a pj 2}
T_{(x,\xi)} L=F_{\fx_2} p(A,B,C,D) L_0
\end{equation}
Note also that both factors of the right hand side extend automatically to elements in $\Sp^4.$ Indeed, one can replace $p(A,B,C,D)$ by the homotopy class of the path $p(tA,tB,tC,tD),$ $0\leq t\leq 1,$ and $F_{\fx_2}$ by the homotopy class of the path 
$$(\fx_1, \fx_2, \fxi_1, \fxi_2)\mapsto  (\fx_1,  \fx_2\cos t -\fxi_2 \sin t, \fxi_1, \fx_2 \sin t + \fxi_2 \cos t),\, 0\leq t\leq \frac{\pi}{2}$$
It is easy to see that the Maslov class $\mu$ corresponding to the lifted transition functions thus defined is inverse to the one defined by \eqref{eq:eps correct}.
%%%%%%%%%%%%%%%%%%%%%%%%%%%%%%%%%%%%%%%%%%%%%%%%%%%%%%%%%%%%%%%%%%%\eta_{\gamma\delta}%%%%%%%%%%%%%%%%%%%%%%%%%%%%%%%%%%%%%%%%%%%%%%%%%%%%%%%%%%%%%%%%%%%%%%%%%%%%%%%%%%%%%%%%%%%%%%%%%%%%%%%%%%%%%%%%%%%%%%%%%%%%%%%%%%%%%%%%%%%%%%%%%%%%%%%%%%%%%%%%%%%%%%%%%%%%%%%%%%%%%%%%%%%%%%%%%%%%%%%%%%%%%%%%%%%%%%%%%%%%%%%%%%%%%%%%%%%%%%%%%%%%%%%%%%%%%%%%%%%%%%%%%%%%%%%%%%%%%%%%%%%% 
\section{Appendix. Twisted $A_\infty$ modules and $A_\infty$ functors}\label{appendix:infty and twisted}
%\subsection{Differential graded categories of $A_\infty$ modules}\label{ss:A infty categ of A infty mods}
\subsection{Differential graded categories of $A_\infty$ functors}\label{ss:A infty categ of A infty functs}
Let $A$ and $B$ be two differential graded (DG) categories. For two maps
$$\bof,\bog:\Ob(A)\to \Ob(B)$$
define
$${\overline{C}}^\bullet_{\bof,\bog} (A,B)= \prod _{{n\geq 1};x_0,\ldots,x_n} \Hom^\bullet (A(x_0,x_1)\otimes\ldots\otimes A(x_{n-1},x_n)[n], B(f(x_0), g(x_n)))$$
where the product is taken over all $x_0,\ldots, x_n\in \Ob(A).$ Put
\begin{equation}\label{eq:full Hoch Ainf}
C^\bullet _{\bof,\bog}(A,B)=\prod_{x_0\in \Ob(A)} B(f(x_0), g(x_0)) \times {\overline{C}}^\bullet_{\bof,\bog} (A,B)
\end{equation}
Define the differential $d$ by
\begin{equation}\label{eq:diffl inside}
(d_1\varphi)(a_1,\ldots,a_{n+1})=\sum_{j=1}^n (-1)^{\sum_{p\leq j} (|a_p|+1)} \varphi (a_1,\ldots, a_ja_{j+1},\ldots, a_{n+1})
\end{equation}
($d_1=0$ on the first factor of \eqref{eq:full Hoch Ainf}); 
\begin{equation}\label{eq:diffl inside 1}
(d_2\varphi)(a_1,\ldots,a_{n})=\sum_{j=1}^n (-1)^{\sum_{p< j} (|a_p|+1)} \varphi (a_1,\ldots, d_Aa_j,\ldots, a_{n+1})+d_B\varphi(a_1,\ldots,a_n)
\end{equation}
Define
$$d=d_1+d_2$$
Also define the product 
$${\overline{C}}^\bullet_{\bof,\bog} (A,B) \otimes {\overline{C}}^\bullet_{\bog,\boh} (A,B) \to {\overline{C}}^\bullet_{\bof,\boh} (A,B)$$
by
\begin{equation}\label{dfn:A infty cup}
(\varphi\smile\psi)(a_1,\ldots,a_{m+n})=(-1)^{|\psi| \sum_{j=1}^m (|a_j|+1)} \varphi(a_1,\ldots,a_m)\psi (a_{m+1},\ldots, a_{m+n})
\end{equation}
(Note that here $m$ or $n$ can be zero, which corresponds to the case of one or both factors lying in the fisrt factor of \eqref{eq:full Hoch Ainf}).
\begin{definition}\label{dfn:A ifty fuct}
An $A_\infty$ functor $f:A\to B$ is a map $f:\Ob(A)\to\Ob(B)$ together with an element $f$ of degree $1$ in ${\overline{C}}^\bullet_{\bof,\bof} (A,B) $ such that 
$$df+f\smile f=0$$
A {\em curved} $A_\infty$ functor is defined the same way but now the cochain $f$ is allowed to be in ${C}^\bullet_{\bof,\bof} (A,B). $
\end{definition}
\begin{definition}\label{dfn:Cat of Ainfty frs}
Define the DG category ${\bf C}(A,B)$ as follows. Let objects be $A_\infty$ functors $\bof:A\to B;$ set
$${\bf C}^\bullet (A,B)(f,g) = C^\bullet_{\bof,\bog}(A,B)$$
with the differential 
$$\delta\varphi= d\varphi +f\smile \varphi - (-1)^{|\varphi|} \varphi \smile f$$
We define the composition to be the cup product.

Also, define the DG category ${\bf C}_+(A,B)$ the same way as above but with objects being curved $A_\infty$ functors.
\end{definition}
\subsubsection{Equivalence of objects in a DG category}\label{sss:ttt} Let ${\bfC}_1$ be the category with two objects $0$ and $1$ and two mutually inverse morphisms $g:0\to 1$ and $g^{-1}:1\to 0.$  
\begin{definition}\label{dfn:equiv of objs} Two objects $\bfx,\,\bfy$ of a DG category $C$ are equivalent if there is an $A_\infty$ functor $\bfC_1\to C$ sending $0$ to $\bfx$ and $1$ to $\bfy.$
\end{definition}
\begin{lemma}\label{lemma:equiv taki equiv} The relation defined above is an equivalence relation.
\end{lemma}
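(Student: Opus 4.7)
My plan is to verify reflexivity, symmetry, and transitivity in turn by exhibiting $A_\infty$ functors $\bfC_1 \to C$ with the prescribed object assignments. First, reflexivity $\bfx \sim \bfx$ is witnessed by the constant $A_\infty$ functor $\Delta_{\bfx}$ sending both $0$ and $1$ to $\bfx$, with $(\Delta_{\bfx})_1(a) = \id_{\bfx}$ on every generating morphism $a \in \{g, g^{-1}, \id_0, \id_1\}$ of $\bfC_1$ and all higher components zero; the equation $d\Delta_{\bfx} + \Delta_{\bfx} \smile \Delta_{\bfx} = 0$ then collapses to the unit axioms in $C$. For symmetry, $\bfC_1$ carries an involutive DG automorphism $\sigma$ interchanging $0 \leftrightarrow 1$ and $g \leftrightarrow g^{-1}$, and given $F$ witnessing $\bfx \sim \bfy$, the precomposition $F \circ \sigma$ is an $A_\infty$ functor witnessing $\bfy \sim \bfx$.

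Transitivity is the substantive step and the main obstacle. Given $A_\infty$ functors $F^{(01)}, F^{(12)} : \bfC_1 \to C$ with $F^{(01)}(0) = \bfx$, $F^{(01)}(1) = F^{(12)}(0) = \bfy$, and $F^{(12)}(1) = \mathbf{z}$, I would package them as a single $A_\infty$ functor out of a larger DG model of a ``triple equivalence''. Let $\bfC_2$ have three objects $0, 1, 2$ and formally invertible morphisms $g_{ij}: i \to j$ subject to $g_{ij} g_{jk} = g_{ik}$; its three full DG subcategories on pairs of objects are each copies of $\bfC_1$. I then need to show that every matching pair $(F^{(01)}, F^{(12)})$ (agreeing on the middle object $\bfy$) lifts to an $A_\infty$ functor $G: \bfC_2 \to C$; restricting $G$ to the subcategory on $\{0, 2\}$ yields the desired witness of $\bfx \sim \mathbf{z}$.

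The lift is to be constructed by induction on the arity $n$ of the components $G_n$. On the subcategories $\{0,1\}$ and $\{1,2\}$ the data is fixed by $F^{(01)}, F^{(12)}$; on the third pair I set $G_1(g_{02}) = F^{(12)}_1(g) \cdot F^{(01)}_1(g)$ (closed, since each factor is) and symmetrically for $G_1(g_{20})$. At each subsequent stage the $A_\infty$-functor equation prescribes $dG_n$ to equal an explicit cocycle expressed in the lower-arity data, and the needed exactness is ensured by the fact that every word of length $\geq 2$ in the generators of $\bfC_2$ reduces via the relations $g_{ij}g_{jk} = g_{ik}$ to a single generator, so the relevant part of the bar complex of $\bfC_2$ is formally contractible. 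The hard part will be the sign bookkeeping and producing explicit primitives at each inductive stage; an equivalent, arguably cleaner, implementation is to recognize $A_\infty$ functors $\bfC_1 \to C$ as Maurer--Cartan elements in an auxiliary DG Lie algebra assembled from $C(\bfx, \bfx) \oplus C(\bfx, \bfy) \oplus C(\bfy, \bfx) \oplus C(\bfy, \bfy)$, in which language transitivity becomes concatenation of MC elements up to gauge equivalence.
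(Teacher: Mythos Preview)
Your overall architecture for transitivity matches the paper's: introduce the contractible groupoid $\bfC_2$ on three objects, build an $A_\infty$ functor $\bfC_2\to C$ extending the two given ones, and restrict along the inclusion of $\{0,2\}$. The gap is in how you justify the inductive extension. At each stage you must solve $d_C\,G_n(\text{tuple})=O_n$, where $O_n$ is assembled from the lower $G_j$'s; the differential here is that of the \emph{target} $C$, so what is needed is that $O_n$ be a $d_C$-coboundary in the relevant $C(\bfx,\mathbf z)$. Your stated reason---``the relevant part of the bar complex of $\bfC_2$ is formally contractible''---is a statement about the source and does not by itself force a cocycle in $C$ to bound. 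The relation $g_{ij}g_{jk}=g_{ik}$ is exactly what makes $\bfC_2$ the amalgamated free product of the two copies of $\bfC_1$; it does not trivialize an obstruction living in $C$.

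The paper circumvents this by a different mechanism. The pair $(F^{(01)},F^{(12)})$ is the same datum as a single \emph{strict} DG functor
\[
\Cbr\Br\,k[i_{01}\bfC_1]\;\ast_{k[1]}\;\Cbr\Br\,k[i_{12}\bfC_1]\;\longrightarrow\;C,
\]
since DG functors (unlike $A_\infty$ ones) out of a free product are just matching pairs. This free product is quasi-isomorphic to $k[i_{01}\bfC_1]\ast_{k[1]}k[i_{12}\bfC_1]\cong k[\bfC_2]$, so standard homotopy transfer produces an $A_\infty$ functor $k[\bfC_2]\to C$ with the required restrictions, independently of any exactness hypothesis on $C$. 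In other words, the acyclicity one actually needs is that of the comparison between the free product of cobar--bar resolutions and $k[\bfC_2]$, not of $\Br(k[\bfC_2])$ itself. Your closing Maurer--Cartan reformulation points in a workable direction but remains a sketch; the paper's free-product-plus-transfer argument is the clean replacement for your inductive step.
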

\begin{proof} Let $\bfC_2$ be the category with three objects $0,$ $1,$ $2$ and with unique morphism between any two objects. There are functors $i_{pq}:{\mathbf C}_1\to {\mathbf C}_2$ that send $0$ to $p$ and $1$ to $q$, $0\leq p<q\leq 2.$ If we have one equivalence between $\bfx$ and $\bfy$ and another between $\bfy$ and ${\mathbf z},$ then we have a functor 
\begin{equation}\label{eq:qe}
\Cbr\Br k[i_{01}\bfC_1] *_{k[1]} \Cbr\Br k[i_{12}\bfC_1] \to C
\end{equation}
that sends $0$ to $\bfx,$ $1$ to $\bfy,$ and $2$ to ${\mathbf z}.$ Here $*$ stands for free product of categories; for any category $\bfC,$ $k[\bfC]$ is its linearization, and $k[1]$ is the category with one object $1$ whose ring of endomorphisms is $k.$ But the left hand side of \eqref{eq:qe} is quasi-isomorphic to $ k[i_{01}\bfC_1] *_{k[1]}  k[i_{12}\bfC_1]\isomoto {\bfC}_2.$ By the standard transfer of structure \cite{KellerAinfty}, \cite{KoSoAinfty}, \cite{Merkulov}, we get an $A_\infty$ morphism ${\mathbf C}_2\to C$ that sends $0$ to $\bfx,$ $1$ to $\bfy,$ and $2$ to ${\mathbf z}.$ Composing it with $i_{02},$ we get an equivalence between $\bfx$ and ${\mathbf z}.$
\end{proof}
\begin{definition}\label{dfn:ekviv of A infty finct}
Two $A_\infty$ functors $A\to B$ are equivalent if they are equivalent as objects in ${\mathbf C}(A,B).$
\end{definition}
\subsubsection{The bar construction}\label{sss:bar cons}
The bar construction of a DG category $A$  is a DG cocategory $\Br (A)$ with the same objects where
$${\Br}(A)(x,y)=\bigoplus_{n\geq0}\bigoplus _{x_1,\ldots, x_n}A(x,x_1)[1]\otimes A(x_1,x_2)[1]\otimes \ldots \otimes A(x_n,x)[1]$$
with the differential
$$d=d_1+d_2;$$
$$d_1(a_1|\ldots | a_{n+1})=\sum_{i=1}^{n+1}\pm (a_1|\ldots|da_i|\ldots |a_{n+1});$$
$$d_2(a_1|\ldots | a_{n+1})=\sum_{i=1}^{n}\pm (a_1|\ldots|a_ia_{i+1}|\ldots |a_{n+1})$$
The second sum is taken over $n$-tuples $x_1,\ldots,x_n$ of objects of $A.$
The signs are $(-1)^{\sum_{j<i}(|a_i|+1)+1}$ for the first sum and $(-1)^{\sum_{j\leq i}(|a_i|+1)}$ for the second. The comultiplication is given by
$$\Delta(a_1|\ldots|a_{n})=\sum_{i=1}^{n-1}(a_1|\ldots|a_i)\otimes (a_{i+1}|\ldots |a_{n})$$
Dually, for a DG cocategory $B$ one defines the DG category $\Cbr(B)$. The DG category $\Cbr\Br(A)$ is a cofibrant resolution of $A$.

It is convenient for us to work with DG (co)categories without (co)units. For example, this is the case for $\Br(A)$ and $\Cbr(B)$ (we sum, by definition, over all tensor products with at least one factor). Let $A^+$ be the (co)category $A$ with the (co)units added, i.e. $A^+(x,y)=A(x,y)$ for $x\neq y$ and $A^+(x,x)=A(x,x)\oplus k\id _x.$ If $A$ is a DG category then $A^+$ is an augmented DG category with units, i.e. there is a DG functor $\epsilon: A^+\to k_{\Ob (A)}$. (For a set $I,$ $k_I$ is the DG category with the set of objects $I$ and with $k_I(x,y)=0$ for $x\neq y,$ $k_I(x,x)=k$). Dually, one defines the DG cocategory $k^{\Ob(B)}$ and the DG functor $\eta: k^{\Ob(B)} \to B^+$ for a DG cocategory $B$.

For DG (co)categories with (co)units, define $A\otimes B$ as follows: ${\rm{Ob}}(A\otimes B)={\rm{Ob}}(A)\times {\rm{Ob}}(B);$ $(A\otimes B)((x_1,y_1), (x_2, y_2))=A(x_1,y_1)\otimes B(x_2,y_2);$ the product is defined as $(a_1\otimes b_1)(a_2\otimes b_2)=(-1)^{|a_2||b_1|}a_1a_2\otimes b_1b_2$, and the coproduct in the dual way. This tensor product, when applied to two (co)augmented DG (co)categories with (co)units, is again a (co)augmented DG (co)category with (co)units: the (co)augmentation is given by $\epsilon\otimes \epsilon$, resp. $\eta\otimes \eta.$
\begin{definition}\label{dfn:tensor product without (co)units}
For DG categories $A$ and $B$ without units, put
$$A\otimes B=\Ker(\epsilon\otimes \epsilon: A^+\otimes B^+\to k_{\Ob (A)} \otimes k_{\Ob (B)}).$$
Dually, for DG cocategories $A$ and $B$ without counits, put
$$A\otimes B=\Coker(\eta\otimes \eta:k^{\Ob (A)}\otimes k^{\Ob (B)} \to A^+\otimes B^+).$$
\end{definition}
The following is standard (and straightforward).
\begin{lemma}\label{lemma:bar vs ainf}
There are natural bijections
$$\Ob {\mathbf C}(A,B)\isomoto \Hom  (\Cbr \Br (A), B);$$
$$\Ob {\mathbf C}_+(A,B)\isomoto \Hom (\Cbr \Br^+ (A), B)$$
\end{lemma}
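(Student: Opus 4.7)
The plan is to deduce both bijections from the standard bar--cobar adjunction for DG (co)categories, which I would set up in two stages. First, for any DG cocategory $C$ with $\mathrm{Ob}(C)=\mathrm{Ob}(A)$ and any DG category $B$, the universal property of the cobar construction gives
$$\Hom(\Cbr(C),B)\isomoto \{(\bof,\tau)\mid \bof:\mathrm{Ob}(C)\to\mathrm{Ob}(B),\ \tau\in Z^1\mathrm{MC}_{\bof}(C,B)\},$$
where a Maurer--Cartan element $\tau$ is a family of degree-one maps $C(x,y)\to B(\bof(x),\bof(y))$ (compatible with differentials in the appropriate sense) satisfying $d\tau+\tau\smile\tau=0$, with $\smile$ built from the coproduct of $C$ and the composition of $B$. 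This is essentially the definition of $\Cbr$: a DG functor out of $\Cbr(C)$ is freely generated by its values on the shifted generators $C[-1]$, and the Leibniz/chain-map conditions for the generator-level map become exactly the Maurer--Cartan equation.

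Second, I would take $C=\Br(A)$ and unpack what a Maurer--Cartan element is. Because $\Br(A)(x,y)=\bigoplus_{n\geq 1,\,x_1,\ldots,x_{n-1}} A(x,x_1)[1]\otimes\cdots\otimes A(x_{n-1},y)[1]$, a degree-one map $\tau:\Br(A)\to B$ consists exactly of a sequence of maps
$$\tau_n: A(x_0,x_1)\otimes\cdots\otimes A(x_{n-1},x_n)[n]\to B(\bof(x_0),\bof(x_n))$$
of degree one, which is precisely an element of $\overline{C}^{\,1}_{\bof,\bof}(A,B)$. I would then check, unwinding the definitions of $d_1$, $d_2$ in \eqref{eq:diffl inside}--\eqref{eq:diffl inside 1} and of the cup product \eqref{dfn:A infty cup}, that the coproduct on $\Br(A)$ (splitting a word into two sub-words) and the internal differential of $\Br(A)$ (contracting two adjacent factors plus the internal differential of $A$) match term-by-term the total differential $d=d_1+d_2$ on $\overline{C}^{\bullet}_{\bof,\bof}(A,B)$. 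Consequently the Maurer--Cartan equation $d\tau+\tau\smile\tau=0$ coincides with the $A_\infty$-functor equation $df+f\smile f=0$ of Definition~\ref{dfn:A ifty fuct}. This yields the first bijection.

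For the curved version I would apply the same reasoning with $\Br^+(A)$ in place of $\Br(A)$. The only new piece of data is the counit component $k^{\mathrm{Ob}(A)}\to\Br^+(A)$, which under $\tau$ contributes an additional family of degree-one elements $\tau_0\in B(\bof(x_0),\bof(x_0))$, one per object $x_0$. This is exactly the distinguished factor $\prod_{x_0}B(\bof(x_0),\bof(x_0))$ in \eqref{eq:full Hoch Ainf}, so Maurer--Cartan elements on $\Br^+(A)$ are in bijection with curved $A_\infty$-functors in the sense of Definition~\ref{dfn:A ifty fuct}.

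The main obstacle is purely bookkeeping: matching the Koszul signs arising from the shifts $[1]$ in $\Br(A)$ and $[n]$ in $\overline{C}^{\,\bullet}_{\bof,\bog}(A,B)$ with the signs in \eqref{eq:diffl inside}, \eqref{eq:diffl inside 1}, and \eqref{dfn:A infty cup}, and verifying that the comultiplication/contraction on $\Br(A)$ splits into the two halves $d_1$ (composition in $A$) and the cup-with-$\tau$ terms (composition in $B$) in the right way. Once the conventions are aligned on a small example (say $n\leq 2$), all higher-$n$ identities follow formally, giving the claimed natural bijections.
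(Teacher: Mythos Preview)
Your proposal is correct and is essentially the same argument as the paper's. The paper factors the bijection through an intermediate notion of convolution DG categories, proving $\Ob{\mathbf C}(A,B)\cong\Ob\bConv(\Br(A),B)$ (Lemma~\ref{lemma:adj conv 0}) and $\Ob\bConv(\bB,C)\cong\Hom(\Cbr(\bB),C)$ (Lemma~\ref{lemma:adj conv}); your two stages are precisely these two identifications, written out directly without naming the intermediate object $\bConv$.
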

In other words, an $A_\infty$ functor $A\to B$ is the same as a DG functor $\Cbr \Br (A) \to B.$ A curved $A_\infty$ functor $A\to B$ is the same as a DG functor $\Cbr \Br^+ (A) \to B.$
\subsubsection{The adjunction formula}\label{sss:The adjunction}
\begin{lemma}\label{lemma:adjunct} There are natural bijections
$$\Ob {\mathbf C}(A, {\mathbf C}(B,C))\isomoto \Hom_{{\rm {DGcat}}} (\Cbr (\Br^+(A)\otimes \Cbr(B)), C)$$
$$\Ob {\mathbf C}_+(A, {\mathbf C}_+(B,C))\isomoto \Hom_{{\rm {DGcat}}} (\Cbr (\Br^+(A)\otimes \Cbr^+(B)), C)$$
\end{lemma}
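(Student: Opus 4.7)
The proof proceeds by iterating Lemma~\ref{lemma:bar vs ainf} and exploiting the fact that $\mathbf{C}(B,C)$ plays the role of an internal Hom in the DG-category of DG categories. I focus on the first (non-curved outer) bijection; the second is entirely parallel, with $\Br$ replaced by $\Br^+$ throughout. First I would rewrite the left-hand side via Lemma~\ref{lemma:bar vs ainf}: an $A_\infty$ functor $A\to\mathbf{C}(B,C)$ is the same datum as a DG functor $\Phi\colon\Cbr\Br(A)\to\mathbf{C}(B,C)$. The task then reduces to identifying such $\Phi$ with DG functors $\Cbr(\Br^+(A)\otimes\Cbr(B))\to C$.

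To do this I would unpack $\Phi$ explicitly. On each object $x\in\Ob(A)$, $\Phi(x)$ is an object of $\mathbf{C}(B,C)$, i.e.\ an $A_\infty$ functor $\bof_x\colon B\to C$, equivalently by Lemma~\ref{lemma:bar vs ainf} a twisting cochain from $\Br(B)$ to $C$. On morphisms coming from $\Cbr\Br(A)$, $\Phi$ takes values in morphisms of $\mathbf{C}(B,C)$, which by \eqref{eq:full Hoch Ainf} are themselves cochains on $\Br^+(B)$ with values in $C$; the ``$+$'' here is forced because morphisms in $\mathbf{C}(B,C)$ carry a length-zero component even when their source and target are uncurved. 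Assembling these pieces, the total data of $\Phi$ is a collection of multilinear maps indexed by pairs $(n,m)$ of composable tuples of morphisms in $A$ and in $B$, and I would repackage it as a morphism of graded quivers $\Br^+(A)\otimes\Cbr(B)\to C$. The standard adjunction between twisting cochains $K\to D$ from a DG cocategory $K$ to a DG category $D$ and DG functors $\Cbr K\to D$ then converts this into the desired DG functor, giving the bijection in the statement.

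The hard part will be verifying that the Maurer--Cartan equation $dF+F\smile F=0$ for an $A_\infty$ functor $F\colon A\to\mathbf{C}(B,C)$, with $\smile$ the cup product of \eqref{dfn:A infty cup} built from the diagonal of $\Br(B)$, matches up exactly with the twisting-cochain equation on $\Br^+(A)\otimes\Cbr(B)$ carrying the coproduct induced from both tensor factors. This requires a careful sign and combinatorial check: the coproduct on the $A$-factor corresponds to concatenation of morphism tuples in $A$ under $\smile$, while the coproduct on the $B$-factor corresponds to the internal composition used inside $\mathbf{C}(B,C)$. Once the signs are tabulated, the asymmetry ``$\Br^+$ on $A$ versus $\Cbr$ on $B$'' becomes automatic: the length-zero component of morphisms in $\mathbf{C}(B,C)$ forces the $A$-factor to include its unit, while the absence of a curvature term in the functors $\bof_x$ prevents the $B$-factor from doing so. Adding units on both sides simultaneously is exactly the curved case, which then yields the second bijection by the same argument.
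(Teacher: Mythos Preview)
Your approach is correct and in substance the same as the paper's; the difference is one of packaging. The paper introduces convolution DG categories $\bConv(\bB,C)$ (Definition~\ref{dfn:convo cat}) and factors the bijection into three clean steps: $\mathbf C(A,B)\cong\bConv(\Br(A),B)$ (Lemma~\ref{lemma:adj conv 0}), $\Hom_{\rm DGcat}(\Cbr(\bB),C)\cong\Ob\bConv(\bB,C)$ (Lemma~\ref{lemma:adj conv}), and the internal-hom identity $\bConv(\bB_1,\bConv(\bB_2,C))\cong\bConv(\bB_1\otimes\bB_2,C)$ (Lemma~\ref{lemma:adj conv 1}). Chaining these gives the statement. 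Your direct unpacking of a DG functor $\Cbr\Br(A)\to\mathbf C(B,C)$ into its object- and morphism-level components is exactly the content of these three lemmas, done by hand; in particular your ``hard part'' (matching the two Maurer--Cartan equations via the diagonal on both tensor factors) is precisely Lemma~\ref{lemma:adj conv 1}, which the paper also does not spell out but attributes to Keller. The convolution formalism buys modularity and makes the symmetry in the two tensor factors manifest through the paper's non-unital tensor product (Definition~\ref{dfn:tensor product without (co)units}); your approach is more explicit about where the asymmetric $+$ comes from. Either route is fine.
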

This (as well as Lemma \ref{lemma:bar vs ainf}) follows from Lemmas \ref{lemma:adj conv 0}, \ref{lemma:adj conv},  \ref{lemma:adj conv 1} below.
\subsubsection{Convolution categories}\label{sss:convocats} Let $\bB$ be a DG cocategory and $C$ a DG category. For 
$$\bof,\bog: \Ob(\bB)\to \Ob(C),$$
put
$${\overline{\Conv}}_{\bof,\bog} (\bB,C)=\prod_{x,y\in \Ob(\cB)} \Hom^\bullet(\bB(x,y), C(fx,gy))$$ 
$${{\Conv}}_{\bof,\bog} (\bB,C)=\prod_{x,y\in \Ob(\cB)} \Hom^\bullet(\bB^+(x,y), C(fx,gy))$$ 
The differential $d$ is the usual one (induced by the differentials on $\bB$ and $C$). Define the product
$$\Conv_{\bof,\bog}(\bB,C)\otimes \Conv_{\bog,\boh}(\bB,C)\to\Conv_{\bof,\boh}(\bB,C)$$ 
$$\oConv_{\bof,\bog}(\bB,C)\otimes \oConv_{\bog,\boh}(\bB,C)\to\oConv_{\bof,\boh}(\bB,C)$$ 
as follows. If 
$$\Delta b=\sum b^{(1)}\otimes b^{(2)}$$
then
\begin{equation}\label{eq:mult conv}
(\varphi\smile \psi)(b)=\sum (-1)^{|\psi||{b^{(1)}|}}  \varphi(b^{(1)}) \psi(b^{(2)} )
\end{equation}
\begin{definition}\label{dfn:convo cat}
Define DG categories $\bConv(\bB,C)$ and $\bConv_+(\bB,C)$ as follows. Their objects are maps $\bof:\Ob(\bB)\to \Ob(C)$ together with elements $f$ of degree one in $\oConv_{\bof,\bof}(\bB,C)$ (resp. in $\Conv_{\bof,\bof}(\bB,C)$)
satisfying
$$df+f\smile f=0.$$
The complex of morphisms between $f$ and $g$ is $\Conv_{\bof,\bog}(\bB,C)$ with the differential 
$$\delta\varphi=d\varphi+f\smile\varphi-(-1)^{|\varphi|} \varphi \smile f$$
The composition is the cup product \eqref{eq:mult conv}.
\end{definition}
\begin{lemma}\label{lemma:adj conv 0} 
There are natural isomorphisms of DG categories
$$\bC(A,B)\isomoto \bConv(\Br(A),B)$$
$$\bC_+(A,B)\isomoto \bConv_+(\Br(A),B)$$
\end{lemma}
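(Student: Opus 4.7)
The plan is to exhibit the bijection at the level of graded data, then verify compatibility with all three pieces of structure (differential, product, Maurer--Cartan condition), and finally extend to morphism complexes so that the identification upgrades to an isomorphism of DG categories.

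First, I would set up the underlying bijection of graded spaces. By the universal property of the direct sum, a degree-preserving map
\[
\Br(A)(x_0,x_{n+1})\longrightarrow B(\bof x_0,\bog x_{n+1})
\]
is the same thing, component by component, as a collection of maps
\[
A(x_0,x_1)[1]\otimes\cdots\otimes A(x_n,x_{n+1})[1]\longrightarrow B(\bof x_0,\bog x_{n+1})
\]
for each $n\geq 1$ and each chain of objects $x_0,\dots,x_{n+1}$. Desuspending each factor identifies the latter with $\Hom\bigl(A(x_0,x_1)\otimes\cdots\otimes A(x_n,x_{n+1})[n],\,B(\bof x_0,\bog x_{n+1})\bigr)$. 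Taking the product over all chains therefore gives a canonical isomorphism of graded vector spaces
\[
\oConv_{\bof,\bog}(\Br(A),B)\;\cong\;\overline{C}^{\,\bullet}_{\bof,\bog}(A,B),
\]
and likewise $\Conv_{\bof,\bog}(\Br^+(A),B)\cong C^{\,\bullet}_{\bof,\bog}(A,B)$, since attaching the counit to $\Br(A)$ at an object $x$ adds a summand whose image in $B$ is exactly the $n=0$ factor $B(\bof x,\bog x)$ of $C^{\,\bullet}_{\bof,\bog}$.

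Next I would match the differentials. The differential on $\Br(A)$ is $d=d_1+d_2$, where $d_1$ is induced by the differentials of $A$ and $d_2$ by the composition of $A$. Under the adjunction above, dualizing $d_2$ produces exactly the formula \eqref{eq:diffl inside} for the Hochschild-type differential that composes adjacent entries, while dualizing $d_1$ together with the internal differential of $B$ produces \eqref{eq:diffl inside 1}. The signs agree because both sides use the Koszul convention on the shifted tensor factors. Similarly the cup product: the coproduct on $\Br(A)$ splits $(a_1|\cdots|a_{m+n})$ into $(a_1|\cdots|a_m)\otimes(a_{m+1}|\cdots|a_{m+n})$, and feeding this into the convolution formula \eqref{eq:mult conv} reproduces the cup product \eqref{dfn:A infty cup} verbatim. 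Consequently the Maurer--Cartan equation $df+f\smile f=0$ for an $A_\infty$ functor corresponds exactly to the Maurer--Cartan equation defining an object of $\bConv(\Br(A),B)$, giving the bijection on objects.

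It remains to upgrade this to an isomorphism of DG categories. The morphism complex $\bC(A,B)(f,g)=C^{\,\bullet}_{\bof,\bog}(A,B)$ carries the twisted differential $\delta\varphi=d\varphi+f\smile\varphi-(-1)^{|\varphi|}\varphi\smile f$, and $\bConv(\Br(A),B)(f,g)$ carries the formally identical twisted differential built from the convolution differential and cup product. Because the graded isomorphism already intertwines $d$ and $\smile$, it automatically intertwines $\delta$. Composition on both sides is the cup product, so the identification is a DG functor, and it is invertible because the adjunction is. The curved case $\bC_+(A,B)\cong\bConv_+(\Br(A),B)$ follows by the same argument after replacing $\Br(A)$ by $\Br^+(A)$, which has the effect of allowing the $n=0$ component $B(\bof x,\bof x)$ and so permits the curvature term $m_0$.

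The only delicate point will be the bookkeeping of signs: three different sign conventions (the Koszul sign on the shifted bar complex, the Hochschild-type sign $(-1)^{\sum(|a_p|+1)}$ in \eqref{eq:diffl inside}, and the cup-product sign in \eqref{dfn:A infty cup}) all have to line up after applying the suspension isomorphism. This is the step where a careful inductive check is needed, and it is essentially the standard verification underlying Lemma~\ref{lemma:bar vs ainf} and the adjunction Lemma~\ref{lemma:adjunct}; no new ideas are required, but the signs must be tracked term by term.
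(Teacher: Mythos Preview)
Your argument is correct and is exactly the intended one: the paper states this lemma without proof, treating it as an immediate unwinding of the definitions of $\bC$, $\bConv$, and $\Br$, which is precisely what you have carried out. One small notational slip: in the display ``$\Conv_{\bof,\bog}(\Br^+(A),B)\cong C^{\,\bullet}_{\bof,\bog}(A,B)$'' the argument should be $\Br(A)$ rather than $\Br^+(A)$, since by definition $\Conv_{\bof,\bog}(\bB,C)=\prod_{x,y}\Hom(\bB^+(x,y),C(fx,gy))$ already applies $(-)^+$ internally; your surrounding explanation makes clear you understand this, so it is purely a typo.
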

\begin{lemma}\label{lemma:adj conv}
There is a natural bijection
$$\Hom_{\rm{DGcat}}(\Cbr(\bB), C)\isomoto \Ob(\bConv(\bB, C))$$
\end{lemma}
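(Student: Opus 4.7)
The plan is to exhibit the bijection by the standard correspondence between Maurer--Cartan elements in a convolution DG algebra and DG algebra morphisms out of a cobar construction, tailored to the (co)category setting.

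First I would unpack $\Cbr(\bB)$. Its set of objects is $\Ob(\bB)$, and as a graded category it is freely generated by the shifted hom-spaces $\bB(x,y)[-1]$: a morphism from $x$ to $y$ in $\Cbr(\bB)$ is a direct sum of tensors $b_1\otimes\cdots\otimes b_n$ with $b_i\in\bB(x_{i-1},x_i)[-1]$, $x_0=x$, $x_n=y$. The differential has two pieces, the internal differential inherited from $\bB$ and a piece coming from the coproduct of $\bB$, with the usual Koszul signs.

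Next I would construct the two directions of the bijection. Given a DG functor $F\colon \Cbr(\bB)\to C$, put $\bof=F|_{\Ob(\bB)}\colon \Ob(\bB)\to\Ob(C)$, and define $f\in \oConv_{\bof,\bof}(\bB,C)$ of degree $+1$ by declaring $f(b)=F(b)$ for $b\in \bB(x,y)[-1]$; the degree shift turns the degree-zero generator into a degree-$1$ element of $\oConv_{\bof,\bof}(\bB,C)$. Conversely, given $(\bof,f)$ with $f$ of degree one satisfying the Maurer--Cartan equation, set $F$ on objects to be $\bof$, on generators $F(b)=f(b)$, and extend to tensors by the rule
$$F(b_1\otimes\cdots\otimes b_n)=f(b_1)\cdot f(b_2)\cdots f(b_n)$$
(with signs). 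This is well defined because $\Cbr(\bB)$ is free as a graded category on the generators. Both assignments are manifestly inverse to each other on the level of graded data.

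The core step is to identify the condition $dF=Fd$ with the Maurer--Cartan equation $df+f\smile f=0$. I would check this on a generator $b\in \bB(x,y)[-1]$: the differential in $\Cbr(\bB)$ applied to $b$ produces $d_\bB b$ plus a sum over the reduced coproduct $\sum b^{(1)}\otimes b^{(2)}$, each term with the appropriate shift/Koszul sign. Applying $F$ gives $f(d_\bB b)\pm\sum f(b^{(1)})\, f(b^{(2)})$, which by the definition of $d$ and $\smile$ in $\oConv$ is exactly $(df+f\smile f)(b)$. The condition that $F$ commute with $d$ on a general tensor then follows automatically from the fact that $F$ is a functor and $d$ is a derivation, so the Maurer--Cartan identity on generators is both necessary and sufficient.

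The only delicate point, and the one I would spend the most care on, is bookkeeping the shifts and Koszul signs so that the coproduct-term in $d_{\Cbr}$ matches the $f\smile f$ term in the convolution algebra, including the $(-1)^{|\psi||b^{(1)}|}$ sign in \eqref{eq:mult conv}; once the sign conventions of $\Br$, $\Cbr$, and $\Conv$ are aligned (they are fixed in the excerpt), the verification is purely formal. The analogous statement for the second displayed bijection of Lemma~\ref{lemma:adjunct} is then obtained by replacing $\Br$ with $\Br^+$ (i.e.\ allowing the empty tensor, which encodes the curvature component in the first factor of~\eqref{eq:full Hoch Ainf}), and repeating the argument.
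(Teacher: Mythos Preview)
Your argument is correct and is the standard one: unpack $\Cbr(\bB)$ as the free graded category on $\bB[-1]$, identify a DG functor with its restriction to generators, and check that compatibility with the differential on generators is precisely the Maurer--Cartan equation $df+f\smile f=0$ in $\oConv_{\bof,\bof}(\bB,C)$. The paper does not supply a proof of this lemma at all; it is stated as a standard fact (the sentence ``This is a reformulation of a result in \cite{Keller}'' after Lemma~\ref{lemma:adj conv 1} refers only to that lemma), so your write-up is already more detailed than what the paper offers.
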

\begin{lemma}\label{lemma:adj conv 1}
There is a natural isomorphism of DG categories 
$$\bConv(\bB_1,\bConv(\bB_2, C))\isomoto\bConv(\bB_1\otimes\bB_2,C)$$
\end{lemma}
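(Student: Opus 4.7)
The plan is to construct mutually inverse DG functors between the two sides by the standard currying/uncurrying adjunction for cocategories, and then verify that all of the pieces of the convolution structure (objects, Maurer--Cartan equation, morphism complexes with twisted differential, and cup product) correspond under it.

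First I would set up the bijection on objects. An object of $\bConv(\bB_1,\bConv(\bB_2,C))$ consists of, for each $x\in\Ob(\bB_1)$, an object $(\bof_x,f_x)\in\Ob\bConv(\bB_2,C)$, together with an element $f\in\oConv_{\bof,\bof}(\bB_1,\bConv(\bB_2,C))$ of degree one satisfying $df+f\smile f=0$. Concretely, $f$ assigns to $b_1\in\bB_1(x,x')$ an element $f(b_1)\in\Conv_{\bof_x,\bof_{x'}}(\bB_2,C)$, i.e.\ a collection of maps $b_2\mapsto f(b_1)(b_2)\in C(\bof_x(y),\bof_{x'}(y'))$ for $b_2\in\bB_2^+(y,y')$. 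Currying gives $\bog:\Ob(\bB_1)\times\Ob(\bB_2)\to\Ob(C)$ with $\bog(x,y)=\bof_x(y)$ and a map $g:(\bB_1\otimes\bB_2)^+\to C$ defined on the subspace $\bB_1\otimes\bB_2$ by $g(b_1\otimes b_2)=f(b_1)(b_2)$ and on the ``counit part'' (where $b_1=\mathbf{1}_x$) by $g(\mathbf{1}_x\otimes b_2)=f_x(b_2)$. I would check that $g$ lies in $\oConv_{\bog,\bog}(\bB_1\otimes\bB_2,C)$ (it vanishes on the full counit $\mathbf{1}_x\otimes\mathbf{1}_y$) and that the single equation $dg+g\smile g=0$ decomposes, under the formula $\Delta(b_1\otimes b_2)=\sum\pm(b_1^{(1)}\otimes b_2^{(1)})\otimes(b_1^{(2)}\otimes b_2^{(2)})$ for the tensor product of cocategories, into the MC equation for $f$ in $\bConv(\bB_1,\bConv(\bB_2,C))$ (on summands with $b_1\neq\mathbf{1}$) together with the family of MC equations $df_x+f_x\smile f_x=0$ for each $f_x$ in $\bConv(\bB_2,C)$ (on summands $\mathbf{1}_x\otimes b_2$).

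Next I would do the morphisms. A morphism $\varphi$ between $(\bof,f)$ and $(\bof',f')$ in $\bConv(\bB_1,\bConv(\bB_2,C))$ lies in $\Conv_{\bof,\bof'}(\bB_1,\bConv(\bB_2,C))$, hence assigns to each $b_1\in\bB_1^+$ an element of $\Conv_{\bof_x,\bof'_{x'}}(\bB_2,C)$, i.e.\ a map on $\bB_2^+$. Currying once more produces $\psi(b_1\otimes b_2)=\varphi(b_1)(b_2)$, a morphism in $\Conv_{\bog,\bog'}(\bB_1\otimes\bB_2,C)$; this is a bijection on graded vector spaces by the ordinary Hom--tensor adjunction, once both sides are viewed as $\prod_{x,y,x',y'}\Hom(\bB_1^+(x,x')\otimes\bB_2^+(y,y'),C(\bog(x,y),\bog'(x',y')))$.

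Finally I would verify that the twisted differential $\delta\varphi=d\varphi+f\smile\varphi-(-1)^{|\varphi|}\varphi\smile f$ and the cup product of convolution categories correspond under currying. The unsigned matching is immediate from the definition of $\Delta$ on $\bB_1\otimes\bB_2$, which is designed precisely so that an iterated decomposition $(b_1^{(1)},b_1^{(2)})$ followed by $(b_2^{(1)},b_2^{(2)})$ agrees with the single decomposition on the tensor product. The main obstacle, and essentially the only nontrivial point, is verifying the Koszul signs: the ``inner'' cup product in $\bConv(\bB_2,C)$ introduces a sign $(-1)^{|\psi||b_2^{(1)}|}$, while the ``outer'' cup product in $\bConv(\bB_1,\bConv(\bB_2,C))$ contributes $(-1)^{|\varphi(b_1^{(2)})||b_1^{(1)}|}$; I would check that these combine, together with the Koszul sign in the definition $(a_1\otimes b_1)(a_2\otimes b_2)=(-1)^{|a_2||b_1|}(a_1a_2)\otimes(b_1b_2)$ on the tensor product of cocategories, to exactly the sign prescribed for a single cup in $\bConv(\bB_1\otimes\bB_2,C)$. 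Once this is settled, the inverse functor (uncurrying $g$ along the first factor) is manifestly inverse, and the two functors automatically preserve identities and composition, yielding the claimed isomorphism of DG categories.
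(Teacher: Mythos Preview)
Your approach is correct: the direct currying/uncurrying verification you outline is exactly the standard proof of this tensor--hom adjunction for convolution DG categories, and your identification of the sign bookkeeping as the only nontrivial point is accurate. The paper, however, does not carry out any of this; it simply states that the lemma ``is a reformulation of a result in \cite{Keller}'' and gives no further argument. So your plan is not the paper's approach in the sense of matching a written proof---rather, it supplies the details the paper chose to outsource to the literature. One minor comment: when you unpack the outer Maurer--Cartan equation $df+f\smile f=0$ in $\oConv(\bB_1,\bConv(\bB_2,C))$, remember that the differential $d$ on the target $\bConv(\bB_2,C)$ is already the \emph{twisted} differential $\delta=d+f_x\smile(-)-(-1)^{|\cdot|}(-)\smile f_{x'}$, so the $f_x$'s enter the outer equation through $d$, not only through the cup; keeping this straight is what makes the single MC equation for $g$ on $\bB_1\otimes\bB_2$ match the nested pair.
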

This is a reformulation of a result in \cite{Keller}.
\subsubsection{An $A_\infty$ functor to $A_\infty$ modules}\label{sss: a functor into mos} Let $k$ be a field. By $\dgmod(k)$ we denote the differential graded category of complex of modules over $k.$ Let $R$ be an associative algebra over $k$. 
\begin{definition}\label{def:modinft}
We denote the DG category $\bfC(\Br(R), \dgmod(k))$ by $\Modinf (R)$ and call it the DG category of $A_\infty$ modules over $R.$ 
\end{definition}
Let ${\mathfrak X}(R)$ be the category whose objects are pairs $(\cB\stackrel{\pi}{\longrightarrow}R, \cM)$ where $\cB$ is a differential graded algebra, $\pi$ a quasi-isomorphism of DGA, and $\cM$ a DG module over $\cB.$ A morphism $(\cB\stackrel{\pi}{\longrightarrow}R, \cM)\to (\cB'\stackrel{\pi'}{\longrightarrow}R, \cM')$ is a morphism $\cB\to\cB'$ of DGA over $R$ together with a compatible morphism $\cM\to\cM'.$

We will construct an $A_\infty$ functor
\begin{equation}\label{eq:CR to Modinf}
{\mathfrak X}(R)\to \Modinf(R)
\end{equation}
\begin{remark}\label{rmk:A inf func from cat}
An $A_\infty$ functor from a category ${\mathfrak X}$ to a DG category $\cA$ is by definition an $A_\infty$ functor from the linearization of ${\mathfrak X}$ (viewed as a DG category with zero differential) to $\cA.$
\end{remark} 
Define the DG category ${\mathfrak B}$  as follows. Its objects are the same as objects of ${\mathfrak{X}}(R)$ but repeated counably many times, {\em i.e.} an object of ${\mathfrak B}$ is a pair $(\bfx, n)$ where $\bfx$ is an object of ${\mathfrak X}(R)$ and $n\in {\mathbb Z}.$ The spaces of morphisms are as follows. 
\begin{equation}\label{eq:asdfa1}
{\mathfrak B}((\bfx, m), (\bfy, n))=0
\end{equation} 
if $m<n$ or $m=n$ but $\bfx\neq \bfy.$ If $m>n$ and 
\begin{equation}\label{eq:asdfa2}
\bfx=(\cB\stackrel{\pi}{\longrightarrow}R, \cM),\; \bfy=(\cB'\stackrel{\pi'}{\longrightarrow}R, \cM'),
\end{equation} 
then
\begin{equation}\label{eq:asdfa3}
{\mathfrak B}((\bfx, m), (\bfy, n))=\cB'\times {\mathfrak X}(R)(\bfx,\bfy)
\end{equation} 
By $a'{\mathbf b} $ we denote the pair $(a', {\mathbf b} )$ where $a'\in \cB'$ and ${\mathbf b}: \bfx\to \bfy$  is a morphism in ${\mathfrak X}(R).$ We denote the underlying morphism $\cB\to \cB'$ also by ${\mathbf b}.$ Put
\begin{equation}\label{eq:asdfa4}
{\mathfrak B}((\bfx, n), (\bfx, n))=\cB
\end{equation} 
We also denote the right hand side by $\cB \id_{\bfx}.$ The composition is given by
\begin{equation}\label{eq:asdfa5}
(a'' {\mathbf b}')(a'{\mathbf b})=(a''{\mathbf b}'(a')) {\mathbf b}'{\mathbf b}
\end{equation} 
Consider the following right DG module ${\bfM}$ over ${\mathfrak B}.$ Define
$$\bfM(\bfx, n)=\cM$$
where $\bfx=(\cB\to R, \cM).$ Defuine the action
$$\bfM(\bfx, m) \otimes \geB((\bfx, m), (\bfy, n))\to \bfM(\bfy, n)$$
by 
$$v\otimes (a'{\mathbf b})=a' {\mathbf b}(v)$$
Here we denote by ${\mathbf b}$ the underlying action of the morphism ${\mathbf b}:\bfx\to\bfy$ on the module, as well as on the algebra.

Define another DG category $\cR$ exactly like $\geB$ above with the only difference that we put
\begin{equation}\label{eq:asdfa8}
{\cR}((\bfx, m), (\bfy, n))=R\times {\mathfrak X}(R)(\bfx,\bfy)
\end{equation} 
instead of \eqref{eq:asdfa3} and 
\begin{equation}\label{eq:asdfa44}
{\mathfrak B}((\bfx, n), (\bfx, n))=R
\end{equation} 
instead of \eqref{eq:asdfa4}. We also denote the right hand side by $\id_{\bfx} R.$ Instead of \eqref{eq:asdfa5}, the composition is given by
he composition is given by
\begin{equation}\label{eq:asdfa55}
(a'' {\mathbf b}')(a'{\mathbf b})=(a''a') {\mathbf b}'{\mathbf b}
\end{equation} 
The morphisms $\pi: \cB\to R$ induce a quasi-isomorphism of DG categories $\geB\stackrel{\pi}{\longrightarrow}\cR.$ The transfer of structure argument makes $\bfM$ a right $A_\infty$ module over $\cR$ as follows. Fix a linear map $\cR{\stackrel{i}{\longrightarrow}} \cB$ that is inverse to $\pi$ at the level of cohomology. (This is where we use the assumption that $k$ is a field). Fix also homotopies for $\id_\geB-i\pi$ and for $\id_\cR-\pi i.$ (By this we mean collections of maps $\cR(\bfx,\bfy)\to \geB(\bfx,\bfy),$ {\em etc.}, for any objects $\bfx$ and $\bfy$).  From his data one constructs an $A_\infty$ functor $\cR\to \geB$ which is inverse to $\pi$ up to equivalence ({\em cf.}  \cite{KellerAinfty}, \cite{KoSoAinfty}, \cite{Merkulov}). Furthermore, the map $i$ and the homotpopies can be chosen to be invariant under the action of $\bZ$ on $\geB$ and on $\cR.$ Therefore the $A_\infty$ functor is also $\bZ$-invariant. We denote it by ${\mathbb T},$ and the corresponding twisting cochain $\rho$ by $\rho_{\mathbb T}.$

This, in turn, defines the desired $A_\infty$ functor \eqref{eq:CR to Modinf}. In fact, for any object $\bfx=(\cB\to R, \cM),$ the value of this $A_\infty$ functor on $\bfx$ is the underlying complex $\cM.$ For $g_1,\ldots,g_p\in R,$ put 
\begin{equation}\label{eq:rho as ainf}
\rho(g_1,\ldots,g_p)=\rho_{\mathbb T}(g_1\id_\bfx, \ldots,g_p\id_\bfx)
\end{equation}
where we view $\rho_j\id_\bfx$ as morphisms $(\bfx, 0)\to(\bfx, 0)$ in $ \geB.$ This makes each $\cM$ an $A_\infty$ module over $R.$ Now consider morphisms
\begin{equation}
\bfx_0\stackrel{{\mathbf b}_1}{\longleftarrow}\bfx_1\stackrel{{\mathbf b}_2}{\longleftarrow} \ldots \stackrel{{\mathbf b}_n}{\longleftarrow}\bfx_n
\end{equation}
in $\geX (R),$ as well as corresponding morphisms
\begin{equation}
(\bfx_0, 0)\stackrel{{\mathbf b}_1}{\longleftarrow}(\bfx_1, 1) \stackrel{{\mathbf b}_2}{\longleftarrow} \ldots \stackrel{{\mathbf b}_n}{\longleftarrow}(\bfx_n, n)
\end{equation}
in $\cR.$
Now put
$$\rho(g_1,\ldots,g_p)=
\sum\pm \rho_{\mathbb T}(g_1\id_{\bfx_0}, \ldots,g_{p_1}\id_{\bfx_0}, {\mathbf b}_1, 
$$
$$g_{p_1+1}\id_{\bfx_1}, \ldots,g_{p_2}\id_{\bfx_1}, \ldots, {\mathbf b}_n, g_{p_n+1}\id_{\bfx_n}, \ldots,g_{p}\id_{\bfx_n })
$$
where the sum is taken over all $0\leq p_1\leq \ldots \leq p_{n}\leq n.$ The sign rule: both $g_j\id_{\bfx_k}$ and ${\mathbf b}_i$ are treated as odd (the former has degree $(-1)^{|g_j|+1}$ if $R$ is graded). 

It is straightforward to check that thus defined $\rho,$ when viewed as a cochain
$$\rho({\mathbf b}_1,\ldots,{\mathbf b_n}) \in {\rm{Mod}}_\infty (R)(\cM_0,\cM_n),$$
is an $A_\infty$ functor $\geX(R)\to  {\rm{Mod}}_\infty (R).$ (Here $\cM_j$ is the underlying DG module of $\bfx_j,$ viewed as a complex).
%%%%%%%%%%%%%%%%%%%%%%%%%%%%%%%%%%%%%%%%%%%%%%%%%%%%%%%%%%%%%%%%%%%%%%%%%%%%%%%%%%%%%%%%%%%%%%%%%%%%%%%%%%%%%%%%%%%%%%%%%%%%%%%%%%%%%%%%%%%%%%%%%%%%%%%%%%%%%%%%%%%%%%%%%%%%%%%%%%%%%%%%%%%%%%%%%%%%%%%%%%%%%%%%%%%%%%%%%%%%%%%%%%%%%%%%%%%%%%%%%%%%%%%%%%%%%%%%%%%%%
\subsection{Twisted $A_\infty$ modules on a space}\label{ss:twisted A inf mos}
Let $\cR$ be a sheaf of algebras on a topological space $X.$ Fix an open cover ${\mathfrak U}$ of $X.$ For two collections  $\bfM=\{\cM_U|U\in {\mathfrak U}\}$ and $\bfN=\{\cN_U|U\in {\mathfrak U}\}$ of sheaves of $\cR_U$-modules, define the complex
$C^\bullet _{\bfM, \bfN}({\mathfrak U})$ as follows. Put
\begin{equation}\label{eq:Twisted Cech complex 1}
C^\bullet _{\bfM, \bfN}({\mathfrak U})=\prod_{p,q=0}^\infty \prod_{U_0,\ldots,U_p\in \geU} \uHom^{\bullet -p-q}(\cR^{\otimes q}, \uHom^{\bullet } (\cN_{U_p}, \cM_{U_0}))(U_0\cap \ldots \cap U_p)
\end{equation}
Define the differentials
\begin{equation}\label{eq:Cech twisted diff}
({\check{\partial}} \varphi)_{U_0\ldots U_{p+1} }=\sum _{j=1}^{p} (-1)^j \varphi_{U_0\ldots {\widehat {U_j}}\ldots U_{p+1}};
\end{equation}
\begin{equation}\label{eq:Cech twisted diff 1}
(\partial \varphi)(g_1,\ldots,g_{q+1})=(-1)^{p|\varphi|} \sum_{j=1}^q\varphi (g_1,\ldots,g_jg_{j+1},\ldots,g_{q+1})
\end{equation}
for local sections $g_1,\ldots$ of $\cR;$
\begin{equation}\label{eq:diffl tw tot 2}
d\varphi={\check{\partial}}\varphi+\partial\varphi+d_\cM\varphi-(-1)^{|\varphi|} \varphi d_\cN
\end{equation}
Define also the product
\begin{equation}\label{eq:prod tw cech}
C^\bullet_{\bfM,\bfN} (\geU) \otimes C^\bullet_{\bfN,\bfP} (\geU) \to C^\bullet_{\bfM,\bfP} (\geU)
\end{equation}
by
$$
(\varphi\smile\psi)_{U_0\ldots U_{p_1+p_2}}(g_1,\ldots ,g_{q_1+q_2})=
$$
$$
(-1)^{{|\varphi|p_2+(|\psi|+p_2)q_1}} \varphi_{U_0\ldots U_{p_1}}(g_1,\ldots,g_{q_1}) \psi_{U_{p_1},\ldots,U_{p_1+p_2}}(g_{q_1+1},\ldots,g_{q_1+q_2})
$$
Set
\begin{equation}\label{eq:Cech tw X}
C^\bullet _{\bfM,\bfN} (X)=\varinjlim _{\geU} C^\bullet _{\bfM,\bfN} (\geU)
\end{equation}
The differential and the cup product are well defined on the above complexes.
\begin{definition}\label{dfn:A inf twi}
A twisted $A_\infty$ module $\cM$ over $\cR$ is a collection $\bfM=\{\cM_U|U\in {\mathfrak U}\},$ of sheaves of $\cR_U$-modules together with a cochain $\rho$ of degree one in $C^\bullet_{\bfM,\bfM}(X)$ such that 
$$d\rho+\rho\smile \rho=0.$$
The DG category ${\rm{Tw}}\Modinf(\cR)$ has twisted $A_\infty$ modules as objects. The complex of morphisms between $\cM=(\bfM,\rho)$ and $\cN=(\bfN, \sigma)$ is the complex $C^\bullet _{\bfM,\bfN}(X)$ with the differential $\delta\varphi=d\varphi+\rho\smile \varphi-(-1)^{|\varphi|} \varphi \smile\sigma.$
\end{definition}
The above definition is an extension of the definition of twisted cochains from \cite{TT}. {\em Cf.} also \cite{OTT}, \cite{BGNT1}, and \cite{Wei}.
\begin{remark}\label{rmk:tw coch vs conv} The DG category of twisted $A_\infty$ modules is obtained almost {\em verbatim} as a partial case of the left hand side of Lemma \ref{lemma:adjunct} . Formally, one could choose $B$ to be the category with one object whose complex of morphisms is $\cR,$ and $A={\rm{Op}}_X$ to be the category of open subsets of $X.$ More precisely, we perform all the computations as if $A$ were the category whose objects are open subsets $U_\alpha,$ and there is one morphism $U_\alpha\to U_\beta$ for any two intersecting open subsets. This is not literally true (there may be nonempty intersections $U_\alpha\cap U_\beta$ and $U_\beta\cap U_\gamma$ but not $U_\alpha\cap U_\gamma$), but all the formulas work. The above motivation may be given rigorous meaning using the language of *** as in \cite{Gillette} or in \cite{BGNT1}.
\end{remark}
%%%%%%%%%%%%%%%%%%%%%%%%%%%%%%%%%%%%%%%%%%%%%%%%%%%%%%%%%%%%%%%%%%%%%%%%%%%%%%%%%%%%%%%%%%%%%%%%%%%%%%%%%%%%%%%%%%%%%%%%%%%%%%%%%%%%%%%%%%%%%%%%%%%%%%%%%%%%%%%%%%%%%%%%%%%%%%%%%%%%%%%%%%%%%%%%%%%%%%%%%%%%%%%%%%%%%%%%%%%%%%%%%%%%%%%%%%%%%%%%%%%%%%%%%%%%%%%%%%%%%
\subsection{Twisted $A_\infty$ modules over groupoids}\label{ss:twisted A inf mos gros}
For $q\geq 0,$ we use notation ${\mathbb U}=(U^{(0)}, \ldots, U^{(q)}).$ We denote by $\geU_q$ the set of all such ${\mathbb U}$ where $U_j$  is in a given open cover $\geU.$ For $p+1$ such $q$-tuples ${\mathbb U}_{j_0}, \ldots, {\mathbb U}_{j_p},$  denote
\begin{equation}\label{eq:gababgs}
U^{(k)}_{j_0\ldots j_p}=U^{(k)}_{j_0}\cap\ldots\cap U^{(k)}_{j_p}
\end{equation}
for all $0\leq k \leq q.$ Denote also
\begin{equation}\label{eq:ptup q-tup}
{\mathbb U}_{j_0\ldots j_p}=(U^{(0)}_{j_0\ldots j_p},\ldots,U^{(q)}_{j_0\ldots j_p}).
\end{equation}
Here $\cM_{U^{(0)}_0}$ stands for its inverse image under the map 
$$\prod_k \cap_j U_j^{(k)} \to \prod_k U_0^{(k)}\to U^{(0)}_0$$ 
Let $\Gamma$ be an \'{e}tale groupoid on a manifold $X$ (in our applications, $\Gamma=\pi_1(X)$). For $\bfM=\{\cM_U|U\in \geU\}$ and $\bfN=\{\cN_U|U\in \geU\}$ as in the beginning of \ref{ss:twisted A inf mos}, put
$$C^\bullet _{\bfM,\bfN}(\geU,\Gamma)=\prod_{p,q\geq 0} \prod _{{\mathbb U}_0,\ldots,{\mathbb U}_p \in \geU_q} \uHom^{\bullet-p-q}({\underline{\Gamma}}^{(q)}, \uHom^\bullet(\cN_{U_p^{(q)}},\cM_{U^{(0)}_0})(\prod _{k=0}^q U^{(k)}_{01\ldots p}) $$
The differential and the cup product are defined exactly as in \eqref{eq:prod tw cech}, \eqref{eq:Cech twisted diff 1}, \eqref{eq:Cech twisted diff} (with $U_j$ replaced by ${\mathbb U}_j$). Define 
\begin{equation}\label{eq:ind lim gpd}
C^\bullet _{\bfM,\bfN}(X,\Gamma) =\varinjlim_{\geU} C^\bullet _{\bfM,\bfN}(\geU,\Gamma)
\end{equation}
\begin{definition}\label{dfn:Tw mods groupd}
a) Define the DG category ${\rm Tw}\Modinf (\Gamma)$ exactly as in Definition \ref{dfn:A inf twi} usinng complexes $C^\bullet _{\bfM,\bfN}(X,\Gamma).$ 

b) The DG category 
$${\rm Tw}\Modinf (\Gamma, \Omega^\bullet _{\bK, X})$$ 
is defined the same way but with $\cM_U$ being $\Omega^\bullet _{\bK, U}$-modules as in \ref{ss:Inverse Images}. 
\end{definition} 
\begin{remark}\label{rmk:tw loc sy via} 
By
$${\rm{Loc}}_{\infty,\bK}(X)$$ 
we denote the DG category of $A_\infty$ representations of the fundamental groupoid $\pi_1(X).$ This is the partial case of the above Definition \ref{dfn:Tw mods groupd}, a) when $\Gamma=\pi_1(X),$ the topology on $X$ is {\em discrete}, and the ground ring is $\bK.$ Objects of this DG category are infinity local systems as in \ref{ss:Infty loc systs Kmods}. 
\end{remark}
%%%%%%%%%%%%%%%%%%%%%%%%%%%%%%%%%%%%%%%%%%%%%%%%%%%%%%%%%%%%%%%%%%%%%%%%%%%%%%%%%%%%%%%%%%%
%%%%%%%%%%%%%%%%%%%%%%%%%%%%%%%%%%%%%%%%%%%%%%%%%%%%%%%%%%%%%%%%%%%%%%%%%%%%%%%%%%%%%%%%%%%%%%%%%%{rmk:tw coch vs conv}%%%%%%%%%%%%%%%%%%%%%%%%%%%%%%%%%%%%%%%%%%%%%%%%%%%%%%%%%%%%%%
\subsubsection{From $\cA_M^\bullet$-modules with an action of $\pi_1(M)$ up to inner automorphisms to twisted $(\Omega_{\bK,M}^\bullet,\pi_1(M))$-modules}\label{sss:from up to to tw} Given two $\cA_M^\bullet$-modules $\cV^\bullet$ and $\cW^\bullet$ with an action of $\pi_1(M)$ up to inner automorphisms, consider the standard complex 
$$\cM=\cC^\bullet(\cV^\bullet, \cA^\bullet, \cW^\bullet).$$ As it is shown in \ref{eq:como and Yo}, $\cM$ has the following structure.

For a number of open subsets $U^{(j)}$ indexed by $j\in J,$ write ${\mathbb U}_{ij}=(U^{(i)},U^{(j)}). $ We have constructed:

a) For every $U^{(0)}$ and $U^{(1)},$ an $\Omega^\bullet_{\bK, U^{(0)}\times U^{(1)}}$-module $\cB_{\mathbb U_{01}}$ together with a quasi-isomorphism 
\begin{equation}\label{eq:quiz B to pi1}
\cB_{\mathbb U_{01}}\to \bK{\underline {\pi_1}}(M)|(U^{(0)}\times U^{(1)});
\end{equation}
b) a morphism 
\begin{equation}\label{eq:comp on B curve}
p_{01}^*\cB_{{\mathbb U}_{01}}\otimes p_{12}^*\cB_{{\mathbb U}_{12}} \to p_{02}^*\cB_{{\mathbb U}_{02}}
\end{equation}
which commutes with the composition on ${\underline {\pi_1}}(M)$ under \eqref{eq:quiz B to pi1};

c) for any $U_0^{(j)}$ and $U^{(j)}_1,$ an isomorphism
\begin{equation}\label{eq:trans for B cu}
{\mathbf b}_{01}: \cB_{{\mathbb U}_0}\otomosi \cB_{{\mathbb U}_1}
\end{equation}
that commutes with \eqref{eq:quiz B to pi1} and \eqref{eq:comp on B curve} and satisfies 
$${\mathbf b}_{01}  {\mathbf b}_{12} = {\mathbf b}_{02} $$
on the intersections.

Now repeat the procedure from \ref{sss: a functor into mos}, together with Remark \ref{rmk:tw coch vs conv}, in the above context. First note that the constructions of \ref{sss: a functor into mos} can be carried out in the case when $R$ is a category (and all $\cB$ are DG categories with the same objects). Now act as if $R$ were the category with objects $U^{(j)}$, with 
$$R(i,j)={\underline {\pi_1}}(M)|(U^{(i)}\times U^{(j)})$$
and the composition being the one on ${\underline{\pi_1}}.$ Now, let ${\rm{Op}}_M$ be the category whose objects are open subsets $U_j$, exactly as discussed in Remark \ref{rmk:tw coch vs conv}. View the data a), b), c) above as a DG functor ${\rm{Op}}_M \to {\mathfrak X}(R).$ Applying formulas from \ref{sss: a functor into mos}, we get an $A_\infty$ functor ${\rm{Op}}_M\to\bfC(R,{\rm{dgmod}}(\bK)),$ which is the same as an $\Omega^\bullet_{\bK,M}$-module with a twisted action of $\pi_1(M).$
%%%%%%%%%%%%%%%%%%%%%%%%%%%%%%%%%%%%%%%%%%%%%%%%%%%%%%%%%%%%%%%%%%%%%%%%%%%%%%%%%%%%%%%%%%%%%%%%%%%%%%%%%%%%%%%%%%%%%%%%%%%%%%%%%%%%%%%%%%%%%%%%on $(U^{(0)}_0\cap U^{(0)}_1)\times (U^{(1)}_0\cap U^{(1)}_1)$%%%%%%%%%%%%%%%%%%%%%%%%%%%%%%%%%%%%%%%%%%%%%{rmk:tw coch vs conv}%%%%%%%%%%%%%%%%%%%%%%%%%%%%%%%%%%%%%%%%%%%%%%%%%%%%%%%%%%%%%%
\subsubsection{From twisted $( \Omega^\bullet _{\bK, X}, \pi_1(X))$ modules to infinity local systems}\label{sss:From twisted mods to inf loc sys} 
Here we extend the construction from \ref{sss:from ommods to locsyss}. Consider all open covers of the type $\geU=\{U_x|x\in X\}.$ For an object $\cM$ of ${\rm Tw}\Modinf (\pi_1(X), \Omega^\bullet _{\bK, X})$ choose a cover $\geU$ as above and define
\begin{equation}\label{eq:indlim for twisted mods}
{\cM_x}=\varinjlim_{U\subset U_x} C^\bullet (U, \cM_{U_x})
\end{equation}
The $A_\infty$ operators $T(g_1,\ldots,g_n)$ are by definition $\rho_{\mathbb U}(g_1,\ldots,g_n)$ where $g_j\in \pi_1(X)_{x_{j-1},x_{j}}$ and ${\mathbb U}=(U_{x_0},\ldots,U_{x_n})$. Let us show that different choices of $\geU$ lead to equivalent infinity local systems (in the sense of Definition \ref{dfn:ekviv of A infty finct}). Choose two covers $\geU'$ and $\geU ''.$ Apply \eqref{eq:indlim for twisted mods} to all covers of the form $\geU=\{U_x| x\in X\}$ where for any $x$ either $U_x=U'_x$ or $U_x=U_x''.$ This data defines an $A_\infty$ functor $\bK\bfC_1\otimes \bK\pi_1(X)\to {\rm{dgmod}}(\bK)$ ({\em cf.} \ref{sss:ttt}). Let $\bK(0),$ resp. $\bK(1),$ be the full subcategory of $\bfC_1$ with one object $0,$ resp. $1.$ When restricted to $\bK(0)$, resp. to $\bK(1),$ our $A_\infty$ functor coincides with the infinity local system obtained from $\geU',$ resp. from $\geU''.$ By the adjunction formula (Lemma \ref{lemma:adjunct} ), the two infinity local systems are equivalent.
\begin{remark}\label{rmk:A inf func promis} It is easy to modify the above construction and obtain an $A_\infty$ functor
$${\rm{TwMod}}(\Omega^\bullet_{\bK,X}, \pi_1(X))\to {\rm{Loc}}_{\infty,\bK}(X).$$
Moreover, the right hand side is a monoidal category up to homotopy, and the assignment $\cM, \cN\mapsto \uRHOM(\cM,\cN)$ turns oscillatory modules, as well as $\Omega^\bullet_{\bK,M}$-modules with an action of $\pi_1(M)$ up to inner automorphisms, into a category enriched over it. The main reason for this is Lemma \ref{lemma:coproduct Bc}. We will provide the details in a subsequent work.
\end{remark} 
%%%%%%%%%%%%%%%%%%%%%%%%%%%%%%%%%%%%%%%%%%%%%%%%%%%%%%%%%%%%%%%%%%%%%%%%%%%%%%%%%%%%%%%%%%%%%%%%
%%%%%%%%%%%%%%%%%%%%%%%%%%%%%%%%%%%%%%%%%%%%%%%%%%%%%%%%%%%%%%%%%%%%%%%%%%%%%%%%%%%%%%%%%%%%%%%%%%%%%%%%%%%%%%%%%%%%%%%%%%%%%%%%%%%%%%%%%%%%%%%%%%%%%%%%%%%%%%%%%%%%%%%%
\section{Appendix. Jets and twisted modules}\label{s:app d} Here we will describe the deformation quantization and the twisted bundle $\cH_M$ in terms of bundles of jets. %%%%%%%%%%%%%%%%%%%%%%%%%%%%%%%%%%%%%%%%%%%%%%%%%%%%%%%%%%%%%%%%%%%%%%%%%%%%%%%%%%%%%%%
%%%%%%%%%%%%%%%%%%%%%%%%%%%%%%%%%%%%%%%%%%%%%%%%%%%%%%%%%%%%%%%%%%%%%%%%%%%%%%%%%%%%%%%%%%%%%%%%%%%%%%%%%%%%%%%%%%%%%%%%%%%%%%%%%%%%%%%%%%%%%%%%%%%%%%%%%%%%%%%%%%%%%%%%%%%%%%%%
\subsection{Jet bundles}\label{ss:jet bdles} Let $M$ be any manifold and let $\cE$ be a complex vector bundle of rank $N$ on $M$. Here we recall the construction of the bundle whose fiber at a point $x$ is the space of jets of sections of $\cE$ at $x.$ This bundle has the canonical connection; its horizontal sections are determined by sections $s$ of $\cE$. The value of such a section at any $x$ is the jet of $s$ at $x.$

Let $\{U_\alpha\}$ is an open cover and $x_\alpha=(x_{\alpha, 1}, \ldots, x_{\alpha, n})$ a local coordinate system on $U_\alpha.$ For $x\in U_{\alpha}\cap U_\beta,$ we denote by $x_\alpha$, resp. $x_\beta,$ its coordinates in the corresponding coordinate system and write
\begin{equation}\label{eq:gaba}
x_\alpha=g_{\alpha\beta}(x_\beta)
\end{equation}
Let $h_{\alpha\beta}: U_{\alpha}\cap U_\beta \to \GL_N$ be the transition isomorphisms of $\cE.$ We identify a local section of $\cE$ on $U_{\alpha}\cap U_{\beta}$ with a $\bC^N$-valued function in the coordinates $x_\beta.$

Let $\C^N[[\fx]]=\C^N[[\fx_1,\ldots,\fx_n]].$ For $x\in U_\alpha$ define $G_{\beta\alpha}(x): \C^N[[\fx]]\to \C^N[[\fx]]$ by $G_{\beta\alpha}(x):f_\alpha\mapsto f_\beta$
where
\begin{equation}\label{eq:transijets}
f_\beta(\fx)=h_{\alpha\beta}(x_\beta+\fx) f_\alpha (g_{\alpha\beta}(x_\beta+\fx)-x_\alpha)
\end{equation}
It is easy to see that different choices of covers and of local trivializations lead to isomorphic bundles. We denote the bundle defined by \eqref{eq:transijets} by $\Jets(\Gamma(\cE)).$

The canonical flat connection is given in any local coordinate system by
\begin{equation}\label{eq:canco jets gen}
\nabla_{\rm{can}}=(\frac{\partial}{\partial x_\alpha}-\ddfx)dx_\alpha
\end{equation}
If a local section of $\cE$ is represented by a vector-valued function $f(x_\alpha),$ it defines a horizontal section which is given in local coordinates by $f(x_\alpha+\fx).$
%%%%%%%%%%%%%%%%%%%%%%%%%%%%%%%%%%%%%%%%%%%%%%%%%%%%%%%%%%%%%%%%%%%%%%%%%%%%%%%%%%%%%%%%%%%%%%%%%%%%%%%%%%%%%%%%%%%%%%%%%%%%%%%%%%%%%%%%%%%%%%%%%%%%%%%%%%%%%%%%%%%%%%%%%%%%%%%%%%%%%%%%%%%%%%%%%%%%%%%%%%%%%%%%%%%%%%%%%%%%%%%%%%%%%%%%%%%%%%%%%%%%%%%%%%%%%%%%%%%%%%%%
\subsection{Real polarization}\label{ss:real pol} 

Recall that a real polarization is an integrable distribution of Lagrangian subspaces. Let $\caP$ be a real polarization on $M$. In this case, automatically $2c_1(TM)=0$ modulo $4$ (cf. \cite{S}). 
\subsubsection{The line bundle $\cL$}\label{sss:the line bdle L} Assume that $\omega$ admits a real polarization $\cP$ (i.e. a foliation by Lagrangian submanifolds). By $T_\cP$ we denote the quotient of $TM$ by the subbundle of vectors tangent to the leaves. Choose local Darboux coordinates $\xi_\alpha,x_\alpha$ such that $x_{j,\alpha}$ are constant along the leaves. Then the transition coordinate changes are of the form 
\begin{equation}\label{eq:coord change polariz case}
x_\alpha=g_{\alpha\beta}(x_\beta);\; \xi_\alpha=({{g'_{\alpha\beta}}(x_\beta)}^{t})^{-1}(\xi_\beta+\varphi_{\alpha\beta}(x_\beta))
\end{equation}
Assume that $-i\omega$ is a $2\pi i\mathbb Z$-valued cohomology class. Construct explicitly the line bundle $\cL$ such that $c_1(\cL)=i\omega.$ Adding some constants to $\varphi_{\alpha\beta},$ we may assume that $i\varphi_{\alpha\beta}-i \varphi_{\alpha\gamma}+i\varphi_{\beta\gamma}\in 2\pi i{\mathbb Z};$ define $\cL$ to be the line bundle with transition isomorphisms $\exp(i\varphi_{\alpha\beta}).$ Formulas
\begin{equation}\label{eq:xi dx conn}
A_\alpha=-i\xi_\alpha dx_\alpha
\end{equation}
define a connection in this bundle, since 
$$\xi_\alpha dx_\alpha=\xi_\beta dx_\beta+d\varphi_{\alpha\beta};$$
the curvature of this connection is $-i\omega.$
%%%%%%%%%%%%%%%%%%%%%%%%%%%%%%%%%%%%%%%%%%%%%%%%%%%%%%%%%%%%%%%%%%%%%%%%%%%%%%%%%%%%%%%%%%%%%%%%%%%%%%%%%%%%%%%%%%%%%%%%%%%%%%%%%%%%%%%%%%%%%%%%%%%%%%%%%%%%%%%%%%%%%%%%%%%%%%%%
\subsubsection{The jet bundle $\Jets(\Gamma_{\hor}(\Omhalf \otimes \cL^k))$}\label{sss:jets one h} Define for $x\in U_\alpha \cap U_\beta$ 
$$G_{\beta\alpha}(x): \bC[[\fx]]\to \bC[[\fx,\hbar]]$$
by $(G_{\beta\alpha} f_\alpha)(\fx)=f_\beta(\fx)$ where
\begin{equation}\label{eq: jets Lk re}
f_\beta(\fx)=\det g'_{\alpha\beta}(x_\beta+\fx)^{\frac{1}{2}}  e^{ik\varphi_{\alpha\beta}(x_\beta+\fx)} f_\alpha (g_{\alpha\beta}(x_\beta+\fx)-x_\alpha)
\end{equation}
The square root of the determinant comes from the metalinear structure. The above formula defines the transition functions for the bundle of jets of $\cP$-horizontal sections of the bundle $(\wedge^{\max}T^*_\cP)^{\frac{1}{2}} \otimes \cL^k.$
%%%%%%%%%%%%%%%%%%%%%%%%%%%%%%%%%%%%%%%%%%%%%%%%%%%%%%%%%%%%%%%%%%%%%%%%%%%%%%%%%%%%%%%%
%%%%%%%%%%%%%%%%%%%%%%%%%%%%%%%%%%%%%%%%%%%%%%%%%%%%%%%%%%%%%%%%%%%%%%%%%%%%%%%%%%%%%%%%
\subsubsection{The jet bundle ${\rm{Rees}} \Jets \,D(\Gamma_{\hor}(\Omhalf \otimes \cL^{\frac{1}{\hbar}}))$}\label{sss:jets RD one h} Recall the construction of the Rees ring and the Rees module \cite{Borel} of a filtered ring and a filtered module. If $A$ is a ring with an increasing filtration $F_p A,$ $p\geq 0,$ and $V$ an $A$-module with a compatible filtration $F_p,$ $p\geq 0,$ we put
\begin{equation}\label{eq:Rees dfn}
\Rees A=\oplus_{p\geq 0} \hbar ^p F_p A ; \;\Rees V=\oplus_{p\geq 0} \hbar ^p  F_p V .
\end{equation}
\begin{equation}\label{eq:Rees dfn f}
\Rees_f A=\prod_{p\geq 0} \hbar ^p F_p A ; \;\Rees_f V=\prod_{p\geq 0} \hbar ^p  F_p V .
\end{equation}
When applied to the ring of formal differential operators with its filtration by order, \eqref{eq:Rees dfn} produces the ring $\C[[\fx]][\fxi,\hbar]$ with the usual Heisenberg relations ($\fxi_j=i\hbar \frac{\partial}{\partial \fx_j}$). When applied to the module of formal functions $V=\C[[\fx]]$ whose filtration is given by $F_0 V=V,$ it gives $\C[[\fx]][ \hbar].$ The completed version \eqref{eq:Rees dfn f} produces the complete Weyl algebra $\C[[\fx,\fxi,\hbar]]$ and the complete module $\C[[\fx, \hbar]].$

Observe that in the expression $G_{\beta\alpha} (i\hbar \frac{\partial}{\partial \fx}) G_{\alpha\beta}$ one can substitute $\frac{1}{i\hbar}$ for $k$. The result will be given (in vector/matrix notation) by the following:
$$\frac{1}{2}( i\hbar \frac{\partial}{\partial \fx}) ({{g'_{\alpha\beta}}(x_\beta+\fx)}^{t})+({{g'_{\alpha\beta}}(x_\beta+\fx)}^{t})(i\hbar \frac{\partial}{\partial \fx})-\varphi'_{\alpha\beta}(x_\beta+\fx)$$
%{sss:jets RD one h}
Define the bundle of algebras ${\rm{Rees}}\, \Jets \,D(\Gamma_{\hor}(\Omhalf \otimes \cL^{\frac{1}{\hbar}}))$ whose fiber is $\C[[\fx, \hbar]][\fxi]$ and whose transition isomorphisms are
\begin{equation} \label{eq:trafu RD}
G_{\beta\alpha}(\fx)=g_{\beta\alpha}(x_\alpha+\fx)-x_\beta;
\end{equation}
\begin{equation} \label{eq:trafu RD 1}
G_{\beta\alpha}(\fxi)= {{g'_{\alpha\beta}}(x_\beta+\fx)}^t * \fxi - \varphi'_{\alpha\beta}(x_\beta+\fx)
\end{equation}
(the multiplcation in the left hand side is the (matrix) Moyal-Weyl multiplication). We see that our bundle is the result of formally substituting $\frac{1}{\hbar} $ for $k$ in the bundle of jets of Rees rings of $\cP$-horizontal differential operators on $(\wedge^{\max}T^*_\cP)^{\frac{1}{2}} \otimes \cL^k.$

The above formula is the result of formally substituting $k$ by $\frac{1}{\hbar}$ into the transition functions for the bundle 
$${\rm{Rees}}\,\Jets \,D(\Gamma_{\hor} ((\wedge^{\max}T^*_\cP)^{\frac{1}{2}} \otimes \cL^k)).$$ 
%%%%%%%%%%%%%%%%%%%%%%%%%%%%%%%%%%%%%%%%%%%%%%%%%%%%%%%%%%%%%%%%%%%%%%%%%%%%%%%%%%%%%%%%%%%%%%%%%%%%%%%%%%%%%%%%%%%%%%%%%%%%%%%%%%%%%%%%%%%%%%%%%%%%%%%%%%%%%%%%%%%%%%%%%%%%%%%%
\subsubsection{The bundle of algebras ${\widehat \bA}_M$ and the twisted bundle of modules $\cH_M$}\label{sss:gauge trareal} Now apply to the bundle above the gauge transformation \cite{NT3}
\begin{equation}\label{eq:gauge transf Ad}
\Ad \exp(\frac{1}{i\hbar}\xi_\alpha \fx)
\end{equation} 
We get transition isomorphisms
\begin{equation} \label{eq:trafu RD 2}
G_{\beta\alpha}(\fx)=g_{\beta\alpha}(x_\alpha+\fx)-x_\beta;
\end{equation}
\begin{equation} \label{eq:trafu RD 12}
G_{\beta\alpha}(\fxi)= {{g'_{\alpha\beta}}(x_\beta+\fx)}^t *( \fxi+\xi_\alpha) - \varphi'_{\alpha\beta}(x_\beta+\fx)-\xi_\beta
\end{equation}
Unlike in \eqref{eq:trafu RD} and \eqref{eq:trafu RD 1}, these transition isomorphisms preserve the maximal ideal $\langle \fx,\,\fxi, \hbar\rangle$ and therefore extend to the complete Weyl algebra ${\widehat \bA}=\C[[\fx,\fxi,\hbar]],$ {\em cf.} \ref{ss:Deformation quantization of a formal neighborhood} . We use them to construct a bundle of algebras $\bA_M$ whose fiber is the Weyl algebra ${\widehat \bA}.$ We see immediately that the bundle of algebras ${\widehat \bA}_M$ is a deformation of the bundle of jets of functions on $M.$

Moreover, after we apply the gauge transformation \eqref{eq:gauge transf Ad}, the formula \eqref{eq: jets Loh re} allows to replace $k$ by $\frac{1}{\hbar}$. We get new transition isomorphisms
\begin{equation}\label{eq: jets Loh re}
f_\beta(\fx)=\det g'_{\alpha\beta}(x_\beta+\fx)^{\frac{1}{2}}  e^{-\frac{1}{i\hbar}(\varphi_{\alpha\beta}(x_\beta+\fx)-\varphi'_{\alpha\beta}(x_\beta)\fx)} f_\alpha (g_{\alpha\beta}(x_\beta+\fx)-x_\alpha)
\end{equation}
that define a twisted bundle of modules $\cH_M$ whose fiber is the space $\cH$ of the formal metaplectic representation ({\em cf} \eqref{eq:alg Weil}). The cocycle $c$ from the definition of a twisted module (\eqref{eq:tw coc}) is $\exp(\frac{1}{i\hbar}(\varphi_{\alpha\beta}-\varphi_{\alpha\gamma}+\varphi_{\beta\gamma}))$. (The summand $-\varphi'_{\alpha\beta}(x_\beta)\fx$ in the exponent comes from the difference of $\xi_\alpha\fx$ and $\xi_\beta\fx$ that figure in the gauge transformation).

In other words, the bundle of algebras $\fbA_M$ can be formally described as 
\begin{equation}\label{eq:Algra jets}
\fbA_M={\rm{Rees}}_f\, {\rm{Jets}}\, D _{\rm{hor}}((\wedge^{\frac{1}{2} }T_\cP^*)^{\frac{1}{2}}\otimes \cL^{\frac{1}{\hbar}})
\end{equation}
\begin{equation}\label{eq:Modu jets}
\cH_M={\rm{Rees}}_f\, {\rm{Jets}}\, \Gamma_{\rm{hor}}((\wedge^{\frac{1}{2} }T_\cP^*)^{\frac{1}{2}}\otimes \cL^{\frac{1}{\hbar}})
\end{equation}
({\em cf.} \eqref{eq:Rees dfn f} for the meaning of ${\bf{Rees}}_f$). The latter is only a twisted bundle because the transition functions of $\cL$ stop being a one-cocycle when elevated to the power $\frac{1}{\hbar}.$
%%%%%%%%%%%%%%%%%%%%%%%%%%%%%%%%%%%%%%%%%%%%%%%%%%%%%%%%%%%%%%%%%%%%%%%%%%%%%%%%%%%%%%%%%%%%%%%%%%%%%%%%%%%%%%%%%%%%%%%%%%%%%%%%%%%%%%%%%%%%%%%%%%%%%%%%%%%%%%%%%%%%%%%%%%%%%%%%
\subsubsection{The canonical connections}\label{sss:can con re} The bundle of horizontal sections of $\Omhalf\otimes\cL^k$ has a canonical connection that is given by the formula
$$\nabla=(\ddx-\ddfx)dx+\ddxi d\xi$$
in all local coordinate systems.

This connection induces a connection in $\Jets {\rm{Rees}}\,D (\Gamma_{\hor}(\Omhalf\otimes \cL^{\frac{1}{i\hbar}}))$ that is given by the same formula. After the gauge transformation from \ref{sss:gauge trareal} we get flat connections
\begin{equation}\label{eq:fla con A re}
\nabla_{\bA}=(\ddx-\ddfx)dx+(\ddxi-\ddfxi) d\xi
\end{equation}
in $\cA_M$ and 
\begin{equation}\label{eq:fla con V re}
\nabla_{\cH}=-\oih \xi dx+(\ddx-\ddfx)dx+(\ddxi+\oih \fx) d\xi
\end{equation}
%%%%%%%%%%%%%%%%%%%%%%%%%%%%%%%%%%%%%%%%%%%%%%%%%%%%%%%%%%%%%%%%%%%%%%%%%%%%%%%%%%%%%%%%%%%%%%%%%%%%%%%%%%%%%%%%%%%%%%%%%%%%%%%%%%%%%%%%%%%%%%%%%%%%%%%%%%%%%%%%%%%%%%%%%%%%%%%%%%%%%%%%%%%%%%%%%%%%%%%%%%%%%%%%%%%%%%%%%%%%%%%%%%%%%%%%%%%%%%%%%%%%%%%%%%%%%%%%%%%%%%%%%%%%%%%%%%%%%%%%%%%%%%%%%%%%%%%%%%%%%%%%%%%%%%%%%%%%%%%%%%%%%%%%%%%%%%%%%%%%%%%%%%%%%%%%%%%%%%%%%%%%%%%%%%%%%%%%%%%%%%%%%%%%%%%%%%%%%%%%%%%%%%%%%%%%%%%%%%%%%%%%%%%%%%%%%%%%%%%%%%%%%%%%%%%%%%%%%%%%%%%%%%%%%%%%%%%%%%%%%%%%%%%%%%%%%%%%%%%%%%%%%%%%%%%%%%%%%%%%%%%%%%%%
\subsection{Complex polarization}\label{ss:complex pol} The following is largely based on the approach to deformation quantization from \cite{Kara}.
\subsubsection{K\"{a}hler potentials}\label{sss:Kahler pote} Let $M$ be  a K\"{a}hler manifold. We can locally choose a K\"{a}hler potential, {\em i.e.} a real-valued function $\Phi$ such that the symplectic form is given by 
$$\omega=-i \partial{\overline{\partial}}\Phi$$
A K\"{a}hler potential is unique up to a change $\Phi\mapsto \Phi+\varphi+{\overline{\varphi}}$ where $\varphi$ is holomorphic.
\begin{lemma}\label{lemma:cplex Darboux coords}
Put $\zeta_j=i\frac{\partial \Phi}{\partial z_j}.$ Then
$$\{z_j,z_k\}=0;\; \{\zeta_k, z_j\}=\delta_{jk};\; \{\zeta_j,\zeta_k\}=0.$$
\end{lemma}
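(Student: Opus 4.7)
The plan is to exhibit $(z_1,\ldots,z_n,\zeta_1,\ldots,\zeta_n)$ as \emph{Darboux coordinates} in which the symplectic form takes the canonical shape $\omega=\sum_j d\zeta_j\wedge dz_j$. Once this is done, all three Poisson bracket identities are just the standard formulas for canonical coordinates.

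First I would check that $(z_j,\zeta_j)$ really form a local coordinate system on $M$. Since $\zeta_j=i\,\partial\Phi/\partial z_j$, the Jacobian of the change of variables $(z,\bar z)\mapsto (z,\zeta)$ in the anti-holomorphic direction is $\partial\zeta_j/\partial\bar z_k=i\,\partial^2\Phi/\partial z_j\partial\bar z_k=i\,g_{j\bar k}$, which is invertible because the K\"ahler metric is non-degenerate. Hence, locally, $(z_j,\zeta_j)$ are genuine smooth coordinates.

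Next I would compute the exterior derivative
\[
d\zeta_j=i\,\frac{\partial^2\Phi}{\partial z_k\partial z_j}\,dz_k+i\,g_{j\bar k}\,d\bar z_k,
\]
and form the two-form $\sum_j d\zeta_j\wedge dz_j$. The $dz_k\wedge dz_j$ contribution drops out because $\partial^2\Phi/\partial z_k\partial z_j$ is symmetric in $(j,k)$ while the wedge is antisymmetric, leaving
\[
\sum_j d\zeta_j\wedge dz_j=i\,g_{j\bar k}\,d\bar z_k\wedge dz_j=-i\,g_{j\bar k}\,dz_j\wedge d\bar z_k=-i\,\partial\bar\partial\Phi=\omega.
\]
Thus $\omega$ is in Darboux form in the coordinates $(z_j,\zeta_j)$, with the $z_j$ playing the role of position variables and the $\zeta_j$ playing the role of conjugate momenta.

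Finally I would read off the Poisson brackets. The Hamiltonian vector field $X_f$ is determined by $\iota_{X_f}\omega=-df$; applied to $f=\zeta_k$ in the coordinates $(z,\zeta)$ this gives $X_{\zeta_k}=\partial/\partial z_k$, while $X_{z_j}=-\partial/\partial\zeta_j$. Since $(z,\zeta)$ are coordinate functions, we immediately obtain
\[
\{\zeta_k,z_j\}=X_{\zeta_k}(z_j)=\delta_{jk},\qquad
\{z_j,z_k\}=-\partial_{\zeta_j}(z_k)=0,\qquad
\{\zeta_j,\zeta_k\}=\partial_{z_j}(\zeta_k)=0.
\]
There is no genuine obstacle here; the only point requiring care is not to confuse the coordinate derivation $\partial/\partial z_j$ in the $(z,\zeta)$-frame with the one in the $(z,\bar z)$-frame (in the latter, $\partial_{z_j}\zeta_k$ is nonzero, but it equals $\partial^2\Phi/\partial z_j\partial z_k$, which is symmetric in $(j,k)$ and thus still produces a vanishing antisymmetric bracket, providing a sanity check).
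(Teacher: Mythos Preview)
Your proof is correct, and it takes a genuinely different route from the paper's. The paper works bracket-by-bracket: it first records the basic relation $\{z_j,\bar z_k\}=i(A^{-1})_{kj}$ with $A_{jk}=\partial^2\Phi/\partial z_j\partial\bar z_k$, and then expands $\{z_j,\zeta_k\}$ and $\{\zeta_j,\zeta_k\}$ via the Leibniz rule, using in the last step that $\partial^2\Phi/\partial z_j\partial z_k$ is symmetric. Your approach instead first proves the single identity $\omega=\sum_j d\zeta_j\wedge dz_j$ and then reads off all three brackets from the canonical form. Your argument is cleaner and delivers the Darboux form of $\omega$ as a bonus, which is exactly what the surrounding section wants anyway; the paper's computation is more hands-on but does not require checking separately that $(z,\zeta)$ are coordinates (both proofs of course use the invertibility of the K\"ahler metric at the same place). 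Your closing remark about not confusing $\partial/\partial z_j$ in the $(z,\zeta)$-frame with the one in the $(z,\bar z)$-frame is apt and corresponds precisely to the symmetry cancellation the paper invokes in its computation of $\{\zeta_j,\zeta_k\}$.
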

\begin{proof} Choose local holomorphic coordinates and put
$$A_{jk}=\frac{\partial}{\partial z_j}\frac{\partial}{\partial \bz_k}\Phi(z,\bz)$$
We have
$$\{z_j,\bz_k\}=i (A^{-1})_{kj};$$
$$\{z_j, \zeta_k\}=i\sum\frac{\partial\zeta_k}{\partial \bz_l} \{z_j,\bz_l\}=\sum A_{kl}(A^{-1})_{lj}= \delta_{jk};$$
$$-\{\zeta_j,\zeta_k\}=\sum ( \frac{\partial\zeta_j}{\partial z_p} \frac{\partial \zeta_k}{\partial \bz_q}-\frac{\partial\zeta_k}{\partial z_p} \frac{\partial \zeta_j}{\partial \bz_q})\{z_p,\bz_q\}=$$
$$i\sum ( \frac{\partial^2\Phi}{\partial z_j \partial z_p} A_{kq}- \frac{\partial^2\Phi}{\partial z_k \partial z_p} A_{jq})(A^{-1})_{qp}=i(\frac{\partial^2\Phi}{\partial z_j \partial z_k}-\frac{\partial^2\Phi}{\partial z_k\partial z_j})=0$$
\end{proof}
%%%%%%%%%%%%%%%%%%%%%%%%%%%%%%%%%%%%%%%%%%%%%%%%%%%%%%%%%%%%%%%%%%%%%%%%%%%%%%%%%%%%%%%%%%%%%%%%%%%%%%%%%%%%%%%%%%%%%%%%%%%%%%%%%%%%%%%%%%%%%%%%%%%%%%%%%%%%%%%%%%%%%%%%%%%%%%%%
\subsubsection{The line bundle $\cL$}\label{sss:line L comp} Choose an open cover $ \{U_{\alpha}\}$ of $M$ and a holomorphic coordinate system $z_\alpha=(z_{\alpha, 1},\ldots,z_{\alpha,n})$ on every $U_{\alpha}.$ We write 
\begin{equation}\label{eq:z transi}
z_\alpha=g_{\alpha\beta}(z_\beta).
\end{equation}
Choose local K\"{a}hler potentials $\Phi_\alpha.$ We have 
\begin{equation}\label{eq: Phi phi}
i\Phi_\alpha -i \Phi_\beta = \varphi_{\alpha\beta}+\overline{\varphi_{\alpha\beta}}
\end{equation}
where $\varphi_{\alpha\beta}$ are holomorphic.

Let us start with rewriting the transition isomorphisms in terms of the new complex Darboux coordinates $z,\zeta.$ We have
$$i\Phi_\alpha (z_\alpha)-i \Phi_\beta((z_\beta) = \varphi_{\alpha\beta}+\overline{\varphi_{\alpha\beta}(z_\beta)}$$
Applying $\frac{\partial}{\partial z_\beta},$ we get
$$\frac{\partial z_\alpha}{\partial z_\beta}i\frac{\partial\Phi}{\partial z_\alpha}(z_\alpha)-i\frac{\partial \Phi}{\partial z_\beta}(z_\beta)=\frac{\partial \varphi_{\alpha\beta}}{\partial z_\beta}(z_\beta)$$
or
\begin{equation}\label{eq:zeta transi}
\zeta_\alpha=(g'_{\alpha\beta}(z_\beta)^{-1})^t(\zeta_\beta+\frac{\partial \varphi_{\alpha\beta}}{\partial z_\beta}(z_\beta))
\end{equation}
Together with \eqref{eq:z transi}, this describes the rule for the change of new variables.

Assume that $i(\varphi_{\alpha\beta}+\varphi_{\beta\gamma}-\varphi_{\alpha\gamma})$ is a $2\pi i{\mathbb Z}$-valued two-cocycle. We get the line bundle $\cL$ with transition functions $\exp(\varphi_{\alpha\beta}).$ The curvature of this connection is $-i\omega.$ 
%%%%%%%%%%%%%%%%%%%%%%%%%%%%%%%%%%%%%%%%%%%%%%%%%%%%%%%%%%%%%%%%%%%%%%%%%%%%%%%%%%%%%%%%%%%%%%%%%%%%%%%%%%%%%%%%%%%%%%%%%%%%%%%%%%%%%%%%%%%%%%%%%%%%%%%%%%%%%%%%%%%%%%%%%%%%%%%%%%%%%%%%%%%%%%%%%%%%%%%%%%%%%%%%%%%%%%%%%%%%%%%%%%%%%%%%%%%%%%%%%%%%%%%%%%%%%%%%%%%%%%%%%%%%%%%%%%%%%%%%%%%%%%%%%%%%%%%%%%%%%%%%%%%%%%%%%%%%%%%%%%%%%%%%%%%%%%%%%%%%%%%%%%%%%%%%%%%%%%%%%%%%%%%%%%%%%%%%%%%%%%%%%%%%%%%%%%%%%%%%%%%%%%%%%%%%%%%%%%%%%%%%%%%%%%%%%%%%%%%%%%%%%%%%%%%%%%%%%%%%%%%%%%%%%%%%%%%%%%%%%%%%%%%%%%%%%%%%%%%%%%%%%%%%%%%%%%%%%%%%%%%%%%%%
\subsubsection{The jet bundles}\label{sss:jet bundle holom} Assume that the canonical sheaf has a square root $\Omega^{\frac{1}{2}}.$ We call this line bundle the bundle of holomorphic half-forms on $M.$ The transition isomorphisms of this line bundle are denoted by $\det {g_{\alpha\beta}'}^\half.$ For any integer $k,$ consider the bundle ${\rm{Jets}}(\Gamma_{\rm{hol}}(\cL^k\otimes \Omega^{\frac{1}{2}}))$ of jets of holomorphic sections of $\cL^k\otimes \Omega^{\frac{1}{2}}.$ The fiber of this bundle is $\bC[[\fz]]$ where $\fz=(\fz_1,\ldots,\fz_n).$
The transition isomorphisms of the jet bundle take a power series $f_\alpha(\fz)$ to a power series $f_\beta(\fz)$ according to the following formula.
\begin{equation}\label{eq:jet bdle k holo}
f_\beta (\fz)=f_\alpha (g_{\alpha\beta}(z_\beta+\fz)-z_\alpha) \det g'_{\alpha\beta}(z_\beta+\fz)^{\frac{1}{2}} \exp(k\varphi _{\alpha\beta}(z_\beta+\fz))
\end{equation}

Exactly as in \ref{sss:jets RD one h}, 
we can define the bundle of algebras whose fiber is $\C[[\fz, \hbar]][\fzeta]$ by transition isomorphisms
\begin{equation} \label{eq:trafu RD hol}
G_{\beta\alpha}(\fz)=g_{\beta\alpha}(z_\alpha+\fz)-z_\beta;
\end{equation}
\begin{equation} \label{eq:trafu RD 1 hol}
G_{\beta\alpha}(\fzeta)= {{g'_{\alpha\beta}}(z_\beta+\fz)}^t * \fzeta- \partial_{z_\beta} \varphi_{\alpha\beta}(z_\beta+\fzeta)
\end{equation}
We see that our bundle is the result of formally substituting $\frac{1}{\hbar} $ for $k$ in the bundle of jets of Rees rings of holomorphic differential operators on $\Omega^{\frac{1}{2}} \otimes \cL^k$ (if we map $\fzeta_i$ to $i\hbar \partial_{\fz}$). On the other hand, because of \eqref{eq:zeta transi}, this bundle of algebras is a deformation of the bundle of jets of $C^\infty$ functions on $M.$ The gauge transformation
\begin{equation}\label{eq:gauge transf Ad hol}
\Ad \exp(\frac{1}{i\hbar}\fzeta_\alpha \fz)
\end{equation} 
produces new transition functions
\begin{equation} \label{eq:trafu RD holga}
G_{\beta\alpha}(\fz)=g_{\beta\alpha}(z_\alpha+\fz)-z_\beta;
\end{equation}
\begin{equation} \label{eq:trafu RD 1 holga}
G_{\beta\alpha}(\fzeta)= {{g'_{\alpha\beta}}(z_\beta+\fz)}^t * (\fzeta+\zeta_\alpha)- \partial_{z_\beta} \varphi_{\alpha\beta}(z_\beta+\fzeta)-\zeta_\beta
\end{equation}
that extend to $\fbA_M=\C[[\fz, \fzeta, \hbar]].$ The transition isomorphisms for the module of jets \eqref{eq:jet bdle k holo} become (when we replace $k$ by $\frac{1}{\hbar}$) which now define only a twisted module that we denote by $\cH_M.$
$$
f_\beta (\fz)=f_\alpha (g_{\alpha\beta}(z_\beta+\fz)-z_\alpha) \det g'_{\alpha\beta}(z_\beta+\fz)^{\frac{1}{2}} \exp(\frac{1}{i\hbar}\varphi _{\alpha\beta}(z_\beta+\fz)-\partial_{z_\beta} \varphi _{\alpha\beta}(z_\beta)\fz)
$$ 
As in the case of a real polarization, the canonical connections become
\begin{equation}\label{eq:fla con A re 1}
\nabla_{\bA}=(\ddz-\ddfz)dz+(\ddzeta-\ddfzeta) d\zeta
\end{equation}
in $\cA_M$ and 
\begin{equation}\label{eq:fla con V re 3}
\nabla_{\cH}=-\oih \zeta dz+(\ddz-\ddfz)dz+(\ddzeta+\oih \fz) d\zeta
\end{equation}
on $\cH_M.$  We conclude that
\begin{equation}\label{eq:fffdddggg}
\fbA_M\isomoto{\rm{Rees}}_f\, {\rm{Jets}}\,D_{\rm{hol}} (\Omega ^{\frac{1}{2}}\otimes \cL^{\frac{1}{\hbar}})
\end{equation}
\begin{equation}\label{eq:fffdddggg1}
\cH_M\isomoto {\rm{Rees}}_f\,{\rm{Jets}}\,\Gamma_{\rm{hol}} (\Omega ^{\frac{1}{2}}\otimes \cL^{\frac{1}{\hbar}})
\end{equation}
\end{document}